\newtheorem{theorem}{Theorem}[section]
\newtheorem{proposition}[theorem]{Proposition}
\newtheorem{lemma}[theorem]{Lemma}
\newtheorem{corollary}[theorem]{Corollary}
\newtheorem{D}[theorem]{Definition}
\newenvironment{definition}{\begin{D} \rm }{\end{D}}
\newtheorem{R}[theorem]{Remark}
\newenvironment{remark}{\begin{R}\rm }{\end{R}}
\newtheorem{E}[theorem]{Example}
\newenvironment{example}{\begin{E}\rm }{\end{E}}
\newcommand{\Dis}{\displaystyle}
\numberwithin{equation}{section}
\def\Zee{\mathbb{Z}}
\def\Q{\mathbb{Q}}
\def\Cee{\mathbb{C}}
\def\Pee{\mathbb{P}}
\def\Ker{\operatorname{Ker}}
\def\Coker{\operatorname{Coker}}
\def\Hom{\operatorname{Hom}}
\def\Ext{\operatorname{Ext}}
\def\Gr{\operatorname{Gr}}
\def\im{\operatorname{Im}}
\def\Spec{\operatorname{Spec}}
\def\Proj{\operatorname{Proj}}
\def\scrO{\mathcal{O}}
\def\spcheck{^{\vee}}
\def\hX{\widehat{X}}
\def\hY{\widehat{Y}}
\title{Deformations of singular Fano and Calabi-Yau varieties} 
\begin{document}
\author[R. Friedman]{Robert Friedman}
\address{Columbia University, Department of Mathematics, New York, NY 10027}
\email{rf@math.columbia.edu}
\author[R. Laza]{Radu Laza}
\address{Stony Brook University, Department of Mathematics, Stony Brook, NY 11794}
\email{radu.laza@stonybrook.edu}

\begin{abstract}
The goal of this paper is to generalize results concerning the deformation theory of Calabi-Yau and Fano threefolds with isolated  hypersurface singularites, due to the first author, Namikawa and  Steenbrink. In particular, under the assumption of  terminal singularities, Namikawa proved smoothability  in the Fano case and also for generalized Calabi-Yau threefolds assuming that a certain topological first order condition is satisfied. In the case of dimension $3$, we extend their results by, among other things,  replacing terminal with canonical. In higher dimensions, we identify a class of singularities to which our method applies. A surprising aspect of our study is the role played by the higher Du Bois and higher rational singularities.  Among other  deformation theoretic results in higher dimensions, we obtain smoothing results for generalized Fano varieties whose singularities are not $1$-rational, and for generalized Calabi-Yau varieties whose singularities are not $1$-rational but are $1$-Du Bois under a topological condition on the links which is similar to the first order obstruction in dimension $3$. 
\end{abstract} 

\thanks{Research of the second author is supported in part by NSF grant DMS-2101640.}    

\bibliographystyle{amsalpha}
\maketitle

 \section*{Introduction}

This paper is the first in a series investigating the deformation theory  of Fano and Calabi-Yau varieties with certain singularities \cite{FL22b}, \cite{FL23}, \cite{FL24}. We begin by describing some of the relevant history. Let $Y$ be a complex analytic surface with at worst rational double points (i.e.\ canonical singularities) such that, if $\omega_Y$ is the dualizing sheaf, then  either $\omega_Y^{-1}$ is ample or $\omega_Y \cong \scrO_Y$ and $H^1(Y; \scrO_Y) = 0$. In other words, if $\hY$ is the minimal resolution of $Y$, then  either $-K_{\hY}$ is nef and big ($\hY$ is a generalized del Pezzo surface) or $\hY$ is a $K3$ surface. Burns-Wahl \cite{BurnsWahl} showed   that, in either case, the deformations of $Y$ are versal for the singularities, and in particular that $Y$ is smoothable.
In dimension $3$, following an idea due to Clemens, the first author  showed the following \cite{F}: Let $Y$ be a compact analytic threefold with only ordinary double points which is the analogue of a Calabi-Yau threefold, in the sense that $\omega_Y\cong \scrO_Y$ and  $h^1(\scrO_Y) = 0$.  Let $Y'\to Y$ be a small resolution, with exceptional curves $C_1, \dots, C_r$, and let $[C_i]$ be the class of $C_i$ in $H^4(Y';\Cee)$. Then $Y'$ is smoothable provided 1) the classes $[C_1], \dots, [C_r]$ generate $H^2(Y';\Omega^2_{Y'})$ and 2) there exists a relation $a_1[C_1]+  \cdots +  a_r[C_r] =0 \in H^4(Y';\Cee)$ with $a_i\neq 0$ for every $i$. Subsequently, independent work of Kawamata \cite{Kawamata}, Ran \cite{Ran}, and Tian \cite{Tian} showed that assumption 1) is not necessary provided that  there exists some resolution $\hY \to Y$ which is K\"ahler. The key  point in removing this assumption is a generalization of the unobstructedness theorem for smooth Calabi-Yau manifolds, in any dimension:

\begin{theorem}[Kawamata, Ran, Tian]\label{thmK} Let $Y$ be a compact analytic variety with only ordinary double points such that $\omega_Y\cong \scrO_Y$ and there exists a resolution $\hY \to Y$ which is K\"ahler. Then the deformations of $Y$ are unobstructed. 
\end{theorem}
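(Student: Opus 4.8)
The plan is to deduce unobstructedness from the $T^1$-lifting criterion of Ran \cite{Ran} and Kawamata \cite{Kawamata}, thereby avoiding any explicit order-by-order solving of the Maurer--Cartan equation. Writing $A_n = \Cee[t]/(t^{n+1})$, this criterion reduces the smoothness of the deformation functor of $Y$ to the following assertion: for every $n$ and every deformation $Y_n \to \Spec A_n$ lifting a given $Y_{n-1} \to \Spec A_{n-1}$, the relative tangent module $T^1(Y_n/A_n) = \Ext^1_{\scrO_{Y_n}}(\Omega^1_{Y_n/A_n}, \scrO_{Y_n})$ is a free $A_n$-module; equivalently, the restriction map $T^1(Y_n/A_n) \to T^1(Y_{n-1}/A_{n-1})$ is surjective. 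Thus the problem is converted into a \emph{flatness in families} statement for a single cohomological invariant.

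The second step is to make $T^1$ accessible to Hodge theory by exploiting the Calabi--Yau structure. I would fix a nowhere-vanishing section $\eta$ of $\omega_Y \cong \scrO_Y$. Since the ordinary double points are isolated hypersurface singularities, and in particular rational and Du Bois, contraction with $\eta$ should identify the tangent complex $R\Hom(\Omega^1_Y, \scrO_Y)$ with an appropriate shift of a truncation of the Du Bois complex $\uOb$, and hence identify $T^1_Y$ with a hypercohomology group built from the graded pieces $\uOp$ (the singular analogue of the isomorphism $H^1(T_Y) \cong H^1(\Omega^{n-1}_Y)$ for smooth $Y$). Performed relatively over $A_n$, the same contraction identifies $T^1(Y_n/A_n)$ with the corresponding relative hypercohomology group, so freeness over $A_n$ becomes a statement about the constancy of Hodge-type numbers in the family.

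The crux is then to prove that this relative hypercohomology group is free over $A_n$, and this is where the Kähler resolution $\hY \to Y$ enters. Because the singularities are Du Bois, the hypercohomology of $\uOb$ is governed by a Hodge--de Rham type spectral sequence, and the existence of the Kähler resolution forces this spectral sequence to degenerate, via the $\partial\bar\partial$-lemma on $\hY$ together with the fact that rational singularities do not alter the relevant graded pieces. Degeneration makes the dimensions of the graded pieces constant in the family $Y_n/A_n$; an upper-semicontinuity argument combined with constancy of the total dimension then yields freeness of $T^1(Y_n/A_n)$ over $A_n$, which is exactly the input the $T^1$-lifting criterion demands.

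The main obstacle is the singular Hodge theory in the third step. On a smooth Kähler manifold the degeneration and the $\partial\bar\partial$-lemma are classical, but here one must transfer them across the resolution $\hY \to Y$ and, more delicately, verify that they persist \emph{in families} over $A_n$. Controlling the comparison between the de Rham complex of $\hY$ and the Du Bois complex $\uOb$ of $Y$, and checking that the contraction isomorphism of the second step is compatible with the filtrations that degenerate, is the technical heart of the argument; the mildness of ordinary double points, being isolated, rational, and Du Bois, is precisely what should make this comparison tractable.
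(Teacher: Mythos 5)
The paper does not prove this theorem; it quotes it from Kawamata, Ran and Tian, and its own closest relative is the generalization Theorem~\ref{unob1}, whose proof is delegated to \cite{FL22c}. Measured against that lineage, your skeleton is the right one: the $T^1$-lifting criterion reduces unobstructedness to freeness of $\Ext^1_{\scrO_{Y_n}}(\Omega^1_{Y_n/A_n},\scrO_{Y_n})$ over $A_n$, the Calabi--Yau trivialization converts this into a Hodge-theoretic statement, and degeneration of the relevant spectral sequence (which is where the K\"ahler, or more generally $\partial\bar\partial$, resolution is used) supplies the freeness. This is essentially the Ran--Kawamata argument as extended by Namikawa and by \cite{FL22c}.

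There is, however, one substantive imprecision in your second step. You justify the identification of $R\Hom(\Omega^1_Y,\scrO_Y)$ with a shift of a truncation of $\uOb$ by saying the ordinary double points are ``rational and Du Bois.'' That inference does not work: the Du Bois condition only gives $\scrO_Y\xrightarrow{\sim}\uOp[p=0]$, whereas the contraction-with-$\eta$ step needs the comparison in degree one, i.e.\ $\Omega^1_Y\xrightarrow{\sim}\underline\Omega^1_Y$. This is exactly the $1$-Du Bois condition of Definition~\ref{defsub} and Remark~\ref{altDBdef}, which is strictly stronger than rational plus Du Bois, and the paper stresses that an unobstructedness theorem for rational singularities that are merely Du Bois but not $1$-Du Bois is still open. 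Your argument is rescued for this particular theorem only because ordinary double points of dimension $n\geq 3$ happen to be $1$-Du Bois in every dimension (being $1$-rational for $n>3$ and $1$-liminal for $n=3$), but that fact has to be invoked explicitly; as written, the same reasoning would ``prove'' unobstructedness for arbitrary rational Du Bois lci singularities, which is not known. Relatedly, in the third step the phrase ``rational singularities do not alter the relevant graded pieces'' is doing a lot of hidden work: the comparison between $H^q(\hY;\Omega^p_{\hY})$ and $\mathbb{H}^q(Y;\underline\Omega^p_Y)$ is precisely the delicate point that Sections~\ref{Section1} and~\ref{Section2} of the paper are devoted to, and it is controlled by the vanishing $H^{n-1}(\hX;\Omega^1_{\hX}(\log E)(-E))=0$ coming from the $1$-Du Bois hypothesis, not by rationality alone.
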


\begin{remark}Here, the hypothesis that there is a K\"ahler resolution of $Y$ could be replaced by: there exists a resolution of $Y$ satisfying the $\partial\bar\partial$-lemma. (Compare Remark~\ref{ddbarremark}.)
\end{remark}

This theorem was then generalized by Namikawa \cite{namtop}, in the case of dimension three, to allow for more complicated singularities:

\begin{theorem}[Namikawa]\label{thmN} Let $Y$ be a compact analytic threefold, all of whose singularities are isolated  canonical   local complete intersection (lci) singularities,  such that $\omega_Y\cong \scrO_Y$ and $h^1(\scrO_Y) = 0$, and such that there exists a resolution $\hY \to Y$ which is K\"ahler. Then the deformations of $Y$ are unobstructed.
\end{theorem}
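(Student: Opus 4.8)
The plan is to establish unobstructedness via the $T^1$-lifting principle of Ran and Kawamata (cf.\ \cite{Ran}, \cite{Kawamata}). Writing $A_n = \Cee[t]/(t^{n+1})$, it suffices to show that for every $n\geq 1$ and every flat deformation $\mathcal{Y}_n \to \Spec A_n$ of $Y$, the natural restriction map on relative tangent cohomology
\[
T^1_{\mathcal{Y}_n/A_n} \longrightarrow T^1_{\mathcal{Y}_{n-1}/A_{n-1}}
\]
is surjective. Since $Y$ has only isolated lci singularities, the cotangent complex is concentrated in degrees $[-1,0]$, so these relative $T^1$ groups are computed as $\Ext^1_{\scrO_{\mathcal{Y}_n}}(\Omega^1_{\mathcal{Y}_n/A_n}, \scrO_{\mathcal{Y}_n})$, and flatness over $A_n$ lets me analyze them by base change over the Artinian base. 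The surjectivity of all these restriction maps is then equivalent to the constancy of the relevant dimensions along the family, which I would extract from a Hodge-theoretic degeneration.

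The key is to convert $T^1$ into a Hodge-theoretic object. Because $\omega_Y\cong\scrO_Y$ and the total space $\mathcal{Y}_n$ is again Gorenstein with trivial relative dualizing sheaf, contraction with the relative holomorphic volume form should identify $T^1_{\mathcal{Y}_n/A_n}$ with the first (hyper)cohomology of a sheaf of relative $(n-1)$-forms, exactly as in the smooth Calabi--Yau case where $T_X\cong\Omega^{n-1}_X$ yields $T^1_X\cong H^1(\Omega^{n-1}_X)$. In the singular setting, the correct replacement for $\Omega^{n-1}$ is the Du Bois complex, so I expect $T^1_{\mathcal{Y}_n/A_n}$ to be expressed through the hypercohomology of $\uOp_{\mathcal{Y}_n}$ with $p=n-1=2$. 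Here the hypothesis that the canonical lci threefold singularities are rational Gorenstein, hence Du Bois, is what makes this identification legitimate, and the assumption $h^1(\scrO_Y)=0$ serves to remove the spurious $H^1(\scrO)$ contributions so that the relevant Hodge piece is isolated cleanly.

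Granting this identification, the surjectivity of the lifting maps follows from the $E_1$-degeneration of the Hodge--de Rham spectral sequence attached to the Du Bois complex $\uOb_{\mathcal{Y}_n}$ of the total space: degeneration forces the fiber dimensions of the relevant graded cohomology to be locally constant in the family, and a Nakayama/base-change argument then upgrades this to surjectivity of $T^1_{\mathcal{Y}_n/A_n}\to T^1_{\mathcal{Y}_{n-1}/A_{n-1}}$. This is precisely the step at which the Kähler resolution $\hY\to Y$ (equivalently, a resolution satisfying the $\partial\bar\partial$-lemma) is indispensable: it supplies the Hodge symmetry and strictness needed to guarantee the degeneration for $Y$ and for each $\mathcal{Y}_n$.

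The main obstacle is the Hodge theory of the singular, non-reduced total space $\mathcal{Y}_n$. Unlike the smooth situation, where degeneration is classical and holomorphic forms are closed, here one must control the behavior of the Du Bois complex in a family over the Artinian base $A_n$, verify the comparison between $\uOb_Y$-cohomology and the genuine (mixed Hodge) de Rham cohomology that the rationality of the singularities should provide, and then propagate $E_1$-degeneration across the nilpotent thickenings. I expect the technical heart of the argument to lie in making this relative Du Bois/degeneration package precise, and in confirming that the Kähler hypothesis on $\hY$ transfers to the required degeneration statement on the base $Y$ itself, since it is exactly the failure of this degeneration that would break the dimension count underlying the $T^1$-lifting criterion.
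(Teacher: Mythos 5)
This statement is quoted from Namikawa (\cite{namtop}); the paper does not prove it, so there is no internal proof to measure you against. The closest things in the paper are Theorem~\ref{unob1} (the authors' own unobstructedness result, also quoted from \cite{FL22c}) and the surrounding discussion of $1$-Du Bois singularities, and it is precisely against that discussion that your plan runs into trouble. Your overall skeleton --- the Ran--Kawamata $T^1$-lifting criterion, reduction to surjectivity of $T^1_{\mathcal{Y}_n/A_n}\to T^1_{\mathcal{Y}_{n-1}/A_{n-1}}$, and a Hodge-theoretic degeneration to force constancy of ranks --- is indeed the right frame and is how Namikawa proceeds.

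The gap is in the step you treat as routine: converting $\Ext^1_{\scrO_{\mathcal{Y}_n}}(\Omega^1_{\mathcal{Y}_n/A_n},\scrO_{\mathcal{Y}_n})$ into hypercohomology of $\uOp$ with $p=2$. For that identification you need $\Omega^1_Y\to \underline{\Omega}^1_Y$ to be an isomorphism, i.e.\ the singularities must be \emph{$1$-Du Bois}, not merely Du Bois; being rational (hence $0$-Du Bois) only gives $\scrO_Y\cong\underline{\Omega}^0_Y$ and does not let you replace $\Omega^1$ by $\underline{\Omega}^1$ inside the $\Ext$ group. By the Namikawa--Steenbrink result quoted in Section~\ref{Section2}, an isolated canonical threefold hypersurface singularity is $1$-Du Bois if and only if it is an ordinary double point, so for the general canonical lci singularities allowed in Theorem~\ref{thmN} your identification fails; carried through, your argument would only recover Theorem~\ref{thmK}. (The paper's Summary explicitly flags that a Du Bois-style unobstructedness theorem covering rational but non-$1$-Du Bois singularities is still missing in higher dimensions, which is a strong hint that this route cannot be the proof of Theorem~\ref{thmN}.) Namikawa's actual argument instead splits $\Ext^1(\Omega^1_{\mathcal{Y}_n/A_n},\scrO_{\mathcal{Y}_n})$ via the local-to-global Ext sequence, handles the $H^0(T^1)$-part by the local unobstructedness and good base-change behavior of isolated lci singularities, and uses the K\"ahler resolution and Hodge-theoretic degeneration only to control the $H^1(T^0)$-part (working on the smooth locus or a resolution), rather than through a global Du Bois complex of the non-reduced total space. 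To repair your proposal you would need to replace the Du Bois identification by this local/global splitting.
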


Subsequently, in a series of papers both separate and joint \cite{namtop}, \cite{NS}, \cite{namstrata}, \cite{SteenbrinkDB},  Namikawa and Steenbrink significantly generalized the result of \cite{F}. Among other results, they  proved the following:

\begin{theorem}\label{Thm-NS} Let $Y$ be a compact analytic threefold, all of whose singularities are isolated  canonical  hypersurface singularities,  such $\omega_Y\cong \scrO_Y$ and $h^1(\scrO_Y) = 0$, and such that there exists a resolution $\hY \to Y$ which is K\"ahler.
\begin{enumerate} \item[\rm(i)]  \cite{NS} $Y$ can be deformed to a generalized Calabi-Yau threefold whose only singularities are ordinary double points.  
 \item[\rm(ii)] \cite{namstrata} Suppose that all singular points of $Y$ are in addition terminal singularities. Let $Y' \to Y$ be a small resolution of  $Y$ at the ordinary double points and an isomorphism elsewhere, and, for each $x\in Y$ which is an ordinary double point, let $C_x$ be the corresponding exceptional curve in $Y'$. If there exists a relation $\sum_xa_x[C_x] = 0$ in $H_2(Y')$ with $a_x\in \Cee$ and  $a_x\neq 0$ for every $x$,    then $Y$ is smoothable. 
 \end{enumerate}
\end{theorem}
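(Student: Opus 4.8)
The engine of the proof is the unobstructedness theorem. By Theorem~\ref{thmN}, the semiuniversal deformation $\pi\colon\mathcal Y\to S$ of $Y$ has smooth, hence irreducible, base $S$, with tangent space $\mathbb T^1_Y=\Ext^1(\Omega^1_Y,\scrO_Y)$. Since $Y$ has only isolated singularities, the local-to-global spectral sequence for $\mathcal{E}xt$ yields an exact sequence
\[0\to H^1(Y,\mathcal T_Y)\to \mathbb T^1_Y\xrightarrow{\ r\ }\bigoplus_{x}T^1_{Y,x}\xrightarrow{\ \partial\ } H^2(Y,\mathcal T_Y),\]
where $\mathcal T_Y=\mathcal{H}om(\Omega^1_Y,\scrO_Y)$ and $T^1_{Y,x}$ is the tangent space to the deformations of the germ $(Y,x)$. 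The map $r$ records the local first-order deformation induced at each singular point, and I will use $\partial$ to detect which collections of local deformations are globally realizable.

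For part (i) the relevant local input is that each $(Y,x)$ is an isolated hypersurface singularity, so $T^2_{Y,x}=0$, the local deformation space $\mathrm{Def}(Y,x)\cong\Cee^{\tau_x}$ is smooth, and its discriminant $D_x$ generically parametrizes a fiber with a single ordinary double point; thus the ODP-only deformations are dense in $D_x$. I would then pass to the global classifying map $\phi\colon S\to\prod_x\mathrm{Def}(Y,x)$, whose differential at the origin is $r$, and show that a generic point of the discriminant $D\subset S$ has fiber with only ordinary double points. Concretely, one combines the density of the ODP-stratum in each $D_x$ with the smoothness of $S$ in a Bertini-type argument, arranging that $\phi$ meets the product of ODP-strata while avoiding the deeper strata. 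That the general such fiber $Y_s$ is again generalized Calabi-Yau, i.e.\ $\omega_{Y_s}\cong\scrO_{Y_s}$ and $h^1(\scrO_{Y_s})=0$, follows from deformation invariance in the Gorenstein family together with semicontinuity.

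For part (ii) I would reduce, using part (i), to the situation in which all singularities are ordinary double points, and then apply the smoothing criterion of Friedman and of Kawamata-Ran-Tian. The point is to compute the connecting map $\partial$ on the nodal smoothing directions: at an ordinary double point $x$ the smoothing spans a line in $T^1_{Y,x}$, and I would identify $\partial$, restricted to these lines, with the map $(a_x)_x\mapsto\sum_x a_x[C_x]$ landing in $H_2(Y';\Cee)$ (or its dual), via the small resolution $Y'\to Y$ and Serre duality on the Calabi-Yau $Y$. By the exact sequence, a collection of nodal smoothing parameters $(a_x)_x$ is induced by a global first-order deformation exactly when it lies in $\ker\partial$, that is, when $\sum_x a_x[C_x]=0$. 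The hypothesis supplies such a relation with every $a_x\neq 0$, hence a global first-order deformation smoothing every node simultaneously; by unobstructedness (Theorems~\ref{thmN} and~\ref{thmK}) this tangent vector integrates to a genuine one-parameter deformation with smooth general fiber. Therefore the smooth locus $S^{\mathrm{sm}}$ is nonempty, and since $S$ is smooth and irreducible with $0\in\overline{S^{\mathrm{sm}}}$, the variety $Y$ itself is smoothable.

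The main obstacle in part (i) is the control of the image of $r$: because $H^2(Y,\mathcal T_Y)\neq0$ for Calabi-Yau threefolds, the classifying map $\phi$ need not be dominant, so one must genuinely verify that its constrained image still meets the generic ODP-stratum of the discriminant rather than being forced into a deeper stratum. The main obstacle in part (ii)---and the technical heart of the whole argument---is the identification of $\partial$ on the smoothing lines with the homological pairing $(a_x)_x\mapsto\sum_x a_x[C_x]$; this requires the precise comparison of the deformation theories of $Y'$ and of $Y$, together with the duality that converts the obstruction class in $H^2(Y,\mathcal T_Y)$ into the curve classes $[C_x]$ in $H_2(Y')$, and it is here that the terminal hypothesis and the existence of the K\"ahler resolution enter decisively.
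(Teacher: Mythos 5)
You have the right skeleton---unobstructedness plus the local-to-global exact sequence $0\to H^1(Y;T^0_Y)\to\mathbb{T}^1_Y\xrightarrow{\ r\ }\bigoplus_xT^1_{Y,x}\to H^2(Y;T^0_Y)$---and your part (ii) points at the decisive computation (the obstruction on the nodal smoothing lines is $(a_x)_x\mapsto\sum_xa_x[C_x]$, as in \cite{F}). But part (i) has a genuine gap exactly where you flag ``the main obstacle'': you never supply the argument that the constrained image of $r$ reaches the ODP-stratum. A Bertini-type genericity argument cannot do this, because the image of $r$ in a given $T^1_{Y,x}$ could a priori be a linear subspace contained entirely in a deep stratum of the local discriminant; genericity inside a too-small subspace proves nothing. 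The missing idea---the technical heart of Namikawa--Steenbrink and of this paper---is a precise description of $\im(r)$ modulo $\mathfrak{m}_xT^1_{Y,x}$. One shows: (a) for every $3$-dimensional isolated rational hypersurface singularity the image of $H^1(\hX_x;\Omega^2_{\hX_x})$ in $T^1_{Y,x}$ is contained in $\mathfrak{m}_xT^1_{Y,x}$ (this is $1$-irrationality; Theorem~\ref{maintheorem}(iii) and Lemma~\ref{com1}), so nothing is lost by passing to the quotient $K_x=T^1_{Y,x}/\im H^1(\hX_x;\Omega^2_{\hX_x})$; and (b) the composite $\mathbb{T}^1_Y\to\bigoplus_xK_x$ surjects onto the summands $K'_x=\Ker\{H^2_{E_x}(\hX_x;\Omega^2_{\hX_x})\to H^4_{E_x}(\hX_x)\}$, because these classes die in $H^2(\hY;\Omega^2_{\hY})$, which injects into $H^4(\hY)$ (Lemmas~\ref{7.1} and \ref{liftable}). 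Any preimage in $T^1_{Y,x}$ of a generator of $K'_x$ is then automatically a unit at every non-ODP point, which is what actually produces the deformation in (i); only at the ODPs ($1$-liminal points) does a topological obstruction survive. Without (a) and (b) your classifying-map picture has no content.

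A second, smaller gap sits in your reduction of (ii) to the all-ODP case via (i): the hypothesis $\sum_xa_x[C_x]=0$ lives in $H_2$ of the small resolution of the \emph{original} $Y$, whereas after deforming you would need the corresponding relation on the small resolution of the deformed variety. Transporting it requires knowing that the deformation from (i) is locally trivial at the ODPs (so the small resolutions fit into a family) and comparing $H_2$ across that family; you do not address this. The paper sidesteps the issue entirely by constructing a single first-order class $\theta=t_1\theta_1+t_2\theta_2\in\mathbb{T}^1_Y$ that is simultaneously a unit in $T^1_{Y,y}$ at every non-ODP point (via (a),(b) above) and equal to $a_x$ times the local generator at each ODP (this is where the relation $\sum_xa_x[C_x]=0$ enters, through the identification of $H_{n-1}(Y')$ with $H_{n-1}(\hY)/\im H_{n-1}(E')$), and then integrating it by unobstructedness (Theorem~\ref{smoothCY} and Corollary~\ref{3CYsmoothing}).
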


There is also a related result of Gross \cite[Theorem 3.8]{gross_defcy} in case $Y$ has a crepant resolution:

\begin{theorem}[Gross]  Let $Y$ be a compact analytic threefold, all of whose singularities are isolated  canonical  lci singularities,  such $\omega_Y\cong \scrO_Y$  and such that there exists a resolution $\hY \to Y$ which is K\"ahler and $K_{\hY}\cong \scrO_{\hY}$.
Then  $Y$ can be deformed to a generalized Calabi-Yau threefold whose only singularities are ordinary double points.  More precisely, there exists a small deformation of $Y$ which smooths all of the singular points which are not ordinary double points and which is locally trivial at the ordinary double points.
\end{theorem}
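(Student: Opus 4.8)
Since the singularities of $Y$ are isolated, canonical and lci, they are Gorenstein, and by Reid's classification an isolated Gorenstein canonical threefold singularity is an isolated cDV singularity, hence terminal. A crepant resolution of a terminal singularity can have no exceptional divisor, so $\pi\colon\hY\to Y$ is \emph{small}: its exceptional locus is a disjoint union $\bigsqcup_j E_j$ of connected trees of rational curves, with $E_j=\pi^{-1}(x_j)$ lying over each singular point $x_j$, and $\hY$ is a compact K\"ahler Calabi--Yau threefold. The first input I would use is that $\operatorname{Def}(\hY)$ is smooth, by Bogomolov--Tian--Todorov.

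The plan is to realize the required deformation of $Y$ by deforming $\hY$ and then blowing down. I would take a general one-parameter deformation $\hat{\mathcal Y}\to\Delta$ of $\hY$ and construct a relative contraction $c\colon\hat{\mathcal Y}\to\mathcal Y$ over $\Delta$ restricting to $\pi$ over $0$ and contracting exactly those exceptional curves that survive the deformation; here one contracts the relatively semiample bundle $\pi^*A$, with $A$ ample on $Y$, and must track the relative nef cone in the family. The resulting family $\mathcal Y\to\Delta$ would then be the desired deformation of $Y$, and its structure at each $x_j$ is read off from the normal bundles inside $E_j$.

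The two cases are governed by $H^1$ of the normal bundle. At an ordinary double point $E_j$ is a single $\Pee^1$ with $N_{E_j/\hY}\cong\scrO(-1)\oplus\scrO(-1)$, so $H^1(N_{E_j/\hY})=0$; the curve is rigid and unobstructed, it survives every deformation, and it blows down to a node, so the family is locally trivial there. At a non-ODP point $E_j$ contains a component $C$ with $H^1(N_{C/\hY})\neq0$ (for instance $N_{C/\hY}\cong\scrO\oplus\scrO(-2)$), and for a general deformation class $v\in H^1(\hY,\mathcal T_{\hY})$ the induced obstruction in $H^1(N_{C/\hY})$ is nonzero; then $C$ fails to deform, the fiber $E_j$ is broken up, and $c$ becomes an isomorphism over a neighborhood of $x_j$, i.e.\ $x_j$ is smoothed.

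The hard part is to make this global and simultaneous, and I expect the main obstacle to be the existence of the relative contraction $c$: one must show that the curves which survive can be contracted \emph{together} in the family, which amounts to controlling how the relative K\"ahler/nef cone of $\hY$ varies (in the spirit of the results of Wilson and of Koll\'ar--Mori on contractions in families). Dually, the statement that non-ODP fibers are destroyed while the $(-1,-1)$-curves persist is the assertion that the classes $[C]$ of the former leave Hodge type $(2,2)$ under a general deformation whereas the latter remain of type $(2,2)$; equivalently, on $Y$ one analyzes the local-to-global map $T^1_Y\to\bigoplus_j T^1_{(Y,x_j)}$ and must exhibit a deformation lying in the kernel of the projection onto the ODP factors whose non-ODP components are smoothing directions, after which unobstructedness of $\operatorname{Def}(Y)$ (along the lines of Theorem~\ref{thmN}) and the openness of the smoothing locus promote the first-order statement to an honest simultaneous smoothing. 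This decoupling of the rigid (ODP) and flexible (non-ODP) local deformations, controlled by the vanishing of $H^1(N_{C/\hY})$ and the corresponding variation of Hodge structure on the exceptional classes, is the crux of the argument.
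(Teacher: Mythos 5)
This statement is quoted from Gross and not reproved in the paper, but the paper proves the directly analogous result (Corollary~\ref{3CYsmoothing}(i)) by a route that exposes two fatal problems with your proposal. The first is the opening reduction: it is false that an isolated canonical Gorenstein (lci) threefold singularity is cDV, hence terminal. Terminal Gorenstein threefold points are the isolated cDV points; canonical ones form a strictly larger class. For example the cone over a smooth cubic surface, $x^3+y^3+z^3+w^3=0$ in $\Cee^4$, is an isolated canonical hypersurface singularity whose crepant resolution is the blowup of the vertex, with exceptional locus a \emph{divisor} (the cubic surface). So the crepant resolution $\hY\to Y$ in the hypothesis is in general not small, the exceptional loci are not trees of rational curves, and the entire normal-bundle analysis ($\scrO(-1)\oplus\scrO(-1)$ versus $\scrO\oplus\scrO(-2)$) does not apply. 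The interesting content of Gross's theorem lies precisely in the divisorial case your reduction discards.

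The second problem is independent of the first and kills the strategy even when a small resolution exists: no deformation of $Y$ induced from a deformation of $\hY$ can ever smooth a singular point. For a crepant resolution the image of $H^1(\hX;T_{\hX})$ in $T^1_{Y,x}$ coincides with that of $H^1(\hX;\Omega^{n-1}_{\hX})$, hence of $H^1(\hX;\Omega^{n-1}_{\hX}(\log E)(-E))$ by Theorem~\ref{maintheorem}(iii); since every canonical threefold lci singularity is $1$-irrational (Namikawa--Steenbrink), this image lies in $\mathfrak{m}_xT^1_{Y,x}$ by Lemma~\ref{com2}(i), and by Lemma~\ref{defsmooth} such first-order classes never give smoothings. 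Concretely, for $x^2+y^2+z^2+w^{2n}=0$ with its $\scrO\oplus\scrO(-2)$ exceptional curve, deforming the small resolution and contracting breaks the point into $n$ ordinary double points; it never becomes an isomorphism over a smooth point, contrary to your claim that the disappearance of $C$ forces $x_j$ to be smoothed. The correct mechanism is to produce deformations of $Y$ \emph{transverse} to the image of $\mathbf{Def}_{\hY}$: one works with the quotient $K=\bigoplus_x T^1_{Y,x}/\im H^0((R^1\pi_*\Omega^{n-1}_{\hY})_x)$ and shows (Lemma~\ref{liftable}, Theorem~\ref{smoothCY}) that $\mathbb{T}^1_Y$ surjects onto the relevant part of $K$, which for non-ODP (strongly $1$-irrational) points contains classes that are units in $T^1_{Y,x}$; unobstructedness (Theorem~\ref{thmN} or \cite{gross_defcy}) then integrates these. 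The ODP points are handled by passing to the small resolution $\overline Y{}'$ at those points first and contracting the rigid $(-1,-1)$-curves back at the end, which is the only place where an argument resembling yours legitimately appears.
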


In the case of singular Fano threefolds, the first author in \cite{F} showed that, if $Y$ is a compact analytic threefold such that all of the singularities of $Y$ have a small resolution, $\omega_Y^{-1}$ is ample, and there exists a smooth Cartier divisor $H$ on $Y$ such that $\omega_Y= \scrO_Y(-H)$, then the deformations of $Y$ are unobstructed. Moreover, if all singular points on $Y$ are ordinary double points, then $Y$ is smoothable. Note that, in contrast to the Calabi-Yau case, there is no topological condition on the exceptional curves on the small resolution. This result was generalized by Namikawa \cite{NamikawaF}:

\begin{theorem}[Namikawa] Let $Y$ be a compact analytic Gorenstein threefold such that  $\omega_Y^{-1}$ is ample.  If the  singular points of $Y$ are  isolated canonical local complete intersection  singularities,  then the deformations of $Y$ are unobstructed. If the singularities of $Y$ are  Gorenstein terminal singularities (hence in particular are canonical hypersurface singularities), then $Y$ is smoothable.
\end{theorem}

(See also e.g.\ \cite{Sano} for related results in the $\Q$-Fano threefold case.) 

\medskip

The goal of this paper is to understand and strengthen the above results of Namikawa-Steenbrink and Namikawa in the case of dimension $3$ and to find a natural generalization to higher dimensions. To start, in dimension $3$, in the Fano case, we can replace the hypothesis of terminal singularities with canonical hypersurface  singularities, as follows: 

\begin{theorem}\label{thm-sing-dim3} Let $Y$ be a compact analytic threefold whose singular points are isolated canonical hypersurface  singularities and  such that  $\omega_Y^{-1}$ is ample.  Then $Y$ is smoothable.
\end{theorem}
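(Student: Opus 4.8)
The plan is to reduce both assertions to the single cohomology vanishing $H^2(Y,\mathcal{T}_Y)=0$, where $\mathcal{T}_Y=\mathcal{H}om(\Omega^1_Y,\scrO_Y)$ is the tangent sheaf, and then to establish that vanishing by a Kodaira--Nakano type argument adapted to the singularities. Write $T^i_Y$ for the cotangent functors controlling the deformations of $Y$, and $\mathcal{T}^q_Y=\mathcal{E}xt^q(\Omega^1_Y,\scrO_Y)$. Since the singularities are isolated hypersurface singularities, hence local complete intersections, we have $\mathcal{T}^q_Y=0$ for $q\ge 2$, while $\mathcal{T}^1_Y$ is supported on the finite set $\Sigma=\operatorname{Sing}Y$; in particular $H^p(Y,\mathcal{T}^1_Y)=0$ for $p\ge 1$. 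The local-to-global spectral sequence then collapses to
\[
0\to H^1(Y,\mathcal{T}_Y)\to T^1_Y\to H^0(Y,\mathcal{T}^1_Y)\xrightarrow{\ \delta\ } H^2(Y,\mathcal{T}_Y)\to T^2_Y\to 0 .
\]
Thus once $H^2(Y,\mathcal{T}_Y)=0$ is known, we obtain simultaneously that $T^2_Y=0$, so the deformations are unobstructed, and that the restriction map $T^1_Y\to H^0(Y,\mathcal{T}^1_Y)=\bigoplus_{x\in\Sigma}T^1_{(Y,x)}$ to the product of local deformation spaces is surjective.

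For the vanishing I would first identify $\mathcal{T}_Y$ with a twist of a sheaf of differentials. Because $Y$ is Gorenstein, $\omega_Y$ is invertible, and on the smooth locus the usual isomorphism $T\cong\Omega^2\otimes\omega^{-1}$ holds; since both sides are reflexive and the complement of $Y\setminus\Sigma$ has codimension $3\ge 2$, pushing forward from the smooth locus gives $\mathcal{T}_Y\cong\Omega^{[2]}_Y\otimes\omega_Y^{-1}$, where $\Omega^{[2]}_Y=(\Omega^2_Y)^{\vee\vee}$ is the sheaf of reflexive $2$-forms. It therefore suffices to prove $H^2(Y,\Omega^{[2]}_Y\otimes\omega_Y^{-1})=0$ for the ample line bundle $\omega_Y^{-1}$. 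On a smooth Fano threefold this is precisely Akizuki--Nakano vanishing in the range $p+q=4>3=\dim Y$; the content here is to push it across the singularities. I would pass to a resolution $\pi\colon\hY\to Y$ with simple normal crossings exceptional divisor $E$, compare $\Omega^{[2]}_Y$ with $\pi_*\Omega^2_{\hY}(\log E)$ together with its higher direct images, and apply logarithmic Akizuki--Nakano on $\hY$. The crucial point, and the main obstacle, is to control the discrepancy correction $\omega_{\hY}^{-1}(-E)=\pi^*\omega_Y^{-1}\otimes\scrO_{\hY}(-\sum_i(a_i+1)E_i)$, which is \emph{not} ample. Here I expect to use that canonical (indeed cDV) hypersurface threefold singularities are rational and Du Bois, so that the Du Bois complex $\underline\Omega^\bullet_Y$ computes the relevant pushforwards without correction in the needed degrees, reducing the statement to a Kodaira--Nakano vanishing for the Du Bois complex twisted by an ample bundle. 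This is exactly the place where the higher Du Bois/rational framework enters.

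Finally, granting $H^2(Y,\mathcal{T}_Y)=0$, I would deduce smoothability. Unobstructedness makes the functor $\mathrm{Def}_Y$ formally smooth, and each local functor $\mathrm{Def}_{(Y,x)}$ is smooth since $(Y,x)$ is a hypersurface, hence lci, singularity. The surjectivity of $T^1_Y\to\bigoplus_x T^1_{(Y,x)}$ established above shows that the natural morphism $\mathrm{Def}_Y\to\prod_{x\in\Sigma}\mathrm{Def}_{(Y,x)}$ is a smooth morphism of functors, so it is surjective on the associated germs. A generic deformation of an isolated hypersurface singularity is a smoothing, so choosing a smoothing direction at each $x\in\Sigma$ and lifting it through the smooth morphism produces a one-parameter deformation $\mathcal{Y}\to\Delta$ of $Y$ whose total space is smooth near each point of $\Sigma$; since $Y$ is smooth away from $\Sigma$ and smoothness is open, the general fiber $Y_t$ is smooth everywhere. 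Hence $Y$ is smoothable, completing the argument.
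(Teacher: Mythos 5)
There is a genuine gap: your entire argument hinges on the vanishing $H^2(Y;T^0_Y)=0$, and that vanishing is false in the generality of the theorem. The paper points out (Remark~\ref{Fanoremark}(i), citing \cite[Example 5]{NamikawaF}) that there exist generalized Fano threefolds for which $\mathbb{T}^1_Y \to H^0(Y;T^1_Y)$ is \emph{not} surjective; by your own exact sequence
$$\mathbb{T}^1_Y \to H^0(Y;T^1_Y) \xrightarrow{\ \delta\ } H^2(Y;T^0_Y) \to \mathbb{T}^2_Y \to 0,$$
this forces $H^2(Y;T^0_Y)\neq 0$ for such $Y$. Your proposed mechanism for the vanishing also does not work: identifying $T^0_Y$ with $\Omega^{[2]}_Y\otimes\omega_Y^{-1}$ is fine, but the Du Bois property you invoke only controls $\underline\Omega^0_Y=\scrO_Y$; to run a Guill\'en--Navarro Aznar--Puerta--Steenbrink (logarithmic Akizuki--Nakano) argument one needs $\Omega^1_Y\cong\underline\Omega^1_Y$ (resp.\ $\Omega^2_Y\cong\underline\Omega^2_Y$), i.e.\ the $1$-Du Bois (resp.\ $2$-Du Bois) condition, and by Namikawa--Steenbrink the only $1$-Du Bois rational hypersurface singularities in dimension $3$ are ordinary double points. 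This is exactly why the paper's Theorem~\ref{Fano1DB} is stated only for $1$-Du Bois singularities and cannot cover general canonical hypersurface singularities in dimension $3$.

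The paper's actual route is designed to circumvent precisely this failure. It proves only that the differential $H^0(Y;T^1_Y)\to H^2(Y;T^0_Y)$ is \emph{surjective} (which already gives $\mathbb{T}^2_Y=0$ without $H^2(Y;T^0_Y)=0$); this is done via the Leray spectral sequence for $\Omega^2_{\hY}\otimes\pi^*\omega_Y^{-1}$, using $H^3(Y;T^0_Y)=0$ and a Hodge-theoretic bound on $\dim H^2(\hY;\Omega^2_{\hY}\otimes\pi^*\omega_Y^{-1})$ coming from a smooth anticanonical K3 (Theorems~\ref{Fanofirst} and~\ref{Fanothree}). Consequently $\mathbb{T}^1_Y$ surjects not onto $\bigoplus_x T^1_{Y,x}$ but only onto the quotient $K=\bigoplus_x K_x$ by the image of $H^0(R^1\pi_*\Omega^2_{\hY})$. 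Smoothability is then rescued by the key local input (Theorem~\ref{maintheorem} and Lemma~\ref{com2}, together with the fact that every isolated rational hypersurface threefold singularity is $1$-irrational): that image is contained in $\mathfrak{m}_x\cdot T^1_{Y,x}$, so any $\theta\in\mathbb{T}^1_Y$ hitting $1_x\in K_x$ for all $x$ still has local components that are units, and Lemma~\ref{defsmooth} applies. Your final paragraph (unobstructedness plus a smoothing direction at each point yields a global smoothing) is fine, but without this ``work modulo $K$'' device the intermediate surjectivity you need cannot be obtained.
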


Still in dimension $3$, in the Calabi-Yau case, we sharpen Theorem \ref{Thm-NS} as following: 

\begin{theorem} Let $Y$ be a compact analytic threefold, all of whose singularities are isolated  canonical  hypersurface singularities,  such that $\omega_Y\cong \scrO_Y$ and $h^1(\scrO_Y) = 0$, and such that there exists a resolution $\hY \to Y$ which satisfies the $\partial\bar\partial$-lemma.
\begin{enumerate} \item[\rm(i)]   There exists a small deformation of $Y$ which smooths all of the singular points which are not ordinary double points and which is locally trivial at the ordinary double points.
 \item[\rm(ii)]   Let $Y' \to Y$ be a small resolution of  $Y$ at the ordinary double points and an isomorphism elsewhere, and, for each $x\in Y$ which is an ordinary double point, let $C_x$ be the corresponding exceptional curve in $Y'$. If there exists a relation $\sum_xa_x[C_x] = 0$ in $H_2(Y')$ with $a_x\in \Cee$ and  $a_x\neq 0$ for every $x$,   then $Y$ is smoothable. 
 \end{enumerate}
\end{theorem}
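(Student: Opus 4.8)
The strategy is to establish unobstructedness, reduce everything to a single vanishing statement about the local-to-global obstruction map, and then read off both parts from that vanishing together with Friedman's description of the obstruction at a node.

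First I would prove that the deformations of $Y$ are unobstructed. A canonical hypersurface singularity is in particular an isolated canonical lci singularity, so this is the $\partial\bar\partial$-analogue of Theorem~\ref{thmN}; I would prove it by the argument indicated in the Remark following Theorem~\ref{thmK}, running the $T^1$-lifting criterion of Kawamata--Ran and replacing the K\"ahler hypothesis by the Hodge-theoretic degeneration that the $\partial\bar\partial$-lemma supplies on the resolution. Granting this, $\mathrm{Def}(Y)$ is smooth. Because the singularities are isolated lci, the tangent cohomology sheaf $\mathcal{T}^1_Y=\mathcal{E}xt^1(\Omega^1_Y,\scrO_Y)$ is a skyscraper $\bigoplus_x T^1_{Y,x}$ supported at the singular points, and the local-to-global spectral sequence yields the exact sequence
\[
0\to H^1(\Theta_Y)\to \mathbb{T}^1_Y\xrightarrow{\ \rho\ }\bigoplus_x T^1_{Y,x}\xrightarrow{\ o\ }H^2(\Theta_Y),
\]
where $\Theta_Y=\mathcal{H}om(\Omega^1_Y,\scrO_Y)$, the map $\rho$ records the first-order deformation induced at each singular point, and $\im(o)\cong\mathrm{coker}(\rho)$. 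By unobstructedness the Kuranishi map $\mathrm{Def}(Y)\to\prod_x\mathrm{Def}(Y,x)$ is a submersion onto a smooth germ with tangent space $\ker(o)=\im(\rho)$, so a deformation smooths a given singular point precisely when its image in $\ker(o)$ avoids the corresponding local discriminant.

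The crux, and the main obstacle, is the vanishing
\[
o\big|_{\bigoplus_{x\ \text{non-ODP}}T^1_{Y,x}}=0.
\]
I would prove it by identifying $\im(o)\subset H^2(\Theta_Y)$, via Serre duality (using $\omega_Y\cong\scrO_Y$) and the mixed Hodge theory of a $\partial\bar\partial$-resolution, with a fixed graded piece of the vanishing cohomology of the singularities, and then showing that this piece vanishes for every canonical hypersurface threefold singularity that is not an ordinary double point. This is exactly where the classification by the minimal exponent---higher Du Bois versus higher rational type---enters: the ODP is the borderline case ($\widetilde\alpha=2$) at which the relevant class survives, while all other canonical hypersurface singularities are ``more rational'' in the range that kills it. Granting the vanishing, part (i) is immediate: for generic $v\in\bigoplus_{x\ \text{non-ODP}}T^1_{Y,x}$ the vector $(v,0)$ lies in $\ker(o)$, so there is a global deformation of $Y$ that is a (generic, hence smoothing) deformation at each non-ODP point and is trivial at each ODP. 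Since an isolated $3$-fold hypersurface singularity has reduced vanishing cohomology only in the middle degree, no new singular points appear, and the general fiber $\bar Y$ is a generalized Calabi--Yau threefold whose only singularities are the ODPs of $Y$, with the deformation locally trivial there.

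For part (ii) I would not even need to pass to $\bar Y$. By \cite{F}, a relation $\sum_x a_x[C_x]=0$ in $H_2(Y')$ among the exceptional curves of the small resolution corresponds to an element $(a_x)$ of $\ker\big(o|_{\bigoplus_{x\ \text{ODP}}T^1_{Y,x}}\big)$, each ODP summand being one-dimensional. Combining the hypothesis---a relation with every $a_x\neq0$---with the vanishing of the previous paragraph, the vector $w=(v,(a_x))$ with $v$ generic lies in $\ker(o)=\im(\rho)$ and has nonzero component at every singular point. Deforming $Y$ in the direction $w$ (possible by unobstructedness), the general fiber is smooth at each non-ODP point (generic direction) and at each ODP (any nonzero direction smooths a node), so $Y$ is smoothable. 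Equivalently, one may first apply part (i) to reduce to the nodal variety $\bar Y$---whose deformations remain unobstructed and to which the relation transfers, since smoothing the non-ODP points affects only middle-degree homology and hence induces $H_2(Y';\Cee)\cong H_2(\bar Y';\Cee)$ preserving the $[C_x]$---and then invoke the nodal smoothing criterion of \cite{F}, \cite{Kawamata}, \cite{Ran}, \cite{Tian}.
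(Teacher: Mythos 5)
The decisive step in your argument is the claimed vanishing $o|_{\bigoplus_{x\ \text{non-ODP}}T^1_{Y,x}}=0$, i.e.\ the surjectivity of $\mathbb{T}^1_Y\to\bigoplus_{x\ \text{non-ODP}}T^1_{Y,x}$, and this is a genuine gap: that surjectivity is exactly the statement the paper is structured to avoid, because it is not available. What can be controlled is only the composite $\mathbb{T}^1_Y\to\bigoplus_xT^1_{Y,x}\to\bigoplus_xK_x$ with $K_x=T^1_{Y,x}/\im H^1(\hX_x;\Omega^2_{\hX_x})$ (Theorem~\ref{maintheorem}, Lemma~\ref{liftable}); whether the subspace $\im H^1(\hX_x;\Omega^2_{\hX_x})\subseteq T^1_{Y,x}$, which records deformations of a neighborhood of the exceptional divisor, is hit by global deformations of $Y$ is the Clemens-conjecture-type question the introduction flags as intractable. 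The reason the weaker control suffices is Lemma~\ref{com2}: every canonical hypersurface threefold singularity is $1$-irrational, so $\im H^1(\hX_x;\Omega^2_{\hX_x})\subseteq\mathfrak m_xT^1_{Y,x}$, and lifting a generator of $K_x$ already yields a unit in $T^1_{Y,x}$, which by Lemma~\ref{defsmooth} is all a smoothing requires. Your proposed proof of the vanishing also cannot be repaired as sketched: there is no graded piece of the vanishing or link cohomology that is zero for every non-ODP canonical hypersurface threefold singularity and nonzero for the node. On the contrary, the non-ODPs are exactly the non-$1$-Du Bois points, so $H^2_E(\hX;\Omega^2_{\hX}(\log E))\neq0$ there, and $\Gr_F^2H^3(L)$ is typically nonzero as well (for the cone over the Fermat cubic surface it has dimension $6$ by Theorem~\ref{wtdlink}). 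The class that actually survives into $H^2(\hY;\Omega^2_{\hY})$ and produces the topological condition is $\Gr_F^2H^3(L_x)$; the part that dies is $K'_x$, and it dies for the global reason of Lemma~\ref{7.1} (namely $H^1(\hY;\Omega^3_{\hY})=0$ forces $H^2(\hY;\Omega^2_{\hY})\hookrightarrow H^5(\hY)$), not because of a local vanishing.

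A secondary point: even granting a class $(v,0)\in\ker(o)$, a first-order deformation whose component in $T^1_{Y,x}$ vanishes is only locally trivial to first order at the node; it does not by itself produce a family that is locally trivial there. The paper's proof (Corollary~\ref{3CYsmoothing}) reverses your order of operations: for (i) it first passes to the small resolution $\overline Y'$ of $Y$ at the ordinary double points, observes that all singular points of $\overline Y'$ are strongly $1$-irrational so that Theorem~\ref{smoothCY} applies with $Z''=\emptyset$ and no topological hypothesis, smooths $\overline Y'$, and then contracts the rigid exceptional curves in nearby fibers; for (ii) it applies Theorem~\ref{smoothCY} to $Y$ itself with $Z''$ the set of nodes, after translating the relation $\sum_xa_x[C_x]=0$ into the condition $\sum_xa_x\varphi(\varepsilon_x)=0$. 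You should adopt that route, or at minimum replace your vanishing claim by the statement that the image of $\mathbb{T}^1_Y$ in each non-ODP summand $T^1_{Y,x}$ contains a unit.
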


Before we start our discussion of the   strategy of the proofs of the results above and their generalizations to higher dimensions, let us note that canonical singularities are the natural class of singularities relevant to degenerations of Calabi-Yau and Fano manifolds. Namely, as a consequence of the minimal model program, a one parameter degeneration of Calabi-Yau manifolds can be assumed to be minimal dlt \cite{fujino-ss}. For such a normalized degeneration, the top holomorphic form is preserved $\iff$ the central fiber has canonical singularities \cite{KLSV}, \cite{KLS}. From a  differential geometric perspective, Calabi-Yau varieties with canonical singularities occur as the limits of smooth Calabi-Yau manifolds which are at finite distance with respect to  the Weil-Petersson metric \cite{Tosatti}, \cite{Wang}, \cite{yzhang}. Similarly, it is known that $K$-semistable limits of Fano manifolds have at worst klt singularities \cite{Odaka}, \cite{DonaldsonSun}, and thus are canonical, under the Gorenstein assumption used throughout this paper. Note that under the Gorenstein  assumption, a singularity is canonical if and only if it is  rational, and we will use the two notions interchangeably.  Finally, in order to get a good hold on the local deformation theory, we will usually assume that the singularities are isolated hypersurface singularities (although some of our results hold more generally in the local complete intersection case).

  Let $(X,x)$ be the germ of an isolated  canonical  hypersurface singularity with $\dim X \geq 3$, and let $\pi\colon \hX \to X$ be a good  resolution, i.e.\ such that the exceptional divisor $E =\pi^{-1}(x)$ has simple normal crossings. We will usually replace the germ $(X,x)$ by a good Stein representative. Recall that a \textsl{first order deformation} of $(X,x)$ is a deformation over the dual numbers $\Spec \Cee[\epsilon]$, where $\epsilon^2 =0$, and that these are classified by a finite-dimensional vector space $T^1_{X,x}$, where $T^1_X$ is the sheaf $\mathit{Ext}^1(\Omega^1_X; \scrO_X)$. Likewise, let $Y$ be a compact analytic space with at worst isolated  canonical  hypersurface singularities, and similarly let $\pi \colon \hY \to Y$ be a good resolution. First order deformations of $Y$ are defined similarly and are classified by the finite-dimensional vector space $\mathbb{T}^1_Y=\Ext^1(\Omega^1_X; \scrO_X)$. There is a natural map $\mathbb{T}^1_Y\to \bigoplus_{x\in Y_{\text{\rm{sing}}}}T^1_{Y,x}$ 
which relates the global deformations to the local deformations of the singularities. Given a deformation $\mathcal{X} \to \Delta$ of $(X,x)$ over the unit disk $\Delta$, there is an associated \textsl{Kodaira-Spencer class} $\theta \in  T^1_{X,x}$. In general, the class $\theta$ does not tell us much about how the deformation $\mathcal{X}$ behaves with respect to the singularities. However, if $(X,x)$ is the germ of an isolated hypersurface singularity and $\theta \notin \mathfrak{m}_xT^1_{X,x}$, then $\mathcal{X}$ is smooth in a neighborhood of $x$ and hence the nearby fibers $X_t$ are smooth (Lemma~\ref{defsmooth}).  In this case, we call $\theta$ (or the corresponding deformation over the dual numbers) a \textsl{first order smoothing} of $X$. For a compact $Y$ with isolated hypersurface singularities, a class $\theta \in \mathbb{T}^1_Y$ is a \textsl{first order smoothing} of $Y$ if the image of $\theta$ in $T^1_{Y,x}$ is a first order smoothing for every $x\in Y_{\text{\rm{sing}}}$. 

There are then two main issues:
\begin{enumerate}
\item Analyze the image of $\mathbb{T}^1_Y$ in $\bigoplus_{x\in Y_{\text{\rm{sing}}}}T^1_{Y,x}$ to see if it contains first order smoothings. Of course, the ideal case would be if $\mathbb{T}^1_Y\to \bigoplus_{x\in Y_{\text{\rm{sing}}}}T^1_{Y,x}$ is surjective. However, this is almost never the case. For example, by \cite{F}, there are topological obstructions in the Calabi-Yau case in dimension $3$. We generalize this result to higher dimensions in Theorem~\ref{smoothCY}. In the Fano threefold case, $\mathbb{T}^1_Y\to \bigoplus_{x\in Y_{\text{\rm{sing}}}}T^1_{Y,x}$ is surjective if $Y$ has ordinary double points by \cite{F} but not in general, by examples of Namikawa \cite{NamikawaF}.  For singular Fano varieties $Y$ with  $\dim Y\geq 4$, $\mathbb{T}^1_Y\to \bigoplus_{x\in Y_{\text{\rm{sing}}}}T^1_{Y,x}$ can fail to be surjective even if $Y$ has only ordinary double points (Remark~\ref{Fanoremark}(iv)). There are also  obstructions to the  surjectivity of $\mathbb{T}^1_Y\to \bigoplus_{x\in Y_{\text{\rm{sing}}}}T^1_{Y,x}$ coming from the failure of local deformations of neighborhoods of the exceptional set in $\hY$ to come from global deformations of $\hY$, which we discuss in more detail below. (See also Remark~\ref{Fanoremark}(v) for some higher dimensional examples.)
\item If first order smoothings of $Y$ exist, determine if any of these arise from global smoothings of $Y$. Here, the ideal situation is for all deformations of $Y$ to be unobstructed, and we will only be able to obtain results in this case.
\end{enumerate}

We return to (1) above. In the local setting, the finite dimensional vector space $H^1(\hX; T_{\hX}) = H^0(X; R^1\pi_*T_{\hX})$ roughly measures the first order deformations of the noncompact complex manifold $\hX$, and there is a homomorphism $H^1(\hX; T_{\hX}) \to T^1_{X,x}$ whose image can be viewed as the tangent space to the set of deformations admitting a simultaneous resolution in a certain sense. There is a similar interpretation for $H^1(\hY; T_{\hY})$ and its image in $\mathbb{T}^1_Y$. However, the image of $H^1(\hX; T_{\hX})$ in $ T^1_{X,x}$ depends on the choice of a resolution. In general, one must work with the image of $H^1(\hX; \Omega^{n-1}_{\hX})$, which does not depend on the particular resolution (see Theorem~\ref{0.8} below). If $\pi\colon \hX \to X$ is a \textsl{crepant resolution}, which means that $K_{\hX} \cong \scrO_{\hX}$, or equivalently that $K_{\hX} \cong \pi^*\omega_X$, then the images of $H^1(\hX; T_{\hY})$ and $H^1(\hX; \Omega^{n-1}_{\hX})$ are the same, and thus have a geometric meaning. A similar statement holds if, instead of a good resolution, we use a small resolution $\pi'\colon X'\to X$, where $X'$ is smooth but the dimension of the exceptional set is less than $n-1$. For hypersurface (or more generally local complete intersection) singularities, this last case can only happen if $\dim X=3$ and the exceptional set is a curve. For a partial discussion of the meaning of the image of $H^1(\hX; T_{\hX})$, primarily in  the threefold case, see \cite{FL22b}.

In the global case, where $Y$ is a generalized Calabi-Yau or Fano variety and $\pi\colon \hY \to Y$ is a crepant or small resolution, the corresponding image of $H^1(\hY; T_{\hY})$ in $\mathbb{T}^1_Y$ is notoriously hard to control. For example, in the case of a small resolution of the singularity defined locally by $z_1^2 + z_2^2 + z_3^2 + z_4^{2n} =0$, the exceptional set is a $C\cong \Pee^1$ with normal bundle $\scrO_{\Pee^1}\oplus \scrO_{\Pee^1}(-2)$, and one version of the Clemens conjecture in this case, where $Y$ is a generalized Calabi-Yau threefold, asks whether there is a deformation of a small resolution $Y'$ where $C$ splits up into a union of smooth curves all of which have normal bundle $\scrO_{\Pee^1}(-1)\oplus \scrO_{\Pee^1}(-1)$. Very little is known about this conjecture in general.   In the case of a non-small and non-crepant  resolution, the geometric meaning of the image of $H^1(\hY; \Omega^{n-1}_{\hY})$ in $\mathbb{T}^1_Y$ seems even more mysterious. Instead, our idea is to work modulo this image, both locally and globally.   

To make this idea more precise, consider again the local setting. By a result of Schlessinger, if $U =X -\{x\}$ is a punctured neighborhood of the singularity, then $ T^1_{X,x} = H^1(U; T_U)$. We can also identify $U$ with $\hX -E$ and, by the assumption of canonical singularities,  $T_{\hX}$ with $\Omega^{n-1}_{\hX}(-D)$ where $D$ is an effective divisor supported on $E$. Then there are several natural sheaves on $\hX$ whose restrictions to $U$ are $T_U$. In particular, we can look at the inclusions
$$\Omega^{n-1}_{\hX}(\log E)(-E) \subseteq \Omega^{n-1}_{\hX} \subseteq \Omega^{n-1}_{\hX}(\log E).$$
 By standard results (cf.\ Remark~\ref{biratinvremark}),  $H^q(\hX; \Omega^p_{\hX}(\log E)(-E))$ and $H^q(\hX; \Omega^p_{\hX}(\log E))$ are birational invariants of $X$, i.e.\ do not depend on the particular choice of a resolution.
 
 \medskip

With this said, building on \cite{NS}, we show the following:

\begin{theorem}[Theorem~\ref{maintheorem}(iii)]\label{0.8} Let $X$ be an isolated rational complete intersection singularity. Then the  maps $H^1(\hX; \Omega^{n-1}_{\hX}(\log E)(-E)) \to H^1(U; T_U)$ and $H^1(\hX; \Omega^{n-1}_{\hX}(\log E)) \to H^1(U; T_U)$ are injective. The map $H^1(\hX; \Omega^{n-1}_{\hX})  \to H^1(U; T_U)$ is not in general injective, but its image is the same as the image of $H^1(\hX; \Omega^{n-1}_{\hX}(\log E)(-E))$.
\end{theorem}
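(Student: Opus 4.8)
The plan is to turn all three maps into honest restriction maps from $\hX$ to $U$ and then to control their kernels and images by local (Grothendieck) duality together with the mixed Hodge theory of the log complexes. Since $X$ is Gorenstein (a complete intersection) and canonical, one has $\omega_{\hX}\cong\scrO_{\hX}(D)$ for an effective $\pi$-exceptional divisor $D$ supported on $E$, and hence a canonical isomorphism $T_{\hX}\cong\Omega^{n-1}_{\hX}(-D)$. Restricting to $U=\hX\setminus E$, where $D$ and $E$ disappear, each of $\Omega^{n-1}_{\hX}(\log E)(-E)$, $\Omega^{n-1}_{\hX}$, $\Omega^{n-1}_{\hX}(\log E)$ and $T_{\hX}$ becomes $\Omega^{n-1}_U\cong T_U$, so the three maps are restriction maps into $H^1(U;T_U)=T^1_{X,x}$, which is finite-dimensional by Schlessinger. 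For any locally free $\mathcal F$ on $\hX$ the local-cohomology sequence $H^0(U;\mathcal F)\xrightarrow{\ \delta\ }H^1_E(\hX;\mathcal F)\to H^1(\hX;\mathcal F)\xrightarrow{\ r_{\mathcal F}\ }H^1(U;\mathcal F|_U)$ identifies $\ker r_{\mathcal F}$ with the image of $H^1_E(\hX;\mathcal F)\to H^1(\hX;\mathcal F)$; thus injectivity of $r_{\mathcal F}$ is equivalent to surjectivity of $\delta$, i.e.\ to the vanishing of that image.

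To analyze these images I would dualize. Formal duality for the proper morphism $\pi$ over the Stein germ $X$ gives perfect pairings $H^i_E(\hX;\mathcal F)\times H^{n-i}(\hX;\mathcal F\spcheck\otimes\omega_{\hX})\to\Cee$ and $H^i(\hX;\mathcal F)\times H^{n-i}_E(\hX;\mathcal F\spcheck\otimes\omega_{\hX})\to\Cee$. Setting $\mathcal G=\mathcal F\spcheck\otimes\omega_{\hX}$ and using the wedge pairing $\Omega^p_{\hX}(\log E)\otimes\Omega^{n-p}_{\hX}(\log E)\to\omega_{\hX}(E)$, one computes $\mathcal G=\Omega^1_{\hX}(\log E)$ for $\mathcal F=\Omega^{n-1}_{\hX}(\log E)(-E)$ and $\mathcal G=\Omega^1_{\hX}(\log E)(-E)$ for $\mathcal F=\Omega^{n-1}_{\hX}(\log E)$. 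Under these pairings the map $H^1_E(\mathcal F)\to H^1(\hX;\mathcal F)$ is the transpose of the local-cohomology map $H^{n-1}_E(\hX;\mathcal G)\to H^{n-1}(\hX;\mathcal G)$, whose image is zero exactly when $H^{n-1}(\hX;\mathcal G)\to H^{n-1}(U;\mathcal G)$ is injective. Hence injectivity of the two outer maps is equivalent to injectivity of $H^{n-1}(\hX;\Omega^1_{\hX}(\log E))\to H^{n-1}(U;\Omega^1_U)$ and of $H^{n-1}(\hX;\Omega^1_{\hX}(\log E)(-E))\to H^{n-1}(U;\Omega^1_U)$, i.e.\ the ``$p=1$'' analogues of the original statements. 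These I would obtain from the $E_1$-degeneration (Deligne) of the Hodge--de Rham spectral sequences: $\Omega^\bullet_{\hX}(\log E)$ computes $H^*(U;\Cee)$ with $\Gr^p_F H^k(U)=H^{k-p}(\hX;\Omega^p_{\hX}(\log E))$, and the companion complex $\Omega^\bullet_{\hX}(\log E)(-E)$ carries the dual filtration with $\Gr^p_F=H^{k-p}(\hX;\Omega^p_{\hX}(\log E)(-E))$. Strictness of $F$ for the maps in the long exact sequence of the pair, combined with Grauert--Riemenschneider vanishing and the fact that a rational singularity is Du Bois, forces the relevant boundary pieces to vanish, giving the required injectivity.

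For the equality of images I would first reduce to an intrinsic statement on $\hX$. Since the map $r_C$ for $C=\Omega^{n-1}_{\hX}(\log E)$ is injective and the inclusions $A\subseteq B\subseteq C$ give $r_A=r_C\circ\iota_{AC}$, $r_B=r_C\circ\iota_{BC}$ on $H^1(\hX;-)$, injectivity of $r_C$ yields $\operatorname{im} r_A=r_C(\operatorname{im}\iota_{AC})$ and $\operatorname{im} r_B=r_C(\operatorname{im}\iota_{BC})$, so $\operatorname{im} r_A=\operatorname{im} r_B$ is equivalent to $\operatorname{im}\iota_{AC}=\operatorname{im}\iota_{BC}$ inside $H^1(\hX;\Omega^{n-1}_{\hX}(\log E))$, a statement with no reference to $U$. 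One inclusion is automatic; for the reverse I would describe the quotient sheaves $B/A$ and $C/B$, which are supported on $E$, by the residue (weight) filtration in terms of $\Omega^\bullet$ on the strata of $E$, and run the associated long exact sequences, the point being a weight/Hodge incompatibility that prevents a class from lying simultaneously in $\operatorname{im}\iota_{BC}$ and outside $\operatorname{im}\iota_{AC}$; here Grauert--Riemenschneider again supplies the vanishing. The same residue computation exhibits $\ker\iota_{BC}=\ker r_B$ as generically nonzero --- it is governed by holomorphic $(n-1)$-forms and residues along the exceptional components --- which is exactly the failure of injectivity for $B=\Omega^{n-1}_{\hX}$.

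The hard part, and the place where rationality is essential rather than cosmetic, is the boundary case $p+q=n$. The naive vanishing $R^q\pi_*\Omega^p_{\hX}(\log E)(-E)=0$ for $p+q>n$ just misses the groups $H^{n-1}(\hX;\Omega^1_{\hX}(\log E)(\pm E))$ that control everything here, so one cannot conclude by a crude vanishing theorem on an arbitrary smooth resolution; indeed the naive local cohomology $H^1_E$ does not vanish, and only its \emph{image} does. What makes the argument run is that for a rational (hence Du Bois) singularity the relevant spectral sequences degenerate and $F$ is strict, and this is precisely what converts the non-vanishing of $H^1_E$ into surjectivity of the connecting maps $\delta$ and into the weight obstruction of the third step. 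I expect the main technical difficulty to be keeping track of the interaction between the order-of-vanishing filtration (the $\pm E$ twists) and the residue/weight filtration along $E$, which is where the bookkeeping becomes delicate.
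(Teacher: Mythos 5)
Your overall architecture (local cohomology sequences, formal duality exchanging $H^i_E(\hX;\mathcal F)$ with $H^{n-i}(\hX;\mathcal F\spcheck\otimes\omega_{\hX})$, and Hodge theory of the log complexes) is the right one, and your duality bookkeeping identifying the partners $\Omega^1_{\hX}(\log E)$ and $\Omega^1_{\hX}(\log E)(-E)$ is correct. But both of the substantive steps are left to mechanisms that either do not exist or are not identified. For the injectivity: you invoke ``$E_1$-degeneration'' for $\Omega^\bullet_{\hX}(\log E)(-E)$ with $\Gr_F^p=H^{k-p}(\hX;\Omega^p_{\hX}(\log E)(-E))$. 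On the non-compact germ $\hX$ this cannot hold: the abutment $\mathbb{H}^k(\hX;\Omega^\bullet_{\hX}(\log E)(-E))\cong H^k(X,\{x\};\Cee)$ vanishes for all $k$ because $X$ is contractible, while the $E_1$ terms (e.g.\ $H^0(\hX;\Omega^n_{\hX})$) do not, so the spectral sequence converges to zero without degenerating. The paper's argument uses exactly this convergence to zero, combined with the Guill\'en--Navarro Aznar--Steenbrink vanishing $H^q_E(\hX;\Omega^p_{\hX}(\log E))=0$ for $p+q<n$ and the rationality input $R^{n-1}\pi_*\scrO_{\hX}(-E)=0$, to force $H^1_E(\hX;\Omega^{n-1}_{\hX}(\log E))=0$ on the $E_1$ page; the twisted vanishing $H^1_E(\hX;\Omega^{n-1}_{\hX}(\log E)(-E))=0$ needs in addition the surjectivity of $H^0(\hX;\Omega^{n-1}_{\hX}(\log E)(-E))\to H^0(U;T_U)$, which rests on equivariance of the resolution, the depth of $T^0_X$, and $H^0(E;\Omega^{n-1}_E/\tau^{n-1}_E)=0$ (a consequence of rationality). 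None of these inputs is supplied by ``strictness forces the boundary pieces to vanish.''

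For the equality of images, your reduction to $\operatorname{im}\iota_{AC}=\operatorname{im}\iota_{BC}$ inside $H^1(\hX;\Omega^{n-1}_{\hX}(\log E))$ is legitimate once $r_C$ is injective, but the alleged ``weight/Hodge incompatibility'' is not an argument, and the missing ingredient is the Goresky--MacPherson semipurity theorem. The paper shows that the kernel of $H^1(\hX;\Omega^{n-1}_{\hX})\to H^1(U;T_U)$ is exactly $H^1_E(\hX;\Omega^{n-1}_{\hX})$ (using $H^0(\hX;\Omega^{n-1}_{\hX})\xrightarrow{\ \sim\ }H^0(U;T_U)$), identifies $H^1_E(\hX;\Omega^{n-1}_{\hX})\cong H^0_E(\hX;\Omega^{n-1}_{\hX}(\log E)/\Omega^{n-1}_{\hX})=\Gr_F^{n-1}H^n_E(\hX)$, and then uses semipurity $H^n_E(\hX)\cong H^n(E)$ (an isomorphism of pure Hodge structures) to conclude that the composite $H^1_E(\hX;\Omega^{n-1}_{\hX})\to H^1(\hX;\Omega^{n-1}_{\hX})\to H^1(E;\Omega^{n-1}_E/\tau^{n-1}_E)$ is an isomorphism. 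This is what produces the splitting $H^1(\hX;\Omega^{n-1}_{\hX})\cong H^1(\hX;\Omega^{n-1}_{\hX}(\log E)(-E))\oplus H^1_E(\hX;\Omega^{n-1}_{\hX})$ with the second summand dying in $H^1(U;T_U)$, hence the equality of images. Without semipurity (or an equivalent substitute) the reverse inclusion $\operatorname{im}\iota_{BC}\subseteq\operatorname{im}\iota_{AC}$ does not follow from residue bookkeeping alone, so this step is a genuine gap.
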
 

Working modulo the image of $H^1(\hX; \Omega^{n-1}_{\hX})$ is therefore the same as working modulo the image of $H^1(\hX; \Omega^{n-1}_{\hX}(\log E)(-E))$. To describe this quotient in more detail, we have the following:

\begin{theorem}[Theorem~\ref{maintheorem}(v)]\label{0.9} With hypotheses as in Theorem~\ref{0.8}, let $K$ be the quotient of  $H^1(U; T_U)$ by the image of $H^1(\hX; \Omega^{n-1}_{\hX}(\log E)(-E))$. Then $K\cong H^2_E(\hX; \Omega^{n-1}_{\hX}(\log E)(-E)) \cong \Ker \{H^2_E(\hX; \Omega^{n-1}_{\hX}) \to H^2(\hX; \Omega^{n-1}_{\hX}) \}$, and there is an exact sequence
$$0 \to \Gr_F^{n-1}H^n(L) \to K \to H^2_E(\hX; \Omega^{n-1}_{\hX}(\log E)) \to 0,$$
where $L$ is the link of the singularity.
\end{theorem}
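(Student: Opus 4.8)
The plan is to read everything off the long exact sequences of local cohomology along $E$, using Theorem~\ref{0.8} together with the logarithmic vanishing theorem of Steenbrink. Throughout write $\mathcal F = \Omega^{n-1}_{\hX}(\log E)(-E)$ and $\mathcal L = \Omega^{n-1}_{\hX}(\log E)$, both of which restrict to $T_U$ on $U = \hX - E$. I would dispose of the second isomorphism first. Since $\Omega^{n-1}_{\hX}|_U = T_U$, the local cohomology sequence of $\Omega^{n-1}_{\hX}$ reads
$$H^1(\hX;\Omega^{n-1}_{\hX})\to H^1(U;T_U)\to H^2_E(\hX;\Omega^{n-1}_{\hX})\to H^2(\hX;\Omega^{n-1}_{\hX}),$$
so the cokernel of the first arrow is $\Ker\{H^2_E(\hX;\Omega^{n-1}_{\hX})\to H^2(\hX;\Omega^{n-1}_{\hX})\}$. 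By Theorem~\ref{0.8} the image of $H^1(\hX;\Omega^{n-1}_{\hX})$ in $H^1(U;T_U)$ coincides with the image of $H^1(\hX;\mathcal F)$, so this cokernel is exactly $K$, giving the second displayed isomorphism.

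For the isomorphism $K\cong H^2_E(\hX;\mathcal F)$ I would run the same sequence for $\mathcal F$. Theorem~\ref{0.8} makes $H^1(\hX;\mathcal F)\to H^1(U;T_U)$ injective, so $K$, being the cokernel, embeds in $H^2_E(\hX;\mathcal F)$ with image $\Ker\{H^2_E(\hX;\mathcal F)\to H^2(\hX;\mathcal F)\}$; it therefore suffices to show $H^2(\hX;\mathcal F)=0$. Since $(n-1)+2 > n=\dim\hX$, the logarithmic Grauert--Riemenschneider vanishing of Steenbrink gives $R^2\pi_*\mathcal F=0$, and as $X$ is Stein the Leray spectral sequence collapses to $H^2(\hX;\mathcal F)=H^0(X;R^2\pi_*\mathcal F)=0$.

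It remains to produce the short exact sequence, which I would extract from the sheaf sequence $0\to\mathcal F\to\mathcal L\to\mathcal Q\to0$, where $\mathcal Q=\mathcal L\otimes\scrO_E$ is supported on the compact divisor $E$, so that $H^i_E(\hX;\mathcal Q)=H^i(E;\mathcal Q)$. Its local cohomology sequence contains
$$H^1(E;\mathcal Q)\xrightarrow{\ \delta\ }H^2_E(\hX;\mathcal F)\xrightarrow{\ p\ }H^2_E(\hX;\mathcal L)\xrightarrow{\ r\ }H^2(E;\mathcal Q),$$
and under $K\cong H^2_E(\hX;\mathcal F)$ this yields the asserted sequence provided that (a) $r=0$, so that $p$ is surjective, and (b) $\Ker p=\im\delta\cong\Coker\{H^1_E(\hX;\mathcal L)\to H^1(E;\mathcal Q)\}$ is identified with $\Gr_F^{n-1}H^n(L)$.

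The identification in (b) is the heart of the proof, and I expect it to be the main obstacle. Here I would invoke the mixed Hodge theory of the link: the groups $H^j_E(\hX;\Omega^i_{\hX}(\log E)(-E))$ are the $E_1$-terms of the Hodge--de Rham spectral sequence computing the Hodge filtration on the cohomology supported along $E$, equivalently on $H^\bullet(L)$, and the rational (Du Bois) hypothesis should force this spectral sequence to degenerate at $E_1$. Degeneration then lets one read $\Gr_F^{n-1}H^n(L)$ off as a single local cohomology group (the degrees match, since $1 = n-(n-1)$), while the residue description of $\mathcal Q$ along the strata of $E$ matches it with $\Coker\{H^1_E(\hX;\mathcal L)\to H^1(E;\mathcal Q)\}$; I expect the vanishing $r=0$ in (a) to drop out of the same degeneration. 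The delicate point, in contrast to the compact case, is to establish $E_1$-degeneration in the local analytic setting and to track the Hodge and weight filtrations through the residue sequences precisely enough to capture the full graded piece $\Gr_F^{n-1}H^n(L)$ rather than merely a subquotient of it.
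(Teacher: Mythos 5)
Your proof of the two isomorphisms is complete and essentially identical to the paper's: the local cohomology sequence of $\Omega^{n-1}_{\hX}$ together with Theorem~\ref{0.8} identifies $K$ with $\Ker\{H^2_E(\hX;\Omega^{n-1}_{\hX})\to H^2(\hX;\Omega^{n-1}_{\hX})\}$, and the vanishing $H^2(\hX;\Omega^{n-1}_{\hX}(\log E)(-E))=0$ from Theorem~\ref{GNAPS} identifies $K$ with $H^2_E(\hX;\Omega^{n-1}_{\hX}(\log E)(-E))$. Your reduction of the short exact sequence, via the restriction sequence $0\to\mathcal F\to\mathcal L\to\mathcal Q\to 0$, to the claims (a) $r=0$ and (b) $\im\delta\cong\Coker\{H^1_E(\hX;\mathcal L)\to H^1(E;\mathcal Q)\}\cong\Gr_F^{n-1}H^n(L)$ is also correct and matches the paper's route. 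The gap is that (a) and (b) are left unproved, and the mechanism you propose for them --- $E_1$-degeneration, forced by the rationality/Du Bois hypothesis, of the spectral sequence with terms $H^q_E(\hX;\Omega^p_{\hX}(\log E)(-E))$ or its untwisted analogue --- is false. By Lemma~\ref{convzero} the spectral sequence with $E_1^{p,q}=H^q_E(\hX;\Omega^p_{\hX}(\log E))$ converges to \emph{zero}, and Corollary~\ref{1.5} together with Theorem~\ref{maintheorem}(iv) exhibits an injective, typically nonzero, differential $d\colon H^2_E(\hX;\Omega^{n-2}_{\hX}(\log E))\to H^2_E(\hX;\Omega^{n-1}_{\hX}(\log E))$ whose cokernel detects $\mu\neq\tau$; so nonzero differentials are the rule for rational singularities. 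Moreover, if the twisted supported spectral sequence did degenerate, $K=E_1^{n-1,2}$ would be a single $F$-graded piece of the link cohomology, contradicting the very extension you are trying to establish, whose right-hand term $H^2_E(\hX;\Omega^{n-1}_{\hX}(\log E))$ is nonzero precisely when the singularity is not $1$-Du Bois.

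The correct inputs are more elementary and require no new degeneration statement. For (b): $H^1_E(\hX;\Omega^{n-1}_{\hX}(\log E))=0$ by Corollary~\ref{1.5}, which is deduced from the convergence to zero of the spectral sequence just mentioned together with the vanishing of the adjacent $E_1$-terms (Theorem~\ref{GNAPS} and Lemma~\ref{ratlplus}, using rationality); hence your cokernel is all of $H^1(E;\Omega^{n-1}_{\hX}(\log E)|E)$, and this equals $\Gr_F^{n-1}H^n(L)$ by the $E_1$-degeneration of the spectral sequence of the \emph{restricted} complex $\Omega^\bullet_{\hX}(\log E)|E$, which computes $H^*(L)$ with its Hodge filtration --- a different spectral sequence, whose degeneration is the Durfee--Hain/Steenbrink description of the mixed Hodge structure on the link and holds for any good resolution, with no rationality hypothesis. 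For (a): $H^2(E;\mathcal Q)=H^2(E;\Omega^{n-1}_{\hX}(\log E)|E)=\Gr_F^{n-1}H^{n+1}(L)$, which vanishes because $H^{n+1}(L)=0$ for an isolated lci of dimension $n$ (Theorem~\ref{numervan}); this is exactly where the lci (or $\dim X=3$) hypothesis enters, and it is the same vanishing the paper uses in part (i) of Theorem~\ref{maintheorem} to make $H^1(U;T_U)\to H^2_E(\hX;\Omega^{n-1}_{\hX}(\log E))$ surjective. With these three facts substituted for the degeneration argument, your proof closes.
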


Thus, our strategy only has a chance of working in case $K\neq 0$. We make the following definition:

 \begin{definition}\label{def0} With $K$ the vector  space defined above, the rational singularity $(X,x)$ is \textsl{$1$-rational} if $K=0$, and is   \textsl{$1$-irrational} if $K\neq 0$.   The rational singularity $(X,x)$ is \textsl{$1$-Du Bois} if $H^2_E(\hX; \Omega^{n-1}_{\hX}(\log E)) = 0$, and thus is \textsl{not $1$-Du Bois} if $H^2_E(\hX; \Omega^{n-1}_{\hX}(\log E)) \neq 0$.   By Theorem~\ref{0.9}, if $(X,x)$ is not $1$-Du Bois, then $(X,x)$ is  $1$-irrational. Finally, $(X,x)$ is  \textsl{$1$-liminal} if it is $1$-Du Bois but   $1$-irrational, or equivalently if $\Gr_F^{n-1}H^n(L) \neq 0$ and $H^2_E(\hX; \Omega^{n-1}_{\hX}(\log E)) =0$. (N.B. This terminology differs from an earlier use of the term $1$-rational to mean $R^1\pi_*\scrO_{\hX} =0$ for some or equivalently all good resolutions $\pi\colon \hX \to X$.) 
\end{definition} 

The meaning of these conditions in dimension $3$ is given by the following: 

\begin{theorem}[{\cite[Theorem 2.2]{NS}}] If $\dim X = 3$, an  isolated rational hypersurface singularity $(X,x)$ is   $1$-irrational. Furthermore, $(X,x)$ is $1$-liminal $\iff$ $(X,x)$ is an ordinary double point.
\end{theorem}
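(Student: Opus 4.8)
The plan is to translate the whole statement into the language of the limit mixed Hodge structure on the Milnor fibre, and to read off the two local cohomology groups appearing in Theorem~\ref{0.9} from the singularity spectrum. Write $F$ for the Milnor fibre of the hypersurface singularity $(X,x)\subset(\Cee^4,0)$, so that $F$ is homotopy equivalent to a bouquet of $\mu$ three-spheres and $\tilde H^3(F)=\Cee^\mu$ carries Steenbrink's limit mixed Hodge structure, with monodromy $T$. The Wang sequence of the Milnor fibration together with Alexander duality in $S^7$ identifies $\tilde H^2(L)\cong\Ker(T-1)$ and $\tilde H^3(L)\cong\Coker(T-1)$ as mixed Hodge structures, where $L$ is the $5$-dimensional link. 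The first key input I would record is the resulting dictionary with the spectrum $\{\alpha_1,\dots,\alpha_\mu\}\subset(0,4)$: a non-integral spectral number in $(1,2)$ (necessarily of monodromy eigenvalue $\neq 1$) contributes to $H^2_E(\hX;\Omega^2_{\hX}(\log E))$, while a spectral number equal to $2$ (eigenvalue $1$) contributes to $\Gr_F^2H^3(L)$, so that by the exact sequence of Theorem~\ref{0.9}
$$\dim K=\#\{i:\alpha_i\in(1,2]\},\qquad \dim\Gr_F^2H^3(L)=\#\{i:\alpha_i=2\}.$$
I would also use two standard structural facts: the spectrum is symmetric under $\alpha\mapsto 4-\alpha$ about the centre $(n+1)/2=2$, and, by Saito's theorem, rationality of $(X,x)$ is equivalent to $\tilde\alpha_f>1$, where $\tilde\alpha_f=\min_i\alpha_i$ is the minimal exponent.

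Granting this, the first assertion is immediate. Since $(X,x)$ is rational we have $\tilde\alpha_f>1$, and the symmetry of the spectrum forces $\tilde\alpha_f\le 2$; hence $\tilde\alpha_f\in(1,2]$ and there is at least one spectral number in $(1,2]$. By the displayed formula $\dim K\ge 1$, so $K\neq0$ and $(X,x)$ is $1$-irrational.

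For the second assertion I would run the same bookkeeping. By definition $(X,x)$ is $1$-liminal iff it is $1$-Du Bois, i.e.\ $H^2_E(\hX;\Omega^2_{\hX}(\log E))=0$, and $\Gr_F^2H^3(L)\neq0$. The first condition says there are no spectral numbers in $(1,2)$; combined with the symmetry $\alpha\mapsto 4-\alpha$ and rationality (which confines all $\alpha_i$ to $(1,3)$), this forces every spectral number to equal $2$, and then the second condition holds automatically since $\mu\ge1$. Thus $1$-liminality is equivalent to the spectrum being concentrated at the central value, equivalently to all eigenvalues of $T$ being $1$, i.e.\ to $T$ being unipotent. Finally, for a unipotent $T$ one has $\operatorname{tr}(T\mid H^3(F))=\mu$, whereas A'Campo's theorem on the Lefschetz number of the monodromy at a singular point gives $1+(-1)^3\operatorname{tr}(T\mid H^3(F))=0$, i.e.\ $\operatorname{tr}(T\mid H^3(F))=1$. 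Hence $\mu=1$, and an isolated hypersurface singularity has Milnor number $1$ precisely when it is an ordinary double point; the converse (an ODP has spectrum $\{2\}$, hence is $1$-Du Bois with $\Gr_F^2H^3(L)\cong\Cee$) is a direct computation on the blow-up.

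The substantive point, and the step I expect to be the main obstacle, is establishing the dictionary of the first paragraph: identifying $H^2_E(\hX;\Omega^2_{\hX}(\log E))$ and $\Gr_F^2H^3(L)$ with the indicated graded pieces of the spectrum. This requires comparing the mixed Hodge structure on $H^3(L)$ built from the good resolution $\hX$ (through which Theorem~\ref{0.9} is proved) with Steenbrink's limit mixed Hodge structure on $H^3(F)$, and checking that the Hodge and weight filtrations match up so that the eigenvalue-$\neq1$ part in $\Gr_F^2$ accounts for $H^2_E(\hX;\Omega^2_{\hX}(\log E))$ and the eigenvalue-$1$ part for $\Gr_F^2H^3(L)$. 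Once that comparison is in place, everything else is elementary combinatorics of the spectrum together with the A'Campo trace identity.
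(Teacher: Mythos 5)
The paper does not actually prove this statement: it is quoted from \cite[Theorem 2.2]{NS}, and in Section~\ref{Section2} the authors point to \cite[Corollary 6.12]{FL22d} for a proof in their language. So there is no in-paper argument to compare against, and your route --- translating everything into the singularity spectrum and finishing with A'Campo's trace formula --- is a genuinely independent one. The combinatorial half of your argument is correct and clean: rationality gives $\widetilde\alpha_X>1$ by Saito, symmetry of the spectrum about $(n+1)/2=2$ gives $\widetilde\alpha_X\le 2$, so $1$-irrationality follows; and in the $1$-liminal case the same symmetry forces the spectrum to be concentrated at $2$, whence $T$ is unipotent, A'Campo gives $\operatorname{tr}(T\mid H^3(F))=1=\mu$, and $\mu=1$ characterizes the ordinary double point. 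The A'Campo step in particular is an attractive shortcut that does not appear in the paper's circle of ideas.

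The genuine issue is the ``dictionary'' you rightly single out as the main obstacle: it is not a comparison you can expect to carry out as a routine check, because it \emph{is} the hard content, namely the characterizations ``$1$-rational $\iff\widetilde\alpha_X>2$'' and ``$1$-Du Bois $\iff\widetilde\alpha_X\ge 2$'' (equivalently, your counts of $\dim K$ and of $H^2_E(\hX;\Omega^2_{\hX}(\log E))$ by spectral numbers in $(1,2]$ and $(1,2)$). These are theorems of \cite{MOPW}, \cite{JKSY-duBois} in the Du Bois case and of \cite{KL2}, \cite{FL22c} in the rational case; the present paper proves them only for weighted homogeneous singularities (Corollary~\ref{wtdcor}, Remark~\ref{rem4.6}) and in Remark~\ref{rem4.6}(ii) explicitly leaves the finer general identification (of the relevant subspaces of $T^1_{X,x}$ with steps of the $V$-filtration) as an open question. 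Note also that for the argument you only need the vanishing criteria in terms of $\widetilde\alpha_X$, not the full dimension formulas, and that the identification of $\Gr_F^2H^3(L)$ with the eigenvalue-one part of $\Gr_F^2$ of the limit MHS requires the Durfee--Hain/Scherk--Steenbrink comparison together with a Tate twist (cf.\ the proof of Theorem~\ref{wtdlink}). If you cite the minimal-exponent characterizations rather than attempting to rederive them, your proof is complete and arguably shorter than the source; if you intend to prove the dictionary from scratch, that is where essentially all of the work of \cite{NS} and \cite{FL22d} lives, and the proposal as written leaves it unestablished.
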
 

However, in higher dimensions,  singularities which are rational but   $1$-irrational form a much more restrictive class of singularities than rational singularities. For example, an ordinary double point of dimension $n\geq 3$ is  $1$-irrational $\iff$ $n=3$. More generally, the cone over a smooth hypersurface of degree $d$ in $\Pee^n$ is rational $\iff$ $d\leq n$,   $1$-irrational $\iff$ $d\geq  \frac12(n+1)$, and $1$-liminal $\iff$ $d=\frac12(n+1)$. 

To give some context to Definition~\ref{def0},  there is a generalization of  of Definition~\ref{def0} defining   $m$-Du Bois,  $m$-rational  and $m$-liminal singularities for all $m\ge 0$, due to \cite{MOPW}, \cite[(1)]{JKSY-duBois}, \cite{MP-Mem} in the $m$-Du Bois case and \cite[\S4]{KL2}, \cite{FL22c} in the $m$-rational case (see also \cite{FL22d} for the $m$-liminal case and further discussion in the isolated complete intersection case). In particular, for $m=0$, an $0$-rational singularity is just a  rational singularity and  a $0$-Du Bois singularity is a Du Bois singularity as defined by Steenbrink \cite[\S3]{Steenbrink}.  In Section~\ref{Section2}, we  show that, in case $m=1$, these definitions agree  with  Definition~\ref{def0}.

\medskip

Given the above definitions, we can state our main results. We begin with the (easier) unobstructedness results:

 \begin{theorem}[Theorem~\ref{Fano1DB}]  Suppose that $Y$ is a  projective variety  with   $1$-Du Bois local complete intersection singularities, not necessarily isolated, such that  $\omega^{-1}_Y$ is ample. Then   all deformations of $Y$ are unobstructed. 
 \end{theorem}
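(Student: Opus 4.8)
The plan is to prove the stronger statement that the obstruction space $\mathbb{T}^2_Y$ vanishes; since $\mathbb{T}^2_Y = \Ext^2_Y(\mathbb{L}_Y, \scrO_Y)$ receives all obstructions to deforming $Y$, its vanishing gives unobstructedness at once. The argument is meant to be the singular analogue of the classical proof in the smooth Fano case, where $\mathbb{T}^2_Y = H^2(Y; T_Y) \cong H^{n-2}(Y; \Omega^1_Y\otimes\omega_Y)^*$ vanishes by Kodaira--Akizuki--Nakano (KAN) vanishing for the negative twist ($p+q = 1+(n-2) < n$); the role of the $1$-Du Bois hypothesis will be precisely to reduce the singular computation to the Du Bois complex, where the same vanishing remains available.

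First I would reduce $\mathbb{T}^2_Y$ to the cohomology of a single coherent sheaf. Since $Y$ is reduced and lci, embedding $Y\subseteq P$ locally in a smooth $P$, the conormal sheaf $I/I^2$ is locally free and the map $I/I^2 \to \Omega^1_P|_Y$ is injective: it is injective on the smooth locus, so its kernel is a torsion subsheaf of a locally free sheaf, hence $0$. Thus $\mathbb{L}_Y \simeq \Omega^1_Y$, which has projective dimension $\leq 1$, and $\mathbb{T}^i_Y = \Ext^i_Y(\Omega^1_Y, \scrO_Y)$. As $Y$ is projective and Cohen--Macaulay (being lci) with dualizing sheaf the line bundle $\omega_Y$ and dualizing complex $\omega_Y[n]$, $n=\dim Y$, Grothendieck--Serre duality gives
$$\mathbb{T}^2_Y = \Ext^2_Y(\Omega^1_Y, \scrO_Y) \cong H^{n-2}(Y; \Omega^1_Y\otimes\omega_Y)^*,$$
where I use that $\omega_Y$ is invertible, so no derived-tensor corrections intervene.

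Next I would invoke the $1$-Du Bois hypothesis in the form established in Section~\ref{Section2}: the natural map $\Omega^1_Y \to \underline\Omega^1_Y$ from Kähler differentials to the first graded piece of the Du Bois complex is a quasi-isomorphism, so $\underline\Omega^1_Y$ is concentrated in degree $0$ and equals $\Omega^1_Y$. Hence
$$H^{n-2}(Y; \Omega^1_Y\otimes\omega_Y) \cong \mathbb{H}^{n-2}(Y; \underline\Omega^1_Y\otimes\omega_Y).$$
Now $\omega_Y = (\omega_Y^{-1})^{-1}$ is the inverse of the ample line bundle $\omega_Y^{-1}$, and $Y$, being $1$-Du Bois, is in particular Du Bois, so the KAN vanishing theorem for Du Bois complexes on a projective variety applies: for $L$ ample, $\mathbb{H}^q(Y; \underline\Omega^p_Y\otimes L^{-1}) = 0$ whenever $p+q < n$ (this is the form dual to the standard Du Bois--Nakano vanishing $\mathbb{H}^q(\underline\Omega^p_Y\otimes L)=0$ for $p+q>n$). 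Taking $p=1$, $q=n-2$, so that $p+q = n-1 < n$, yields $\mathbb{H}^{n-2}(Y; \underline\Omega^1_Y\otimes\omega_Y)=0$. Therefore $\mathbb{T}^2_Y = 0$ and all deformations of $Y$ are unobstructed.

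The main obstacle is the passage to the Du Bois complex: for the naive Kähler sheaf $\Omega^1_Y$ no KAN-type vanishing against the negative bundle $\omega_Y$ holds in general, and it is exactly the $1$-Du Bois identity $\Omega^1_Y \simeq \underline\Omega^1_Y$ that permits it. I would therefore need to verify carefully that the $\Omega^1_Y$ governing the deformation theory (via $\mathbb{T}^i_Y = \Ext^i(\Omega^1_Y,\scrO_Y)$) coincides with the Kähler $\Omega^1_Y$ controlled by the $1$-Du Bois condition; this is where the lci hypothesis and the comparison of the various models of $\Omega^1_Y$ from Section~\ref{Section2} enter. I note that isolatedness of the singularities is used nowhere, which is consistent with the statement allowing non-isolated singularities.
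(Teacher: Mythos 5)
Your proposal is correct and follows essentially the same route as the paper's proof: Serre duality identifies $\mathbb{T}^2_Y$ with $H^{n-2}(Y;\Omega^1_Y\otimes\omega_Y)^*$, the $1$-Du Bois hypothesis replaces $\Omega^1_Y$ by $\underline\Omega^1_Y$, and the Guill\'en--Navarro Aznar--Pascual-Gainza--Puerta extension of Akizuki--Nakano vanishing (valid for lci singularities) kills $\mathbb{H}^{n-2}(Y;\underline\Omega^1_Y\otimes\omega_Y)$ since $1+(n-2)<n$. The only cosmetic difference is that the paper runs the same argument for all $i\ge 2$ to get $\mathbb{T}^i_Y=0$, not just $i=2$.
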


In the Calabi-Yau case,  in \cite[Corollary 1.5]{FL22c}, we   obtained the following generalization of Theorem \ref{thmK}:

\begin{theorem}[Theorem~\ref{unob1}]\label{unob11} Let $Y$ be a canonical Calabi-Yau $n$-fold (Definition~\ref{defnCY})  such that all singularities of $Y$ are $1$-Du Bois lci singularities, not necessarily isolated.   Then the deformations of $Y$ are unobstructed.
\end{theorem}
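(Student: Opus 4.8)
The plan is to prove unobstructedness of deformations of a canonical Calabi-Yau $n$-fold $Y$ with $1$-Du Bois lci singularities by establishing a $T^1$-lifting principle, in the spirit of the Kawamata-Ran-Tian and Namikawa arguments, but phrased in terms of the normalized deformation functor and the appropriate Hodge-theoretic comparison. Recall that to show a deformation functor is unobstructed via $T^1$-lifting, it suffices to prove that for every small extension and every infinitesimal deformation over an Artinian base $A_m = A/\mathfrak{m}^{m+1}$, the tangent-space sheaf cohomology controlling obstructions behaves compatibly: concretely, one shows that the natural map on the relevant deformation cohomology $\mathbb{T}^1_{Y_m} \to \mathbb{T}^1_{Y_{m-1}}$ is surjective at each order, which by the standard principle forces the hull to be smooth. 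Since $Y$ has lci singularities, the cotangent complex is perfect in the right range and the local-to-global deformation theory is governed by the tangent cohomology sheaves $\mathcal{T}^i_Y$, with obstructions living in $\mathbb{T}^2_Y$.

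First I would set up the global-to-local comparison using the hypercohomology spectral sequence for the tangent complex of $Y$, reducing the problem to understanding $H^j(Y; \mathcal{T}^i_Y)$ for the lci singularities. The key input is the $1$-Du Bois hypothesis, which by the results of Section~\ref{Section2} (identifying Definition~\ref{def0} with the general notion) gives precise control over the filtered de Rham complex, in particular an isomorphism $\underline{\Omega}^{n-1}_Y \cong \Omega^{n-1}_Y$ together with the degeneration of the relevant Hodge-to-de-Rham spectral sequence on a suitable resolution satisfying the $\partial\bar\partial$-lemma. Next I would deploy the Calabi-Yau condition $\omega_Y \cong \scrO_Y$ to produce a duality: contracting with the nowhere-vanishing $n$-form identifies $\mathcal{T}_Y \cong \Omega^{n-1}_Y$ (in the appropriate derived/reflexive sense), so that the deformation-theoretic cohomology is reinterpreted as cohomology of $(n-1)$-forms. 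This is exactly the maneuver that, combined with Hodge symmetry, allows one to run the $T^1$-lifting induction, because the obstruction map factors through a piece of the Hodge filtration that the $\partial\bar\partial$-lemma forces to vanish.

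The heart of the argument is the inductive step: given a deformation $Y_m \to \Spec A_m$, I would show it extends to $Y_{m+1}$ by verifying the $T^1$-lifting criterion, namely that the $A_m$-module $T^1_{Y_m/A_m}$ is free, equivalently that the cohomology jumps are controlled. Here one transports the problem, via the $\omega_Y \cong \scrO_Y$ identification, to a statement about the Hodge filtration on the cohomology of the total space of the family, and uses the strictness of the Hodge filtration (guaranteed by the $\partial\bar\partial$-lemma on a resolution, cf.\ Remark~\ref{ddbarremark}) to conclude that $\Gr_F$ of the de Rham cohomology is locally free over the base. The $1$-Du Bois hypothesis enters precisely to guarantee that the relevant graded piece $\Gr_F^{n-1}$ computed on $Y$ agrees with that computed on the resolution, so that no correction terms obstruct the lifting.

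The main obstacle I expect is controlling the non-isolated, possibly singular locus of the deformation theory: unlike the isolated case treated classically, here the singularities need not be isolated, so the local tangent sheaves $\mathcal{T}^i_Y$ are genuine sheaves on a positive-dimensional singular locus, and one cannot simply sum over points. The technical crux is therefore to prove that the $1$-Du Bois condition globalizes correctly, i.e.\ that the filtered de Rham complex $\underline{\Omega}^\bullet_Y$ satisfies the required degeneration as a complex of sheaves so that the Hodge-theoretic freeness argument survives after taking hypercohomology over all of $Y$. I would handle this by working throughout with the filtered de Rham complex and its interaction with the cotangent complex of the lci singularity, invoking the comparison of Definition~\ref{def0} with the $k$-Du Bois notion to ensure the filtered pieces are the coherent sheaves the duality argument requires; verifying the compatibility of this filtered structure with the Artinian deformations at each inductive order is the step that demands the most care.
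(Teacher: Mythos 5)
The first thing to note is that this paper does not prove Theorem~\ref{unob11} at all: it is imported verbatim from \cite[Corollary 1.5]{FL22c}, and Theorem~\ref{unob1} in Section~\ref{Section5} is stated with a citation in place of a proof. So your proposal has to be measured against the strategy of that external proof, which is indeed a $T^1$-lifting argument combining duality for $\omega_Y \cong \scrO_Y$ with the Hodge theory of the Deligne--Du Bois complex. Your skeleton --- reduce unobstructedness to freeness of the relative $T^1$ over Artinian bases, and establish that freeness Hodge-theoretically using the $1$-Du Bois hypothesis --- is the right one.

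There is, however, a concrete gap in the mechanism you propose. You want the $1$-Du Bois hypothesis to give $\underline{\Omega}^{n-1}_Y \cong \Omega^{n-1}_Y$, and you want to identify $\mathcal{T}_Y$ with $\Omega^{n-1}_Y$ by contracting with the trivializing $n$-form, so as to work with $\Gr_F^{n-1}$. Neither step is available. The $1$-Du Bois condition controls only the graded pieces $\underline{\Omega}^0_Y$ and $\underline{\Omega}^1_Y$ (Remark~\ref{altDBdef}, Proposition~\ref{relatedefs}); the comparison $\Omega^{n-1}_Y \cong \underline{\Omega}^{n-1}_Y$ is the $(n-1)$-Du Bois condition, which is much stronger and already fails for an ordinary double point of dimension $n\geq 3$ --- a singularity the theorem is meant to cover, since such a point \emph{is} $1$-Du Bois. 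Likewise, contraction with the $n$-form identifies $T_Y$ with $\Omega^{n-1}_Y$ only on the smooth locus: $\Omega^{n-1}_Y$ has torsion along $Y_{\text{sing}}$, and even on a resolution one only gets an inclusion $T_{\hX} \hookrightarrow \Omega^{n-1}_{\hX}$ (Remark~\ref{inclusion}). The correct maneuver --- visible in this paper's own proof of Theorem~\ref{Fano1DB} --- is to stay in degree one: $\mathbb{T}^i_Y = \Ext^i(\Omega^1_Y, \omega_Y)$ is Serre dual to $H^{n-i}(Y; \Omega^1_Y)$, and the $1$-Du Bois hypothesis identifies this with $\mathbb{H}^{n-i}(Y; \underline{\Omega}^1_Y) = \Gr_F^1 H^{n-i+1}(Y)$, so that only the first graded piece of the Hodge filtration ever needs to be controlled. (Note also that Definition~\ref{defnCY} forces $Y$ to be a scheme in the non-isolated case, so the relevant degeneration input there is algebraic Du Bois theory rather than the $\partial\bar\partial$-lemma.) Finally, the step you defer to the end --- that the relative version of this $\Gr_F^1$, hence the relative $T^1$, is free over each Artinian base --- is the actual content of the proof; your outline names it as the delicate point but does not supply an argument, so even after correcting the degree of the Du Bois comparison the proposal remains an outline rather than a proof.
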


We note that Gross \cite{Grossobstr} has given examples of canonical Calabi-Yau $n$-folds with non-isolated singularities with obstructed deformations. 

Without more information on the existence of first order smoothings, the above results are not especially useful. In the Fano case, we show the following: 

\begin{theorem}[Corollary~\ref{Fanocorollary}] Let $Y$ be a compact analytic variety of dimension $n\geq 3$ with isolated rational hypersurface singularities, such that $\omega_Y^{-1}$ is ample. Suppose that  the singularities of $Y$ are  $1$-irrational and that there exists a  Cartier divisor $H$ not passing through the singularities of $Y$ such that $\omega_Y =\scrO_Y(-H)$. Under certain cohomological conditions, $Y$ is smoothable. In particular, if $\dim Y =3$,  or if all singularities of $Y$ are $1$-liminal and $H^{n-3}(H; \Omega^1_H) =0$, then $Y$ is smoothable. 
\end{theorem}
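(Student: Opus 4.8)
The plan is to split the statement into an unobstructedness assertion and a first-order smoothing assertion, and then combine them. Precisely, I would show (a) that the deformations of $Y$ are unobstructed, so that the Kuranishi space $\mathrm{Def}(Y)$ is smooth, and (b) that there is a class in $\mathbb{T}^1_Y$ whose image in each $T^1_{Y,x}$ is a smoothing direction. Granting (a) and (b), smoothability is formal: the local Kuranishi spaces of isolated hypersurface singularities are smooth and their non-smoothing (discriminant) loci are proper closed analytic subsets, so if the image of $\mathrm{Def}(Y)\to\prod_{x}\mathrm{Def}(Y,x)$ is transverse to these discriminants in the direction supplied by (b), a general point of the smooth base $\mathrm{Def}(Y)$ maps off the finite union of discriminants and hence simultaneously smooths every singular point. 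For (a), when $\dim Y=3$ the full conclusion is already Theorem~\ref{thm-sing-dim3}; when all singularities are $1$-liminal they are in particular $1$-Du Bois lci singularities, so unobstructedness follows from Theorem~\ref{Fano1DB}. The genuinely new work is therefore (b) in the higher-dimensional $1$-liminal case.

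For (b) I would use the local-to-global spectral sequence for $\mathbf{R}\mathcal{H}om(\Omega^1_Y,\scrO_Y)$, giving
$$H^1(Y;\Theta_Y)\to\mathbb{T}^1_Y\to H^0(Y;\mathcal{T}^1_Y)\xrightarrow{\ \partial\ }H^2(Y;\Theta_Y),$$
where $\Theta_Y=\mathcal{H}om(\Omega^1_Y,\scrO_Y)$ and $\mathcal{T}^1_Y=\mathcal{E}xt^1(\Omega^1_Y,\scrO_Y)$ is supported on $Y_{\mathrm{sing}}$ with stalks $T^1_{Y,x}$. A first-order smoothing exists exactly when $\partial$ kills some smoothing class in $H^0(Y;\mathcal{T}^1_Y)=\bigoplus_x T^1_{Y,x}$, so it is enough to show that the image of $\mathbb{T}^1_Y$ meets the smoothing directions. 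Following the strategy of the paper, I would work modulo the image of $H^1(\hY;\Omega^{n-1}_{\hY})$. By Theorem~\ref{0.8} this image coincides with that of $H^1(\hY;\Omega^{n-1}_{\hY}(\log E)(-E))$, and by Theorem~\ref{0.9} the local quotients are the groups $K_x$, which for a $1$-liminal singularity reduce to $\Gr_F^{n-1}H^n(L_x)$. This is exactly the part of $T^1_{Y,x}$ in which a smoothing direction survives; here the $1$-irrationality hypothesis is essential, since for a $1$-rational singularity one would have $K_x=0$ and every smoothing direction would already lie in the image of $H^1(\hY;\Omega^{n-1}_{\hY})$, so the method would see nothing.

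It remains to kill the relevant part of $\partial$ using the Fano structure. As reflexive sheaves $\Theta_Y\cong\Omega^{n-1}_Y\otimes\scrO_Y(H)$, while on the resolution $\Theta_{\hY}\cong\Omega^{n-1}_{\hY}\otimes\pi^*\scrO_Y(H)\otimes\scrO_{\hY}(-D)$, where $D\ge 0$ is the discrepancy divisor supported on $E$; this is precisely what brings the sheaves $\Omega^{n-1}_{\hY}(\log E)$ of Theorems~\ref{0.8}--\ref{0.9} into play. Since $\omega_Y^{-1}=\scrO_Y(H)$ is ample, a Kodaira--Akizuki--Nakano--type vanishing, valid because the singularities are $1$-Du Bois, annihilates the bulk of the obstruction; what remains is governed by the restriction sequence
$$0\to\Omega^{n-1}_Y\to\Omega^{n-1}_Y\otimes\scrO_Y(H)\to\bigl(\Omega^{n-1}_Y\otimes\scrO_Y(H)\bigr)|_H\to 0$$
along the smooth anticanonical divisor $H$. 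By adjunction $\omega_H\cong\scrO_H$, so the restriction term is an extension of the ample bundle $\scrO_H(H)$ by $\Omega^{n-2}_H$; as $H^q(H;\scrO_H(H))=0$ for $q>0$ by Kodaira vanishing, the only surviving obstruction is $H^2(H;\Omega^{n-2}_H)$, which by Serre duality on the smooth $(n-1)$-fold $H$ is dual to $H^{n-3}(H;\Omega^1_H)$. Hence the hypothesis $H^{n-3}(H;\Omega^1_H)=0$ forces $\partial$ to vanish on the smoothing class, establishing (b). I expect the main obstacle to be precisely this vanishing step: since $\pi^*\scrO_Y(H)$ is only nef and big on $\hY$, the naive Akizuki--Nakano theorem does not apply, and one must invoke the correct logarithmic, Kawamata--Viehweg-type vanishing for $\Omega^{n-1}_{\hY}(\log E)$ and use the $1$-Du Bois hypothesis to justify the reduction to $H^2(H;\Omega^{n-2}_H)$; tracking the smoothing class through the spectral sequence as a genuine element of $\Gr_F^{n-1}H^n(L_x)$ is the other delicate point.
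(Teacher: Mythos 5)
Your overall skeleton is the right one and matches the paper: unobstructedness plus a first-order smoothing class, with the first-order part carried out modulo the image of $H^1(\hY;\Omega^{n-1}_{\hY})$ so that $1$-irrationality (Lemma~\ref{com2}) guarantees that anything surjecting onto $K=\bigoplus_x K_x$ lifts to a unit in each $T^1_{Y,x}$. The gap is in how you propose to produce that surjection. You aim to show that the obstruction map $\partial\colon H^0(Y;T^1_Y)\to H^2(Y;T^0_Y)$ \emph{vanishes} on the smoothing class by proving that the obstruction group is controlled by $H^2(H;\Omega^{n-2}_H)\cong H^{n-3}(H;\Omega^1_H)\spcheck$, after an Akizuki--Nakano-type vanishing kills ``the bulk.'' This cannot work: it would prove that $\mathbb{T}^1_Y\to H^0(Y;T^1_Y)$ is surjective, and Remark~\ref{Fanoremark}(i) records Namikawa's example of a generalized Fano threefold where that map is \emph{not} surjective --- this is precisely why the paper works with the quotient $K$ rather than with $H^0(Y;T^1_Y)$ itself. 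Moreover the vanishing you invoke is not available under the stated hypotheses: the GNPP form of Akizuki--Nakano applies to $\underline\Omega^{n-1}_Y\otimes\omega_Y^{-1}$, and the $1$-Du Bois condition only identifies $\Omega^1_Y$ with $\underline\Omega^1_Y$ (dually, it controls $\mathbb{T}^i_Y=\Ext^i(\Omega^1_Y,\scrO_Y)$, whose vanishing for $i=2$ says only that $\partial$ is \emph{surjective}, being its cokernel via the local-to-global Ext sequence); it says nothing about $H^2(Y;\Theta_Y)$ or about $\Omega^{n-1}_Y$ versus $\underline\Omega^{n-1}_Y$.

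What the paper actually proves (Theorem~\ref{Fanofirst}) is different in kind: the subspace $\im\bigl(H^0(Y;R^1\pi_*(\Omega^{n-1}_{\hY}\otimes\pi^*\omega_Y^{-1}))\bigr)\subseteq H^0(Y;T^1_Y)$ --- which by $1$-irrationality lies in $\bigoplus_x\mathfrak m_xT^1_{Y,x}$ --- already maps \emph{onto} $H^2(Y;T^0_Y)$ under $\partial$, so any class can be corrected by a non-unit to land in $\ker\partial=\im(\mathbb{T}^1_Y)$. Via the Leray spectral sequence this surjectivity is equivalent to the injectivity of $H^2(\hY;\Omega^{n-1}_{\hY}\otimes\pi^*\omega_Y^{-1})\to H^0(R^2\pi_*\Omega^{n-1}_{\hY})\cong\bigoplus_iH^2(E_i;\Omega^{n-1}_{E_i})$ (together with $H^3(Y;T^0_Y)=0$, supplied by Theorem~\ref{Fanothree} in dimension $3$ and by Theorem~\ref{Fano1DB} in the $1$-liminal case). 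That injectivity is established by a dimension count, $\dim H^{n-2}(\hY;\Omega^1_{\hY}(-H))\le\sum_ih^{0,n-3}(E_i)$, using the conormal sequence of $H\subseteq\hY$ (this is where $H^{n-3}(H;\Omega^1_H)=0$ enters, Serre-dually to your $H^2(H;\Omega^{n-2}_H)$), the Goresky--MacPherson--Lefschetz theorem for the affine variety $Y-H$, and the identification $\Gr^1_FH^{n-1}(\hY,H)\cong\bigoplus_iH^{n-3}(E_i;\scrO_{E_i})\spcheck{}\spcheck$. None of this mixed-Hodge-theoretic input appears in your proposal, and it is exactly what replaces the false vanishing of the obstruction group. (You also do not address the dimension-$3$ exceptional case where no smooth anticanonical divisor exists and $Y$ is a $(2,6)$ complete intersection in $\Pee(1,1,1,1,2,3)$, which the paper handles separately.)
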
 

A more precise statement is given in Section~\ref{Section4}. It is likely that both the assumption that $\omega_Y =\scrO_Y(-H)$ for some $H$ not passing through the singularities of $Y$ and the cohomological hypotheses are not  optimal. (Compare Remark~\ref{Fanoremark}(iii).) 

 In the Calabi-Yau case, first order smoothings are constructed via the following theorem:

\begin{theorem}[Theorem~\ref{smoothCY}]\label{smoothCY1} Let $Y$ be a canonical Calabi-Yau $n$-fold with $H^1(Y;\scrO_Y) =0$ such that all singularities of $Y$ are $1$-irrational hypersurface singularities, and are either $1$-Du Bois, hence $1$-liminal, or are not $1$-Du Bois and satisfy a slightly stronger technical condition that we call \textsl{strongly $1$-irrational} (Definition~\ref{defsub}). Write $Z = Y_{\text{\rm{sing}}} = Z'\cup Z''$, where $Z'$ is the set of strongly $1$-irrational points and $Z''$ is the set of $1$-liminal points, and let $Y'$ be a partial resolution of $Y$ at the points of $Z''$. Then there exists a first order smoothing of $Y$ $\iff$ there is a linear relation in $H_{n-1}(Y')$,  analogous to the condition that, for a canonical Calabi-Yau threefold, for every $x\in Z''$,  there exist nonzero $a_x\in \Cee$ such that $\sum_{x\in Z''}a_x[C_x] = 0$ in $H_2(Y')$. 
\end{theorem}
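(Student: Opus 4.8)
The plan is to detect first order smoothings inside the global first order deformation space $\mathbb{T}^1_Y$ by restricting to the singular points and then projecting onto the local quotients $K_x$ from Theorem~\ref{0.9}. A class $\xi\in\mathbb{T}^1_Y$ is a first order smoothing exactly when its image under the local-to-global map $\mathbb{T}^1_Y\to\bigoplus_{x\in Z}T^1_{Y,x}$ is a nonzero smoothing direction at every $x$. By Theorem~\ref{0.8} the images of $H^1(\hY;\Omega^{n-1}_{\hY})$ and of $H^1(\hY;\Omega^{n-1}_{\hY}(\log E)(-E))$ in each $T^1_{Y,x}=H^1(U_x;T_{U_x})$ coincide, so it is harmless to record smoothing directions after projecting to $K_x$, the cokernel of this image. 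The hypothesis that every singularity is $1$-irrational ($K_x\neq 0$) is precisely what guarantees that a smoothing direction does not die in $K_x$; were $(X,x)$ merely $1$-rational, the smoothing direction would be absorbed by deformations admitting a simultaneous resolution and the method would see nothing.

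First I would globalize the local computation of Theorem~\ref{0.9}. Writing $\mathcal F=\Omega^{n-1}_{\hY}(\log E)(-E)$ and $U=\hY-E=Y-Z$, the canonical/Calabi-Yau identification $T_U\cong\Omega^{n-1}_U$ gives $\mathcal F|_U\cong T_U$, and the localization sequence for cohomology supported along $E$ reads
\[
H^1(\hY;\mathcal F)\longrightarrow H^1(U;T_U)\xrightarrow{\ \partial\ }\bigoplus_{x\in Z}H^2_{E_x}(\hY;\mathcal F)\xrightarrow{\ \delta\ }H^2(\hY;\mathcal F),
\]
where $H^2_{E_x}(\hY;\mathcal F)\cong K_x$ by Theorem~\ref{0.9}. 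Using Schlessinger's identification $T^1_{Y,x}=H^1(U_x;T_{U_x})$ together with $H^1(Y;\scrO_Y)=0$ to compare $\mathbb{T}^1_Y$ with $H^1(U;T_U)$, a first order smoothing restricts to a class of $H^1(U;T_U)$ whose local components are the chosen smoothing directions $s_x$. Hence a first order smoothing exists if and only if there is a tuple $(s_x)_{x\in Z}$, with each $s_x\in K_x$ a nonzero smoothing direction, lying in $\im\partial=\Ker\delta$. Since $\delta$ is linear, this is a single vanishing condition in $H^2(\hY;\mathcal F)$, and the task reduces to understanding the vectors it records.

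Next I would separate the two kinds of points. At a $1$-liminal point $x\in Z''$ we have $H^2_{E_x}(\Omega^{n-1}_{\hY}(\log E))=0$, so by Theorem~\ref{0.9} $K_x\cong\Gr^{n-1}_FH^n(L_x)$ is one-dimensional and every smoothing direction is $s_x=a_xs_x^0$ with $a_x\in\Cee^*$ for a fixed generator $s_x^0$; this is the exact analogue of the one-dimensional $T^1$ of a threefold ordinary double point. At a strongly $1$-irrational point $x\in Z'$, by contrast, $K_x$ also carries the nonzero summand $H^2_{E_x}(\Omega^{n-1}_{\hY}(\log E))$, and the role of the strong irrationality hypothesis (Definition~\ref{defsub}) is to provide enough freedom in this extra summand that a smoothing direction can be chosen inside $\Ker\delta$. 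Thus the points of $Z'$ impose no constraint---this is why the partial resolution $Y'$ is taken only at $Z''$, and why these points disappear from the statement---so that $(s_x)_{x}\in\Ker\delta$ with all components nonzero reduces to
\[
\sum_{x\in Z''}a_x\,v_x=0\ \text{ in } H^2(\hY;\mathcal F),\qquad a_x\neq 0\ \text{ for all } x\in Z'',
\]
where $v_x:=\delta(s_x^0)$.

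Finally I would use Hodge theory on the resolution to turn this into the asserted homological relation. The hypotheses $\omega_Y\cong\scrO_Y$ and $H^1(Y;\scrO_Y)=0$, together with the $\partial\bar\partial$-lemma on the resolution, force degeneration of the relevant spectral sequences and make both $H^2(\hY;\mathcal F)$ and the map $\delta$ computable: the target is identified with a graded piece of the cohomology of $Y'$ carrying a pure Hodge structure, and under Poincar\'e duality the obstruction vector $v_x$ attached to a $1$-liminal point becomes exactly the class $[C_x]\in H_{n-1}(Y')$ of the exceptional cycle of $Y'\to Y$ over $x$ (for $n=3$ this is the class of the exceptional curve $C_x\cong\Pee^1$). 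The displayed equation then says precisely that $\Ker\delta$ contains a vector with no zero component, i.e.\ that $\sum_{x\in Z''}a_x[C_x]=0$ with all $a_x\neq 0$, and reading the equivalence in both directions proves the theorem. I expect the main obstacle to be this last identification: matching the algebraically-defined obstruction map $\delta$ with the topological cycle-class map requires a careful comparison, through the twists by $\log E$ and $(-E)$, of the mixed Hodge structures on the links $H^n(L_x)$, on $H^*(\hY)$, and on $H^*(Y')$, while simultaneously controlling the strongly $1$-irrational summands $H^2_{E_x}(\Omega^{n-1}_{\hY}(\log E))$ so that they genuinely contribute nothing to the relation.
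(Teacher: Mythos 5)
Your outline matches the paper's proof in structure: realize the image of $\mathbb{T}^1_Y$ in $\bigoplus_{x\in Z}K_x$ as the kernel of a map to global cohomology, use the strongly $1$-irrational points to absorb part of the obstruction, and convert what remains at the $1$-liminal points into a homology relation on $Y'$. But both load-bearing steps are left as assertions, and the second is set up with the wrong condition. For the points of $Z'$, ``enough freedom in the extra summand'' is not an argument. What is actually needed is (a) that the summand $K'_y=\Ker\{H^2_{E_y}(\hX_y;\Omega^{n-1}_{\hX_y})\to H^{n+1}_{E_y}(\hX_y)\}$ maps to zero in $H^2(\hY;\Omega^{n-1}_{\hY})$ and hence lies in the image of $\mathbb{T}^1_Y$ --- this is Lemma~\ref{7.1}, and it uses $H^1(Y;\scrO_Y)=0$ to force $H^1(\hY;\Omega^n_{\hY})=0$ and thereby the injectivity of $H^2(\hY;\Omega^{n-1}_{\hY})\to H^{n+1}(\hY)$, whereas you invoke $H^1(Y;\scrO_Y)=0$ only for spectral sequence degeneration; and (b) that the strong $1$-irrationality hypothesis makes $K'_y\to H^2_{E_y}(\hX_y;\Omega^{n-1}_{\hX_y}(\log E_y))$ an isomorphism, so that by Lemma~\ref{com1} an element of $K'_y$ lifting the class of $1\in T^1_{Y,y}$ is automatically a smoothing direction. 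Without (a) and (b) there is no reason a smoothing direction at $y\in Z'$ can be chosen inside $\Ker\delta$.

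For the points of $Z''$, the condition you arrive at, $\sum_{x\in Z''}a_xv_x=0$ in $H^2(\hY;\Omega^{n-1}_{\hY}(\log E)(-E))$, is strictly stronger than the theorem's condition: after the Hodge-theoretic identifications it amounts to vanishing in $H_{n-1}(\hY)$ rather than in $H_{n-1}(Y')=H_{n-1}(\hY)/\im H_{n-1}(E')$. The correct requirement is only that $\sum_xa_x\iota(\varepsilon_x)$ lie in the image of $H^{n+1}_{E'}(\hY)$, because such a class can be cancelled by the $Z'$-component of the tuple; to know that a cancelling class can be taken inside $H^2_{E'}(\hY;\Omega^{n-1}_{\hY})$ one needs Theorem~\ref{maintheorem}(vi) (the image of $H^2_{E'}(\hY;\Omega^{n-1}_{\hY})$ in $H^{n+1}_{E'}(\hY)$ contains $\Gr_F^{n-1}H^{n+1}_{E'}(\hY)$) together with strictness of morphisms of mixed Hodge structures. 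Finally, $v_x$ is not ``the class of the exceptional cycle of $Y'\to Y$ over $x$'' (that cycle has the wrong dimension for a divisorial resolution); it is the image of the generator $\varepsilon_x$ of $\Gr_F^{n-1}H^n(L_x)\cong K_x$ under $H^n(L_x)\cong H_{n-1}(L_x)\to H_{n-1}(Y')$, and its identification with $[C_x]$ when $n=3$ requires the separate comparison with the small resolution carried out in Corollary~\ref{3CYsmoothing}. These are precisely the points where the content of the theorem lies, so the proposal as written does not constitute a proof.
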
 

Putting Theorems~\ref{smoothCY1} and \ref{unob11} together, in case $\dim Y > 3$, we are able to obtain smoothing results in the Calabi-Yau case if the singularities are both $1$-Du Bois and $1$-irrational, or equivalently $1$-liminal.

\subsection*{Summary and Outlook}
We can roughly summarize the above discussion as follows:  In both the Fano and the Calabi-Yau case, the deformations are unobstructed provided that all singularities are local complete intersection $1$-Du Bois singularities (in dimension $3$, for isolated hypersurface singularities, this is just the assumption of ordinary double points, recovering Theorem \ref{thmK}).   Specializing further to isolated hypersurface singularities, in the Fano case, under some mild assumptions, first order smoothings exist as soon as the singularities are $1$-irrational. In dimension $3$, this is equivalent to the assumption that the singularities are  canonical, or equivalently rational. In the Calabi-Yau case, in order for first order smoothings to exist, we need to make an essentially topological assumption about the $1$-liminal singularities (analogous to the condition of \cite{F} for smoothing nodal threefolds). Thus, from the point of view of first order smoothings, $1$-liminal singularities are a very natural generalization of ordinary double points of dimension $3$. We emphasize that  our  main technical results, Theorem \ref{maintheorem} and Theorems~\ref{Fano1DB} and ~\ref{unob1}, require almost completely disjoint conditions on the singularities: $1$-irrational singularities for obtaining first order smoothings, $1$-Du Bois for obtaining lifts to higher order deformations. Only the $1$-liminal case satisfies both conditions.  

We list  some natural open problems:

\begin{enumerate}
\item Can one weaken the assumption of  isolated rational hypersurface singularities to isolated local complete intersection singularities? Compare \cite[\S3]{gross_defcy} in the case of dimension $3$. 
\item Are there  more general unobstructedness results in the Calabi-Yau and Fano cases? For example, is there  a higher dimensional analogue of Theorem \ref{thmN} covering rational  but not necessarily $1$-Du Bois  isolated  local complete intersection singularities, and possibly  even an extension to the log canonical case? 
\item It is surprising, and somewhat disappointing,  that our techniques for finding a first order smoothing do not apply to the simplest singularities, ordinary double points, in case $n= \dim Y > 3$. For $n$ odd and greater than $3$, if $Y$ is a canonical Calabi-Yau $n$-fold  with only ordinary double points, Rollenske-Thomas  \cite{RollenskeThomas} have found necessary conditions on the singular points for there to to exist a first order smoothing. Note  that the case of  hypersurfaces with many ordinary double points of degree  $n+1$ or $n+2$ in $\Pee^{n+1}$, $n\ge 4$, shows that the map $\mathbb{T}^1_Y \to \bigoplus_{x\in Y_{\text{\rm{sing}}}}T^1_{Y,x}$ cannot be surjective in general, so some obstructions must exist. 
If $n$ is odd, say $n = 2k+1$,  ordinary double points are $k$-liminal, i.e.\ both $k$-irrational and $k$-Du Bois. The authors have generalized the methods of \cite{RollenskeThomas} to the case of Calabi-Yau $n$-folds with isolated weighted homogeneous $k$-liminal hypersurface singularities \cite{FL23}. How far are these conditions from being sufficient? 
\item  The above gives some weak necessary conditions in the $k$-liminal case. What happens for $1$-Du Bois singularities in the non-$k$-liminal case, for example the case of ordinary double points when $n$ is even? This case is completely mysterious. 
\end{enumerate}

\subsection*{Structure of the paper} The outline of this paper is as follows: In Section~\ref{Section1}, we review standard facts about isolated singularities and collect some of the main technical preliminaries used throughout the paper. Section~\ref{Section2} is devoted to one of the main technical results, Theorem~\ref{maintheorem}. Some of the statements are contained in work of Steenbrink \cite{SteenbrinkDB}. The key new ingredients are stated in Parts (iii) and (vi) of Theorem~\ref{maintheorem}. We then define   $1$-rational and $1$-Du Bois singularities,  and relate these definitions to the more general definitions of $1$-rational and $1$-Du Bois singularities (Proposition \ref{relatedefs}) given by \cite{MOPW}, \cite{JKSY-duBois}, \cite{FL22c}. Section~\ref{Section3} looks at the case of weighted homogeneous singularities, where we have stronger results (e.g. Theorem \ref{wtdhmg}). In particular, inspired by a result of Wahl \cite{Wahl}, we go beyond a strictly numerical characterization of $1$-rational or $1$-Du Bois singularities to identify the relevant groups inside of $T^1_{X,x}$ in terms of the $\Cee^*$-weights.  Section~\ref{Section4} gives smoothing results for generalized Fano varieties (Theorem~\ref{Fanofirst}  and Corollary~\ref{Fanocorollary}). In Section~\ref{Section5},   we discuss first order smoothings of canonical Calabi-Yau varieties with $1$-irrational singularities  and give criteria for the existence of smoothings whenever there is a corresponding unobstructedness result.

\subsection*{Notations and Conventions} To fix our notation, we emphasize that $(X,x)$ always denotes the germ of an isolated singularity, with $X$ a good representative, i.e.\  a contractible Stein representative. In the local setting, $\pi: \hX\to X$ denotes a good resolution with exceptional divisor $E=\pi^{-1}(x)$.  In particular, note that if $\mathcal F$ is a coherent sheaf on $\hX$, $H^i(\hX;\mathcal F)\cong H^0(X;R^i\pi_*\mathcal F)$, and further we can identify the sheaf $R^i\pi_*\mathcal F$ with its global sections. Thus, we will use $H^i(\hX,\mathcal F)$ and $R^i\pi_*\mathcal F$ interchangeably; of course, in the case of a non-isolated singularity, we would need to consistently use $R^i\pi_*\mathcal F$. We set $U = X-\{x\} = \hX - E$. In the global setting, $Y$ will always denote a compact complex analytic variety with isolated singularities and $\pi: \hY\to Y$  a good resolution with exceptional divisor, which we also denote by $E$. Finally, all singular homology and cohomology groups have  coefficients in $\Cee$.
  
\subsection*{Acknowledgements} It is a pleasure to thank Johan de Jong, J\'anos Koll\'ar, Mircea Musta\c{t}\u{a}, and Mihnea Popa for insightful conversations and correspondence. We would also like to thank the referees for a very careful reading of this paper and for numerous comments which helped to improve it.

\section{Preliminaries}\label{Section1}
\subsection{Hodge theory of singularities}
We begin by collecting  some basic Hodge theory results we shall need. We use  the notational conventions for $X$, $\hX$, $E$ described above. We denote by $\Omega^\bullet_E$ the complex of K\"ahler differentials on $E$ and by $\tau^\bullet_E$ the subcomplex of torsion differentials, i.e.\ those supported on $E_{\text{\rm{sing}}}$. Then $(\Omega^\bullet_E/\tau^\bullet_E, d)$ is a resolution of the constant sheaf $\Cee$ on $E$, and there is a short exact sequence

\begin{equation}\label{1.00}
0 \to \Omega^\bullet_{\hX}(\log E)(-E) \to \Omega^\bullet_{\hX} \to \Omega^\bullet_E/\tau^\bullet_E \to 0.
\end{equation}

 The restriction exact sequence 
\begin{equation}\label{1.01} 0 \to \Omega^\bullet_{\hX}(\log E)(-E) \to \Omega^\bullet_{\hX}(\log E)\to \Omega^\bullet_{\hX}(\log E)|E \to 0,
\end{equation}
gives rise to a long exact sequence
 
\begin{equation}\label{1.1}\dots \to H^q(\hX; \Omega^p_{\hX}(\log E)(-E))  \to  H^q(\hX;\Omega^p_{\hX}(\log E))\to  H^q(E;\Omega^p_{\hX}(\log E)|E)  \to \cdots 
\end{equation}
 
  Let $L$ be the link of the pair $(\hX, E)$ \cite[\S6.2]{PS}, so that $L $ has the homotopy type of  $\hX -E$ in case $E$ is a deformation retract of $\hX$. In this case there is the relative cohomology  exact sequence
\begin{equation}\label{1.11} \cdots \to H^{k-1}(L) \to H^k_E(\hX) \to H^k(E) \to H^k(L) \to \cdots,
\end{equation}
which is an exact sequence  of mixed Hodge structures if all components of $E$ are compact K\"ahler.
Similarly, there are exact sequences
\begin{multline}\label{1.2}
\dots \to H^q(E; \Omega^p_{\hX}(\log E)|E) \to H^q(E; \Omega^p_{\hX}(\log E)/\Omega^p_{\hX}) \to H^{q+1}(E;\Omega^p_E/\tau_E^p) \to \\
\to H^{q+1}(E; \Omega^p_{\hX}(\log E)|E) \to H^{q+1}(E; \Omega^p_{\hX}(\log E)/\Omega^p_{\hX}) \to \dots 
\end{multline}
Using (\ref{1.00}) and (\ref{1.01}), the exact sequence  of the link comes from the exact sequence 
$$\begin{CD}
0@>>>  \Omega^\bullet_{\hX} /\Omega^\bullet_{\hX}(\log E)(-E) @>>> \Omega^\bullet_{\hX}(\log E)/\Omega^\bullet_{\hX}(\log E)(-E) @>>> \Omega^\bullet_{\hX}(\log E)/\Omega^\bullet_{\hX} @>>> 0\\
@. @V{\cong}VV @| @| @.\\
0 @>>> \Omega^\bullet_E/\tau_E^\bullet @>>> \Omega^\bullet_{\hX}(\log E)|E @>>> \Omega^\bullet_{\hX}(\log E)/\Omega^\bullet_{\hX} @>>> 0,
\end{CD}$$
Here $\mathbb{H}^k(E;\Omega^\bullet_{\hX}(\log E)|E) = H^k(L)$ and $\mathbb{H}^k(E;\Omega^\bullet_{\hX}(\log E)/\Omega^\bullet_{\hX}) = H^{k+1}_E(\hX)$ (\cite[Theorem 1.9]{Steenbrink},  \cite[\S5.5, \S6.2]{PS}). The long exact sequences on hypercohomology or cohomology then give the exact sequences (\ref{1.11}) and (\ref{1.2}) above.

 For an isolated singularity $X$, the condition that $X$ is Du Bois (= $0$-Du Bois) is equivalent to the statement that  $R^i\pi_*\scrO_{\hX}(-E) = 0$ for $i>0$. The following is then due to Steenbrink \cite{Steenbrink}:

\begin{lemma}\label{ratlplus} If $(X,x)$ is a rational singularity, then,  for $i> 0$, $H^i(E; \scrO_E) = 0$ and $R^i\pi_*\scrO_{\hX}(-E) = 0$ (i.e.\ $(X,x)$ is a Du Bois singularity). Finally, $H^i_E(\hX; \Omega^n_{\hX}) = 0$ for $i< n$.  
\end{lemma}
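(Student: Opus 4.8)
The plan is to derive all three statements from the defining property of a rational singularity, $R^i\pi_*\scrO_{\hX}=0$ for $i>0$, together with normality (so $\pi_*\scrO_{\hX}=\scrO_X$ and $E$ is connected). The cleanest is the last statement, which I would obtain from duality. Since $\Omega^n_{\hX}=\omega_{\hX}$, and since $X$ is a good (Stein, contractible) representative with $E=\pi^{-1}(x)$ compact, formal duality for the proper morphism $\pi$ identifies, for a locally free sheaf $\mathcal F$ on $\hX$, the group $H^i_E(\hX;\mathcal F)$ with the Matlis dual of $R^{n-i}\pi_*(\mathcal F\spcheck\otimes\omega_{\hX})$. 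Taking $\mathcal F=\omega_{\hX}$, so that $\mathcal F\spcheck\otimes\omega_{\hX}=\scrO_{\hX}$, gives
$$H^i_E(\hX;\Omega^n_{\hX})\cong \big(R^{n-i}\pi_*\scrO_{\hX}\big)\spcheck .$$
For $i<n$ one has $n-i\geq 1$, so the right-hand side vanishes by rationality; hence $H^i_E(\hX;\Omega^n_{\hX})=0$ for $i<n$. (At $i=n$ the same computation produces the dualizing module, so the range is sharp.)

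For $H^i(E;\scrO_E)=0$ when $i>0$, I would combine the theorem on formal functions with the negativity of $E$. Formal functions gives $\varprojlim_m H^i(E_m;\scrO_{E_m})\cong (R^i\pi_*\scrO_{\hX})^\wedge_x=0$ for $i>0$, where $E_m$ is the $m$-th infinitesimal neighborhood of $E$. The tower is assembled from the exact sequences $0\to\scrO_E(-mE)\to\scrO_{E_{m+1}}\to\scrO_{E_m}\to 0$, with $\scrO_E(-mE)=\scrO_{\hX}(-mE)|E$. Granting the relative vanishing $H^j(E;\scrO_E(-mE))=0$ for all $j\geq 1$ and $m\geq 1$, the resulting long exact sequences show that the restriction maps $H^i(E_{m+1};\scrO_{E_{m+1}})\to H^i(E_m;\scrO_{E_m})$ are isomorphisms for every $i\geq 1$; the tower is then constant in positive degrees, and the vanishing of its inverse limit forces $H^i(E;\scrO_E)=0$ for $i>0$.

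The middle statement then follows formally. Applying $R\pi_*$ to $0\to\scrO_{\hX}(-E)\to\scrO_{\hX}\to\scrO_E\to 0$ and using $R^i\pi_*\scrO_{\hX}=0$ for $i>0$ together with the surjection $\scrO_X=\pi_*\scrO_{\hX}\to H^0(E;\scrO_E)=\Cee$ (evaluation at $x$, via connectedness of $E$), one reads off $R^1\pi_*\scrO_{\hX}(-E)=0$ and isomorphisms $R^{i+1}\pi_*\scrO_{\hX}(-E)\cong H^i(E;\scrO_E)$ for $i\geq 1$; these vanish by the previous step, so $R^i\pi_*\scrO_{\hX}(-E)=0$ for all $i>0$. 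The main obstacle is precisely the relative vanishing $H^{>0}(E;\scrO_E(-mE))=0$ feeding the second step: this is the only place where genuine geometric input enters (the negativity of the exceptional divisor for the contraction $\pi$, a Grauert--Riemenschneider-type phenomenon), while the duality computation and the sheaf-theoretic bookkeeping are purely formal. In the isolated hypersurface or lci setting of this paper, where rational is equivalent to canonical, I would try to exploit the effectivity of the discrepancies $K_{\hX}=\pi^*\omega_X+\sum a_iE_i$ with $a_i\geq 0$ to feed this relative vanishing into a relative Kawamata--Viehweg vanishing theorem.
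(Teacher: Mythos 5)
Your treatment of the third statement (local duality identifying $H^i_E(\hX;\Omega^n_{\hX})$ with the dual of $R^{n-i}\pi_*\scrO_{\hX}$) and of the middle statement (the long exact sequence of $0\to\scrO_{\hX}(-E)\to\scrO_{\hX}\to\scrO_E\to 0$, using connectedness of $E$ for the $R^1$ term) coincides with what the paper does, and both are fine. The gap is in the first statement. You reduce $H^i(E;\scrO_E)=0$ to the auxiliary vanishing $H^j(E;\scrO_E(-mE))=0$ for all $j\ge 1$ and $m\ge 1$, which you explicitly leave as "granted." This is not a harmless technical point: it is strictly stronger than the conclusion you want, it does not follow from rationality, and the route you sketch for it fails. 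The reduced exceptional divisor $E$ of an arbitrary good resolution need not have $-E$ relatively nef (already for surfaces one can arrange a component $E_1$ with $E\cdot E_1>0$ inside a negative definite configuration, e.g.\ a $(-1)$-curve meeting two $(-5)$-curves), so no relative Kawamata--Viehweg or Grauert--Riemenschneider statement applies to $\scrO_E(-mE)$; effectivity of the discrepancies controls $K_{\hX}$, not $-E$. Moreover the top cohomology $H^{n-1}(E;\scrO_E(-mE))$ is Serre dual to $H^0(E;\scrO_E(K_{\hX}+(m+1)E))$, which has no reason to vanish for large $m$ on a non-minimal resolution. Note also that via the theorem on formal functions, the surjectivity of the tower maps you are trying to force is essentially equivalent to the surjectivity of $R^i\pi_*\scrO_{\hX}\to H^i(E;\scrO_E)$ --- so your reduction circles back to the real content of the lemma rather than proving it.

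The paper closes exactly this point by citing Steenbrink \cite[Lemma 2.14]{Steenbrink}: with \emph{no} hypothesis on $(X,x)$, the natural map $R^i\pi_*\scrO_{\hX}\to H^i(E;\scrO_E)$ is surjective. This is a Hodge-theoretic fact (it comes from the surjection $H^i(\hX)\to H^i(E)$ together with strictness and the identification of $\Gr_F^0$ for the normal crossing divisor $E$), not a relative vanishing theorem, and it immediately gives $H^i(E;\scrO_E)=0$ for $i>0$ once $R^i\pi_*\scrO_{\hX}=0$. To repair your argument you should replace the formal-functions reduction by this surjectivity statement (or prove an equivalent Hodge-theoretic input); as written, the first third of the lemma is not established.
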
 
\begin{proof} By \cite[Lemma 2.14]{Steenbrink}, with no assumption on $(X,x)$, the map  $R^i\pi_*\scrO_{\hX} \to H^i(E; \scrO_E)$ is surjective, and hence $H^i(E; \scrO_E) = 0$ for $i> 0$ in case $(X,x)$ is rational. The second statement then follows by applying $R^i\pi_*$ to the exact sequence
$$0 \to \scrO_{\hX}(-E) \to \scrO_{\hX} \to \scrO_E \to 0,$$
and using the fact that $R^0\pi_*\scrO_{\hX} \to  H^0(E; \scrO_E)$ is surjective. The last statement is a consequence of duality.
\end{proof}

Next, we have the vanishing theorem of Guill\'en, Navarro Aznar, Pascual-Gainza, Puerta and Steenbrink:
\begin{theorem}\label{GNAPS}  For $p+q> n$, 
$H^q(\hX; \Omega^p_{\hX}(\log E)(-E))  =0$. Hence, by duality, $H^q_E(\hX; \Omega^p_{\hX}(\log E)) = 0$ for $p+q< n$. \qed
\end{theorem}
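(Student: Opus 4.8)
The plan is to establish the first vanishing and then obtain the second formally by duality. For the duality step, observe that the wedge pairing $\Omega^p_{\hX}(\log E)\otimes\Omega^{n-p}_{\hX}(\log E)\to\Omega^n_{\hX}(\log E)=\Omega^n_{\hX}(E)$ is a perfect pairing of locally free sheaves, so after twisting by $\scrO_{\hX}(-E)$ it identifies the dual $(\Omega^p_{\hX}(\log E)(-E))^{\vee}\otimes\Omega^n_{\hX}$ with $\Omega^{n-p}_{\hX}(\log E)$. Since $\pi\colon\hX\to X$ is proper and $X$ is Stein, formal (local) duality provides a perfect pairing $H^q(\hX;\Omega^p_{\hX}(\log E)(-E))\times H^{n-q}_E(\hX;\Omega^{n-p}_{\hX}(\log E))\to\Cee$ of finite-dimensional spaces (both factors are supported at $x$ in the relevant range). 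Substituting $(n-p,n-q)$ for $(p,q)$ converts the first statement, the vanishing of the left factor for $p+q>n$, into the vanishing $H^q_E(\hX;\Omega^p_{\hX}(\log E))=0$ for $p+q<n$. Hence everything reduces to the first vanishing.

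For the first vanishing, note that since $p\le n$, the hypothesis $p+q>n$ forces $q\ge1$; as $\pi$ is an isomorphism over $X\setminus\{x\}$, the sheaf $R^q\pi_*\Omega^p_{\hX}(\log E)(-E)=H^q(\hX;\Omega^p_{\hX}(\log E)(-E))$ is supported at the single point $x$. Thus the statement is a \emph{relative} (local) vanishing of Grauert--Riemenschneider type, not a consequence of any vanishing on a compactification of $\hX$ --- indeed the corresponding statement on a compact K\"ahler manifold $W$ is already false for $E=\emptyset$, e.g.\ $H^n(W;\Omega^n_W)\ne0$ with $p+q=2n>n$. When $p=n$ it \emph{is} Grauert--Riemenschneider, $R^q\pi_*\Omega^n_{\hX}=0$ for $q>0$, which (for $\pi$ projective) one proves by tensoring with $\pi^*A^{m}$ for $A$ ample on $X$ and $m\gg0$ and applying Kawamata--Viehweg vanishing, $\pi^*A^m$ being nef and big; I would make this reduction first and dispose of the case $p=n$. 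For general $p$ the same scheme would require an Akizuki--Kodaira--Nakano-type vanishing $H^q(\hX;\Omega^p_{\hX}(\log E)(-E)\otimes\pi^*A^m)=0$ for $p+q>n$ with $\pi^*A^m$ only nef and big, and it is exactly here that the elementary argument breaks down.

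This is the main obstacle: Nakano-type vanishing fails for line bundles that are merely nef and big, so the passage from $p=n$ to general $p$ cannot be made by positivity alone and genuinely needs Hodge theory. The resolution, following Guill\'en--Navarro Aznar--Pascual-Gainza--Puerta and Steenbrink, is to replace the naive argument by the degeneration and strictness of the Hodge filtration on the logarithmic de Rham complex: via a cubical hyperresolution of (a compactification of) the pair $(\hX,E)$ one reduces the assertion to the classical Akizuki--Kodaira--Nakano vanishing on the smooth projective strata, where the line bundles involved \emph{are} ample and the Hodge-symmetric range $p+q>n$ appears directly. One should keep in mind that the spectral sequence of $\Omega^\bullet_{\hX}(\log E)(-E)$ on $\hX$ itself gives no shortcut: it computes $H^\bullet(\hX,E)$, which vanishes because $\hX$ retracts onto $E$, yet terms such as $H^0(\hX;\Omega^n_{\hX})$ are nonzero, so it does not degenerate at $E_1$ and the terms with $p+q\le n$ survive and cancel among themselves --- the content of the theorem is precisely that those with $p+q>n$ vanish individually. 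In modern language the whole statement is an instance of Saito's vanishing: it amounts to the torsion-freeness (hence, being supported at $x$, the vanishing) of the relevant graded pieces of the Hodge filtration of $j_!\Q^H_{\hX\setminus E}$, and the numerology of the Hodge filtration on a pure Hodge module of weight $n$ reproduces exactly the bound $p+q>n$. I would lead with the classical hyperresolution argument to match the cited sources, treating the remaining reductions --- to a projective $\pi$ and the duality of the first paragraph --- as routine.
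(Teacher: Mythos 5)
The paper offers no proof of this statement: it is quoted as the known vanishing theorem of Guill\'en--Navarro Aznar--Pascual-Gainza--Puerta and Steenbrink, with the dual form obtained exactly by the Serre/local duality you describe. Your reduction via the perfect pairing $\Omega^p_{\hX}(\log E)\otimes\Omega^{n-p}_{\hX}(\log E)\to\Omega^n_{\hX}(E)$ and Grothendieck local duality over the Stein base is correct, and your account of why the positivity argument fails beyond $p=n$ and of the hyperresolution/Akizuki--Kodaira--Nakano mechanism matches the cited sources; the only caveat is that the core hyperresolution step is described rather than carried out, but that is no further from a complete proof than the paper itself, which simply cites the result.
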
 

\begin{corollary}\label{Hn-1E} \begin{enumerate} \item[\rm(i)] For all $q\geq 2$,  
$$H^q(\hX; \Omega^{n-1}_{\hX}) \cong H^q(E; \Omega_E^{n-1}/\tau_E^{n-1}) \cong \bigoplus_iH^q(E_i; \Omega_{E_i}^{n-1}).$$
\item[\rm(ii)] If $(X,x)$ is a rational singularity, $H^0(E;\Omega^{n-1}_E/\tau_E^{n-1}) = 0$.
\end{enumerate}
\end{corollary}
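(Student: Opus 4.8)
The plan is to extract from the complex-level sequence (\ref{1.00}) the degree-$(n-1)$ short exact sequence of sheaves
$$0 \to \Omega^{n-1}_{\hX}(\log E)(-E) \to \Omega^{n-1}_{\hX} \to \Omega^{n-1}_E/\tau_E^{n-1} \to 0,$$
pass to the associated long exact cohomology sequence, and feed in the vanishing theorem. By Theorem~\ref{GNAPS} with $p = n-1$, the groups $H^q(\hX; \Omega^{n-1}_{\hX}(\log E)(-E))$ vanish for $q > 1$. Hence for every $q \geq 2$ both the term $H^q(\hX;\Omega^{n-1}_{\hX}(\log E)(-E))$ to the left of $H^q(\hX;\Omega^{n-1}_{\hX}) \to H^q(E; \Omega^{n-1}_E/\tau^{n-1}_E)$ and the target $H^{q+1}(\hX;\Omega^{n-1}_{\hX}(\log E)(-E))$ of the connecting map vanish, so the middle map is the first isomorphism $H^q(\hX;\Omega^{n-1}_{\hX}) \cong H^q(E;\Omega^{n-1}_E/\tau^{n-1}_E)$ asserted in part (i).

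The second isomorphism in (i) is purely local on $E$ and holds in every cohomological degree; it is a statement about top-degree K\"ahler differentials on the simple normal crossings variety $E$. Writing $\iota_i\colon E_i \hookrightarrow E$ for the inclusions and $\nu\colon \widetilde E = \bigsqcup_i E_i \to E$ for the normalization, I would establish the sheaf isomorphism
$$\Omega^{n-1}_E/\tau^{n-1}_E \cong \bigoplus_i (\iota_i)_*\Omega^{n-1}_{E_i} = \nu_*\omega_{\widetilde E}.$$
The morphism is restriction of forms to each component; since each $(\iota_i)_*\Omega^{n-1}_{E_i}$ is torsion-free it factors through the torsion-free quotient, it is an isomorphism over the smooth locus $E \setminus E_{\text{\rm{sing}}}$, and hence it is injective (its kernel is a torsion subsheaf of the torsion-free source). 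Surjectivity is checked in local coordinates $z_1\cdots z_k = 0$ with $E_i = \{z_i = 0\}$: the form $dz_1\wedge\cdots\widehat{dz_i}\cdots\wedge dz_n$ restricts to a generator of $\Omega^{n-1}_{E_i}$ on $E_i$ and to $0$ on every $E_j$ with $j\neq i$, since it retains the factor $dz_j$, which dies on $E_j$. Thus the generators of all summands lie in the image. Taking cohomology and using that cohomology commutes with the finite maps $(\iota_i)_*$ gives $H^q(E; \Omega^{n-1}_E/\tau^{n-1}_E) \cong \bigoplus_i H^q(E_i; \Omega^{n-1}_{E_i})$ for all $q$, completing (i).

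For part (ii), I would apply the sheaf isomorphism just established in degree $q = 0$, reducing the claim to $\bigoplus_i H^0(E_i; \Omega^{n-1}_{E_i}) = 0$. Each $E_i$ is a smooth compact $(n-1)$-fold, so $\Omega^{n-1}_{E_i} = \omega_{E_i}$ and Serre duality gives $H^0(E_i; \omega_{E_i}) \cong H^{n-1}(E_i; \scrO_{E_i})^\vee$; it therefore suffices to prove $H^{n-1}(E_i; \scrO_{E_i}) = 0$. Consider the normalization sequence $0 \to \scrO_E \to \bigoplus_i \scrO_{E_i} \to \mathcal Q \to 0$, where $\mathcal Q$ is supported on $E_{\text{\rm{sing}}}$, a space of dimension $\leq n-2$, so $H^{n-1}(E;\mathcal Q) = 0$ for dimension reasons. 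In the resulting long exact sequence, $\bigoplus_i H^{n-1}(E_i;\scrO_{E_i})$ is squeezed between $H^{n-1}(E;\scrO_E)$ and $H^{n-1}(E; \mathcal Q)$; the former vanishes by Lemma~\ref{ratlplus} since $(X,x)$ is rational, and the latter by the dimension bound. Hence $\bigoplus_i H^{n-1}(E_i; \scrO_{E_i}) = 0$, proving (ii).

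I expect the main technical point to be the identification $\Omega^{n-1}_E/\tau^{n-1}_E \cong \bigoplus_i(\iota_i)_*\Omega^{n-1}_{E_i}$, i.e.\ controlling exactly which part of the top-degree differentials survives after killing the torsion along the crossings; everything else is a formal consequence of the vanishing theorem, the normalization sequence, and Serre duality.
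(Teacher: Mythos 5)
Your proof is correct and follows essentially the same route as the paper: the degree-$(n-1)$ piece of the sequence \eqref{1.00} plus Theorem~\ref{GNAPS} for the first isomorphism, the decomposition $\Omega^{n-1}_E/\tau^{n-1}_E\cong\bigoplus_i(\iota_i)_*\Omega^{n-1}_{E_i}$, and a reduction via Serre duality to $H^{n-1}(E_i;\scrO_{E_i})=0$ using $H^{n-1}(E;\scrO_E)=0$ from Lemma~\ref{ratlplus}. The only cosmetic difference is that where the paper invokes the Mayer--Vietoris spectral sequences for $\Omega^{n-1}_E/\tau^{n-1}_E$ and $\scrO_E$, you verify the sheaf decomposition by a direct local computation and replace the spectral sequence for $\scrO_E$ by the normalization sequence together with the dimension bound $\dim E_{\text{\rm{sing}}}\le n-2$, which amounts to the same calculation.
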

\begin{proof}  To see (i), by Theorem~\ref{GNAPS} and the exact sequence (\ref{1.00}), $H^q(\hX; \Omega^{n-1}_{\hX}) \cong H^q(E; \Omega_E^{n-1}/\tau_E^{n-1})$. The Mayer-Vietoris spectral sequence for $\Omega_E^{n-1}/\tau_E^{n-1}$ then implies that $\Omega_E^{n-1}/\tau_E^{n-1} \cong \bigoplus_i\Omega_{E_i}^{n-1}$.  To see (ii), note that  $H^0(E;\Omega^{n-1}_E/\tau_E^{n-1}) \cong  \bigoplus_iH^0(E_i; \Omega^{n-1}_{E_i})$. Also, by the Hodge symmetries, $\dim H^0(E_i; \Omega^{n-1}_{E_i}) = \dim H^{n-1}(E_i; \scrO_{E_i})$. Finally, an examination of the Mayer-Vietoris spectral sequence for $\scrO_E$ which degenerates at $E_2$, and the fact that $H^{n-1}(E;\scrO_E) = 0$, imply that $H^{n-1}(E_i; \scrO_{E_i})=0$ for all $i$.
\end{proof}

Next, we recall some of the basic numerical invariants of an isolated singularity \cite{SteenbrinkDB}:

\begin{definition}\label{defDBlink}  For $q>0$, define the \textsl{Du Bois invariants} 
$$b^{p,q} = \dim H^q(\hX; \Omega^p_{\hX}(\log E)(-E))  = \dim H^{n-q}_E(\hX; \Omega^{n-p}_{\hX}(\log E)).$$
 There are also the  \textsl{link invariants}
$$\ell^{p,q} = \dim  H^q(E;\Omega^p_{\hX}(\log E)|E) = \dim \Gr_F^pH^{p+q}(L).$$
By Serre duality, $\ell^{p,q} =  \ell^{n-p,n-q-1}$. 
\end{definition}

\begin{theorem}[\cite{SteenbrinkDB}]\label{numervan}  For $p+q> n$, $b^{p,q} =0$. If $X$ is an lci singularity, then $b^{p,q} =0$ for all $q> 0$ unless $p+q = n-1$ or $n$. Finally, $\ell^{p,q} = 0$ except in the following cases: $(p,q) = (0,0)$ or $(n, n-1)$, or $p+q = n-1$ or $n$. \qed
\end{theorem}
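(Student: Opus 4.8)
The plan is to handle the three assertions in order of increasing difficulty, feeding two external inputs into the exact sequences (\ref{1.00}), (\ref{1.01}), (\ref{1.11}), (\ref{1.2}): the GNAPS vanishing of Theorem~\ref{GNAPS}, and the Milnor--Hamm connectivity of the link of an isolated complete intersection singularity. The first assertion is immediate: by definition $b^{p,q}=\dim H^q(\hX;\Omega^p_{\hX}(\log E)(-E))$, and Theorem~\ref{GNAPS} asserts exactly that this group vanishes for $p+q>n$.

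For the statement about the $\ell^{p,q}$ I would begin from the identification $\ell^{p,q}=\dim\Gr_F^pH^{p+q}(L)$ built into their definition, so that $\ell^{p,q}=0$ as soon as $H^{p+q}(L)=0$. The geometric input is that the link $L$ of an lci (hence ICIS) germ is a smooth closed oriented $(2n-1)$--manifold which, by Milnor--Hamm, is $(n-2)$--connected; Poincar\'e duality then gives $H^k(L)=0$ for $k\notin\{0,n-1,n,2n-1\}$, confining the nonzero $\ell^{p,q}$ to $p+q\in\{0,n-1,n,2n-1\}$. The two extreme degrees are then pinned down by hand: for $k=0$ the only index is $(0,0)$ and $H^0(L)=\Cee$ has pure type $(0,0)$; for $k=2n-1$ the group $H^{2n-1}(L)\cong\Cee$ is one--dimensional, so $\Gr_F^pH^{2n-1}(L)\ne0$ for a single $p$, which the Serre duality symmetry $\ell^{p,q}=\ell^{n-p,n-q-1}$ (dual to the $(0,0)$ case) forces to be $p=n$, i.e.\ bidegree $(n,n-1)$. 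This is exactly the asserted list.

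The real work is the lci refinement of the $b^{p,q}$: that for $q>0$ only the antidiagonals $p+q=n-1,n$ survive. By the first assertion it remains to treat $p+q\le n-2$, and here GNAPS is useless, being one--sided; the lci hypothesis must enter essentially, and indeed the conclusion is false for general isolated singularities (a cone over an abelian surface has $H^1(L)\ne0$, so both its $\ell$-- and $b$--invariants spill off the allowed antidiagonals). I would pass to the Serre--dual form $b^{p,q}=\dim H^{n-q}_E(\hX;\Omega^{n-p}_{\hX}(\log E))$ and apply $H^\bullet_E(\hX;-)$ to the restriction sequence (\ref{1.01}), obtaining
$$\cdots\to H^{k}_E(\hX;\Omega^a_{\hX}(\log E)(-E))\to H^{k}_E(\hX;\Omega^a_{\hX}(\log E))\to \ell^{a,k}\to H^{k+1}_E(\hX;\Omega^a_{\hX}(\log E)(-E))\to\cdots$$
with $a=n-p$, $k=n-q$. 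In the surviving range $p+q\le n-2$ one has $a+k\ge n+2$, so the middle term $\ell^{a,k}=\Gr_F^aH^{a+k}(L)$ vanishes by the connectivity result above, except for the boundary indices meeting $H^{2n-1}(L)$; those (the groups $b^{0,q}=\dim R^q\pi_*\scrO_{\hX}(-E)$) I would kill directly, using that the complete intersection property forces $R^q\pi_*\scrO_{\hX}=0$ and $H^q(E;\scrO_E)=0$ for $0<q\le n-2$. Once the $\ell$--terms are removed, the vanishing of $H^k_E(\hX;\Omega^a_{\hX}(\log E))$ is reduced to that of the neighbouring groups $H^\bullet_E(\hX;\Omega^a_{\hX}(\log E)(-E))$, which I would control by propagating the connectivity of $L$ through the mixed Hodge structures in (\ref{1.11}) and the sequences (\ref{1.00}), (\ref{1.2}).

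I expect this last propagation to be the main obstacle: one must match the Hodge--graded pieces of the several three--term sequences and verify that it is the connectivity of the link, not merely GNAPS, that annihilates every group strictly below the antidiagonal $p+q=n-1$, while the boundary antidiagonals $p+q=n-1,n$ are (correctly) left alone. Conceptually the cleanest organizing principle is to view the whole concentration phenomenon as the Hodge--theoretic shadow of the Milnor--Hamm theorem that the Milnor fibre of an ICIS is a bouquet of $n$--spheres: the concentration of its reduced cohomology in the single degree $n$ is precisely what pushes the Du Bois invariants onto the two adjacent antidiagonals.
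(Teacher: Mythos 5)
The paper gives no proof of this statement (it is quoted from \cite{SteenbrinkDB} with a \,\qedsymbol), so your attempt has to be measured against Steenbrink's argument. Your first and third assertions are fine: $b^{p,q}=0$ for $p+q>n$ is literally Theorem~\ref{GNAPS}, and your derivation of the $\ell^{p,q}$ statement from Hamm's $(n-2)$-connectivity of the link of an isolated complete intersection, Poincar\'e duality on the $(2n-1)$-manifold $L$, and the symmetry $\ell^{p,q}=\ell^{n-p,n-q-1}$ to isolate the bidegree $(n,n-1)$ in top degree, is exactly Steenbrink's Lemma~1.

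The genuine gap is in the central assertion, $b^{p,q}=0$ for $q>0$ and $p+q\le n-2$, which you yourself leave as ``the main obstacle,'' and the route you sketch for it does not close. After you strip the $\ell$-terms out of the local-cohomology restriction sequence, what remains to be killed is $H^k_E(\hX;\Omega^a_{\hX}(\log E)(-E))$ for $a+k\ge n+2$, $k\le n-1$; by Serre duality these are the groups $H^{q}(\hX;\Omega^{p}_{\hX}(\log E))$ with $p,q\ge 1$, $p+q\le n-2$ --- objects of the same nature as the ones you started with, now on the complementary antidiagonals, so the reduction is circular as it stands. In that range the dual form of Theorem~\ref{GNAPS} gives $H^j_E(\hX;\Omega^p_{\hX}(\log E))=0$ for $p+j<n$, whence $H^{q}(\hX;\Omega^{p}_{\hX}(\log E))\cong H^q(U;\Omega^p_U)\cong H^{q+1}_x(X;\Omega^p_X)$, and the vanishing of this local cohomology for $p+q\le n-2$ is precisely the depth estimate $\operatorname{depth}\Omega^p_{X,x}\ge n-p$ for an isolated complete intersection (Greuel/Lebelt; the present paper records only the case $p=1$ in Lemma~\ref{reflexive}). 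That coherent-algebraic input is the essential place where the lci hypothesis enters Steenbrink's proof, and it cannot be obtained by ``propagating the connectivity of $L$ through the mixed Hodge structures'': the coherent cohomology of the punctured neighbourhood $U$ is not determined by the topology of its retract $L$ (already $H^0(U;\Omega^1_U)$ is infinite-dimensional while $H^1(L)$ is finite). Your closing heuristic that the concentration is the Hodge-theoretic shadow of the bouquet theorem is appealing, but the proof genuinely requires the syzygy/depth theorem for the K\"ahler differentials of an ICIS (equivalently, Greuel's exactness of $\Omega^\bullet_{X,x}$ in degrees $0<k<n$), which your proposal never supplies.
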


Another essential tool is the semipurity  theorem of Goresky-MacPherson \cite[(1.12)]{Steenbrink}:

\begin{theorem}\label{semipure} The morphism of mixed Hodge structures  $H^k_E(\hX) \to H^k(E)$ is injective for $k\leq  n$, surjective for $k\geq n$, and hence an isomorphism for $k=n$. \qed
\end{theorem}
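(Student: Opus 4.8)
The plan is to extract Theorem~\ref{semipure} from the long exact sequence of mixed Hodge structures \eqref{1.11} together with standard weight bounds, using the strictness of morphisms of mixed Hodge structures. Throughout, $n = \dim X$ and all components of $E$ are assumed compact K\"ahler, so that \eqref{1.11} is an exact sequence of mixed Hodge structures. Writing $j_k \colon H^k_E(\hX) \to H^k(E)$ for the forget-supports map, $\delta_k \colon H^k(L) \to H^{k+1}_E(\hX)$ for the connecting map, and $r_k \colon H^k(E) \to H^k(L)$ for the restriction, exactness shows that $j_k$ is injective $\iff \delta_{k-1} = 0$ and surjective $\iff r_k = 0$. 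Thus the entire statement is reduced to the vanishing of certain morphisms of mixed Hodge structures, which I will force for weight reasons.

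First I would record the three relevant weight estimates. Since $E$ is proper, $H^k(E)$ has weights $\le k$. Since $U := \hX \setminus E$ is smooth and $L$ is homotopy equivalent to $U$, the group $H^k(L) = H^k(U)$ has weights $\ge k$. Finally, Poincar\'e--Alexander--Lefschetz duality in the $2n$-manifold $\hX$ gives an isomorphism of mixed Hodge structures $H^k_E(\hX) \cong H_{2n-k}(E)(-n)$; as $E$ is compact, $H_{2n-k}(E) \cong H^{2n-k}(E)^\vee$ has weights $\ge -(2n-k)$, so after the Tate twist $H^k_E(\hX)$ has weights $\ge k$. These bounds already imply, by strictness, that the image of any $j_k$ is pure of weight $k$, but that alone is not enough: the genuine input is the semipurity of the link itself, namely that $H^j(L)$ is pure of weight $j$ for $j \le n-1$ (equivalently, dually via $H^j(L) \cong H^{2n-1-j}(L)^\vee(-n)$, pure of weight $j+1$ for $j \ge n$).

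Granting link semipurity, the argument closes cleanly. For $j \le n-1$, the map $\delta_j \colon H^j(L) \to H^{j+1}_E(\hX)$ has source pure of weight $j$ and target of weights $\ge j+1$; its image is simultaneously a quotient of a pure weight-$j$ structure and a sub-object of weights $\ge j+1$, hence $\delta_j = 0$. Taking $k = j+1 \le n$ gives injectivity of $j_k$ for all $k \le n$. Symmetrically, for $k \ge n$ the restriction $r_k \colon H^k(E) \to H^k(L)$ has source of weights $\le k$ and target pure of weight $k+1$, so $r_k = 0$ and $j_k$ is surjective; alternatively this surjectivity follows from the injectivity range by the Poincar\'e duality pairing $H^k_E(\hX) \times H^{2n-k}(E) \to \Q(-n)$, under which $j_k$ and $j_{2n-k}$ are transpose. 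At $k = n$ both conclusions apply, giving the isomorphism.

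The main obstacle is precisely the link semipurity $H^j(L)$ pure of weight $j$ for $j < n$, which is the true content of the Goresky--MacPherson theorem and the only step that is not a formal consequence of weight strictness. I would obtain it either from stratified Morse theory in the original topological form, or Hodge-theoretically from the vanishing pattern of the link invariants $\ell^{p,q} = \dim \Gr^p_F H^{p+q}(L)$ recorded in Theorem~\ref{numervan} together with the duality $\ell^{p,q} = \ell^{n-p,n-q-1}$, which encode exactly this purity. A secondary point requiring care is the bookkeeping of the Tate twists in the duality isomorphisms $H^k_E(\hX) \cong H_{2n-k}(E)(-n)$ and $H^j(L) \cong H^{2n-1-j}(L)^\vee(-n)$, since these govern the boundary behavior at $k = n$ where injectivity and surjectivity must coincide.
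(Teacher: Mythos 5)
The paper does not actually prove Theorem~\ref{semipure}: it is stated with a \qed and attributed to Goresky--MacPherson, so there is no internal argument to compare against. Your reduction of the statement to weight bounds via the link exact sequence \eqref{1.11} is the standard translation and is formally sound: $H^k(E)$ has weights $\leq k$ because $E$ is compact, $H^k_E(\hX)\cong H^{2n-k}(E)^\vee(-n)$ has weights $\geq k$, and once one knows that $H^j(L)$ has weights $\leq j$ for $j\leq n-1$ (equivalently, weights $\geq j+1$ for $j\geq n$), strictness kills $\delta_{j}$ and $r_k$ in the stated ranges exactly as you say. Two corrections, though. First, your formulation of the key input is too strong: $H^j(L)$ is \emph{not} pure of weight $j$ for $j\leq n-1$ in general (already $H^1$ of the link of a cusp surface singularity carries weight-$0$ classes coming from the cycle of rational curves); the correct statement is only the upper bound $W_jH^j(L)=H^j(L)$. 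Your argument survives because it only uses the upper bound on the source of $\delta_j$, but if you set out to prove purity you would be proving something false.

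Second, and more seriously, the one concrete mechanism you offer for establishing the link semipurity --- the vanishing pattern of the invariants $\ell^{p,q}=\dim\Gr_F^pH^{p+q}(L)$ from Theorem~\ref{numervan} --- cannot work. Those invariants record the Hodge filtration $F$, and knowing which $\Gr_F^p$ vanish places no constraint on the weight filtration $W$; moreover that vanishing is itself a consequence of Theorem~\ref{GNAPS} and (in its strong form) requires the lci hypothesis, whereas Theorem~\ref{semipure} holds for an arbitrary isolated singularity. So the only working route you name is stratified Morse theory, i.e.\ citing Goresky--MacPherson, which is precisely what the paper does. If you want a genuinely Hodge-theoretic proof of the weight bound on $H^j(L)$, the standard ones (Steenbrink, Navarro Aznar, Guill\'en--Navarro Aznar) go through the weight spectral sequence attached to the normal crossing divisor $E$, computing $\Gr^W_\bullet H^k_E(\hX)$ from the cohomology of the strata of $E$ and checking the relevant graded pieces vanish; that is a different and more substantial argument than anything sketched in your proposal.
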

 
 The first statement in the following lemma is due  to Namikawa-Steenbrink  \cite[p.\ 407]{NS}, \cite[p.\ 1369]{SteenbrinkDB}:

\begin{lemma}\label{convzero}  The hypercohomology groups $\mathbb{H}^k(\hX; \Omega^\bullet_{\hX}(\log E)(-E))=0$ for all $k$, and hence the spectral sequence with $E_1^{p,q}= H^q(\hX; \Omega^p_{\hX}(\log E)(-E))$ converges to zero.  Dually, the hypercohomology groups $\mathbb{H}^k_E (\hX; \Omega^\bullet_{\hX}(\log E))=0$ for all $k$, and hence the spectral sequence with $E_1^{p,q}= H^q_E(\hX; \Omega^p_{\hX}(\log E))$ converges to zero. 
\end{lemma}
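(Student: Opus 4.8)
The plan is to deduce the first vanishing directly from the short exact sequence of complexes (\ref{1.00}),
$$0 \to \Omega^\bullet_{\hX}(\log E)(-E) \to \Omega^\bullet_{\hX} \to \Omega^\bullet_E/\tau^\bullet_E \to 0,$$
by passing to the associated long exact sequence in hypercohomology and identifying the two outer terms topologically. First I would invoke the holomorphic Poincar\'e lemma: $\Omega^\bullet_{\hX}$ is a resolution of the constant sheaf $\Cee_{\hX}$, so $\mathbb{H}^k(\hX;\Omega^\bullet_{\hX}) \cong H^k(\hX;\Cee)$. Likewise, since $(\Omega^\bullet_E/\tau^\bullet_E, d)$ is a resolution of $\Cee_E$, and this sheaf is supported on $E$, we have $\mathbb{H}^k(\hX;\Omega^\bullet_E/\tau^\bullet_E) \cong H^k(E;\Cee)$. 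Under these identifications the map on hypercohomology induced by the surjection in (\ref{1.00}) is precisely the restriction map $H^k(\hX;\Cee) \to H^k(E;\Cee)$ attached to the inclusion $E \hookrightarrow \hX$.

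Because $X$ is a good (contractible Stein) representative and $\pi$ is proper, $E$ is a deformation retract of $\hX$, so this restriction map is an isomorphism for every $k$. Feeding the isomorphism $\mathbb{H}^\bullet(\hX;\Omega^\bullet_{\hX}) \cong \mathbb{H}^\bullet(\hX;\Omega^\bullet_E/\tau^\bullet_E)$ into the long exact hypercohomology sequence collapses it and forces $\mathbb{H}^k(\hX;\Omega^\bullet_{\hX}(\log E)(-E)) = 0$ for all $k$. The spectral sequence with $E_1^{p,q} = H^q(\hX;\Omega^p_{\hX}(\log E)(-E))$ is the Hodge-to-de Rham (Fr\"olicher) spectral sequence of this complex, so it abuts to the hypercohomology just shown to vanish; hence it converges to zero.

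For the dual statement I would exhibit the wedge-product pairing of complexes
$$\Omega^p_{\hX}(\log E)(-E) \otimes \Omega^{n-p}_{\hX}(\log E) \to \Omega^n_{\hX}(\log E)(-E) = \omega_{\hX},$$
which is compatible with the de Rham differentials up to the usual Leibniz boundary term. Combined with the trace $H^n_E(\hX;\omega_{\hX}) \to \Cee$, this induces a perfect Serre--Grothendieck duality pairing
$$\mathbb{H}^k(\hX;\Omega^\bullet_{\hX}(\log E)(-E)) \times \mathbb{H}^{2n-k}_E(\hX;\Omega^\bullet_{\hX}(\log E)) \to \Cee,$$
whose $E_1$-level is the sheaf-wise duality $H^q(\hX;\Omega^p_{\hX}(\log E)(-E)) \cong H^{n-q}_E(\hX;\Omega^{n-p}_{\hX}(\log E))^{\vee}$ already recorded in the definition of the invariants $b^{p,q}$. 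The vanishing of all $\mathbb{H}^k(\hX;\Omega^\bullet_{\hX}(\log E)(-E))$ therefore forces the vanishing of all $\mathbb{H}^{2n-k}_E(\hX;\Omega^\bullet_{\hX}(\log E))$, and the corresponding spectral sequence with $E_1^{p,q} = H^q_E(\hX;\Omega^p_{\hX}(\log E))$ again converges to zero.

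I expect the only delicate point to be the bookkeeping in this duality step: checking that the wedge pairing is a morphism of complexes up to homotopy, so that it descends to hypercohomology and to the spectral-sequence pages; tracking the degree shift to $2n-k$; and confirming nondegeneracy rather than a mere pairing, which is where the local-cohomology supports and the trace map must be handled with care. By contrast, the topological input (the deformation-retract isomorphism) is routine for a good representative, and the identification of the outer terms in the long exact sequence is standard, so the duality compatibility is the genuine obstacle.
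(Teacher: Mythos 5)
Your argument for the first vanishing is correct and is a genuinely different route from the paper's: you use the short exact sequence (\ref{1.00}) to identify $\mathbb{H}^k(\hX;\Omega^\bullet_{\hX}(\log E)(-E))$ as the cone on the restriction $H^k(\hX;\Cee)\to H^k(E;\Cee)$, which is an isomorphism because $E$ is a deformation retract of $\hX$. This is essentially the Namikawa--Steenbrink argument the paper cites, and it is fine. The paper instead proves only the \emph{second} statement, directly: it uses the local-cohomology triangle
$$\cdots \to \mathbb{H}^k(\hX;\Omega^\bullet_{\hX}(\log E)) \to \mathbb{H}^k(U;\Omega^\bullet_U) \to \mathbb{H}^{k+1}_E(\hX;\Omega^\bullet_{\hX}(\log E)) \to \cdots$$
together with the identifications $\mathbb{H}^k(\hX;\Omega^\bullet_{\hX}(\log E))\cong H^k(U)\cong \mathbb{H}^k(U;\Omega^\bullet_U)$, under which the first map is the identity. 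This is the exact analogue of your first-half argument, with supports, and it disposes of the second statement with no duality at all.

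The duality step you use to get from the first statement to the second is where your proof has a genuine gap, and you have correctly located it but not closed it. The wedge pairing $\Omega^p_{\hX}(\log E)(-E)\otimes\Omega^{n-p}_{\hX}(\log E)\to\omega_{\hX}$ is \emph{not} a morphism of complexes into $\omega_{\hX}$ placed in degree $n$ with zero differential: for $\alpha$ of degree $p$ and $\beta$ of degree $n-p-1$ the chain-map condition requires $d\alpha\wedge\beta+(-1)^p\alpha\wedge d\beta=0$, but this expression equals $d(\alpha\wedge\beta)$, which is $d$ of an $(n-1)$-form and has no reason to vanish. So the "Leibniz boundary term" is precisely the obstruction, not a harmless correction, and the pairing does not descend to hypercohomology in the way you assert. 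What is actually needed is the statement that the $E_1$-differentials $d\colon H^q(\Omega^p(\log E)(-E))\to H^q(\Omega^{p+1}(\log E)(-E))$ and $d\colon H^{n-q-1}_E(\Omega^{n-p-1}(\log E))\to H^{n-q-1}_E(\Omega^{n-p}(\log E))$ are adjoint (up to sign) under the coherent Serre duality defining $b^{p,q}$, and that this adjointness propagates through all pages of the two spectral sequences to give dual abutments; this is true but requires a Stokes-type integration-by-parts argument and is not supplied. The simplest repair is to drop the duality entirely and prove the second statement directly as the paper does, by the local-cohomology long exact sequence displayed above.
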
 
\begin{proof} We shall just prove the second statement. With $U =\hX -E$, we have the long exact hypercohomology  sequence
 
$$\cdots \to \mathbb{H}^k (\hX;\Omega^\bullet_{\hX}(\log E)) \to \mathbb{H}^k(U;\Omega^\bullet_{\hX}(\log E)|U)\to \mathbb{H}^{k+1}_E (\hX;\Omega^\bullet_{\hX}(\log E)) \to \cdots$$
As $\mathbb{H}^k (\hX;\Omega^\bullet_{\hX}(\log E))\cong H^k(U)$ and 
$\mathbb{H}^k(U;\Omega^\bullet_{\hX}(\log E)|U) = \mathbb{H}^k(U;\Omega^\bullet_{U}) \cong H^k(U)$ 
and the natural map is the identity under these identifications, we see that $\mathbb{H}^{k+1}_E (\hX;\Omega^\bullet_{\hX}(\log E))=0$ for all $k$. As $E_1^{p,q} \implies  \mathbb{H}^{p+q}_E (\hX;\Omega^\bullet_{\hX}(\log E))$, we are done.
\end{proof}

\begin{corollary}\label{1.5} Suppose that $(X,x)$ is a rational or more generally a  Du Bois singularity. Then  $H^1_E(\hX; \Omega_{\hX}^{n-1}(\log E)) = 0$, and  there is an exact sequence
$$0 \to H^2_E(\hX;\Omega^{n-2}_{\hX}(\log E)) \xrightarrow{d} H^2_E(\hX;\Omega^{n-1}_{\hX}(\log E))\to A\to 0,$$
where  $A = \Ker\{d\colon H^3_E(\hX;\Omega^{n-3}_{\hX}(\log E)) \to H^3_E(\hX;\Omega^{n-2}_{\hX}(\log E))\}$.
\end{corollary}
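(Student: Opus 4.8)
The plan is to read both assertions off the spectral sequence of Lemma~\ref{convzero}, namely $E_1^{p,q}=H^q_E(\hX;\Omega^p_{\hX}(\log E))\Rightarrow 0$ with $d_1$ the de Rham differential, combined with the vanishing $E_1^{p,q}=0$ for $p+q<n$ (Theorem~\ref{GNAPS}) and the rationality input of Lemma~\ref{ratlplus}. The first order of business is to record the extra vanishing forced by rationality. Using the duality $\dim H^q_E(\hX;\Omega^p_{\hX}(\log E))=b^{n-p,n-q}$ together with $R^i\pi_*\scrO_{\hX}(-E)=0$ for $i>0$, one finds $E_1^{n,1}=b^{0,n-1}=\dim H^{n-1}(\hX;\scrO_{\hX}(-E))=0$ and $E_1^{n,2}=b^{0,n-2}=0$ (here $n\ge 3$). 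Together with Theorem~\ref{GNAPS} this shows that the entire row $q=1$ and the entry $E_1^{n,2}$ vanish.

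The first assertion is then immediate: the term $E_1^{n-1,1}=H^1_E(\hX;\Omega^{n-1}_{\hX}(\log E))$ receives no $d_1$ (its source $E_1^{n-2,1}$ lies below the line $p+q=n$) and maps under $d_1$ into $E_1^{n,1}=0$; since no higher differential can enter or leave an entry in the row $q=1$ (the neighboring positions all vanish), it survives to $E_\infty$, which is $0$. Hence $H^1_E(\hX;\Omega^{n-1}_{\hX}(\log E))=0$.

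For the exact sequence I would work on the two lines $p+q=n$ and $p+q=n+1$. Because $E_1^{n,2}=0$, the row $q=2$ reduces to the two-term complex $0\to E_1^{n-2,2}\xrightarrow{d_1}E_1^{n-1,2}\to 0$, so $\Ker(d_1)=E_2^{n-2,2}$ and $\Coker(d_1)=E_2^{n-1,2}$. The term $E_2^{n-2,2}$ lies on $p+q=n$, has no incoming differential, and its only possibly nonzero outgoing differential $d_2$ lands in $E_2^{n,1}=0$ (the row $q=1$ vanishes); hence $E_\infty^{n-2,2}=E_2^{n-2,2}=0$, which gives the injectivity of $d\colon H^2_E(\hX;\Omega^{n-2}_{\hX}(\log E))\to H^2_E(\hX;\Omega^{n-1}_{\hX}(\log E))$ and identifies its cokernel with $E_2^{n-1,2}$. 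On the other hand $A=\Ker\{d_1\colon E_1^{n-3,3}\to E_1^{n-2,3}\}=E_2^{n-3,3}$, and the differential $d_2\colon E_2^{n-3,3}\to E_2^{n-1,2}$ is injective: the source has no incoming differential, and after $d_2$ its only further outgoing differential $d_3$ lands in $E_3^{n,1}=0$, so $E_\infty^{n-3,3}=\Ker(d_2)=0$. Thus it remains to show that this $d_2$ is also surjective, i.e. that $E_3^{n-1,2}=0$; this gives $A\cong E_2^{n-1,2}=\Coker(d)$ and hence the desired short exact sequence.

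The surjectivity of $d_2$ is the crux. For $n=3$ it is automatic: the only entries on the line $p+q=n$ with $q\ge 2$ are $(n-2,2)=(1,2)$ and $(n-3,3)=(0,3)$, so $E_2^{n-1,2}=(2,2)$ receives only the differential $d_2$ out of $(0,3)$, and $E_\infty^{2,2}=E_2^{2,2}/\im(d_2)=0$ forces surjectivity. For $n\ge 4$, however, the line $p+q=n$ carries further entries $(n-4,4),(n-5,5),\dots$ (of dimensions $b^{4,n-4},\dots$), and their higher differentials $d_3,d_4,\dots$ can a priori also map into bidegree $(n-1,2)$; indeed, the death of such a source at $E_\infty$ shows that any nonzero part of $\Ker(d_2)$ at $(n-4,4)$ would inject into $E_3^{n-1,2}$. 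The substance of the statement is precisely that these higher contributions are absent, equivalently that $d_r=0$ into $(n-1,2)$ for all $r\ge 3$. I would attack this by showing that the higher-page sources on $p+q=n$ already die under $d_2$, exploiting the mixed Hodge structures carried by the $E_1$-terms and the strictness of the weight filtration to force the vanishing of $d_{\ge 3}$; controlling these higher differentials is the main obstacle, and is where the real work lies beyond the formal bookkeeping above.
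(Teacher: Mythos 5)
Your reading is the same as the paper's: the entire printed proof of Corollary~\ref{1.5} is the one sentence that it ``follows easily by examining the $E_1$ page'' of the spectral sequence $E_1^{p,q}=H^q_E(\hX;\Omega^p_{\hX}(\log E))\Rightarrow 0$ of Lemma~\ref{convzero}, and your bookkeeping carries that out correctly. The vanishings you extract from rationality ($E_1^{n,1}=E_1^{n,2}=0$ via $b^{0,n-1}=b^{0,n-2}=0$ and Lemma~\ref{ratlplus}), the proof of $H^1_E(\hX;\Omega^{n-1}_{\hX}(\log E))=0$, the injectivity of $d$ on $H^2_E(\hX;\Omega^{n-2}_{\hX}(\log E))$, the identification $A=E_2^{n-3,3}$ together with the injectivity of $d_2$ on it, and the full argument when $n=3$ are all complete and correct.

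The difficulty you flag at the end is, however, a genuine gap, not a technicality: for $n\ge 4$ the convergence to zero only says that $E_2^{n-1,2}=\Coker(d)$ is exhausted by the successive images of $d_2,d_3,\dots,d_{n-1}$ emanating from $(n-3,3),(n-4,4),\dots,(0,n)$ on the line $p+q=n$, and none of the $E_1$-page vanishings at your disposal kill those sources --- for instance $E_1^{1,n-1}$ has dimension $b^{n-1,1}$, one of the main contributions to $\dim T^1_{X,x}$, and $E_1^{0,n}=H^n_E(\hX;\scrO_{\hX})$ is not even finite-dimensional. So the assertion $E_3^{n-1,2}=0$, equivalently the vanishing of every $d_r$, $r\ge 3$, into bidegree $(n-1,2)$, is real additional content that your proposal does not supply; your suggestion to invoke strictness of mixed Hodge structures on the $E_1$ terms is only a direction, not an argument, since this spectral sequence abuts to zero and manifestly does not degenerate. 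In fairness, the paper's one-line proof does not supply this step either: the exact sequence is attributed in Theorem~\ref{maintheorem}(iv) to \cite{SteenbrinkDB}, and the control of the higher differentials has to come from the dual statement proved there (about the spectral sequence $H^q(\hX;\Omega^p_{\hX}(\log E)(-E))\Rightarrow 0$). Note that what the rest of the paper uses repeatedly is only the first assertion, $H^1_E(\hX;\Omega^{n-1}_{\hX}(\log E))=0$, together with the $n=3$ case of the exact sequence, and those parts you have proved in full.
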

\begin{proof} The Du Bois condition, that $R^i\pi_*\scrO_{\hX}(-E) = 0$ for $i >0$, is equivalent by duality to the statement that $H^i_E(\hX; \Omega^n_{\hX}(\log E)) = 0$ for $i< n$ (cf.\ Lemma~\ref{ratlplus}). The corollary then  follows easily by examining the $E_1$ and $E_2$ pages of the above hypercohomology spectral sequence $E_1^{p,q}= H^q_E(\hX; \Omega^p_{\hX}(\log E)) \implies 0$ and using Theorem~\ref{GNAPS}. 
\end{proof}  

(Compare \cite{SteenbrinkDB} for a related  dual statement.)

\subsection{Deformation theory preliminaries}
We begin with the case where  $(X,x)$ is the germ of an isolated  hypersurface singularity. In this case, $H^0(X;T^1_X)$ is a cyclic $\scrO_{X,x}$-module. In particular,  $u\in H^0(X;T^1_X)$ is a generator for $H^0(X;T^1_X)$ as an $\scrO_{X,x}$-module $\iff$ $u\notin \mathfrak{m}_x\cdot H^0(X;T^1_X)$. The relevance for smoothings of $X$ is the following well-known lemma:

\begin{lemma}\label{defsmooth} Let $\mathcal{X} \to \Delta$ be a one-parameter deformation of $(X,x)$ with Kodaira-Spencer class $u\in H^0(X;T^1_X)$. Then $\mathcal{X}$ is smooth in a neighborhood of $x$, and in particular gives a smoothing of $(X,x)$ $\iff$ $u\notin \mathfrak{m}_x\cdot H^0(X;T^1_X)$.
\end{lemma}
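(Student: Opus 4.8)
The plan is to reduce everything to the explicit local equation and to compute the Kodaira--Spencer class by hand. Choose an embedding $(X,x)\cong(\{f=0\},0)\subseteq(\Cee^{n+1},0)$, so that $H^0(X;T^1_X)=T^1_{X,x}$ is the Tjurina algebra $\scrO_{\Cee^{n+1},0}/(f,\partial f/\partial z_0,\dots,\partial f/\partial z_n)$, a finite-dimensional local $\Cee$-algebra that is cyclic over $\scrO_{X,x}$ with generator the class of $1$. Since $f(0)=0$ and all $\partial f/\partial z_i$ vanish at $0$ (isolated singularity), the ideal $(f,\partial f/\partial z_i)$ lies in $\mathfrak{m}_x$, so the augmentation gives a canonical isomorphism $T^1_{X,x}/\mathfrak{m}_x\cdot T^1_{X,x}\cong\Cee$ under which a class $[g]$ maps to the value $g(0)$. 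First I would record that, because deformations of an isolated hypersurface singularity are again hypersurfaces, $\mathcal{X}$ is obtained by pulling the miniversal deformation $\{F(z,t_1,\dots,t_\tau)=f(z)+\sum_j t_j g_j(z)=0\}$ back along some holomorphic germ $\phi\colon(\Delta,0)\to(\Cee^\tau,0)$; writing $G(z,s)=F(z,\phi(s))$ this exhibits $\mathcal{X}$ as the hypersurface $\{G=0\}\subseteq\Cee^{n+1}\times\Delta$ with $G(z,0)=f(z)$, and the Kodaira--Spencer class is exactly $u=[\dot G]$, where $\dot G=\partial G/\partial s|_{s=0}=\sum_j\phi_j'(0)g_j$.

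With this in place, the stated equivalence is a one-line differential calculation. The total space $\mathcal{X}$ is smooth at $x=(0,0)$ iff $dG(0,0)\neq0$; since $\partial G/\partial z_i(0,0)=\partial f/\partial z_i(0)=0$, we have $dG(0,0)=\dot G(0)\,ds$, so smoothness at $x$ is equivalent to $\dot G(0)\neq0$. By the previous paragraph, $\dot G(0)$ is precisely the image of $u$ under $T^1_{X,x}\to T^1_{X,x}/\mathfrak{m}_x T^1_{X,x}\cong\Cee$, which is nonzero iff $u\notin\mathfrak{m}_x T^1_{X,x}$ (equivalently, by Nakayama, iff $u$ generates the cyclic module $T^1_{X,x}$). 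This yields the asserted equivalence between smoothness of the total space near $x$ and $u\notin\mathfrak{m}_x H^0(X;T^1_X)$.

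It remains to see that smoothness of the total space forces $\mathcal{X}\to\Delta$ to be a genuine smoothing, i.e.\ that the nearby fibers are smooth near $x$. I would analyze the relative critical locus $\Sigma=\{G=0,\ \partial G/\partial z_0=\dots=\partial G/\partial z_n=0\}$, the set of singular points of the fibers of $\pi$. Over $s=0$ this cuts out $\{f=0\}\cap\{\partial f/\partial z_i=0\}=\operatorname{Sing}(X)=\{x\}$, so $\Sigma\cap\pi^{-1}(0)=\{x\}$ as germs and hence $\dim_x\Sigma\leq1$. To rule out a positive-dimensional component, suppose $\gamma\subseteq\Sigma$ were a curve through $x$. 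On $\gamma$ one has $G\equiv0$ and $\partial G/\partial z_i\equiv0$, so $dG$ along $\gamma$ equals $(\partial G/\partial s)\,ds$; but $G\equiv0$ on $\gamma$ forces $dG$ to annihilate the tangent to $\gamma$, so either $ds$ annihilates that tangent everywhere (then $s\equiv0$ on $\gamma$, so $\gamma\subseteq\Sigma\cap\pi^{-1}(0)=\{x\}$, a contradiction) or $\partial G/\partial s\equiv0$ on $\gamma$ (then $dG\equiv0$ along $\gamma\subseteq\mathcal{X}$, contradicting smoothness of the total space). Hence $\Sigma=\{x\}$, so for $0\neq s$ small the fiber $\mathcal{X}_s$ has no singular points near $x$; together with flatness, this says $\mathcal{X}$ is a smoothing.

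The main obstacle is conceptual rather than computational: one must be careful that the correct characterization is of smoothness of the total space, not of being a smoothing. Indeed, a family such as $G=z_0^2+\dots+z_n^2-s z_0$ is a smoothing of the node whose total space is singular at $x$ and whose Kodaira--Spencer class lies in $\mathfrak{m}_x T^1_{X,x}$ (the smoothing is achieved only at second order), so "smoothing" alone is strictly weaker than the stated condition. The one direction requiring genuine argument is the final step, that a smooth total space yields a smoothing, and its heart is the dimension count for $\Sigma$ together with the observation that a hypothetical curve of singular points of the fibers would have to be a curve of singular points of $\mathcal{X}$ itself.
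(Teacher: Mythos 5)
Your proof is correct, but it takes a genuinely different route from the paper's. The paper argues homologically and without coordinates: it characterizes smoothness of the total space by the vanishing of $\Ext^1_R(\Omega^1_{\mathcal{X},x}|X,R)$ for $R=\scrO_{X,x}$, invokes the long exact $\Ext$ sequence coming from the conormal sequence of $X\subseteq\mathcal{X}$ (in which the Kodaira--Spencer class is \emph{defined} as $\partial(1)$), and concludes that the vanishing is equivalent to $\partial(1)$ generating the cyclic module $H^0(X;T^1_X)$, i.e.\ to $u\notin\mathfrak{m}_x\cdot H^0(X;T^1_X)$ by Nakayama. You instead write down the defining equation $G$ of the total space, identify the Kodaira--Spencer class with $[\dot G]$ in the Tjurina algebra, and read off smoothness at $x$ from $dG(0,0)=\dot G(0)\,ds$; this is an honest, elementary computation, though it is tied to the hypersurface presentation (which is the setting of the lemma) and quietly uses the standard facts that every flat deformation of a hypersurface germ is again a hypersurface and that $\partial(1)$ computes to $[\partial G/\partial s|_{s=0}]$ under the usual identification $T^1_{X,x}\cong\scrO/(f,\partial f/\partial z_i)$. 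Two things your write-up adds that the paper omits: a complete argument that a smooth total space actually yields a smoothing (your analysis of the relative critical locus $\Sigma$, showing $\pi$ is locally constant on it so that $\Sigma=\{x\}$, is exactly the right argument for the ``in particular'' clause, which the paper treats as well known), and the example $G=z_0^2+\cdots+z_n^2-sz_0$ showing that ``smoothing'' is strictly weaker than ``smooth total space,'' which correctly locates where the content of the lemma lies. The one degenerate point common to both arguments is that the equivalence as literally stated requires $T^1_{X,x}\neq 0$, i.e.\ that $x$ is genuinely a singular point, as is implicit throughout the paper.
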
 
\begin{proof} Let $R = \scrO_{X,x}$.  By a standard argument, $\mathcal{X}$ is smooth in a neighborhood of $x$ $\iff$ $\Ext^1_R(\Omega^1_{\mathcal{X},x}|X, R) = 0$.  The Kodaira-Spencer class $u$ of a one-parameter deformation $\mathcal{X}$ is by definition  $\partial(1)$ via the exact sequence  
$$\Hom_R(R, R)\cong R \xrightarrow{\partial} \Ext^1_R(\Omega^1_{X,x},R) \to \Ext^1_R(\Omega^1_{\mathcal{X},x}|X,R)\to 0.$$
Thus $\Omega^1_{\mathcal{X},x}$ is locally free in a neighborhood  of $x$ $\iff$ $\Ext^1_R(\Omega^1_{\mathcal{X},x}|X,R) =0$  $\iff$ $\partial(1)$ generates $\Ext^1_R(\Omega^1_{X,x},R) = H^0(X;T^1_X)$.
\end{proof} 

This motivates the following definition: 

\begin{definition}\label{deffirstordersm}  If $(X,x)$ is the germ of an isolated  hypersurface singularity, then an element $u\in H^0(X;T^1_X)$ is a \textsl{first order smoothing} of $X$ if $u\notin \mathfrak{m}_x\cdot H^0(X;T^1_X)$. For a compact $Y$ with isolated hypersurface singularities, a class $u \in \mathbb{T}^1_Y$ is a \textsl{first order smoothing} of $Y$ if the image of $u$ in $T^1_{Y,x}$ is a first order smoothing for every $x\in Y_{\text{\rm{sing}}}$. 
\end{definition}

Turning now to arbitrary isolated singularities, we make the following definitions:

\begin{definition} Let $\pi \colon \hX \to X$ be a resolution of  $X$, where $X$ has an isolated singularity at $x$. Then $\pi$ is \textsl{equivariant} if $R^0\pi_*T_{\hX}\cong T^0_X$. By e.g.\ \cite[Lemma 3.1]{F}, a small resolution is equivariant. Note that, for $q> 0$, $R^q\pi_*T_{\hX}$ is a torsion sheaf supported on $x$.

The resolution $\pi \colon \hX \to X$ is:
\begin{enumerate}
\item a \textsl{good resolution} or a  \textsl{log resolution} if $\pi^{-1}(x) = E$ is a divisor with simple normal crossings.
\item  a \textsl{good equivariant resolution} if $\pi$ is good and equivariant. 
\item  \textsl{crepant} if $K_{\hX} = \pi^*\omega_X$ and hence $K_{\hX}\cong \scrO_{\hX}$, and is a \textsl{good crepant resolution} if $\pi$ is also a good resolution. (Thus, for our purposes, a small resolution is crepant but not a good crepant resolution.) If there exists a crepant resolution, $(X,x)$ is a rational Gorenstein singularity.
\end{enumerate}
If $Y$ is a compact analytic space with isolated singularities, a resolution $\pi \colon \hY \to Y$ is always assumed to be an isomorphism away from the finite set $Y_{\text{sing}}$. Then $\pi \colon \hY \to Y$ is a good resolution if it is so at every point of $Y_{\text{sing}}$, and similarly for good equivariant resolutions, crepant resolutions, and good crepant resolutions. 
\end{definition}

It is well-known  that  good equivariant resolutions always exist:

\begin{lemma}\label{existgoodeq} Let $(X,x)$ be the germ of an isolated singularity. Then there exists a good equivariant resolution of $(X,x)$.
\end{lemma}
\begin{proof} (Sketch.) Choose a resolution algorithm which is functorial with respect to smooth morphisms as in e.g.\ \cite[\S3.4]{Kollarres}, yielding a resolution $\pi\colon\hX \to X$. If $\theta \in H^0(X;T^0_X)$, then $\theta$ has to vanish at $x$ since $x$ is an isolated singular point. Then the vector field $\theta$ can be integrated to a local one parameter group action as in \cite[3.9.1]{Kollarres}, i.e.\  a smooth morphism of germs $(\Delta, 0)\times (X,x) \to (X,x)$ such that the image of $\partial/\partial t$ is $\theta$. By the argument of \cite[3.9.1]{Kollarres}, the local one parameter group action lifts to an action on $\hX$. Differentiating this action gives a lift of $\theta$ to a neighborhood of the exceptional divisor in $\hX$. Then $R^0\pi_*T_{\hX}\to  T^0_X$ is surjective, and it is injective because $R^0\pi_*T_{\hX}$ is torsion free. Hence $R^0\pi_*T_{\hX} \cong T^0_X$. 
\end{proof}

We have the following basic result \cite{KollarMori}, \cite[(11.1)]{Kollar}:

\begin{theorem} A normal  Gorenstein   singularity is canonical if and only if it is rational. \qed 
\end{theorem}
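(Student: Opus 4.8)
The plan is to deduce both implications from a single application of Grothendieck--Serre relative duality for a log resolution $\pi\colon \hX \to X$, combined with the Grauert--Riemenschneider vanishing theorem. The crucial simplifying observation is that, because $(X,x)$ is Gorenstein, it is automatically Cohen--Macaulay and its dualizing sheaf $\omega_X = \scrO_X(K_X)$ is invertible; hence the dualizing complex is $\omega_X^\bullet = \omega_X[n]$ with $n = \dim X$, and since $\hX$ is smooth, $\pi^!\omega_X^\bullet = \omega_{\hX}[n]$. Applying relative duality to $\scrO_{\hX}$ then yields the isomorphism
$$R\pi_*\,\omega_{\hX}[n] \;\cong\; R\mathcal{H}om_X\!\big(R\pi_*\scrO_{\hX},\,\omega_X[n]\big)\qquad (\star)$$
in the derived category of coherent sheaves on $X$. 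Both directions of the theorem will fall out of comparing the two sides of $(\star)$.

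For the implication canonical $\Rightarrow$ rational, I would first invoke Grauert--Riemenschneider to get $R^i\pi_*\omega_{\hX} = 0$ for $i>0$, so that the left-hand side of $(\star)$ reduces, up to shift, to the single sheaf $\pi_*\omega_{\hX}$. Writing $K_{\hX} = \pi^*K_X + \sum_i a_iE_i$, the canonical hypothesis gives $a_i \geq 0$, so $\sum_i a_iE_i$ is an effective $\pi$-exceptional divisor and normality of $X$ forces $\pi_*\scrO_{\hX}(\sum_i a_iE_i) = \scrO_X$; thus $\pi_*\omega_{\hX} = \omega_X$. Feeding this into $(\star)$ gives $\omega_X[n] \cong R\mathcal{H}om_X(R\pi_*\scrO_{\hX}, \omega_X[n])$. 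Applying $R\mathcal{H}om_X(-,\omega_X[n])$ once more and using biduality for the dualizing complex (legitimate since $R\pi_*\scrO_{\hX}$ has bounded coherent cohomology) recovers $R\pi_*\scrO_{\hX} \cong R\mathcal{H}om_X(\omega_X,\omega_X) = \scrO_X$, because $\omega_X$ is a line bundle. Hence $R^i\pi_*\scrO_{\hX} = 0$ for $i>0$ and $\pi_*\scrO_{\hX} = \scrO_X$, i.e.\ $X$ is rational.

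For the converse, rational $\Rightarrow$ canonical, I would run the same computation in reverse: the hypothesis $R\pi_*\scrO_{\hX} = \scrO_X$ plugged into $(\star)$ gives $R\pi_*\omega_{\hX}[n] \cong R\mathcal{H}om_X(\scrO_X, \omega_X[n]) = \omega_X[n]$, so in particular $\pi_*\omega_{\hX} = \omega_X$. Expanding $\pi_*\omega_{\hX} = \omega_X \otimes \pi_*\scrO_{\hX}(\sum_i a_iE_i)$ with $K_{\hX} = \pi^*K_X + \sum_i a_iE_i$, this forces $\pi_*\scrO_{\hX}(\sum_i a_iE_i) = \scrO_X$. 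The final step is the elementary but essential divisor lemma: for a $\pi$-exceptional divisor $D = \sum_i a_iE_i$ on the resolution of a normal germ, $\pi_*\scrO_{\hX}(D) = \scrO_X$ holds if and only if every $a_i \geq 0$; indeed, if some $a_j < 0$, any section of $\pi_*\scrO_{\hX}(D)$ pulls back to a function vanishing to positive order along $E_j$, so $1 \notin \pi_*\scrO_{\hX}(D)$ and the pushforward is a proper ideal. Applying this forces all $a_i \geq 0$, i.e.\ $X$ is canonical.

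The main obstacle is largely bookkeeping rather than a genuine difficulty: one must set up Grothendieck duality in the analytic category with the correct normalization of dualizing complexes and shifts, and verify that the Gorenstein hypothesis really does collapse the dualizing complex to $\omega_X[n]$, so that $(\star)$ takes the clean form above. Once $(\star)$ and Grauert--Riemenschneider are in hand, both implications are short, and the only hands-on input is the divisor lemma in the final step, whose proof uses nothing beyond normality of $X$ and the exceptional nature of the $E_i$.
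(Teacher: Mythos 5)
Your argument is correct, but note that the paper does not prove this statement at all: it is quoted as a known result with a bare citation to Koll\'ar--Mori and to \cite[(11.1)]{Kollar}, so there is no internal proof to compare against. What you have written is essentially the standard argument from those sources (Elkik/Kempf's criterion): Grothendieck--Verdier duality for $\pi$ plus Grauert--Riemenschneider shows that, for a Gorenstein (hence Cohen--Macaulay) normal $X$, the condition $R\pi_*\scrO_{\hX}\cong\scrO_X$ is equivalent to $\pi_*\omega_{\hX}=\omega_X$, and the latter is equivalent to nonnegativity of the discrepancies because $\omega_X$ is invertible. Two small points are worth making explicit. First, since the paper works with Stein representatives of analytic germs, the duality you invoke is the analytic one for proper morphisms (Ramis--Ruget--Verdier); you flag this as bookkeeping, which is fair. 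Second, in the direction rational $\Rightarrow$ canonical you need the identification $\pi_*\omega_{\hX}\cong\omega_X$ to be an \emph{equality of subsheaves} of the sheaf of meromorphic $n$-forms, not merely an abstract isomorphism, before the discrepancy computation applies; this is fine because the duality isomorphism is induced by the trace map, which is the identity over the locus where $\pi$ is an isomorphism, and both sheaves are torsion-free, so the abstract isomorphism forces the natural inclusion $\pi_*\omega_{\hX}\subseteq\omega_X$ to be an equality. With that observation your divisor lemma ($\pi_*\scrO_{\hX}(D)=\scrO_X$ for exceptional $D$ iff $D\ge 0$) finishes the argument as you say.
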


\begin{remark}\label{inclusion}   If $X$ has  canonical singularities and $\pi$ is a good, not necessarily crepant resolution, there exists an effective divisor $D$ whose support is contained in $E$ such that $K_{\hX}\cong \scrO_{\hX}(D)$. It follows that, in general, there is a perfect pairing
$$\Omega^1_{\hX}\otimes \Omega^{n-1}_{\hX} \to \Omega^n_{\hX} \cong \scrO_{\hX}(D),$$
and thus $T_{\hX} \cong \Omega^{n-1}_{\hX}(-D)$. In particular, there is an inclusion $T_{\hX} \to \Omega^{n-1}_{\hX}$.
\end{remark}

We now assume that $(X,x)$ is the germ of an isolated Gorenstein singularity of dimension $n\geq 3$, with $\pi\colon \hX \to X$ a resolution of singularities and $X$   a contractible Stein representative for the germ $(X,x)$. 

\begin{lemma}\label{1.0} The sheaf $T^0_X$ has depth at least $2$. Equivalently, if $U = X-x= \hX -E$ and $i\colon U \to X$ is the inclusion, then the natural map $T^0_X \to i_*i^*T^0_X$ is an isomorphism, $H^0(X;T^0_X) \cong H^0(U; T_U)$, and $H^1_x(X; T^0_X) = 0$.
\end{lemma}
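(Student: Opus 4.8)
The plan is to prove Lemma~\ref{1.0}, which asserts that $T^0_X$ has depth at least $2$ at the isolated singular point $x$, under the hypothesis that $(X,x)$ is an isolated Gorenstein singularity of dimension $n\geq 3$. The statement has three equivalent formulations: that $T^0_X \to i_*i^*T^0_X$ is an isomorphism, that $H^0(X;T^0_X)\cong H^0(U;T_U)$, and that $H^1_x(X;T^0_X)=0$. These are linked by the standard local cohomology exact sequence, so the real content is a depth/local-cohomology vanishing statement.

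First I would recall the relationship between depth and local cohomology: for a coherent sheaf $\mathcal{F}$ on $X$ with support of the singularity concentrated at the point $x$, the condition $\operatorname{depth}_x \mathcal{F} \geq 2$ is equivalent to $H^0_x(X;\mathcal{F}) = H^1_x(X;\mathcal{F}) = 0$. Since $T^0_X = \mathcal{H}om_{\scrO_X}(\Omega^1_X,\scrO_X)$ is a sheaf of homomorphisms into $\scrO_X$, I would exploit the fact that $T^0_X$ is a second-syzygy-type (reflexive) sheaf. The key general principle is that the dual $\mathcal{H}om(\mathcal{G},\scrO_X)$ of any coherent sheaf automatically has depth at least $2$ wherever $\scrO_X$ itself has depth at least $2$, because a $\mathcal{H}om$ into a sheaf inherits the $S_2$ property from the target when the ambient ring is Cohen-Macaulay. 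Here the Gorenstein hypothesis (in particular Cohen-Macaulay) on $(X,x)$ guarantees $\operatorname{depth}_x \scrO_{X,x} = n \geq 3 \geq 2$.

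The cleanest route is therefore: let $R = \scrO_{X,x}$ and observe that $T^0_{X,x} = \Hom_R(\Omega^1_{X,x}, R)$. For any finitely generated $R$-module $M$, the dual $\Hom_R(M,R)$ satisfies Serre's condition $S_2$ provided $R$ is Cohen-Macaulay (more precisely, $\operatorname{depth} \Hom_R(M,R) \geq \min\{2,\operatorname{depth} R\}$ by a standard application of the long exact sequence obtained from a presentation $R^a \to R^b \to M \to 0$, giving $0 \to \Hom_R(M,R) \to R^b \to R^a$, so that $\Hom_R(M,R)$ is a second syzygy and hence has depth $\geq 2$). Since $R$ is Gorenstein of dimension $n\geq 3$, it is Cohen-Macaulay and $\operatorname{depth} R = n \geq 2$, so $\operatorname{depth}_x T^0_X \geq 2$. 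I would then translate this back into the three stated equivalent forms via the exact sequence $0 \to H^0_x(X;T^0_X) \to H^0(X;T^0_X) \to H^0(U;T_U) \to H^1_x(X;T^0_X) \to 0$, where the outer terms vanish precisely because of the depth bound.

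The main obstacle, and the step requiring the most care, is justifying the depth-$2$ claim for the dual module rather than simply asserting it. The subtlety is that $T^0_X$ is a dual sheaf, and one must verify that being a second syzygy module over a Cohen-Macaulay (here Gorenstein) local ring genuinely forces depth at least $\min\{2,\dim R\}$; this is where the Gorenstein and $n\geq 3$ hypotheses enter essentially, since for a surface singularity ($n=2$) the conclusion would be more delicate. I expect the cleanest presentation to invoke the left-exactness of $\Hom_R(-,R)$ applied to a free presentation of $\Omega^1_{X,x}$, exhibiting $T^0_{X,x}$ as the kernel of a map between free modules, and then to quote the standard fact that a kernel of a map of free modules over a Cohen-Macaulay ring has depth at least $2$ (equivalently, is $S_2$). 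This is the heart of the argument; everything else is formal manipulation of the local cohomology sequence.
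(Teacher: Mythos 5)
Your proposal is correct and is essentially the paper's own argument: the paper likewise dualizes a presentation of $\Omega^1_X$ to exhibit $T^0_X$ (locally) as a kernel $0\to T^0_X\to \scrO_X^a\to\scrO_X^b$, i.e.\ a second syzygy, and concludes depth at least $2$ from $\operatorname{depth}\scrO_X\geq 3$. The only difference is presentational — you spell out the local cohomology sequence linking the three equivalent formulations, which the paper leaves implicit — and note that full Cohen--Macaulayness is not really needed here, only that $\scrO_X$ has depth at least $2$.
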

\begin{proof} By assumption,  $\scrO_X$ has depth at least three.  Since $T^0_X$ is the dual of the sheaf $\Omega^1_X$,  dualizing a local presentation $\scrO_X^b \to \scrO_X^a \to \Omega^1_X\to 0$ gives   (locally)  an exact sequence $0\to T^0_X\to \scrO_X^a \to \scrO_X^b$. Hence $T^0_X$ has depth at least $2$.
\end{proof}

\begin{lemma}\label{imageH0} If $\hX$ is a good equivariant resolution of $X$ and $X$ has canonical singularities, $\pi_*T_{\hX} = \pi_*T_{\hX}(D)  = \pi_*\Omega^{n-1}_{\hX}$, and the natural map $H^0(\hX; \Omega^{n-1}_{\hX}) \to H^0(U; T_U)$ is an isomorphism.
\end{lemma}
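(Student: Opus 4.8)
The plan is to reduce everything to a sandwiching argument among subsheaves of $i_*T_U$, where $i\colon U = X-x \to X$ is the inclusion. First, by Remark~\ref{inclusion}, canonicity gives an effective divisor $D$ supported on $E$ with $K_{\hX}\cong \scrO_{\hX}(D)$ and a perfect pairing yielding $T_{\hX}\cong \Omega^{n-1}_{\hX}(-D)$, equivalently $\Omega^{n-1}_{\hX}\cong T_{\hX}(D)$ as sheaves on $\hX$. This already gives the second asserted equality $\pi_*T_{\hX}(D) = \pi_*\Omega^{n-1}_{\hX}$ for free, so the real content is the first equality $\pi_*T_{\hX} = \pi_*T_{\hX}(D)$ together with the global consequence. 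Since $D$ is effective, the natural inclusion $T_{\hX}\hookrightarrow T_{\hX}(D)$ restricts to the identity on $U = \hX - E$, where $D$ is supported off; pushing forward yields an inclusion $\pi_*T_{\hX}\hookrightarrow \pi_*T_{\hX}(D)$. Both sheaves are locally free on $\hX$, hence torsion-free, so restriction to the dense open $U$ embeds each of $\pi_*T_{\hX}$ and $\pi_*T_{\hX}(D)$ as a subsheaf of $i_*T_U$, compatibly with the inclusion above.

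Next I would identify the smaller of the two with all of $i_*T_U$, using the two hypotheses at my disposal. Because $\pi$ is a \emph{good equivariant} resolution, by definition $\pi_*T_{\hX} = R^0\pi_*T_{\hX}\cong T^0_X$. By Lemma~\ref{1.0}, $T^0_X$ has depth at least $2$, so the canonical map $T^0_X \to i_*i^*T^0_X = i_*T_U$ is an isomorphism. Tracking these identifications through the restriction-to-$U$ embedding shows that $\pi_*T_{\hX}$ is already the \emph{full} sheaf $i_*T_U$. We therefore obtain the chain
$$
i_*T_U \cong \pi_*T_{\hX} \hookrightarrow \pi_*T_{\hX}(D) \hookrightarrow i_*T_U,
$$
in which the composite is the identity on $i_*T_U$. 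Consequently both inclusions are isomorphisms, and we conclude
$$
\pi_*T_{\hX} = \pi_*T_{\hX}(D) = \pi_*\Omega^{n-1}_{\hX} = i_*T_U = T^0_X.
$$

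Finally, for the statement about global sections, I would invoke that $X$ is a contractible Stein representative, so taking $H^0$ of the sheaf identity above gives
$$
H^0(\hX; \Omega^{n-1}_{\hX}) = H^0(X; \pi_*\Omega^{n-1}_{\hX}) = H^0(X; i_*T_U) = H^0(U; T_U),
$$
and under these identifications the displayed composite is exactly the natural restriction map, which is therefore an isomorphism. The one step that deserves genuine care, and which I regard as the crux, is the verification that $\pi_*T_{\hX}$ is \emph{maximal} inside $i_*T_U$: this is where the equivariance hypothesis is indispensable, since for a non-equivariant resolution $\pi_*T_{\hX}$ could be a proper subsheaf of $i_*T_U$ and the sandwiching would collapse. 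Concretely, I must check that the isomorphism $\pi_*T_{\hX}\cong T^0_X$ supplied by equivariance is compatible with the restriction maps to $U$ used to embed everything into $i_*T_U$, so that the composite $i_*T_U \to i_*T_U$ is genuinely the identity rather than merely an abstract isomorphism; this compatibility follows because all the maps in sight are the canonical restrictions over the dense open $U$, on which $\pi$ is an isomorphism.
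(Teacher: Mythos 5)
Your proof is correct and follows essentially the same route as the paper: both arguments sandwich $\pi_*T_{\hX}$ and $\pi_*\Omega^{n-1}_{\hX}$ between the injections induced by $T_{\hX}\hookrightarrow \Omega^{n-1}_{\hX}=T_{\hX}(D)$ and the embedding into $i_*T_U = T^0_X$ (using depth $\geq 2$ from Lemma~\ref{1.0}), then use equivariance to see that the composite $R^0\pi_*T_{\hX}\to T^0_X$ is an isomorphism, forcing all intermediate inclusions to be equalities. The passage to global sections over the Stein representative is likewise identical.
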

\begin{proof} The inclusion   $T_{\hX} \to \Omega^{n-1}_{\hX}$ leads to a sequence of inclusions
$$R^0\pi_*T_{\hX} \to R^0\pi_*\Omega^{n-1}_{\hX} \to i_*i^*R^0\pi_*\Omega^{n-1}_{\hX} = i_*i^*T^0_X =T^0_X,$$
since $T^0_X$ has depth at least $2$. As $\pi$ is equivariant, $R^0\pi_*T_{\hX} \to T^0_X$ is an isomorphism, hence all of the above maps are equalities. Since $H^0(\hX; \Omega^{n-1}_{\hX}) = H^0(X; R^0\pi_*\Omega^{n-1}_{\hX}) = H^0(X;T^0_X)$ and $H^0(X;T^0_X) \cong H^0(U; T_U)$, the  map $H^0(\hX; \Omega^{n-1}_{\hX}) \to H^0(U; T_U)$ is an isomorphism.
\end{proof}

Since   $\operatorname{depth}_xX \geq 3$, there is the following result of Schlessinger  \cite[Theorem 2]{Schlessinger}:

\begin{lemma}\label{2.1} There are natural identifications  $H^2_x(X;T^0_X) \cong H^1(U, T_U) \cong H^0(X; T^1_X)$.  
\end{lemma}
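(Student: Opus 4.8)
The plan is to establish the two isomorphisms separately: the first by a routine local-cohomology computation, and the second by reproving Schlessinger's comparison concretely via an embedding.

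For the first isomorphism $H^2_x(X; T^0_X) \cong H^1(U; T_U)$, I would apply the local cohomology exact sequence for the coherent sheaf $T^0_X$,
$$\cdots \to H^i_x(X; T^0_X) \to H^i(X; T^0_X) \to H^i(U; T_U) \to H^{i+1}_x(X; T^0_X) \to \cdots,$$
noting that $T^0_X|U = T_U$. Since $X$ is a Stein representative, $H^i(X; T^0_X) = 0$ for $i > 0$ by Cartan's Theorem B, and since $T^0_X$ has depth $\geq 2$ at $x$ by Lemma~\ref{1.0}, we have $H^1_x(X; T^0_X) = 0$. The segment at $i = 1$ then collapses to the claimed isomorphism.

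For the second isomorphism $H^1(U; T_U) \cong H^0(X; T^1_X)$, I would choose a closed embedding $X \hookrightarrow W$ into a smooth Stein space with ideal sheaf $\mathcal{I}$ and use the standard four-term sequence computing the lower cotangent sheaves, $0 \to T^0_X \to T_W|X \to N_{X/W} \to T^1_X \to 0$, where $N_{X/W} = \Hom(\mathcal{I}/\mathcal{I}^2, \scrO_X)$. The two depth inputs are that $T_W|X$ is locally free, hence of depth $n \geq 3$, and that $N_{X/W}$, being a dual sheaf, has depth $\geq 2$ by the same syzygy argument as in Lemma~\ref{1.0}. Restricting to $U$, where $T^1_X$ vanishes, the sequence becomes the short exact sequence $0 \to T_U \to T_W|U \to N_{X/W}|U \to 0$; combined with $H^1(U; T_W|U) \cong H^2_x(X; T_W|X) = 0$ (depth $n \geq 3$), this identifies $H^1(U; T_U)$ with the cokernel of $H^0(U; T_W|U) \to H^0(U; N_{X/W}|U)$. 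The depth bounds give restriction isomorphisms $H^0(X; T_W|X) \cong H^0(U; T_W|U)$ and $H^0(X; N_{X/W}) \cong H^0(U; N_{X/W}|U)$, so this cokernel equals the cokernel of $H^0(X; T_W|X) \to H^0(X; N_{X/W})$. Finally, taking global sections of the four-term sequence over the Stein space $X$, where $H^1(X; -)$ vanishes so that global sections are exact on the two short exact sequences into which it splits, identifies that same cokernel with $H^0(X; T^1_X)$. Since the singularity is isolated, $H^0(X; T^1_X) = T^1_{X,x}$, giving the result.

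I expect the main obstacle to lie in the bookkeeping of the second identification: justifying the four-term sequence without an lci hypothesis (i.e.\ working with the Lichtenbaum--Schlessinger $T^1$ directly), verifying the depth-$\geq 2$ property of $N_{X/W}$, and checking that the connecting maps are intertwined by restriction, so that the two cokernel descriptions genuinely coincide. By contrast, the first identification and all the Stein vanishing statements are routine.
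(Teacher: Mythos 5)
Your proof is correct, and it reaches the statement by a more hands-on route than the paper. The paper (restricting to the lci case, "the only case we shall need") works with the two-term normal complex $\mathcal{P}^\bullet = [T_{\Cee^N}|X \to N]$ of locally free sheaves and compares the two spectral sequences converging to the local hypercohomology $\mathbb{H}^k_x(\mathcal{P}^\bullet)$: the first ($E_1^{p,q}=H^q_x(\mathcal{P}^p)$) kills everything in total degree $\le 2$ because $\operatorname{depth}_x X\ge 3$, and the second ($E_2^{p,q}=H^p_x(\mathcal{H}^q\mathcal{P}^\bullet)$) then forces the differential $d_2\colon H^0_x(X;T^1_X)\to H^2_x(X;T^0_X)$ to be an isomorphism. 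Your argument unpacks exactly this computation: splitting the four-term sequence $0\to T^0_X\to T_W|X\to N_{X/W}\to T^1_X\to 0$ into two short exact sequences and chasing cohomology on $X$ and on $U$ is the explicit form of the spectral-sequence comparison, and your depth inputs (depth $\ge 3$ for the locally free terms, depth $\ge 2$ for $N_{X/W}$ via the syzygy argument of Lemma~\ref{1.0}) are the same ones the paper uses, except that in the lci case $N_{X/W}$ is itself locally free so the paper gets its vanishing for free. What your version buys is transparency about which sheaf needs which depth and a path toward the non-lci case (modulo correctly interpreting $T^1_X$ \`a la Lichtenbaum--Schlessinger, as you note); what the paper's version buys is brevity and, more importantly for later use, an explicit description of the isomorphism as the $d_2$ of the local-cohomology Leray/hypercohomology spectral sequence, which is how it is invoked in Lemma~\ref{localCD} to identify the image of $H^1(\hX;T_{\hX})$ in $H^1(U;T_U)$. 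The compatibility issues you flag at the end (intertwining of connecting maps with restriction) are genuinely routine and do not hide a gap.
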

\begin{proof} For simplicity, we assume that $X$ is a local complete intersection (the only case we shall need). Let $\mathcal{P}^\bullet = T_{\Cee^N}|X \to N$ be the normal complex, i.e.\ the dual of the  conormal complex $I/I^2 \to \Omega^1_{\Cee^N}|X$, where $I$ is the ideal sheaf of the local complete intersection $X$ in $\Cee^N$. Hence $\mathcal{P}^\bullet$ is a complex of locally free sheaves.  The local hypercohomology groups $\mathbb{H}_x^k(\mathcal{P}^\bullet) =0$ for $k\leq 2$, because the spectral sequence with $E_1^{p,q} = H^q_x(\mathcal{P}^p)$ is $0$ for   $q\leq 2$ and all $p$ since $\operatorname{depth}_xX \geq 3$. Since the cohomology sheaves $\mathcal{H}^k\mathcal{P}^\bullet$ of $\mathcal{P}^\bullet$ are $T^0_X$ in dimension $0$ and $T^1_X$ in dimension $1$, it follows that the only possible nonzero differential in the spectral sequence with $E_2^{p,q}$ page $H^p_x(\mathcal{H}^q\mathcal{P}^\bullet)$, namely $d_2\colon H^0_x(X;T^1_X) \to H^2_x(X;T^0_X)$, is an isomorphism. Thus $H^0(X; T^1_X) = H^0_x(X;T^1_X) \cong H^2_x(X;T^0_X)\cong H^1(U, T_U)$.
\end{proof}

We  have the Leray spectral sequence in local cohomology, with $E_2^{p,q}$ term 
$$ H^p_x(X; R^q\pi_*T_{\hX})  \implies H^{p+q}_E(\hX; T_{\hX}).  $$
For $q> 0$, 
$$H^p_x(X; R^q\pi_*T_{\hX}) = H^p(X; R^q\pi_*T_{\hX}),$$
and in particular both sides are $0$ if $p> 0$. 

From now on, we assume that $\pi \colon \hX \to X$ is an equivariant  resolution of  $X$  and that $(X,x)$ is a rational, hence canonical singularity.   Since $H^1_x(T^0_X)=0$, we have an exact sequence
$$0\to H^1_E(T_{\hX})\to H^0(R^1\pi_*T_{\hX}) \to H^2_x(T^0_X)  \to H^2_E(T_{\hX}) \to H^0(R^2\pi_*T_{\hX}).$$
Thus there is an exact sequence
$$0\to H^1_E(T_{\hX})\to H^0(R^1\pi_*T_{\hX}) \to H^0(X; T^1_X)  \to H^2_E(T_{\hX}) \to H^0(R^2\pi_*T_{\hX}).$$
Tracing through the various identifications, we have the following:

\begin{lemma}\label{localCD} For an equivariant  resolution $\pi \colon \hX \to X$  of  $X$, via the natural isomorphism $H^1(U; T_U)  \cong  H^2_x(X;T^0_X)$, the image of $H^1(\hX; T_{\hX})$ in $H^1(U; T_U)$ is identified with the image of $d_2\colon H^0_x(X; R^1\pi_*T_{\hX}) \to  H^2_x(X;T^0_X)$, where $d_2$ is the differential in the local cohomology Leray spectral sequence.  Likewise, the image of $H^1(\hX; \Omega^{n-1}_{\hX})$ in $H^1(U; \Omega^{n-1}_{\hX}|U)$ is identified with the image of $d_2\colon H^0_x(X; R^1\pi_*\Omega^{n-1}_{\hX}) \to  H^2_x(X;R^0\pi_*\Omega^{n-1}_{\hX})\cong   H^2_x(X;T^0_X)$.
\end{lemma}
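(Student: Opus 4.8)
The plan is to reduce both assertions to a single naturality statement, exploiting the fact that the lemma only concerns \emph{images}, and that each image is the kernel of a map into $H^2_E(\hX; T_{\hX})$. On the one hand, the restriction long exact sequence
$$\cdots \to H^1(\hX; T_{\hX}) \xrightarrow{\ r\ } H^1(U; T_U) \xrightarrow{\ \delta\ } H^2_E(\hX; T_{\hX}) \to \cdots$$
gives $\im(r) = \Ker\delta$. On the other hand, the edge map $e\colon E_2^{2,0} = H^2_x(X; T^0_X) \to H^2_E(\hX; T_{\hX})$ of the local cohomology Leray spectral sequence satisfies $\Ker e = \im(d_2)$: since $E_2^{1,0} = H^1_x(X;T^0_X) = 0$ by $\operatorname{depth}_x T^0_X \geq 2$, no differential other than $d_2\colon E_2^{0,1}\to E_2^{2,0}$ affects the spot $(2,0)$, so $E_\infty^{2,0} = \Coker(d_2)$ is the bottom of the filtration on $H^2_E$ and $e$ has kernel exactly $\im(d_2)$. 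Thus it suffices to prove that, under the natural isomorphism $\partial_A\colon H^1(U;T_U)\xrightarrow{\sim} H^2_x(X;T^0_X)$ of Lemma~\ref{2.1}, one has $\delta = e\circ\partial_A$; then $\Ker\delta = \partial_A^{-1}(\Ker e) = \partial_A^{-1}(\im d_2)$, which is precisely the claim.

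To establish $\delta = e\circ\partial_A$ I would work with $\mathcal G := R\pi_*T_{\hX} \in D^b(X)$. Because $E = \pi^{-1}(x)$, composition of the support functors gives $R\Gamma_E(\hX; T_{\hX}) = R\Gamma_x(X; \mathcal G)$, and since $\pi$ is an isomorphism over $U$ we have $\mathcal G|_U = T_U$. Hence the restriction sequence above is identically the local cohomology sequence of $\mathcal G$ on $X$, with $\delta$ its connecting map. Now invoke the truncation triangle $\tau_{\leq 0}\mathcal G \to \mathcal G \to \tau_{\geq 1}\mathcal G \xrightarrow{+1}$: equivariance gives $\tau_{\leq 0}\mathcal G = R^0\pi_*T_{\hX} = T^0_X =: A$ (in degree $0$), and the edge map $e$ is exactly the map $\mathbb H^2_x(X; A)\to \mathbb H^2_x(X;\mathcal G)$ induced by $A\to\mathcal G$. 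Over $U$ the morphism $A|_U\to\mathcal G|_U$ is the identity of $T_U$, so naturality of the local cohomology connecting homomorphism with respect to $A\to\mathcal G$ furnishes the commuting square
$$\begin{CD}
H^1(U; T_U) @>{\partial_A}>> H^2_x(X;A)\\
@| @VV{e}V\\
\mathbb H^1(U;\mathcal G|_U) @>{\delta}>> \mathbb H^2_x(X;\mathcal G),
\end{CD}$$
that is, $\delta = e\circ\partial_A$. (Here $\partial_A$ is the connecting map of the local cohomology sequence of $A = T^0_X$ on $X$; as $X$ is Stein, $H^1(X;T^0_X) = H^2(X;T^0_X) = 0$, so $\partial_A$ is the isomorphism of Lemma~\ref{2.1}, and $H^1(\hX;T_{\hX}) = H^0(R^1\pi_*T_{\hX})$ by the same Stein collapse.)

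For the second statement I would repeat the argument verbatim with $T_{\hX}$ replaced by $\Omega^{n-1}_{\hX}$. By Lemma~\ref{imageH0} one still has $R^0\pi_*\Omega^{n-1}_{\hX} = T^0_X$, so the bottom truncation of $R\pi_*\Omega^{n-1}_{\hX}$ is again $A = T^0_X$ and the target of the edge map is again $H^2_x(X;T^0_X)$; and since the divisor $D$ of Remark~\ref{inclusion} is supported on $E$, we have $\Omega^{n-1}_{\hX}|U = T_U$, so the target of restriction is again $H^1(U;T_U)$ and $\partial_A$ plays the same role. The conclusion $\delta = e\circ\partial_A$, hence the identification of the two images, follows identically.

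The part that requires the most care is the identification of the spectral-sequence edge map $e$ with the map on local cohomology induced by the bottom truncation $A = \tau_{\leq 0}\mathcal G \to \mathcal G$, together with the vanishing $H^1_x(X;T^0_X) = 0$ that makes $\Ker e = \im d_2$; once these are pinned down, the equality $\delta = e\circ\partial_A$ is pure naturality of connecting maps. I expect no genuine obstacle beyond this bookkeeping, since every vanishing needed ($X$ Stein, $\operatorname{depth}_x T^0_X\geq 2$, and $R^q\pi_*(-)$ supported at $x$ for $q>0$) has already been established above; in particular, because only images are asserted, no sign comparison between $r$ and $d_2$ themselves is required.
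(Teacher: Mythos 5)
Your proof is correct and follows essentially the same route as the paper's: the paper likewise reduces both statements to the compatibility of the local cohomology sequence of $\mathcal{F}$ on $\hX$ with that of $\pi_*\mathcal{F}$ on $X$, identifying the image of $H^1(\hX;\mathcal{F})$ with $\Ker\{H^1(U;\mathcal{F}|U)\to H^2_E(\hX;\mathcal{F})\}$ and the kernel of the edge map with $\im d_2$; you simply supply the derived-category justification (the truncation triangle for $R\pi_*\mathcal{F}$) for the commuting square that the paper asserts without proof. The only cosmetic quibble is that the vanishing $E_2^{1,0}=H^1_x(X;T^0_X)=0$ is not actually what isolates $d_2$ at the $(2,0)$ spot — that is forced by degree reasons alone — but this does not affect the argument.
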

\begin{proof} More generally, let $\mathcal{F}$ be any coherent sheaf on $\hX$. The local cohomology sequences for $\mathcal{F}$ on $\hX$ and for $\pi_*\mathcal{F}$ on $X$ are compatible, in the sense that there is a commutative diagram
$$\begin{CD}
H^1(U; \mathcal{F}|U) @>{\cong}>> H^2_x(X;R^0\pi_*\mathcal{F})\\
@| @VVV \\
H^1(U; \mathcal{F}|U) @>>>  H^2_E(\hX; \mathcal{F}).
\end{CD}$$ 
The isomorphism $H^1(U; \mathcal{F}|U) \to  H^2_x(X;R^0\pi_*\mathcal{F})$ then  identifies the kernel of $H^1(U; \mathcal{F}|U) \to  H^2_E(\hX; \mathcal{F})$, namely the image of $H^1(\hX; \mathcal{F})$, with the image of $d_2\colon H^0_x(X; R^1\pi_*\mathcal{F}) \to  H^2_x(X;R^0\pi_*\mathcal{F})$. Applying this remark to $\mathcal{F} = T_{\hX}$ or to $\mathcal{F} =  \Omega^{n-1}_{\hX}$ completes the proof.
\end{proof}
 
The geometric meaning of the map $H^0(R^1\pi_*T_{\hX}) \to H^2_x(T^0_X) \cong H^0(X; T^1_X)$ is as follows. The group  $H^0(X;R^1\pi_*T_{\hX})$ is the tangent space to $\mathbf{Def}_{\hX}$.  Since $X$ has only rational singularities,  there is a natural morphism   $\mathbf{Def}_{\hX} \to \mathbf{Def}_X$ by a theorem of Wahl \cite{Wahl}. By a result of Gross \cite{gross_defcy}, $\mathbf{Def}_{\hX}$ is unobstructed in case $\hX$ is a crepant resolution (in any dimension), and $\mathbf{Def}_X$ is always unobstructed    for a  local complete intersection $(X,x)$. On the level of tangent spaces, there are two ways to see the map  $\mathbf{Def}_{\hX} \to \mathbf{Def}_X$: first, one can check using the rationality assumption that there is an isomorphism $\Ext^1(\pi^*\Omega^1_X, \scrO_{\hX}) \cong  \Ext^1(\Omega^1_X, \scrO_X)$. The   map $\pi^*\Omega^1_X \to \Omega^1_{\hX}$ leads to a map $\Ext^1(\Omega^1_{\hX}, \scrO_{\hX}) \to \Ext^1(\pi^*\Omega^1_X, \scrO_{\hX})$. The map $H^1(\hX; T_{\hX}) \to H^0(X; T^1_X)$ is then   the composition of the natural maps
\begin{gather*}
H^1(\hX; T_{\hX}) =  \Ext^1(\Omega^1_{\hX}, \scrO_{\hX}) \to \Ext^1(\pi^*\Omega^1_X, \scrO_{\hX})\cong \Ext^1(\Omega^1_X, \scrO_X)  \\
 \cong  H^0(X; \mathit{Ext}^1(\Omega^1_X, \scrO_X)) = H^0(X; T^1_X).
\end{gather*}
 Another way to see the map $H^1(\hX; T_{\hX}) \to H^0(X; T^1_X)$ is as the composition
$$H^1(\hX; T_{\hX}) \to H^1(U; T_{\hX}|U)\cong H^0(X; T^1_X)$$
arising from Lemma~\ref{2.1}.

We have seen that, for $\dim X \geq 3$,   $T^0_X$ has depth at least $2$. For a 
local complete intersection $(X,0)\subseteq (\Cee^N,0)$,  we have the exact sequence $0 \to I/I^2 \to \Omega^1_{\Cee^N,0}|X \to \Omega^1_{X,0} \to 0$.  
Hence: 

\begin{lemma}\label{reflexive} If $(X,x)$ is an isolated local complete intersection of dimension $n$, then $\operatorname{depth} \Omega^1_{X,x} \geq n-1$. In particular, if $\dim X \geq 3$, then $\operatorname{depth} \Omega^1_{X,x} \geq 2$ and hence $\Omega^1_{X,x}$ is reflexive. Finally, $\pi_*\Omega^1_{\hX} = \Omega^1_X$.  
\end{lemma}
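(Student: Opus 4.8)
The plan is to establish the three assertions in order, taking the conormal sequence as the only essential input and then deducing reflexivity and the identification of the pushforward formally.

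For the depth bound I would start from the conormal sequence
$$0 \to I/I^2 \to \Omega^1_{\Cee^N}|X \to \Omega^1_X \to 0,$$
which is exact on the left precisely because $X$ is a local complete intersection. Being lci, $X$ is Cohen-Macaulay, so $\scrO_{X,x}$ has depth $n$; since $I/I^2$ is locally free of rank $N-n$ and $\Omega^1_{\Cee^N}|X \cong \scrO_X^N$ is free, the first two terms both have depth $n$ at $x$. Feeding this into the standard estimate $\operatorname{depth} C \geq \min\{\operatorname{depth} B, \operatorname{depth} A - 1\}$ for a short exact sequence $0\to A\to B\to C\to 0$ (read off the long exact sequence in local cohomology $H^i_x$) yields $\operatorname{depth}_x \Omega^1_{X,x} \geq \min\{n, n-1\} = n-1$. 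This computation is the technical heart of the lemma, although it is entirely routine.

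When $n \geq 3$ this gives $\operatorname{depth}_x \Omega^1_{X,x} \geq 2$. Away from $x$ the variety $X$ is smooth, so $\Omega^1_X$ is locally free, hence reflexive, on $U = X - x$; moreover $X$ is normal, being lci (hence $S_2$) and regular in codimension one, since its singular locus is the single point $x$ of codimension $n \geq 3$. Thus $\Omega^1_X$ is locally free in codimension $\leq 1$ and satisfies Serre's condition $S_2$, and the standard reflexivity criterion for sheaves on a normal variety shows $\Omega^1_X$ is reflexive. Equivalently, the depth bound makes $H^0_x$ and $H^1_x$ of $\Omega^1_X$ vanish, so the canonical map $\Omega^1_X \to i_*i^*\Omega^1_X = i_*\Omega^1_U$ is an isomorphism, where $i\colon U \to X$; this is the same mechanism used for $T^0_X$ in Lemma~\ref{1.0}.

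For the final identification I would sandwich $\pi_*\Omega^1_{\hX}$ between $\Omega^1_X$ and $i_*\Omega^1_U$. Let $j\colon \hX - E \to \hX$ be the open inclusion, so $\pi$ restricts to an isomorphism $\hX - E \xrightarrow{\sim} U$ and $\pi\circ j$ is identified with $i$. The canonical map $\pi^*\Omega^1_X \to \Omega^1_{\hX}$ gives by adjunction a map $\Omega^1_X \to \pi_*\Omega^1_{\hX}$; and since $\Omega^1_{\hX}$ is locally free, with no sections supported on $E$, the unit $\Omega^1_{\hX} \to j_*j^*\Omega^1_{\hX}$ is injective, whence applying the left-exact $\pi_*$ gives $\pi_*\Omega^1_{\hX} \hookrightarrow \pi_* j_* \Omega^1_{\hX - E} = i_*\Omega^1_U$. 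All of these maps restrict to the identity on $U$, so by adjunction the composite $\Omega^1_X \to \pi_*\Omega^1_{\hX} \to i_*\Omega^1_U$ is exactly the canonical isomorphism of the previous paragraph. A composite $A \to B \to C$ that is an isomorphism, with $B \to C$ injective, forces both maps to be isomorphisms; hence $\Omega^1_X \cong \pi_*\Omega^1_{\hX}$. The only real obstacle is confined to the depth computation, namely getting the direction of the inequality right and checking that both non-$\Omega^1_X$ terms have full depth $n$ (which is where Cohen-Macaulayness, i.e. the lci hypothesis, enters); once $\operatorname{depth}_x\Omega^1_{X,x}\geq n-1$ is secured, reflexivity and the equality $\pi_*\Omega^1_{\hX}=\Omega^1_X$ follow formally from standard properties of reflexive and $S_2$ sheaves on normal varieties.
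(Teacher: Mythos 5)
Your proof is correct and follows essentially the same route as the paper: the depth bound comes from the conormal sequence $0 \to I/I^2 \to \Omega^1_{\Cee^N}|X \to \Omega^1_X \to 0$ together with Cohen--Macaulayness of the lci $X$, and the identification $\pi_*\Omega^1_{\hX} = \Omega^1_X$ is deduced from reflexivity of $\Omega^1_X$ plus torsion-freeness of $\pi_*\Omega^1_{\hX}$ (your sandwich through $i_*\Omega^1_U$ is just the explicit form of the paper's two-line argument). No gaps.
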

\begin{proof} We only need to check the last statement. We have a morphism $f\colon\Omega^1_X \to \pi_*\Omega^1_{\hX}$ which is an isomorphism away from $x$. Since $\Omega^1_X$ is reflexive and $\pi_*\Omega^1_{\hX}$ is torsion free, $f$ is an isomorphism. 
\end{proof}

\section{Local deformation theory}\label{Section2}

\subsection{Statement and proof of the main theorem} Let $(X,x)$ be the germ of an isolated canonical (hence rational) Gorenstein singularity  of dimension $n\geq 3$ and let $X$ be a contractible Stein representative for $(X,x)$. Let $\pi \colon \hX \to X$ be  a  good equivariant resolution of singularities of $X$, with normal crossings exceptional divisor $E$. Let $U = X -\{x\} = \hX - E$. Note that $H^0(X; T^1_X) \cong H^1(U; T_U)$. We can further identify $H^1(U; T_U)$ with $H^1(U; \Omega^{n-1}_{\hX}|U)$, $H^1(U; \Omega^{n-1}_{\hX}(\log E)|U)$ or $H^1(U;  \Omega^{n-1}_{\hX}(\log E)(-E)|U)$, and each such identification comes with an associated local cohomology exact sequence.  Our  first goal in this section is to prove the following:

\begin{theorem}\label{maintheorem} Suppose that either $\dim X = 3$ or $X$ is a local complete intersection.
\begin{enumerate} 
\item[\rm(i)] There is an exact sequence (arising from the long exact local cohomology sequence)
$$0 \to H^1(\hX; \Omega^{n-1}_{\hX}(\log E)) \to H^1(U; T_U) \to H^2_E( \hX; \Omega^{n-1}_{\hX}(\log E)) \to 0.$$
\item[\rm(ii)] There is an exact sequence
$$0 \to H^1(\hX; \Omega^{n-1}_{\hX}(\log E)(-E)) \to H^1(\hX; \Omega^{n-1}_{\hX}(\log E)) \to H^1(\hX; \Omega^{n-1}_{\hX}(\log E)|E)\to 0.$$
Thus in particular $\dim H^0(X; T^1_X) = b^{1, n-2} + b^{n-1, 1} + \ell^{n-1, 1}$ \cite[Theorem 4]{SteenbrinkDB}.
\item[\rm(iii)] The map $H^1(\hX; \Omega^{n-1}_{\hX}(\log E)(-E)) \to H^1(\hX; \Omega^{n-1}_{\hX})$ is injective and    $H^1(\hX; \Omega^{n-1}_{\hX})$  and $H^1(\hX; \Omega^{n-1}_{\hX}(\log E)(-E))$ have the same image in $H^1(U; T_U)$.  More precisely, there is a canonical isomorphism
$$H^1(\hX; \Omega^{n-1}_{\hX}) \cong H^1(\hX; \Omega^{n-1}_{\hX}(\log E)(-E)) \oplus H^1_E(\hX; \Omega^{n-1}_{\hX})$$
such that the map $H^1(\hX; \Omega^{n-1}_{\hX}) \to H^1(U; T_U)$ is the natural map on  $H^1(\hX; \Omega^{n-1}_{\hX}(\log E)(-E))$ and is $0$ on the $H^1_E(\hX; \Omega^{n-1}_{\hX})$ summand. 
Moreover $H^1_E(\hX; \Omega^{n-1}_{\hX}(\log E)(-E)) =0$, or equivalently $H^{n-1}(\hX; \Omega^1_{\hX}(\log E)) =0$.
\item[\rm(iv)] \cite{SteenbrinkDB} Let $A = \Ker\{d\colon H^3_E(\hX;\Omega^{n-3}_{\hX}(\log E)) \to H^3_E(\hX;\Omega^{n-2}_{\hX}(\log E))\}$. Then there is an exact sequence
$$0 \to H^2_E(\hX;\Omega^{n-2}_{\hX}(\log E)) \xrightarrow{d} H^2_E(\hX;\Omega^{n-1}_{\hX}(\log E))\to A\to 0.$$
Hence,  with   $b^{p,q} $ as in Definition~\ref{defDBlink} and $a =\dim A$, 
$$\dim H^0(T^1_X) = b^{n-1, 1} + b^{2, n-2} + \ell^{n-1, 1} + a.$$
Finally, if $X$ is a local complete intersection and $\dim X =3$, then the singularity $(X,x)$ has a good $\Cee^*$ action $\iff$ $A= 0$, or equivalently  $a=0$.
\item[\rm(v)] Let $K = \Ker\{H^2_E(\hX; \Omega^{n-1}_{\hX}) \to H^2(\hX; \Omega^{n-1}_{\hX})\}$. Then $K\cong H^2_E(\hX; \Omega^{n-1}_{\hX}(\log E)(-E))$, and  there are exact sequences
\begin{gather*}
0\to H^1(\hX; \Omega^{n-1}_{\hX}(\log E)(-E)) \to H^1(U; T_U) \to K \to 0;\\
0 \to \Gr^{n-1}_FH^n (L) \to K \to H^2_E( \hX; \Omega^{n-1}_{\hX}(\log E)) \to 0.
\end{gather*}
\item[\rm(vi)] There is a  natural map $H^2_E(\hX; \Omega^{n-1}_{\hX}) \to H^{n+1}_E(\hX)$, to be defined in the course of the proof. Let $K' =\Ker\{H^2_E(\hX; \Omega^{n-1}_{\hX}) \to H^{n+1}_E(\hX)\}$. Then there is an inclusion 
$$K'\oplus \Gr^{n-1}_FH^n (L) \hookrightarrow K.$$ 
If $K'\oplus \Gr^{n-1}_FH^n (L) \hookrightarrow K$ is an isomorphism, then  $K'\cong H^2_E( \hX; \Omega^{n-1}_{\hX}(\log E))$. In general, the  image of $H^2_E(\hX; \Omega^{n-1}_{\hX}) \to H^{n+1}_E(\hX)$ contains the subspace $H^1(\hX; \Omega^{n-1}_{\hX}(\log E) /\Omega^{n-1}_{\hX}) = \Gr_F^{n-1}H^{n+1}_E(\hX)$. If equality holds, then $K'\oplus \Gr^{n-1}_FH^n (L) \hookrightarrow K$ is an isomorphism and hence the second exact sequence in (v) splits: 
$$K \cong \Gr^{n-1}_FH^n (L) \oplus H^2_E( \hX; \Omega^{n-1}_{\hX}(\log E))\cong \Gr^{n-1}_FH^n (L) \oplus K'.$$
 Finally, if $\dim X =3$, then 
$\Gr_F^2H^4_E(\hX) = H^4_E(\hX)$ and   
$$K \cong \Gr^2_FH^3 (L) \oplus H^2_E( \hX; \Omega^2_{\hX}(\log E))= H^3(L)\oplus H^2_E( \hX; \Omega^2_{\hX}(\log E))\cong H^3(L) \oplus K'.$$
\end{enumerate} 
\end{theorem}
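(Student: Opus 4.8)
\emph{Constructing $\phi$ and computing its image.} The plan is to produce $\phi$ from the local‑cohomology Hodge–de Rham spectral sequence $E_1^{p,q}=H^q_E(\hX;\Omega^p_{\hX})\Rightarrow H^{p+q}_E(\hX)$ and the stupid filtration $F^p\Omega^\bullet_{\hX}=\Omega^{\ge p}_{\hX}$. First I record that $H^1_E(\hX;\Omega^n_{\hX})=H^2_E(\hX;\Omega^n_{\hX})=0$ by Lemma~\ref{ratlplus} (here $n\ge 3$), so the short exact sequence of complexes $0\to\Omega^{\ge n-1}_{\hX}\to\Omega^\bullet_{\hX}\to\Omega^{\le n-2}_{\hX}\to 0$ gives a canonical isomorphism $\mathbb{H}^{n+1}_E(\hX;\Omega^{\ge n-1}_{\hX})\cong H^2_E(\hX;\Omega^{n-1}_{\hX})$. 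I then define $\phi$ as the composite $H^2_E(\hX;\Omega^{n-1}_{\hX})\cong\mathbb{H}^{n+1}_E(\hX;\Omega^{\ge n-1}_{\hX})\to H^{n+1}_E(\hX)$. By construction its image is $F^{n-1}H^{n+1}_E(\hX)$, and the same vanishing gives $F^nH^{n+1}_E(\hX)=0$, so the image is exactly the bottom Hodge piece $\Gr^{n-1}_FH^{n+1}_E(\hX)$. The identification $\Gr^{n-1}_FH^{n+1}_E(\hX)=H^1(\hX;\Omega^{n-1}_{\hX}(\log E)/\Omega^{n-1}_{\hX})$ is read off from the log‑quotient spectral sequence $H^q(\hX;\Omega^p_{\hX}(\log E)/\Omega^p_{\hX})\Rightarrow H^{p+q+1}_E(\hX)$ whose abutment is the shifted local cohomology; the delicate point, which I defer, is whether this $E_1$‑term survives to $E_\infty$.

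\emph{The inclusion (B) and the conditional isomorphism (C).} Next I would set up the three single‑sheaf local cohomology sequences of $\Omega^{n-1}_{\hX}(\log E)(-E)\subseteq\Omega^{n-1}_{\hX}\subseteq\Omega^{n-1}_{\hX}(\log E)$, recalling from the proof of part~(v) that $K\cong H^2_E(\hX;\Omega^{n-1}_{\hX}(\log E)(-E))$ sits inside $H^2_E(\hX;\Omega^{n-1}_{\hX})$ as $\Ker\{H^2_E(\hX;\Omega^{n-1}_{\hX})\xrightarrow{j}H^2(\hX;\Omega^{n-1}_{\hX})\}$, while the copy of $\Gr^{n-1}_FH^n(L)=H^1(E;\Omega^{n-1}_{\hX}(\log E)|E)$ inside $K$ arises from the connecting map of $0\to\Omega^{n-1}_{\hX}(\log E)(-E)\to\Omega^{n-1}_{\hX}(\log E)\to\Omega^{n-1}_{\hX}(\log E)|E\to 0$, using $H^1_E(\hX;\Omega^{n-1}_{\hX}(\log E))=0$ (Corollary~\ref{1.5}). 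The inclusion $K'\oplus\Gr^{n-1}_FH^n(L)\hookrightarrow K$ then amounts to two facts: that $K'=\Ker\phi\subseteq K$, and that $\phi$ is injective on $\Gr^{n-1}_FH^n(L)$. For the first I would compare $\phi$ with $j\colon H^2_E(\hX;\Omega^{n-1}_{\hX})\to H^2(\hX;\Omega^{n-1}_{\hX})\cong\bigoplus_iH^2(E_i;\Omega^{n-1}_{E_i})$ of Corollary~\ref{Hn-1E}(i) through the restriction $H^{n+1}_E(\hX)\to H^{n+1}(E)$, which is surjective by semipurity (Theorem~\ref{semipure}): matching the Hodge pieces of $H^{n+1}(E)$ that $\phi$ and $j$ hit should force $\Ker\phi\subseteq\Ker j=K$. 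For the second I would trace $\Gr^{n-1}_FH^n(L)\hookrightarrow K\hookrightarrow H^2_E(\hX;\Omega^{n-1}_{\hX})\xrightarrow{\phi}H^{n+1}_E(\hX)$ back to the $\Gr^{n-1}_F$ of the link connecting map $H^n(L)\to H^{n+1}_E(\hX)$ of~(\ref{1.11}), which is injective on $\Gr^{n-1}_F$ because $\Gr^{n-1}_FH^n(E)\to\Gr^{n-1}_FH^n(L)$ vanishes. Granting (B), claim~(C) is immediate: if $K'\oplus\Gr^{n-1}_FH^n(L)\to K$ is onto, then the surjection $\beta\colon K\to H^2_E(\hX;\Omega^{n-1}_{\hX}(\log E))$ of part~(v), whose kernel is $\Gr^{n-1}_FH^n(L)$, restricts to an isomorphism on the complement $K'$.

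\emph{The image statement (D) and the splitting (E).} Since $\im\phi=\Gr^{n-1}_FH^{n+1}_E(\hX)$ by the first paragraph, it certainly contains the image of the edge map $H^1(\hX;\Omega^{n-1}_{\hX}(\log E)/\Omega^{n-1}_{\hX})\to H^{n+1}_E(\hX)$ of the log‑quotient spectral sequence. The content of ``equality'' is exactly that this edge map is onto $\Gr^{n-1}_FH^{n+1}_E(\hX)$, i.e.\ that the log‑quotient spectral sequence degenerates at the spot $(n-1,1)$; here I would use that the full log complex is $E$‑acyclic (Lemma~\ref{convzero}) to control the differentials into and out of that spot and to identify $H^2_E(\hX;\Omega^{n-1}_{\hX}(\log E))$ with $K'$. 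When equality holds, a dimension count against the two exact sequences of part~(v) forces $K'\oplus\Gr^{n-1}_FH^n(L)\hookrightarrow K$ to be an isomorphism, whence by~(C) the second sequence of~(v) splits as $K\cong\Gr^{n-1}_FH^n(L)\oplus H^2_E(\hX;\Omega^{n-1}_{\hX}(\log E))\cong\Gr^{n-1}_FH^n(L)\oplus K'$.

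\emph{Dimension three and the main obstacle.} When $\dim X=3$ everything is forced numerically. Assembling the vanishing statements of Theorems~\ref{numervan} and~\ref{GNAPS}, Corollary~\ref{Hn-1E}, and the rationality input $H^i(E;\scrO_E)=0$ for $i>0$ (Lemma~\ref{ratlplus}), together with the Serre duality $\ell^{p,q}=\ell^{n-p,n-1-q}$ on the link, one checks that $\Gr^p_FH^3(L)=0$ for $p\ne 2$, so $\Gr^2_FH^3(L)=H^3(L)$; feeding this into the link sequence~(\ref{1.11}) as a sequence of mixed Hodge structures and using that $H^4(E)$ is pure of type $(2,2)$ gives $\Gr^2_FH^4_E(\hX)=H^4_E(\hX)$. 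In particular the edge map above is automatically onto, equality holds, and the splitting $K\cong H^3(L)\oplus H^2_E(\hX;\Omega^2_{\hX}(\log E))\cong H^3(L)\oplus K'$ follows. I expect the genuine obstacle to lie in the diagram chase of the second paragraph: establishing simultaneously that $\Ker\phi\subseteq K$ and that $\phi$ is injective on $\Gr^{n-1}_FH^n(L)$ requires pinning down the exact compatibility of $\phi$ with the connecting maps appearing in~(v) and in the link sequence, and it is there — rather than in the formal consequences~(C)--(F) — that semipurity and the acyclicity of the log complex must do the real work.
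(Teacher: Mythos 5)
Your construction of the map $\phi\colon H^2_E(\hX;\Omega^{n-1}_{\hX})\to H^{n+1}_E(\hX)$ via the stupid filtration and the vanishing $H^1_E(\hX;\Omega^n_{\hX})=H^2_E(\hX;\Omega^n_{\hX})=0$ matches the paper's, but your very next claim — that $\im\phi$ is \emph{exactly} $\Gr_F^{n-1}H^{n+1}_E(\hX)$ ``by construction'' — is the genuine gap, and it is not the one you flag at the end. By construction $\im\phi$ is the $(n-1)$-st step of the filtration on $H^{n+1}_E(\hX)$ induced by the naive filtration of $\Omega^\bullet_{\hX}$ in local hypercohomology; the Hodge filtration of the mixed Hodge structure, which is what $\Gr_F^{n-1}H^{n+1}_E(\hX)=H^1(\hX;\Omega^{n-1}_{\hX}(\log E)/\Omega^{n-1}_{\hX})$ refers to, is induced by the log-quotient complex $\Omega^\bullet_{\hX}(\log E)/\Omega^\bullet_{\hX}$. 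These two filtrations are not known to agree: the paper only proves the containment $\im\phi\supseteq\Gr_F^{n-1}H^{n+1}_E(\hX)$, by producing a map $H^1(\hX;\Omega^{n-1}_{\hX}(\log E)/\Omega^{n-1}_{\hX})\to H^2_E(\hX;\Omega^{n-1}_{\hX})$ from the comparison of the two spectral sequences (via Lemma~\ref{convzero}), and the reverse containment is precisely the ``equality'' hypothesis of (vi) — left open in general in the remark following the theorem. If your claim were correct the conditional statements of (vi) would be unconditional, so the statement you are proving would be strictly stronger than the theorem; the delicate point is not whether the log-quotient $E_1$-term survives (it does, and both degeneration and the inclusion into $H^{n+1}_E(\hX)$ follow from $\Gr_F^{n}H^{n+1}_E(\hX)=H^0(E;\omega_E)=0$), but whether $\phi$ hits anything outside that piece.

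Two smaller points. For $\Ker\phi\subseteq K$, ``matching the Hodge pieces that $\phi$ and $j$ hit'' is not an argument: what you need is that the right-hand vertical map in the square comparing $\phi$ with $j$ is injective, i.e.\ that $H^2(\hX;\Omega^{n-1}_{\hX})\to H^{n+1}(\hX)\cong H^{n+1}(E)$ is injective. The paper proves this from $H^2(\hX;\Omega^{n-1}_{\hX})\cong\bigoplus_iH^2(E_i;\Omega^{n-1}_{E_i})$ (Corollary~\ref{Hn-1E}), the vanishing $H^1(E;\Omega^n_E/\tau^n_E)=0$, and $E_1$-degeneration of the Hodge spectral sequence of $E$; once you have that, $\Ker\phi\subseteq\Ker j=K$ is immediate. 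Finally, your ``dimension count'' for the splitting under the equality hypothesis is circular as stated (it presupposes $\dim K'=\dim H^2_E(\hX;\Omega^{n-1}_{\hX}(\log E))$, which is what is being proved); the paper instead uses the coboundary $H^1(\hX;\Omega^{n-1}_{\hX}(\log E)/\Omega^{n-1}_{\hX})\to H^2_E(\hX;\Omega^{n-1}_{\hX})$ as an explicit section splitting $H^2_E(\hX;\Omega^{n-1}_{\hX})=K'\oplus\Gr_F^{n-1}H^{n+1}_E(\hX)$, and then identifies the second summand's contribution to $K$ with $\Gr_F^{n-1}H^n(L)$ via semipurity. Your treatment of the injectivity of $\phi$ on $\Gr_F^{n-1}H^n(L)$, of the conditional isomorphism (C), and of the case $\dim X=3$ is essentially the paper's argument and is fine, modulo the compatibility diagrams you correctly identify as needing to be written out; note also that your proposal addresses only part (vi) of the theorem, taking (i)--(v) as given.
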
 

\begin{remark} The map $H^1(\hX; \Omega^{n-1}_{\hX}(\log E)(-E)) \to H^1(U; T_U)$ already figures in \cite[Theorem 1.1]{NS} and the group $K$  in (v) above is $\im\tau$ in that notation. 
\end{remark} 

\begin{proof}

\noindent Proof of (i): The local cohomology exact sequence and the identification $H^1(U; T_U) \cong H^1(U; \Omega^{n-1}_{\hX}(\log E)|U)$ give an exact sequence
$$H^1_E(\Omega_{\hX}^{n-1}(\log E)) \to H^1( \Omega_{\hX}^{n-1}(\log E))\to H^1(U; T_U) \to H^2_E( \Omega_{\hX}^{n-1}(\log E)) \to H^2(\Omega^{n-1}_{\hX}(\log E)).$$
But $H^1_E(\hX; \Omega_{\hX}^{n-1}(\log E)) = 0$ by Corollary~\ref{1.5}. We claim that, under the assumptions of (i),  $H^2(\hX; \Omega_{\hX}^{n-1}(\log E)) = 0$, giving the exact sequence in (i). To see this, consider the long exact cohomology sequence (\ref{1.1}) for $p=n-1$. 
By Theorem~\ref{GNAPS}, $H^2(\hX; \Omega^{n-1}_{\hX}(\log E)(-E)) = H^3(\hX; \Omega^{n-1}_{\hX}(\log E)(-E)) =0$. Thus  
$$H^2(\hX; \Omega_{\hX}^{n-1}(\log E)) \cong H^2(\Omega^{n-1}_{\hX}(\log E)|E)= \Gr^{n-1}_FH^{n+1}(L).$$   If   $X$ is a local complete intersection, then $H^{n+1}(L) =0$,  and therefore  $H^2(\hX; \Omega_{\hX}^{n-1}(\log E)) = 0$. Likewise, if $\dim X =3$, then we have the link exact sequence 
$$H^4_E(\hX) \to H^4(E) \to H^4(L) \to H^5_E(\hX).$$
By semipurity, $H^4_E(\hX) \to H^4(E)$ is surjective. By duality, $H^5_E(\hX)$ is dual to $H_1(E)$. A somewhat involved direct argument (see  \cite[Proposition 2.1(iii)]{FL22b}) shows that $H^1(E) =0$ and thus $H_1(E) =0$. Hence $H^4(L) = 0$.

\smallskip
\noindent  Proof of (ii): 
 Using the exact sequence (\ref{1.1}) for $p=n-1$ gives
\begin{gather*}
H^0(\Omega^{n-1}_{\hX}(\log E)|E) \to  H^1(\hX; \Omega^{n-1}_{\hX}(\log E)(-E)) \to H^1(\hX; \Omega^{n-1}_{\hX}(\log E)) \\
\to H^1(\hX; \Omega^{n-1}_{\hX}(\log E)|E) 
\to H^2(\hX; \Omega^{n-1}_{\hX}(\log E)(-E)),
\end{gather*}
 with $H^2(\hX; \Omega^{n-1}_{\hX}(\log E)(-E)) =0$.
By \cite[Lemma 2]{SteenbrinkDB}, $H^0( \Omega^{n-1}_{\hX}(\log E)|E) = 0$, giving the exact sequence in (ii). (Note that we did not need the hypothesis $\dim X = 3$ or $X$ is a local complete intersection.) A direct argument that $H^0(\Omega^{n-1}_{\hX}(\log E)|E) = 0$ goes as follows:   by  semipurity,  there is an exact  sequence of mixed Hodge structures
$$0 \to    H^{n-1}_E(\hX) \to H^{n-1}(E) \to H^{n-1}(L) \to 0.$$  By the strictness of morphisms with respect to the Hodge filtration, the map $H^0(\Omega^{n-1}_E/\tau_E^{n-1})\to H^0(\Omega^{n-1}_{\hX}(\log E)|E)$ is surjective. By Corollary~\ref{Hn-1E},  $H^0(\Omega^{n-1}_E/\tau_E^{n-1}) =0$. Thus $H^0(\Omega^{n-1}_{\hX}(\log E)|E) = 0$ as well.

\smallskip
\noindent  Proof of (iii): There is a commutative diagram
$$\begin{CD}
@. @.   0 @. @. @.\\
@. @. @VVV @. @.\\
@. 0 @. H^1_E(\hX; \Omega^{n-1}_{\hX}) @. @.\\
@. @VVV @VVV @. @.\\
0 @>>> H^1(\hX; \Omega^{n-1}_{\hX}(\log E)(-E)) @>>> H^1(\hX; \Omega^{n-1}_{\hX}) @>>> H^1(E; \Omega^{n-1}_E/\tau^{n-1}_E) @>>>  0
\\
@. @VVV @VVV @. @.\\
@. H^1(U; T_U) @>{=}>> H^1(U; T_U). @. @. 
\end{CD}$$
The map $H^1(\hX; \Omega^{n-1}_{\hX}(\log E)(-E)) \to H^1(U; T_U)$ is injective since it is the composition of the maps
$$H^1(\hX; \Omega^{n-1}_{\hX}(\log E)(-E)) \to H^1(\hX; \Omega^{n-1}_{\hX}(\log E) )\to H^1(U; T_U),$$
and each of these maps is injective by (ii) and (i). Since the composition $$H^1(\hX; \Omega^{n-1}_{\hX}(\log E)(-E)) \to  H^1(\hX; \Omega^{n-1}_{\hX}) \to H^1(U; T_U)$$ is injective, the map $H^1(\hX; \Omega^{n-1}_{\hX}(\log E)(-E)) \to H^1(\hX; \Omega^{n-1}_{\hX})$ is injective as well.  The map $H^1_E(\hX; \Omega^{n-1}_{\hX}) \to H^1(\hX; \Omega^{n-1}_{\hX})$ is also injective since its kernel is the cokernel of the map 
$$H^0(\hX; \Omega^{n-1}_{\hX}) \to H^0(U; T_U),$$ which is an isomorphism by Lemma~\ref{imageH0}. Hence the kernel of the map $H^1(\hX; \Omega^{n-1}_{\hX}) \to H^1(U; T_U)$ is isomorphic to $H^1_E(\hX; \Omega^{n-1}_{\hX})$.   If we can show that the composition $H^1_E(\hX; \Omega^{n-1}_{\hX}) \to H^1(\hX; \Omega^{n-1}_{\hX}) \to  H^1(E; \Omega^{n-1}_E/\tau^{n-1}_E)$ is an isomorphism, then there is a splitting 
$$H^1(\hX; \Omega^{n-1}_{\hX}) \cong H^1(\hX; \Omega^{n-1}_{\hX}(\log E)(-E)) \oplus H^1_E(\hX; \Omega^{n-1}_{\hX}),$$
and in particular $H^1(\hX; \Omega^{n-1}_{\hX})$ and  $H^1(\hX; \Omega^{n-1}_{\hX}(\log E)(-E))$ have the same image in $H^1(U; T_U)$.

To see that $H^1_E(\hX; \Omega^{n-1}_{\hX}) \to   H^1(E; \Omega^{n-1}_E/\tau^{n-1}_E)$ is an isomorphism, we  first show that $$H^1_E(\hX; \Omega^{n-1}_{\hX}) \cong  H^0_E(\hX; \Omega^{n-1}_{\hX}(\log E)/\Omega^{n-1}_{\hX}).$$
 Consider the exact sequence
$$
0 \to \Omega^{n-1}_{\hX}  \to \Omega^{n-1}_{\hX}(\log E) \to \Omega^{n-1}_{\hX}(\log E)/\Omega^{n-1}_{\hX} 
   \to 0.$$
By taking the associated long exact local cohomology sequence, we get
$$0 = H^0_E( \Omega^{n-1}_{\hX}(\log E)) \to H^0_E(\Omega^{n-1}_{\hX}(\log E)/\Omega^{n-1}_{\hX}) \to H^1_E( \Omega^{n-1}_{\hX})  \to H^1_E( \Omega^{n-1}_{\hX}(\log E)).$$
By Corollary~\ref{1.5}, $H^1_E(\hX;  \Omega^{n-1}_{\hX}(\log E)) =0$, so that 
  $H^0_E(\hX; \Omega^{n-1}_{\hX}(\log E)/\Omega^{n-1}_{\hX}) \cong H^1_E( \hX; \Omega^{n-1}_{\hX}) $. By semipurity, there is an  isomorphism of mixed Hodge  structures $H^n_E(\hX) \cong H^n(E)$, and both are in fact pure Hodge structures of weight $n$. By the strictness of morphisms, we get an isomorphism $\Gr_F^{n-1} H^n_E(\hX)  \cong \Gr_F^{n-1} H^n(E)$. But $\Gr_F^{n-1} H^n_E(\hX) = H^0_E(\hX; \Omega^{n-1}_{\hX}(\log E)/\Omega^{n-1}_{\hX}) \cong H^1_E( \hX; \Omega^{n-1}_{\hX})$ and $\Gr_F^{n-1}  H^n(E)  =  H^1(E; \Omega^{n-1}_E/\tau^{n-1}_E)$. We now have two maps $H^1_E(\hX; \Omega^{n-1}_{\hX})   \to  H^1(E; \Omega^{n-1}_E/\tau^{n-1}_E)$: the first is the map given by the composition
$$H^1_E(\hX; \Omega^{n-1}_{\hX})    \to  H^1(\hX; \Omega^{n-1}_{\hX}) \to H^1(E; \Omega^{n-1}_E/\tau^{n-1}_E )$$ and the second is the composition of isomorphisms given by
$$H^1_E(\hX; \Omega^{n-1}_{\hX}) \cong H^0_E(\hX; \Omega^{n-1}_{\hX}(\log E)/\Omega^{n-1}_{\hX} )\to  H^1(E; \Omega^{n-1}_E/\tau^{n-1}_E ),$$
which factors as
$$ H^0_E(\hX; \Omega^{n-1}_{\hX}(\log E)/\Omega^{n-1}_{\hX} )\xrightarrow{\partial}  H^1(\hX; \Omega^{n-1}_{\hX}) \to  H^1(\hX; \Omega^{n-1}_{\hX}/ \Omega^{n-1}_{\hX}(\log E)(-E)) = H^1(E; \Omega^{n-1}_E/\tau^{n-1}_E ).$$
Thus the two maps are equal, and in particular the first map is an isomorphism as well.
This completes the proof of the second statement in (iii). Finally, we have the exact sequence
$$H^0(\Omega^{n-1}_{\hX}(\log E)(-E)) \to H^0(U; T_U) \to H^1_E( \Omega^{n-1}_{\hX}(\log E)(-E))\to H^1(\Omega^{n-1}_{\hX}(\log E)(-E)) \to H^1(U; T_U).$$
Since $H^1(\hX; \Omega^{n-1}_{\hX}(\log E)(-E)) \to H^1(U; T_U)$ is injective, to prove that $H^1_E(\hX; \Omega^{n-1}_{\hX}(\log E)(-E)) =0$, it suffices to prove that $H^0(\hX; \Omega^{n-1}_{\hX}(\log E)(-E)) \to H^0(U; T_U)$ is surjective. We have seen that $H^0(\hX; \Omega^{n-1}_{\hX}) \to H^0(U; T_U)$ is surjective, in fact an isomorphism, so it suffices to show that 
$$H^0(\hX; \Omega^{n-1}_{\hX}(\log E)(-E)) \to H^0(\hX; \Omega^{n-1}_{\hX})$$
is surjective. But the cokernel of this map is contained in $H^0(\Omega^{n-1}_E/\tau^{n-1}_E)$, which is $0$ by Corollary~\ref{Hn-1E}.
(Note that, in the proof of (iii),  we did not need the hypothesis $\dim X = 3$ or $X$ is a local complete intersection. The dual form of the last statement, that  $H^{n-1}(\hX; \Omega^1_{\hX}(\log E)) =0$ for an isolated rational singularity, is proved in \cite{MOP}. See also below.)

\smallskip
\noindent  Proof of (iv): The exact sequence was established in Corollary~\ref{1.5}. (Note that we did not need the hypothesis $\dim X = 3$ or $X$ is a local complete intersection for this part.) In particular, 
$$  \dim H^2_E(\hX;\Omega^{n-1}_{\hX}(\log E)) = \dim H^2_E(\hX;\Omega^{n-2}_{\hX}(\log E)) + \dim A.$$
By definition, $\dim A = a$. By duality $\dim H^q_E(\hX; \Omega^p_{\hX}(\log E)) = \dim H^{n-q}(\hX; \Omega^{n-p}_{\hX}(\log E)(-E))= b^{n-p, n-q}$. Thus $ \dim H^2_E(\hX;\Omega^{n-1}_{\hX}(\log E)) = b^{1, n-2} = b^{2, n-2} + a$. From the exact sequences in (i) and (ii), 
$$\dim H^0(T^1_X) = \dim H^0(\hX; \Omega^{n-1}_{\hX}(\log E)) + b^{1,n-2} = b^{n-1,1} + \ell^{n-1, 1}+ b^{2, n-2} + a.$$

 Let $\mu$ be the Milnor number of $(X,x)$ and $\tau$ the Tyurina number. Then, by \cite{SteenbrinkDB}, for an isolated rational (and hence Du Bois) local complete intersection singularity, $\mu -\tau = \alpha$, where $\alpha = \dim \Coker \{d\colon H^0(\hX; \Omega_{\hX}^{n-1}(\log E)(-E)) \to H^0(\hX; \Omega_{\hX}^n)\}$ (compare Proposition 3, Lemma 2, and the last line of Lemma 3 in \cite{SteenbrinkDB}).  Set $B= \Coker \{d\colon H^{n-3}(\hX; \Omega_{\hX}^2(\log E)(-E)) \to H^{n-3}(\hX; \Omega_{\hX}^3)\}$, so that $\alpha = \dim B$ in case $n=3$. By Lemma~\ref{ratlplus}, $H^i(\hX;\scrO_{\hX}(-E)) =0$.     By looking at the $E_2$ page of the spectral sequence with $E_1^{p,q}=H^q(\hX; \Omega^p_{\hX}(\log E) (-E))$, which converges to $0$ by Lemma~\ref{convzero}, there is an exact sequence (dual to that in Corollary~\ref{1.5})
 $$0 \to B \to H^{n-2}(\hX; \Omega^1_{\hX}(\log E) (-E)) \to H^{n-2}(\hX; \Omega^2_{\hX}(\log E) (-E)) \to 0.$$
Hence, in case $n=3$,   $\alpha = b^{1,1} - b^{2,1} = a$, by Corollary~\ref{1.5} and duality (and in fact $B\cong A\spcheck $).   Thus, for $n=3$,  $A = 0$ $\iff$ $a =0$ $\iff$ $\alpha =0$ $\iff$ $\mu =\tau$.  By a result due to K. Saito \cite{KSaito} for hypersurface singularities and Vosegaard \cite{Vosegaard} for general complete intersections, $\mu =\tau$ $\iff$ $(X,x)$ has a good $\Cee^*$ action, i.e.\ is quasihomogeneous.
 
 \smallskip
\noindent  Proof of (v): From the local cohomology sequence
$$ H^1(\hX; \Omega^{n-1}_{\hX}) \to H^1(U; T_U) \to H^2_E(\hX; \Omega^{n-1}_{\hX}) \to H^2(\hX; \Omega^{n-1}_{\hX})$$
and (iii), we get an exact sequence 
$$0\to H^1(\hX; \Omega^{n-1}_{\hX}(\log E)(-E)) \to H^1(U; T_U) \to K \to 0.$$
Thus $K$ is isomorphic to the cokernel of $H^1(\hX; \Omega^{n-1}_{\hX}(\log E)(-E)) \to H^1(U; T_U)$. But this cokernel  is $H^2_E(\hX; \Omega^{n-1}_{\hX}(\log E)(-E))$ as $H^2(\hX; \Omega^{n-1}_{\hX}(\log E)(-E))=0$. 
Now comparing this with the exact sequence in (i), there is a commutative diagram with exact rows
$$\begin{CD}
0@>>> H^1(\hX; \Omega^{n-1}_{\hX}(\log E)(-E)) @>>>  H^1(U; T_U) @>>>  K @>>> 0\\
@. @VVV @| @VVV @. \\ 
0 @>>> H^1(\hX; \Omega^{n-1}_{\hX}(\log E)) @>>> H^1(U; T_U) @>>> H^2_E( \hX; \Omega^{n-1}_{\hX}(\log E)) @>>> 0
\end{CD}$$
Here, $K \to  H^2_E( \hX; \Omega^{n-1}_{\hX}(\log E))$ is surjective because $H^1(U; T_U) \to H^2_E( \hX; \Omega^{n-1}_{\hX}(\log E))$ is surjective. Since $K \cong H^2_E(\hX; \Omega^{n-1}_{\hX}(\log E)(-E))$, we have an exact sequence
$$H^1_E(\hX; \Omega^{n-1}_{\hX}(\log E)) \to H^1(\hX; \Omega^{n-1}_{\hX}(\log E)|E) \to K \to H^2_E( \hX; \Omega^{n-1}_{\hX}(\log E))\to 0.$$
Since  $H^1_E(\hX; \Omega^{n-1}_{\hX}(\log E))=0$ by Corollary~\ref{1.5}, we get the    exact sequence   
$$0 \to H^1(\hX; \Omega^{n-1}_{\hX}(\log E)|E)) \to K \to H^2_E( \hX; \Omega^{n-1}_{\hX}(\log E)) \to 0,$$
which is  the second exact sequence in (v). 

 \smallskip
\noindent  Proof of (vi): We begin by defining the map $H^2_E(\hX; \Omega^{n-1}_{\hX}) \to H^{n+1}_E(\hX)$. By Lemma~\ref{ratlplus},  $H^1_E(\hX; \Omega^n_{\hX}) = H^2_E(\hX; \Omega^n_{\hX}) =0$. Thus, looking at the spectral sequence with $E_1^{p,q}$ term $H^q_E(\hX; \Omega^p_{\hX})$, which converges to $\mathbb{H}^{p+q}_E(\Omega^\bullet_{\hX}) = H^{p+q}_E(\hX)$, gives the map $H^2_E(\hX; \Omega^{n-1}_{\hX}) \to H^{n+1}_E(\hX)$.  As noted in \S1, $\mathbb{H}^k(E;\Omega^\bullet_{\hX}(\log E)/\Omega^\bullet_{\hX}) = H^{k+1}_E(\hX)$. Moreover, the associated spectral sequence degenerates at $E_1$ and computes the Hodge filtration on $H^{k+1}_E(\hX)$. Taking $k=n$,  we have $\Gr_F^{n+1}H^{n+1}_E(\hX) = 0$ for dimension reasons, and $\Gr_F^nH^{n+1}_E(\hX) = H^0(\hX; \Omega^n_{\hX}(\log E)/\Omega^n_{\hX}) = H^0(E; \omega_E)=0$, as $H^0(E; \omega_E)$ is Serre dual to $H^{n-1}(E; \scrO_E)=0$. Thus $H^1(\hX; \Omega^{n-1}_{\hX}(\log E)/\Omega^{n-1}_{\hX}) = \Gr_F^{n-1}H^{n+1}_E(\hX)$ includes into $H^{n+1}_E(\hX)$. 

From the exact sequence  
$$0 \to \Omega^\bullet_{\hX} \to \Omega^\bullet_{\hX}(\log E) \to \Omega^\bullet_{\hX}(\log E)/\Omega^\bullet_{\hX} \to 0$$
there is a long exact hypercohomology sequence
$$\cdots \to \mathbb{H}^{k-1}_E(\Omega^\bullet_{\hX}(\log E)/\Omega^\bullet_{\hX}) \to \mathbb{H}^k_E(\Omega^\bullet_{\hX}) \to \mathbb{H}^k_E(\Omega^\bullet_{\hX}(\log E)) \to \mathbb{H}^k_E(\Omega^\bullet_{\hX}(\log E)/\Omega^\bullet_{\hX}) \to \mathbb{H}^{k+1}_E(\Omega^\bullet_{\hX}) \to \cdots$$
Since $\mathbb{H}^k_E(\Omega^\bullet_{\hX}(\log E)) =0$ for all $k$, there is an isomorphism 
$$\mathbb{H}^{k-1}_E(\Omega^\bullet_{\hX}(\log E)/\Omega^\bullet_{\hX}) = \mathbb{H}^{k-1} (\Omega^\bullet_{\hX}(\log E)/\Omega^\bullet_{\hX}) \cong \mathbb{H}^k_E(\Omega^\bullet_{\hX})\cong H^k_E(\hX),$$
compatible with the corresponding spectral sequences. In particular, looking at the $E_1$ pages, there is a map 
 
 $$H^1(\hX; \Omega^{n-1}_{\hX}(\log E)/\Omega^{n-1}_{\hX}) \to  H^2_E(\hX; \Omega^{n-1}_{\hX})$$ 
for which the diagram 
$$\begin{CD}
H^1(\hX; \Omega^{n-1}_{\hX}(\log E)/\Omega^{n-1}_{\hX}) @>>> H^2_E(\hX; \Omega^{n-1}_{\hX}) \\
@VVV @VV{\beta}V\\
H^{n+1}_E(\hX) @>{=}>> H^{n+1}_E(\hX)
\end{CD}$$ 
is commutative. Thus the image of $\beta\colon H^2_E(\hX; \Omega^{n-1}_{\hX}) \to H^{n+1}_E(\hX)$  contains $\Gr_F^{n-1}H^{n+1}_E(\hX)$. In fact, the proof shows that the inclusion 
$$\Gr_F^{n-1}H^{n+1}_E(\hX) = H^1_E(\hX; \Omega^{n-1}_{\hX}(\log E)/\Omega^{n-1}_{\hX}) \to \im \beta$$ is given by the coboundary $\partial$ followed by the surjection $ H^2_E(\hX; \Omega^{n-1}_{\hX}) \to \im  \beta$. 

\begin{lemma} As before, let $K = \Ker \{H^2_E(\hX; \Omega^{n-1}_{\hX}) \to H^2(\hX; \Omega^{n-1}_{\hX})\}$, and let 
$$K'  =\Ker \{H^2_E(\hX; \Omega^{n-1}_{\hX}) \to  H^{n+1}_E(\hX )\}.$$
\begin{enumerate} 
\item[\rm(i)] The natural map $H^2(\hX; \Omega^{n-1}_{\hX}) \to H^{n+1}(\hX)$ is injective. 
\item[\rm(ii)] $K'$ is contained in $K$.
\end{enumerate}
\end{lemma}
\begin{proof} (i) By Corollary~\ref{Hn-1E}, $H^2(\hX; \Omega^{n-1}_{\hX}) \cong H^2(E; \Omega^{n-1}_E/\tau^{n-1}_E)$.  Since $\dim E = n-1$,
$$H^0(E; \Omega^{n+1}_E/\tau^{n+1}_E)= H^1(E; \Omega^n_E/\tau^n_E)=0.$$  As the Hodge spectral sequence $H^q(E; \Omega^p_E/\tau^p_E)\implies H^{p+q}(E)$ degenerates at $E_1$, there is an inclusion $H^2(E; \Omega^{n-1}_E/\tau^{n-1}_E) \hookrightarrow H^{n+1}(E)$. From the commutative diagram
$$\begin{CD}
H^2(\hX; \Omega^{n-1}_{\hX}) @>{\cong}>> H^2(E; \Omega^{n-1}_E/\tau^{n-1}_E) \\
@VVV @VVV\\
H^{n+1}(\hX) @>{\cong}>> H^{n+1}(E)
\end{CD}$$
and the fact that the right hand vertical map is injective, the left hand vertical map is injective as well.

\smallskip
\noindent (ii) There is a commutative diagram
$$\begin{CD}
H^2_E(\hX; \Omega^{n-1}_{\hX}) @>>> H^2(\hX; \Omega^{n-1}_{\hX}) \\
@VVV @VVV\\
H^{n+1}_E(\hX) @> >> H^{n+1}(\hX)
\end{CD}$$
where the right hand vertical map is injective  by (i). Hence $K'$, which by definition is the kernel of the left hand vertical map, is contained in the kernel of $H^2_E(\hX; \Omega^{n-1}_{\hX}) \to H^2(\hX; \Omega^{n-1}_{\hX})$, which by definition is $K$.
\end{proof}

Returning to the proof of (vi) of Theorem~\ref{maintheorem}, we have the two subspaces $K'$ and $\Gr_F^{n-1}H^n(L) = H^1(\Omega^{n-1}_{\hX}(\log E)|E)$ of $K$, where the second inclusion is given by the second exact sequence in (v).  We first show that $K'\cap \Gr_F^{n-1}H^n(L) =  0$. This follows from the commutative diagram 
$$\begin{CD}
H^1(\Omega^{n-1}_{\hX}(\log E)|E) @>>> K \\
@VVV @VVV\\
H^n(L) @>>> H^{n+1}_E(\hX)
\end{CD}$$
where the existence of the left hand vertical map follows because $\Gr_F^nH^n(L) = H^0(\Omega^n_{\hX}(\log E)|E)= H^0(E; \omega_E) = 0$. As noted in \S1, $\mathbb{H}^k(\Omega^\cdot_{\hX}(\log E)|E) \cong  H^k(L)$, and the associated spectral sequence degenerates at $E_1$.  Thus the left vertical arrow is the inclusion of $\Gr_F^{n-1}H^n(L)$ in $H^n(L)$, and in particular it  is injective. By Theorem~\ref{semipure}, $H^n_E(\hX) \to H^n(E)$ is surjective, so that  $H^n(L) \to  H^{n+1}_E(\hX)$ is injective by the link exact sequence (\ref{1.11}). Hence, if  $\xi \in K'\cap \Gr_F^{n-1}H^n(L)$, then $\xi$ maps to $0$ in $H^{n+1}_E(\hX)$ by the definition of $K'$, and thus $\xi =0$. In particular, the map $K'\oplus \Gr^{n-1}_FH^n (L) \to K$ is an inclusion.

Next suppose that the image of $H^2_E(\hX; \Omega^{n-1}_{\hX})$ in $H^{n+1}_E(\hX)$ is equal to $\Gr_F^{n-1}H^{n+1}_E(\hX)$. Then the coboundary map 
$$H^1(\hX; \Omega^{n-1}_{\hX}(\log E)/\Omega^{n-1}_{\hX}) = H^1_E(\hX; \Omega^{n-1}_{\hX}(\log E)/\Omega^{n-1}_{\hX}) \to H^2_E(\hX; \Omega^{n-1}_{\hX})$$ splits the surjection $H^2_E(\hX; \Omega^{n-1}_{\hX}) \to \Gr_F^{n-1}H^{n+1}_E(\hX)$. Thus  $H^2_E(\hX; \Omega^{n-1}_{\hX}) = K'\oplus  \Gr_F^{n-1}H^{n+1}_E(\hX)$, where by hypothesis $K' =\Ker \{H^2_E(\hX; \Omega^{n-1}_{\hX}) \to \Gr_F^{n-1}H^{n+1}_E(\hX )\}$. Hence   $K = K'\oplus K''$, where $K''$ is the kernel of 
$$\Gr_F^{n-1}H^{n+1}_E(\hX) \to H^2(\hX; \Omega^{n-1}_{\hX}) \cong H^2(E; \Omega^{n-1}_E/\tau^{n-1}_E) = \Gr_F^{n-1}H^{n+1}(E).$$
Thus $K''$ is the kernel of $\Gr_F^{n-1}H^{n+1}_E(\hX) \to \Gr_F^{n-1}H^{n+1}(E)$
which by semipurity is exactly $\Gr_F^{n-1}H^n(L) = H^1(\hX;\Omega^{n-1}_{\hX}(\log E)|E)$. If $K \cong \Gr_F^{n-1}H^n(L) \oplus K'$, then the surjection $K \to H^2_E( \hX; \Omega^{n-1}_{\hX}(\log E))$, whose kernel is $\Gr_F^{n-1}H^n(L)$,   identifies $K'$ with $H^2_E( \hX; \Omega^{n-1}_{\hX}(\log E))$. 

Finally, the statement that $\Gr_F^2H^4_E(\hX)= H^4_E(\hX)$ when $\dim X =3$ follows from the fact that
$$\Gr_F^3H^4_E(\hX)= H^0(\hX;\Omega^3_{\hX}(\log E)/ \Omega^3_{\hX}) = H^0(E; \omega_E) =0,$$
as $H^2(E; \scrO_E) =0$, and
$$\Gr_F^1H^4_E(\hX)= H^2(\hX;\Omega^1_{\hX}(\log E)/ \Omega^1_{\hX}) \cong \bigoplus_i H^2(E_i; \scrO_{E_i}) =0,$$
as $H^2(E; \scrO_E) =0$  and $H^2(E; \scrO_E) = \bigoplus_i H^2(E_i; \scrO_{E_i})$ by the Mayer-Vietoris spectral sequence.
\end{proof}

\begin{remark} As we shall see in Section~\ref{Section3}, the condition that $K'\oplus \Gr^{n-1}_FH^n (L) \cong K$ is satisfied in case $X$ is  a rational weighted homogeneous hypersurface singularity. It is natural to ask if $K'\oplus \Gr^{n-1}_FH^n (L) \cong K$ for an arbitrary rational local complete intersection singularity.
\end{remark}

\subsection{First order smoothings  and $1$-rationality}  Theorem~\ref{maintheorem} highlights  the relevance of the groups $K= \Ker \{H^2_E(\hX; \Omega^{n-1}_{\hX}) \to H^2(\hX; \Omega^{n-1}_{\hX})\} \cong H^2_E(\hX; \Omega^{n-1}_{\hX}(\log E)(-E))$, $H^2_E(\hX; \Omega^{n-1}_{\hX}(\log E)$, and   $\Gr_F^{n-1}H^n(L)$ to the deformation theory of $X$.   We now specialize to the case of a hypersurface singularity.  

\begin{lemma}\label{com1} Suppose that $(X,x)$ is moreover a hypersurface singularity and is not a smooth point, i.e.\ that $T^1_X\neq 0$. The following are equivalent:
\begin{enumerate}
\item[\rm(i)] $K\neq 0$, i.e.\ $H^2_E(\hX; \Omega^{n-1}_{\hX}(\log E)(-E))\neq 0$.   
\item[\rm(ii)] Either $\Gr^{n-1}_FH^n (L)=H^1(\hX; \Omega^{n-1}_{\hX}(\log E)|E)  \neq 0$ or $H^2_E( \hX; \Omega^{n-1}_{\hX}(\log E)) \neq 0$. 
\item[\rm(iii)] $ H^1(\hX; \Omega^{n-1}_{\hX}(\log E)(-E)) \neq H^0(X;T^1_X)$, or equivalently $\im H^1(\hX; \Omega^{n-1}_{\hX}) \neq H^0(X;T^1_X)$.
\item[\rm(iv)] $H^1(\hX; \Omega^{n-1}_{\hX}(\log E)(-E)) \subseteq \mathfrak{m}_x\cdot H^0(X;T^1_X)$,  or equivalently $\im H^1(\hX; \Omega^{n-1}_{\hX})\subseteq \mathfrak{m}_x\cdot H^0(X;T^1_X)$.
\end{enumerate}
Moreover,  the following are equivalent: 
\begin{enumerate}
\item[\rm(i)] $H^2_E(\hX; \Omega^{n-1}_{\hX}(\log E))\neq 0$.   
\item[\rm(ii)]  $ H^1(\hX; \Omega^{n-1}_{\hX}(\log E)) \neq H^0(X;T^1_X)$.
\item[\rm(iii)]  $H^1(\hX; \Omega^{n-1}_{\hX}(\log E)) \subseteq \mathfrak{m}_x\cdot H^0(X;T^1_X)$.
  \end{enumerate}
\end{lemma}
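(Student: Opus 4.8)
Both chains of equivalences rest on two inputs already established: the exact sequences of Theorem~\ref{maintheorem}, which present each relevant local cohomology group as the cokernel of an injection into $H^1(U;T_U)\cong H^0(X;T^1_X)$, and the fact (recorded before Lemma~\ref{defsmooth}) that for a hypersurface singularity $H^0(X;T^1_X)$ is a cyclic $\scrO_{X,x}$-module. The bridge from ``proper image'' to ``image inside $\mathfrak{m}_x\cdot H^0(X;T^1_X)$'' is a module-theoretic dichotomy I would isolate first: if $M$ is a nonzero cyclic module over the local ring $(\scrO_{X,x},\mathfrak{m}_x)$, then $M/\mathfrak{m}_xM$ is one-dimensional by Nakayama, so any $\scrO_{X,x}$-submodule $N\subseteq M$ maps either to $0$ or onto $M/\mathfrak{m}_xM$; in the first case $N\subseteq\mathfrak{m}_xM$, and in the second $N+\mathfrak{m}_xM=M$ forces $N=M$ by Nakayama. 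Since $M\neq0$, the alternatives $N=M$ and $N\subseteq\mathfrak{m}_xM$ are mutually exclusive and exhaustive, i.e.\ $N\neq M\iff N\subseteq\mathfrak{m}_xM$.

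\textbf{First equivalence.} I would begin by noting that the image of $H^1(\hX;\Omega^{n-1}_{\hX}(\log E)(-E))$ in $H^1(U;T_U)\cong H^0(X;T^1_X)$ is an $\scrO_{X,x}$-submodule. Equivalence (i)$\iff$(iii) is then immediate from the first exact sequence of Theorem~\ref{maintheorem}(v), which identifies $K$ with the cokernel of $H^1(\hX;\Omega^{n-1}_{\hX}(\log E)(-E))\to H^1(U;T_U)$: thus $K\neq0$ exactly when this image is proper. Equivalence (i)$\iff$(ii) is read off the second exact sequence of (v), since the middle term $K$ vanishes iff both ends $\Gr^{n-1}_FH^n(L)$ and $H^2_E(\hX;\Omega^{n-1}_{\hX}(\log E))$ vanish. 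The alternative formulations in (iii) and (iv) in terms of $\im H^1(\hX;\Omega^{n-1}_{\hX})$ follow from Theorem~\ref{maintheorem}(iii), which gives that $H^1(\hX;\Omega^{n-1}_{\hX})$ and $H^1(\hX;\Omega^{n-1}_{\hX}(\log E)(-E))$ share the same image in $H^1(U;T_U)$. Finally (iii)$\iff$(iv) is precisely the dichotomy applied to this submodule.

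\textbf{Second equivalence.} This runs in parallel. By Theorem~\ref{maintheorem}(i) the map $H^1(\hX;\Omega^{n-1}_{\hX}(\log E))\to H^1(U;T_U)$ is injective with cokernel $H^2_E(\hX;\Omega^{n-1}_{\hX}(\log E))$, so I would identify $H^1(\hX;\Omega^{n-1}_{\hX}(\log E))$ with an $\scrO_{X,x}$-submodule of $H^0(X;T^1_X)$. Then (i)$\iff$(ii) says this cokernel is nonzero iff the submodule is proper, and (ii)$\iff$(iii) is again the dichotomy.

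\textbf{Main obstacle.} Because the substantive cohomological content is entirely packaged in Theorem~\ref{maintheorem}, the argument is formal; the one delicate point is verifying that the two images are genuine $\scrO_{X,x}$-submodules rather than mere $\Cee$-subspaces, so that Nakayama and the cyclicity of $H^0(X;T^1_X)$ apply. This requires tracking the $\scrO_{X,x}$-module structure through the identifications $H^1(\hX;\mathcal F)=H^0(X;R^1\pi_*\mathcal F)$ and through Schlessinger's isomorphism $H^1(U;T_U)\cong H^0(X;T^1_X)$ of Lemma~\ref{2.1}; once this $\scrO_{X,x}$-linearity is in place, the dichotomy finishes both chains, and the hypersurface hypothesis enters only through cyclicity.
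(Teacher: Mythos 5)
Your proposal is correct and follows essentially the same route as the paper: the equivalences (i)--(iii) are read off the two exact sequences of Theorem~\ref{maintheorem}(v) (resp.\ the sequence in (i) for the second chain), and (iii)$\iff$(iv) is exactly the paper's argument that a proper $\scrO_{X,x}$-submodule of a nonzero cyclic module cannot contain a generator, i.e.\ must lie in $\mathfrak{m}_x\cdot H^0(X;T^1_X)$. Your isolation of this as a Nakayama dichotomy, and your flagging of the $\scrO_{X,x}$-linearity of the images, are just slightly more explicit packagings of what the paper asserts.
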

\begin{proof} We begin with the first set of equivalences:

\smallskip
\noindent
(i) $\iff$ (ii): This is clear from the exact sequence (Theorem~\ref{maintheorem}(v)) 
$$0 \to \Gr^{n-1}_FH^n (L) \to H^2_E(\hX; \Omega^{n-1}_{\hX}(\log E)(-E)) \to H^2_E( \hX; \Omega^{n-1}_{\hX}(\log E)) \to 0.$$

\smallskip
\noindent (ii) $\iff$ (iii):  This is clear from the exact sequence (Theorem~\ref{maintheorem}(v)) 
$$0\to H^1(\hX; \Omega^{n-1}_{\hX}(\log E)(-E)) \to H^0(X;T^1_X) \to H^2_E(\hX; \Omega^{n-1}_{\hX}(\log E)(-E)) \to 0.$$

\smallskip
\noindent (iii) $\iff$ (iv): 
 (iv) $\implies$ (iii) is obvious, using Theorem~\ref{maintheorem}(iii) for the second part. Conversely, suppose that $H^1(\hX; \Omega^{n-1}_{\hX}(\log E)(-E))$ (or equivalently the image of $H^1(\hX; \Omega^{n-1}_{\hX})$) is not contained in $\mathfrak{m}_x\cdot H^0(X;T^1_X)$.  Then  $H^1(\hX; \Omega^{n-1}_{\hX}(\log E)(-E))$ contains a generator of the cyclic $\scrO_{X,x}$-module $H^0(X;T^1_X)$. As $H^1(\hX; \Omega^{n-1}_{\hX}(\log E)(-E))$ is   an  $\scrO_{X,x}$-submodule of $ H^0(X;T^1_X)$, it  follows that $ H^1(\hX; \Omega^{n-1}_{\hX}(\log E)(-E)) = H^0(X;T^1_X)$.
 
 The proof for the second set of equivalences is similar, using Theorem~\ref{maintheorem}(i) as a point of departure.
\end{proof}

Given the above lemma and Theorem~\ref{maintheorem}, we make a definition which captures when certain terms in the various exact sequences in Theorem~\ref{maintheorem} vanish. We will relate this definition to more standard definitions in Proposition~\ref{relatedefs} and Remark~\ref{altDBdef} below.

\begin{definition}\label{defsub} Let $(X,x)$ be an isolated rational lci singularity. 
\begin{enumerate}
\item[\rm(i)]   $(X,x)$ is \textsl{$1$-rational} if $K= 0$, where as before  
$$K  = \Ker \{H^2_E(\hX; \Omega^{n-1}_{\hX}) \to H^2(\hX; \Omega^{n-1}_{\hX})\}\cong H^2_E(\hX; \Omega^{n-1}_{\hX}(\log E)(-E)).$$ By duality, $(X,x)$ is $1$-rational $\iff$ $H^{n-2}(\hX; \Omega^1_{\hX}(\log E)) = 0$. A singularity which is not $1$-rational is \textsl{$1$-irrational}. Thus, $(X,x)$ is $1$-irrational $\iff$ $K\neq 0$. 

\item[\rm(ii)] $(X,x)$ is \textsl{$1$-Du Bois} if $H^2_E(\hX; \Omega^{n-1}_{\hX}(\log E)) = 0$. By duality, $(X,x)$ is $1$-Du Bois $\iff$ $H^{n-2}(\hX; \Omega^1_{\hX}(\log E)(-E)) = 0$ $\iff$ $b^{1, n-2} =0$. By Theorem~\ref{maintheorem}(v), if $(X,x)$ is $1$-rational, then $(X,x)$ is  $1$-Du Bois.
  
 \item[\rm(iii)]  $(X,x)$ is \textsl{$1$-liminal} if it is $1$-Du Bois but   $1$-irrational, or equivalently if $H^2_E(\hX; \Omega^{n-1}_{\hX}(\log E)) =0$ but $\Gr_F^{n-1}H^n(L) \neq 0$. Note that in this case $\Gr_F^{n-1}H^n(L) \cong K$.

\item[\rm(iv)]  $(X,x)$ is \textsl{strongly $1$-irrational} if it is not $1$-Du Bois, hence is $1$-irrational,  and the map $K'\oplus \Gr_F^{n-1}H^n(L) \to K$ defined in Theorem~\ref{maintheorem}(vi) is an isomorphism.  Note that, if   $(X,x)$ is  strongly $1$-irrational, then $K'\cong H^2_E(\hX; \Omega^{n-1}_{\hX}(\log E))$ by Theorem~\ref{maintheorem}(vi), and $H^2_E(\hX; \Omega^{n-1}_{\hX}(\log E)) \neq 0$ since $(X,x)$ is not $1$-Du Bois. Thus, if  $(X,x)$ is  strongly $1$-irrational, then $K' \neq 0$. 
\end{enumerate}
\end{definition}

We can rephrase part of Lemma~\ref{com1} as follows:

\begin{lemma}\label{com2} Let $X$ be an isolated rational hypersurface singularity, not a smooth point. 
\begin{enumerate}
\item[\rm(i)] $X$ is   $1$-irrational $\iff$ $H^1(\hX; \Omega^{n-1}_{\hX}(\log E)(-E)) \subseteq \mathfrak{m}_x\cdot H^0(X;T^1_X)$.
\item[\rm(ii)] $X$ is not $1$-Du Bois $\iff$ $H^1(\hX; \Omega^{n-1}_{\hX}(\log E)) \subseteq \mathfrak{m}_x\cdot H^0(X;T^1_X)$. \qed
\end{enumerate}
\end{lemma}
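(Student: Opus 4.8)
The plan is to recognize that both equivalences are purely formal reformulations of Definition~\ref{defsub} combined with the two chains of equivalences already established in Lemma~\ref{com1}. No new geometric input is needed: the content is entirely a matter of matching the defined notions ($1$-irrational, not $1$-Du Bois) to the cohomological conditions appearing in Lemma~\ref{com1}, and then reading off the corresponding containment conditions from that same lemma. So I would present this as an immediate corollary rather than as an independent argument.

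Concretely, for part (i) I would begin from Definition~\ref{defsub}(i), which says that $X$ is $1$-irrational precisely when $K\neq 0$, i.e.\ when $H^2_E(\hX; \Omega^{n-1}_{\hX}(\log E)(-E))\neq 0$; this is exactly condition (i) in the first block of equivalences of Lemma~\ref{com1}, whose condition (iv) is the containment $H^1(\hX; \Omega^{n-1}_{\hX}(\log E)(-E)) \subseteq \mathfrak{m}_x\cdot H^0(X;T^1_X)$. Chaining ``$X$ is $1$-irrational $\iff K\neq 0 \iff$ (iv)'' gives the claim. For part (ii) I would argue identically: by Definition~\ref{defsub}(ii), $X$ fails to be $1$-Du Bois precisely when $H^2_E(\hX; \Omega^{n-1}_{\hX}(\log E))\neq 0$, which is condition (i) of the \emph{second} block of equivalences in Lemma~\ref{com1}, whose condition (iii) is the asserted containment $H^1(\hX; \Omega^{n-1}_{\hX}(\log E)) \subseteq \mathfrak{m}_x\cdot H^0(X;T^1_X)$. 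Chaining these finishes the proof.

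The only genuinely nontrivial step is not carried out here but is hidden inside Lemma~\ref{com1}, namely the implication ``proper submodule $\implies$ contained in $\mathfrak{m}_x\cdot H^0(X;T^1_X)$.'' This uses the hypersurface hypothesis, which forces $H^0(X;T^1_X)$ to be a \emph{cyclic} $\scrO_{X,x}$-module, so that its quotient by $\mathfrak{m}_x\cdot H^0(X;T^1_X)$ is one-dimensional; a submodule is then either contained in $\mathfrak{m}_x\cdot H^0(X;T^1_X)$ or surjects onto this quotient and hence equals everything by Nakayama. Since that dichotomy has already been invoked in Lemma~\ref{com1}, I expect no real obstacle to remain; the ``hard part,'' such as it is, amounts only to keeping the definitions and the two exact sequences of Theorem~\ref{maintheorem}(v) correctly aligned.
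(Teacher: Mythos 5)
Your proposal is correct and matches the paper exactly: the paper gives no proof at all (the lemma is stated with a \qed as an immediate rephrasing of Lemma~\ref{com1} via Definition~\ref{defsub}), and your chaining of Definition~\ref{defsub}(i),(ii) with conditions (i) and (iv) of the first block, respectively (i) and (iii) of the second block, of Lemma~\ref{com1} is precisely the intended argument. Your remark that the only substantive input is the cyclicity of $H^0(X;T^1_X)$ as an $\scrO_{X,x}$-module, already used inside Lemma~\ref{com1}, is also accurate.
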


 In terms of the deformation theory of \S\ref{Section1}, we have the following:

\begin{corollary}\label{meaning} Let $X$ be an isolated rational hypersurface singularity. 
\begin{enumerate}
\item[\rm(i)] If $X$ is   $1$-irrational, then any subspace $T$ of $H^0(X;T^1_X)$ mapping onto $K$ contains a first order smoothing.
\item[\rm(ii)] If $X$ is strongly $1$-irrational, then any subspace $T$ of $H^0(X;T^1_X)$ mapping onto $K'$ contains a first order smoothing.  
\end{enumerate}
\end{corollary}
\begin{proof} (i)  Let $u\in T$ map onto the image of a generator $u_0$ of $H^0(X;T^1_X)$. Then $u$ and $u_0$ differ by an element of  $H^1(\hX; \Omega^{n-1}_{\hX}(\log E)(-E))$ and hence by an element of $\mathfrak{m}_x\cdot H^0(X;T^1_X)$. Thus   $u \notin \mathfrak{m}_x\cdot H^0(X;T^1_X)$. By definition, $u$ is a first order smoothing.

\smallskip
\noindent (ii) By Theorem~\ref{maintheorem}(vi), if $X$ is strongly $1$-irrational, then $K' \cong H^2_E(\hX; \Omega^{n-1}_{\hX}(\log E))$ and there is an exact sequence
$$0 \to H^1(\hX; \Omega^{n-1}_{\hX}(\log E))  \to H^0(X;T^1_X) \to K'\to 0.$$
The argument then proceeds as in (i).
\end{proof} 

We shall in fact use a slight variant of this idea in \S\ref{Section4} and \S\ref{Section5}.

\subsection{Examples and further comments} We begin with a series of explicit examples of $1$-irrational and $1$-liminal singularities.

\begin{example}   For $n\geq 3$, let $f(z) = \sum_{i=1}^{n+1}z_i^d$, or more generally the polynomial defining the affine cone over a smooth hypersurface in $\Pee^n$ of degree $d$, and let $X$ be the corresponding germ of $V(f)$ at $0$. As we shall show in the next section (Corollary~\ref{wtscor}),  $X$ is rational $\iff$ $d< n+1$, $X$ is   $1$-irrational $\iff$ $d\geq \frac12(n+1)$, $X$ is strongly $1$-irrational $\iff$ $d> \frac12(n+1)$, and $X$ is $1$-liminal  if $d= \frac12(n+1)$. In particular, an ordinary double point is $1$-rational if $n> 3$, and our methods do not apply to such singularities. 

 Again by  Corollary~\ref{wtscor}, if $n$ is odd and $n\ge 3$, say $n=2k-1$ with $k\ge 2$, then the affine cone over a smooth hypersurface of degree $k$ in $\Pee^n$ is $1$-liminal. Likewise, in case $n$ is even and $n\geq 4$, say $n =2k$,  an example of a $1$-liminal  singularity is given by the weighted homogeneous polynomial $$f(z) = z_1^k + \cdots + z_{n-1}^k + z_n^{2k} + z_{n+1}^{2k}.$$
\end{example}

 The following shows that our definitions of $1$-rational and $1$-Du Bois agree with the definitions of \cite[\S4]{KL2} and \cite{FL22c}  for the $1$-rational case, and \cite{MOPW}, \cite[(1)]{JKSY-duBois}, \cite{MP-Mem} for the  $1$-Du Bois case. (See \cite[Theorem 5.2(iii) and Theorem 5.3(iii)]{FL22d} for a generalization.)
 
 \begin{proposition}\label{relatedefs} \begin{enumerate} \item[\rm(i)] An isolated rational lci singularity $X$ is $1$-rational  $\iff$ $H^q(\hX; \scrO_{\hX}) = H^q(\hX; \Omega^1_{\hX}(\log E)) = 0$ for all $q>0$. 
 
 \item[\rm(ii)] An isolated rational lci singularity $X$ is $1$-Du Bois in the sense of Definition~\ref{defsub} $\iff$ $H^q(\hX; \scrO_{\hX}(-E)) = H^q(\hX; \Omega^1_{\hX}(\log E)(-E)) = 0$ for all $q>0$.
 \end{enumerate}
 \end{proposition}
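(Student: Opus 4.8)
The plan is to reduce each biconditional to the vanishing in a single cohomological degree, exploiting the numerical vanishing of Theorem~\ref{numervan} together with the sharper vanishing statements already available for rational singularities. The first thing I would observe is that, for an isolated rational singularity, the two hypotheses involving $\scrO_{\hX}$ and $\scrO_{\hX}(-E)$ are automatic and therefore carry no information: rationality gives $H^q(\hX;\scrO_{\hX})=0$ for $q>0$, and Lemma~\ref{ratlplus} gives $H^q(\hX;\scrO_{\hX}(-E))\cong H^0(X;R^q\pi_*\scrO_{\hX}(-E))=0$ for $q>0$. Hence proving (i) amounts to showing that $1$-rationality is equivalent to $H^q(\hX;\Omega^1_{\hX}(\log E))=0$ for \emph{all} $q>0$, and proving (ii) amounts to showing that the $1$-Du Bois condition is equivalent to $H^q(\hX;\Omega^1_{\hX}(\log E)(-E))=0$ for all $q>0$. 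In both cases the ``$\Leftarrow$'' direction is trivial (take $q=n-2$), so the content is the implication that a single vanishing propagates to all positive degrees.

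For (ii), I would write $b^{1,q}=\dim H^q(\hX;\Omega^1_{\hX}(\log E)(-E))$ and invoke Theorem~\ref{numervan} in the lci case: $b^{1,q}=0$ for $q>0$ unless $1+q=n-1$ or $1+q=n$, i.e.\ unless $q=n-2$ or $q=n-1$. The boundary degree $q=n-1$ is then eliminated by duality and Corollary~\ref{1.5}, since $b^{1,n-1}=\dim H^1_E(\hX;\Omega^{n-1}_{\hX}(\log E))=0$. Thus $H^q(\hX;\Omega^1_{\hX}(\log E)(-E))$ can be nonzero only for $q=n-2$, so its vanishing in all positive degrees is equivalent to $b^{1,n-2}=0$, which by Definition~\ref{defsub}(ii) is exactly the $1$-Du Bois condition.

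For (i) I would run the analogous bookkeeping for $\Omega^1_{\hX}(\log E)$ using the restriction long exact sequence (\ref{1.1}) with $p=1$, which relates it to $\Omega^1_{\hX}(\log E)(-E)$ and to the link sheaf $\Omega^1_{\hX}(\log E)|E$, whose cohomology has dimensions $\ell^{1,q}=\dim H^q(E;\Omega^1_{\hX}(\log E)|E)$. By Theorem~\ref{numervan} one has $\ell^{1,q}=0$ for $q>0$ outside $\{n-2,n-1\}$, and likewise $b^{1,q}=0$ there. Hence for $q>0$ with $q\notin\{n-2,n-1\}$, the term $H^q(\hX;\Omega^1_{\hX}(\log E))$ is squeezed between two vanishing groups in (\ref{1.1}) and so vanishes. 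The degree $q=n-1$ is handled directly by Theorem~\ref{maintheorem}(iii), which asserts $H^{n-1}(\hX;\Omega^1_{\hX}(\log E))=0$. Consequently $H^q(\hX;\Omega^1_{\hX}(\log E))=0$ for all $q>0$ is equivalent to its vanishing in the single degree $q=n-2$, which is precisely the dual reformulation of $1$-rationality ($K=0$) recorded in Definition~\ref{defsub}(i). This completes (i).

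The only genuinely delicate point, and the step I expect to require the most care, is the treatment of the boundary degree $q=n-1$. There both $b^{1,n-1}$ and $\ell^{1,n-1}$ are a priori permitted to be nonzero by the coarse numerical estimate of Theorem~\ref{numervan}, so that theorem alone cannot close the argument; one must genuinely appeal to the finer vanishing results for rational singularities, namely Corollary~\ref{1.5} (for the $\Omega^1_{\hX}(\log E)(-E)$ case) and Theorem~\ref{maintheorem}(iii) (for the $\Omega^1_{\hX}(\log E)$ case), to collapse these groups. Once these refined inputs are in place, everything else is formal manipulation of the long exact sequence (\ref{1.1}) together with the duality identifications defining the $b^{p,q}$ and $\ell^{p,q}$.
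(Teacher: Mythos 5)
Your proposal is correct and follows essentially the same route as the paper: both directions reduce to the single degree $q=n-2$ via Theorem~\ref{numervan} and the exact sequence (\ref{1.1}), with the boundary degree $q=n-1$ killed by Theorem~\ref{maintheorem}(iii) in case (i) and by Corollary~\ref{1.5} (via duality) in case (ii). Your preliminary observation that the $\scrO_{\hX}$ and $\scrO_{\hX}(-E)$ conditions are automatic for rational singularities is left implicit in the paper but is the same use of rationality and Lemma~\ref{ratlplus}.
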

 \begin{proof} Proof of (i):  The $\impliedby$ implication is clear. Conversely, if $X$ is $1$-rational, then by definition $H^{n-2}(\hX; \Omega^1_{\hX}(\log E)) = 0$. By Theorem~\ref{maintheorem}(iii), since $X$ is rational, $H^1_E(\hX; \Omega^{n-1}_{\hX}(\log E)(-E)) =0$, and hence by duality $H^{n-1}(\hX; \Omega^1_{\hX}(\log E)) =0$. Then it follows from the exact sequence (\ref{1.1}) and Theorem~\ref{numervan} that $H^q(\hX; \Omega^1_{\hX}(\log E)) =0$ for $0< q < n-2$. Thus $H^q(\hX; \scrO_{\hX}) = H^q(\hX; \Omega^1_{\hX}(\log E)) = 0$ for all $q>0$. 
 
\smallskip
\noindent Proof of (ii):  Again, the $\impliedby$ implication is clear. Conversely, if $X$ is $1$-Du Bois, then it is rational. By Corollary~\ref{1.5}, $H^1_E(\hX; \Omega^{n-1}_{\hX}(\log E)) =0$, and hence $H^{n-1}(\hX; \Omega^1_{\hX}(\log E)(-E)) = 0$. Theorem~\ref{numervan} implies that that $H^q(\hX; \Omega^1_{\hX}(\log E)(-E)) =0$ for $0< q < n-2$. Thus $ H^q(\hX; \Omega^1_{\hX}(\log E)(-E)) = 0$ for all $q>0$.
 \end{proof}
 
 \begin{remark}\label{altDBdef} For an algebraic variety $X$, let $\underline{\Omega}_X^\bullet$ denote the \textsl{filtered de Rham} or \textsl{Deligne-Du Bois} complex of $X$, which is an object of the filtered derived category (see e.g.\ \cite[\S7.3]{PS}). Then \cite{MOPW}, \cite[(1)]{JKSY-duBois}, \cite{MP-Mem} have defined $X$ to have a \textsl{$1$-Du Bois singularity} if the natural maps $\scrO_X \to \underline{\Omega}_X^0$ and $\Omega^1_X \to \underline{\Omega}_X^1$ are isomorphisms. By \cite[Remark 3.20]{FL22c}, if  $X$ has an isolated   lci singularity and $\dim X \ge 3$, then $X$ is rational and $1$-Du Bois in the sense of Definition~\ref{defsub} $\iff$ $X$ is $1$-Du Bois in the above sense.   
  \end{remark} 
 
 The following is due to Namikawa-Steenbrink \cite[Theorem 2.2]{NS}. A proof in the above language may be found in \cite[Corollary 6.12]{FL22d}:
 
 \begin{proposition} Let $X$ be an isolated rational hypersurface singularity    with $\dim X = 3$. Then $X$  is $1$-irrational. Moreover,   $X$ is $1$-liminal $\iff$  $X$ is $1$-Du Bois  $\iff$ $X$ is an ordinary double point. \qed
 \end{proposition}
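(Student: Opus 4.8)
The plan is to read everything off the exact sequence of Theorem~\ref{maintheorem}(v) specialized to $n=3$, namely
$$0 \to \Gr^2_F H^3(L) \to K \to H^2_E(\hX; \Omega^2_{\hX}(\log E)) \to 0,$$
together with the dimension count of Theorem~\ref{maintheorem}(ii),(iv). Recall that $X$ is $1$-irrational means $K\neq 0$, that $X$ is $1$-Du Bois means $H^2_E(\hX;\Omega^2_{\hX}(\log E))=0$, and that $1$-liminal means both. Since $H^2_E(\hX;\Omega^2_{\hX}(\log E))$ is a \emph{quotient} of $K$ in the sequence above, if $X$ fails to be $1$-Du Bois then $K\neq 0$ and $X$ is automatically $1$-irrational. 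Thus the proposition reduces to two assertions: (I) $X$ is $1$-Du Bois if and only if it is an ordinary double point; and (II) a $1$-Du Bois such $X$ still has $K\neq 0$. Granting (I) and (II), every isolated rational hypersurface threefold singularity is $1$-irrational, and, $1$-irrationality being automatic, $1$-liminal is equivalent to $1$-Du Bois, hence by (I) to being an ODP.

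Assertion (II) is the quick part, and I would dispatch it first since it does not logically need (I). If $X$ is $1$-Du Bois then $b^{1,1}=\dim H^2_E(\hX;\Omega^2_{\hX}(\log E))=0$; the relation $b^{1,1}=b^{2,1}+a$ of Theorem~\ref{maintheorem}(iv) (the case $n=3$ of $b^{1,n-2}=b^{2,n-2}+a$) forces $b^{2,1}=0$ and $a=0$. Feeding this into $\dim H^0(X;T^1_X)=b^{1,1}+b^{2,1}+\ell^{2,1}$ gives $\dim H^0(X;T^1_X)=\ell^{2,1}=\dim\Gr^2_F H^3(L)$, while in the $1$-Du Bois case the sequence above gives $K\cong\Gr^2_F H^3(L)$. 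Since $X$ is singular, $T^1_X\neq 0$, so $K\neq 0$. (Concretely, the link of the threefold node is $S^2\times S^3$, and $\Gr^2_F H^3(L)\neq 0$ reflects $H^3(S^2\times S^3)\neq 0$.)

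For (I), the implication $\Leftarrow$ is immediate: the node $f=z_1^2+\cdots+z_4^2$ is weighted homogeneous with all weights $\tfrac12$, its minimal exponent is $\sum_i w_i=2$, and the description of $1$-Du Bois singularities by the bound $\tilde\alpha_f\ge 2$ (which matches Definition~\ref{defsub} via Proposition~\ref{relatedefs} and the theory of \cite{MOPW}, \cite{MP-Mem}) shows the node is $1$-Du Bois. The substantive direction is $\Rightarrow$. First, $1$-Du Bois forces $a=0$ as above, so by the Saito--Vosegaard statement in Theorem~\ref{maintheorem}(iv) the singularity is quasihomogeneous, say with weights $w_i$ on $\Cee^4$. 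The $1$-Du Bois hypothesis then reads $\tilde\alpha_f=\sum_i w_i\ge 2$, whereas the top (socle) spectral number equals $4-\sum_i w_i\le 2$; as every spectral number lies between these extremes, all spectral numbers must equal $2$. In the quasihomogeneous case the spectral numbers are the values $\sum_i(k_i+1)w_i$ over a monomial basis of the Milnor algebra, and the minimum $\sum_i w_i=2$ is attained only by the constant monomial, so $\mu=1$. Finally $\mu=1$ characterizes the ordinary double point by the Morse lemma, completing (I).

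The main obstacle is exactly this $\Rightarrow$ step of (I): translating the cohomological vanishing $H^2_E(\hX;\Omega^2_{\hX}(\log E))=0$ into numerical data and then invoking the extremality of the node. The Hodge-theoretic bookkeeping of the first two paragraphs is formal once Theorem~\ref{maintheorem} is available; the content lies in importing the minimal-exponent description of $1$-Du Bois together with the symmetry of the spectrum about $(n+1)/2=2$, which pins the spectrum to the single value $2$ and forces $\mu=1$. An alternative, avoiding the spectrum, would use the weighted-homogeneous computations of Section~\ref{Section3} to express $b^{1,1}$ directly in terms of the weights and read off $w_i=\tfrac12$ for all $i$; either way the crux is the rigidity that, in dimension three, only the node saturates the $1$-Du Bois bound.
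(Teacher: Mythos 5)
Your argument is correct, but it is worth saying up front that the paper gives no internal proof of this proposition: it is stated with a \qed and attributed to Namikawa--Steenbrink \cite[Theorem 2.2]{NS}, with the remark that a proof ``in the above language'' is in \cite[Corollary 6.12]{FL22d}. So there is nothing in the paper to match your argument against line by line; what you have produced is a self-contained derivation from the paper's own machinery, and it holds up. The skeleton is sound: since $H^2_E(\hX;\Omega^2_{\hX}(\log E))$ is a quotient of $K$, failure of $1$-Du Bois gives $1$-irrationality for free, so everything reduces to your (I) and (II). For (II), $b^{1,1}=0$ forces $b^{2,1}=a=0$ via Theorem~\ref{maintheorem}(iv), and then $K\cong H^0(X;T^1_X)\neq 0$ (your route through $\ell^{2,1}=\dim\Gr^2_FH^3(L)$ is equivalent to noting that the subspace $H^1(\hX;\Omega^2_{\hX}(\log E)(-E))$ of $H^1(U;T_U)$ has dimension $b^{2,1}=0$). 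For (I), the key reduction $a=0\Rightarrow$ quasihomogeneous is exactly the Saito--Vosegaard statement recorded in Theorem~\ref{maintheorem}(iv), after which the Section~\ref{Section3} corollary ($1$-Du Bois $\iff\sum_iw_i\ge 2$) applies; note that this imports material from a later section, though there is no circularity since Section~\ref{Section3} does not use the proposition. The one genuinely external ingredient is the symmetry of the spectrum about $(n+1)/2=2$, equivalently that the socle of the graded Milnor algebra sits in degree $\sum_i(d-2a_i)=4d-2\sum_ia_i$; with $\sum_iw_i\ge 2$ this degree is $\le 0$, hence $=0$, hence $Q_f=\Cee$, $\mu=1$, and the Morse lemma gives the node. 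This is standard (Milnor--Orlik, or local duality for the Jacobian ring) but should be cited, since the paper only records the weight of the \emph{bottom} of $T^1$, not the top. Compared with the cited sources, your proof is closer in spirit to \cite{FL22d} (which runs through the minimal exponent $\widetilde\alpha_X$ and the bound $\widetilde\alpha_X\le (n+1)/2$ for arbitrary, not necessarily quasihomogeneous, hypersurface germs); your detour through $\mu=\tau$ buys you the quasihomogeneous reduction so that only the elementary graded computation is needed, at the cost of invoking the Saito/Vosegaard theorem.
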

 
 The last sentence in Theorem~\ref{maintheorem}(vi) then implies: 
 
  \begin{corollary}\label{gooddim3cor}  Let $X$ be an isolated rational hypersurface singularity    with $\dim X = 3$. Then either  $X$ is $1$-liminal, in which case  $X$ is an ordinary double point, or $X$ is strongly $1$-irrational. \qed
 \end{corollary}

 An ordinary double point satisfies $\dim T^1_{X,x} = 1$. As we shall see, the following is  an analogue for $1$-liminal singularities in higher dimensions (\cite[Corollary 6.14]{FL22d}, where the dual statement is proved):
 
 \begin{proposition}\label{limdimone} Let $X$ be an isolated  $1$-liminal  hypersurface singularity   with $\dim X \geq  3$. Then $\ell^{n-1,  1} = 1$, i.e.\ $\dim \Gr^{n-1}_FH^n (L) = 1$, and the map  
$$\Gr_F^{n-1}H^n(L) \to K = H^2_E (\hX;\Omega_{\hX}^{n-1}(\log E)(-E)) $$ is an isomorphism. \qed
 \end{proposition}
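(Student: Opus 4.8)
The plan is to reduce the statement to the computation $\dim_{\Cee} K = 1$ and then to exploit the $\scrO_{X,x}$-module structure on the relevant groups. Since a $1$-liminal singularity is by definition $1$-Du Bois, we have $H^2_E(\hX;\Omega^{n-1}_{\hX}(\log E)) = 0$, so the second exact sequence of Theorem~\ref{maintheorem}(v) shows at once that the natural map $\Gr_F^{n-1}H^n(L) \to K$ is an isomorphism. This settles the second assertion and reduces the first, $\ell^{n-1,1} = \dim \Gr_F^{n-1}H^n(L) = 1$, to proving that $\dim_{\Cee} K = 1$.

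First I would record two features of $K$ furnished by the exact sequences of Theorem~\ref{maintheorem}(v), keeping careful track of the $R := \scrO_{X,x}$-module structure, which is present on all the groups involved (the maps in those sequences come from local cohomology and restriction sequences of coherent sheaves on $\hX$, hence are $R$-linear). On the one hand, the first sequence of (v) exhibits $K$ as a quotient of $H^1(U;T_U) \cong H^0(X;T^1_X)$; since $(X,x)$ is a hypersurface singularity, $H^0(X;T^1_X)$ is a cyclic $R$-module, and therefore so is its quotient $K$. On the other hand, the isomorphism $K \cong \Gr_F^{n-1}H^n(L) = H^1(\hX;\Omega^{n-1}_{\hX}(\log E)|E)$ identifies $K$, as an $R$-module, with the cohomology of a coherent sheaf supported on the exceptional divisor $E$.

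The decisive point is then that $\mathfrak{m}_x$ annihilates $K$. Indeed, the $R$-action on $H^1(E;\Omega^{n-1}_{\hX}(\log E)|E)$ factors through the restriction $R = H^0(\hX;\scrO_{\hX}) \to H^0(E;\scrO_E) = \Cee$, the last equality holding because $E = \pi^{-1}(x)$ is connected and reduced; as $\mathfrak{m}_x$ maps to $0$ under this augmentation, it acts as $0$ on the cohomology of every $\scrO_E$-module, in particular on $K$. Combining the two observations, $K$ is a cyclic $R$-module killed by $\mathfrak{m}_x$, hence a quotient of $R/\mathfrak{m}_x \cong \Cee$, so $\dim_{\Cee} K \leq 1$. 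Finally, a $1$-liminal singularity is $1$-irrational, so $K \neq 0$ by definition, which forces $\dim_{\Cee} K = 1$, as desired; the isomorphism $\Gr_F^{n-1}H^n(L) \xrightarrow{\sim} K$ then completes the proof.

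The step requiring the most care — the main (if modest) obstacle — is the compatibility of the two $R$-module structures on $K$: the one arising from its description as a quotient of $H^0(X;T^1_X)$ and the one arising from its description as sheaf cohomology on $E$. Both are manifestations of the canonical $R = H^0(X;\scrO_X)$-action on the relevant (local) cohomology groups, and the connecting and restriction maps of Theorem~\ref{maintheorem}(v) are $R$-linear, so this is a bookkeeping matter; once it is in place, the vanishing of the $\mathfrak{m}_x$-action on cohomology of sheaves on $E$ is routine. I would also double-check that the cyclicity input for $H^0(X;T^1_X)$ in the hypersurface case and the Schlessinger identification $H^0(X;T^1_X)\cong H^1(U;T_U)$ of Lemma~\ref{2.1} are $R$-linear, which they are.
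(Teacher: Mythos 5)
Your proof is correct, and it is genuinely different from what the paper does: the paper does not prove Proposition~\ref{limdimone} internally at all, but defers to \cite[Corollary 6.14]{FL22d} (where a dual statement is established via the machinery of higher Du Bois/rational singularities), and only verifies the dimension count directly in the weighted homogeneous case (Theorem~\ref{wtdlink}, Corollary~\ref{wtdcor}) using the Scherk--Steenbrink identification of $\Gr_V^1Q_f$ with $\Gr_F^{n-1}H^n(M)_1$. Your route is more elementary and self-contained: the reduction of the second assertion to the second exact sequence of Theorem~\ref{maintheorem}(v) is exactly right, and the key observation --- that $K$ is simultaneously a quotient of the cyclic $\scrO_{X,x}$-module $H^0(X;T^1_X)$ (via the first sequence of (v)) and, being isomorphic to $H^1(E;\Omega^{n-1}_{\hX}(\log E)|E)$, is annihilated by $\mathfrak{m}_x$ since $H^0(E;\scrO_E)=\Cee$ --- does force $\dim_\Cee K\le 1$, with $K\ne 0$ supplied by $1$-irrationality. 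The compatibility of the two module structures that you flag is indeed the only point needing care, and your justification (all maps in (v) arise from $\scrO_{\hX}$-linear sheaf maps and connecting homomorphisms, hence are $H^0(\hX;\scrO_{\hX})=\scrO_{X}(X)$-linear, as are the Schlessinger isomorphism and the twist by the trivialization of $\omega_X$) is sound. What the paper's (external) approach buys that yours does not is finer information --- e.g.\ in the weighted homogeneous case it identifies $\Gr_F^{n-1}H^n(L)$ with the specific weight space $T^1_X(-N)=\Gr_V^1Q_f$, tying the statement to the minimal exponent; what your argument buys is a short, direct proof valid for an arbitrary $1$-liminal hypersurface singularity with no Hodge-theoretic input beyond Theorem~\ref{maintheorem}(v) itself.
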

  
\section{The case of a weighted homogeneous hypersurface singularity}\label{Section3}

Our goal in this section is to make the condition  of $1$-irrationality explicit for a weighted homogeneous hypersurface singularity, and to sharpen some of the results of Section~\ref{Section2}. Let  $X\subseteq (\Cee^{n+1}, 0)$ be the germ of an isolated, not necessarily  rational  hypersurface  singularity of dimension $n\geq 3$ with a good $\Cee^*$-action. We may as well assume that $X$ is affine, say $X \subseteq \Cee^{n+1}$, with an isolated singularity at $0\in X$, and in particular that $X$ is not smooth.  Let $\Cee^*$ act  on  $\Cee^{n+1}$ with integer weights $a_1, \dots, a_{n+1}> 0$ and $\gcd(a_1, \dots, a_{n+1}) = 1$, and suppose that $X =V(f)$, where $f$ is weighted homogeneous of degree $d>1$. Let $\hX \to X$ be a good equivariant resolution of $X$, which we may assume is equivariant for the $\Cee^*$-action, with exceptional set $E$ and $U = \hX -E = X -\{0\}$. We set   $N = \sum_{i=1}^{n+1}a_i - d$, so that the canonical bundle of the hypersurface $E^\#$ defined by $f$ in the corresponding weighted projective space is $\scrO_{E^\#}(-N)$. 

The $\Cee^*$-action on $X$ induces a  $\Cee^*$-action on $H^0(X;T^1_X) \cong H^1(U; T_U)$. Thus $H^0(X;T^1_X) =\bigoplus_{k\in \Zee}H^0(X;T^1_X)(k)$.  
For a multi-index $\alpha = (\alpha_1, \dots, \alpha_{n+1})$,   let $z^\alpha= z_1^{\alpha_1}\cdots z_{n+1}^{\alpha_{n+1}}$.  Fix an index set $A$ of multi-indices $\alpha$ such that $\{z^\alpha: \alpha \in A\}$ maps to a basis of $H^0(X;T^1_X) \cong \Cee\{z_1, \dots, z_{n+1}\}/J(f)$. Define $w_i=a_i/d\in \Q$, so that $\sum_iw_i = N/d + 1$. Set
$$\ell(\alpha) = \sum_{i=1}^{n+1}(\alpha_i+1)w_i.$$
For future reference, we note the following (easily checked):

\begin{lemma}\label{Ceestar1} \begin{enumerate} \item[\rm(i)] The $\Cee^*$-weight of $z^\alpha$, viewed as an element of $H^0(X;T^1_X)$, is
$$\sum_ia_i\alpha_i - d = d\left(\ell(\alpha)-\sum_iw_i -1\right) = d(\ell(\alpha) -2) -N.$$
\item[\rm(ii)] The $\Cee^*$-weight of $z^\alpha$ is $-N$ $\iff$ $\ell(\alpha) = 2$. \qed
\end{enumerate}
\end{lemma}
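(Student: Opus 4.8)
The plan is to reduce the whole statement to the single computation of the $\Cee^*$-weight of the class of a monomial $z^\alpha$ in $T^1_X$, after which both parts follow by elementary algebra. First I would pin down the weight convention. Recall the identification $H^0(X;T^1_X) \cong \Cee\{z_1,\dots,z_{n+1}\}/J(f)$, which comes from realizing $T^1_X$ as the cokernel of the natural map $T_{\Cee^{n+1}}|X \to N_{X/\Cee^{n+1}}$ from the restricted tangent bundle to the normal sheaf $N_{X/\Cee^{n+1}} = \Hom_{\scrO_X}(I/I^2, \scrO_X)$, where $I = (f)$. The essential point is the grading shift. As a graded $\scrO_X$-module, $I/I^2 = (f)/(f^2)$ is free of rank one on the class of $f$, which has weight $d$; hence the dual generator $\nu$ of $N_{X/\Cee^{n+1}}$ carries $\Cee^*$-weight $-d$. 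The map $T_{\Cee^{n+1}}|X \to N_{X/\Cee^{n+1}}$ sends $\partial_i \mapsto (\partial_i f)\,\nu$, so its image recovers the Jacobian ideal $J(f)$ and the class of $z^\alpha$ in $T^1_X$ is represented by $z^\alpha\,\nu$. Its weight is therefore $\deg(z^\alpha) + \deg(\nu) = \sum_i a_i\alpha_i - d$, which is the first equality of (i). (Equivalently, one can note that the one-parameter deformation $V(f+\epsilon z^\alpha)$ is $\Cee^*$-equivariant once $\epsilon$ is assigned weight $d - \sum_i a_i\alpha_i$, so the Kodaira–Spencer direction dual to $\epsilon$ has weight $\sum_i a_i\alpha_i - d$.)

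The remaining two equalities in (i) are purely formal. Substituting $a_i = d\,w_i$ gives $\sum_i a_i\alpha_i - d = d\bigl(\sum_i w_i\alpha_i - 1\bigr)$, and since by definition $\ell(\alpha) = \sum_i w_i\alpha_i + \sum_i w_i$, we may rewrite $\sum_i w_i\alpha_i = \ell(\alpha) - \sum_i w_i$, yielding $d\bigl(\ell(\alpha) - \sum_i w_i - 1\bigr)$. Finally, using $\sum_i w_i = N/d + 1$ (immediate from $N = \sum_i a_i - d$), the bracket becomes $\ell(\alpha) - 2 - N/d$, so the expression equals $d(\ell(\alpha)-2) - N$.

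Part (ii) is then immediate from the last form: the weight equals $-N$ if and only if $d(\ell(\alpha)-2) = 0$, and since $d > 1$ this holds if and only if $\ell(\alpha) = 2$.

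The only genuine content — and the step I would treat most carefully — is justifying the grading shift by $-d$ in the first equality, i.e.\ fixing the $\Cee^*$-weight convention on $T^1_X$ so that it is compatible with the induced action on the deformation parameters and with the identification $H^0(X;T^1_X)\cong H^1(U;T_U)$ used throughout the paper. Everything after that is bookkeeping, which is why the statement is flagged as easily checked.
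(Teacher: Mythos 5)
The paper gives no proof of this lemma (it is labeled ``easily checked''), and your argument supplies exactly the intended computation: the grading shift by $-d$ from the generator of $N_{X/\Cee^{n+1}}$ dual to $f$ is the right convention, and it is consistent with the paper's later assertion that $1\in\Cee[z_1,\dots,z_{n+1}]/J(f)$ has weight $-d$. The remaining algebra and part (ii) are correct as written.
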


\begin{remark}\label{Cstaridentify}
There is a $\Cee^*$-equivariant isomorphism
$$H^1(U; T_U\otimes \Omega^n_U) \to H^1(U; \Omega^{n-1}_U).$$
Here, the line bundle $\Omega^n_U$ is trivialized by the section $\omega$ defined by $dz_1\wedge\cdots\wedge dz_{n+1}/f$, of $\Cee^*$ weight $N = \sum_ia_i - d$. Thus, under the image of the isomorphisms
$$H^1(U; \Omega^{n-1}_U)  = H^1(U; T_U)\cdot\omega \cong H^1(U; T_U),$$ the image in $H^1(U; T_U)$ of the  weight space $H^1(U; \Omega^{n-1}_U)(a)$ is the weight space $H^1(U; T_U)(a-N)$.
\end{remark}

Our aim in this section, inspired by a result of Wahl \cite[Corollary 3.8]{Wahl},  is to prove:

\begin{theorem}\label{wtdhmg} The image of the  map 
$$H^1(\hX; \Omega^{n-1}_{\hX}(\log E)) \to H^1(U; \Omega^{n-1}_{\hX}(\log E)|U) \cong H^1(U; T_U)\cong H^0(X;T^1_X)$$ is the space $\bigoplus_{k\geq -N}H^0(X;T^1_X)(k)$. 
\end{theorem}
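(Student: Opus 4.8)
The plan is to identify the image of $H^1(\hX; \Omega^{n-1}_{\hX}(\log E))$ in $H^1(U; T_U)$ weight-by-weight under the $\Cee^*$-action, using the local cohomology sequence from Theorem~\ref{maintheorem}(i). By that result, the cokernel of this map is $H^2_E(\hX; \Omega^{n-1}_{\hX}(\log E))$, so it suffices to show that this cokernel, as a $\Cee^*$-representation, is supported exactly in weights $< -N$ (equivalently, that the map is onto in each weight $k \geq -N$ and zero into each weight $k < -N$ from $H^1(\hX;\Omega^{n-1}_{\hX}(\log E))$). The natural tool is Serre duality for the $\Cee^*$-equivariant groups: by the duality $\dim H^2_E(\hX; \Omega^{n-1}_{\hX}(\log E)) = b^{1,n-2} = \dim H^{n-2}(\hX; \Omega^1_{\hX}(\log E)(-E))$, and more importantly this duality should be refined to identify the weight spaces, with the weight-$k$ part of $H^2_E(\hX; \Omega^{n-1}_{\hX}(\log E))$ dual to the weight-$(-k)$ part of $H^{n-2}(\hX;\Omega^1_{\hX}(\log E)(-E))$ after the canonical twist by $\omega$ of weight $N$ (Remark~\ref{Cstaridentify}).

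Concretely, I would first set up the $\Cee^*$-equivariant refinement of the exact sequence in Theorem~\ref{maintheorem}(i), so that in each weight $k$ we have
$$0 \to H^1(\hX; \Omega^{n-1}_{\hX}(\log E))(k) \to H^0(X; T^1_X)(k) \to H^2_E(\hX; \Omega^{n-1}_{\hX}(\log E))(k) \to 0.$$
Then, following Wahl's argument \cite[Corollary 3.8]{Wahl}, I would compute the weights appearing in the cokernel $H^2_E(\hX; \Omega^{n-1}_{\hX}(\log E))$. The key computation is to show, via the duality $H^2_E(\hX;\Omega^{n-1}_{\hX}(\log E)) \cong H^{n-2}(\hX; \Omega^1_{\hX}(\log E)(-E))^\vee$ together with the weight bookkeeping of $\omega$, that all weights of $H^2_E(\hX;\Omega^{n-1}_{\hX}(\log E))$ are strictly less than $-N$. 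The intuition, matching Lemma~\ref{Ceestar1}(ii), is that the threshold $\ell(\alpha) = 2$ corresponds to weight $-N$, and that the logarithmic sheaf $\Omega^{n-1}_{\hX}(\log E)$ captures precisely the part of $T^1_X$ of weight $\geq -N$, i.e.\ those $z^\alpha$ with $\ell(\alpha) \geq 2$, while the "missing" directions of weight $< -N$ are exactly those detected by the obstruction group.

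The cleanest way to pin down the vanishing of the cokernel in weights $\geq -N$ is a positivity/vanishing argument on the weighted blow-up or the weighted projective hypersurface $E^\#$, whose canonical bundle is $\scrO_{E^\#}(-N)$. The point is that the weight filtration interacts with $N$ through the identification $\sum_i w_i = N/d + 1$, so the self-duality of the link (the symmetry $\ell^{p,q} = \ell^{n-p,n-q-1}$ from the link invariants, refined equivariantly) forces the weight support of the cokernel to lie below $-N$. I would carry out the weight computation for the logarithmic de Rham data on a $\Cee^*$-equivariant resolution, reducing to Bott-type vanishing on $E^\#$ twisted by the appropriate powers of $\scrO_{E^\#}(1)$; the weights $k \geq -N$ are exactly those for which the relevant twisted cohomology on $E^\#$ vanishes, yielding surjectivity of $H^1(\hX;\Omega^{n-1}_{\hX}(\log E))(k) \to H^0(X;T^1_X)(k)$ there.

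The main obstacle I anticipate is establishing the \emph{weight-graded} form of the duality precisely, i.e.\ matching the $\Cee^*$-weight on $H^2_E(\hX;\Omega^{n-1}_{\hX}(\log E))$ with the dual weight on $H^{n-2}(\hX;\Omega^1_{\hX}(\log E)(-E))$, including the correct shift by $N$ coming from the trivializing section $\omega$. One must check that the equivariant structure is compatible with the choice of equivariant good resolution (which exists by resolution of singularities in the $\Cee^*$-equivariant category) and that the duality pairing respects the weight decomposition with the expected sign/shift. Once the weight shift is correctly identified, the statement that the image is exactly $\bigoplus_{k \geq -N} H^0(X;T^1_X)(k)$ should follow formally from the exact sequence above together with the concentration of the cokernel in weights $< -N$.
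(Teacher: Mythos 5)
Your reduction via the $\Cee^*$-equivariant form of Theorem~\ref{maintheorem}(i) and the weight shift by $N$ coming from the trivialization $\omega$ (Remark~\ref{Cstaridentify}) is the right bookkeeping, but the heart of the theorem is left unproved, and the mechanism you propose for it is misidentified. First, a logical point: concentration of the cokernel $H^2_E(\hX;\Omega^{n-1}_{\hX}(\log E))$ in weights $<-N$ only gives surjectivity of $H^1(\hX;\Omega^{n-1}_{\hX}(\log E))(k)\to H^0(X;T^1_X)(k)$ for $k\geq -N$; to get that the image is \emph{exactly} $\bigoplus_{k\geq -N}H^0(X;T^1_X)(k)$ you must separately show $H^1(\hX;\Omega^{n-1}_{\hX}(\log E))(k)=0$ for $k<-N$, and your duality argument for the cokernel does not address this half. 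Second, and more seriously, the proposed "Bott-type vanishing on $E^\#$" is not what makes the weight computation work, and the claim that "the weights $k\geq -N$ are exactly those for which the relevant twisted cohomology on $E^\#$ vanishes" is backwards. The paper's proof (following Wahl) shows that on the weighted blowup, viewed as the quotient stack $\underline{X}^\#=[\Cee\times U/\Cee^*]$, the sheaf $\Omega^{n-1}_{\underline{X}^\#}(\log\underline{E}^\#)$ is the pullback $\rho^*\underline{W}$ of a bundle on $\underline{E}^\#=[U/\Cee^*]$; since $\rho_*\scrO_{\underline{X}^\#}=\bigoplus_{k\geq 0}\scrO_{\underline{E}^\#}(k)$ while $\rho_*\scrO_U=\bigoplus_{k\in\Zee}\scrO_{\underline{E}^\#}(k)$, one gets $H^1(\underline{X}^\#;\rho^*\underline{W})=\bigoplus_{k\geq 0}H^1(\underline{E}^\#;\underline{W}\otimes\scrO_{\underline{E}^\#}(k))$ sitting inside $H^1(U;\Omega^{n-1}_U)=\bigoplus_{k\in\Zee}H^1(\underline{E}^\#;\underline{W}\otimes\scrO_{\underline{E}^\#}(k))$. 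The groups in weights $k\geq 0$ are typically \emph{nonzero} — they are precisely the image — so no vanishing theorem is involved; the split into weights is a consequence of the pullback structure, not of positivity. Transporting a hoped-for vanishing through Serre duality or the link symmetry $\ell^{p,q}=\ell^{n-p,n-q-1}$ just relocates the problem without solving it.

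What is also missing from your sketch is the nontrivial comparison chain that lets one compute with $X^\#$ at all: the identification of the stack cohomology with $H^1(X^\#;\Omega^{n-1}_{X^\#}(\log E^\#))$ in Steenbrink's sense (handled in the paper by orbifold charts $[V_i/\boldsymbol{\mu}_{a_i}]$ and invariant sections), and then the independence-of-resolution statement $H^q(X^\#;\Omega^p_{X^\#}(\log E^\#))\cong H^q(\hX;\Omega^p_{\hX}(\log E))$ (Lemma~\ref{reswp}, which rests on \cite{Steenvanishing}). Finally, note that invoking Theorem~\ref{maintheorem}(i) imports the rationality hypothesis, whereas the theorem as stated (see the remark following it) holds for a not necessarily rational weighted homogeneous hypersurface singularity; the paper's argument avoids this restriction entirely.
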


\begin{remark} Neither the hypothesis that $X$ is rational nor that it is a hypersurface singularity play an essential role.
\end{remark}

As for the subspace $H^0(X;T^1_X)(-N)$ of weight exactly $-N$, we have the following:

\begin{theorem}\label{wtdlink} $H^0(X;T^1_X)(-N) = \Gr_F^{n-1}H^n(L)$.   
\end{theorem}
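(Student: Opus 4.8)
The plan is to prove Theorem~\ref{wtdlink}, namely that the weight $-N$ piece $H^0(X;T^1_X)(-N)$ of the first order deformation space coincides with $\Gr_F^{n-1}H^n(L)$. The key observation is that Theorem~\ref{wtdhmg} already tells us that the image of $H^1(\hX; \Omega^{n-1}_{\hX}(\log E)) \to H^0(X;T^1_X)$ is exactly $\bigoplus_{k\geq -N}H^0(X;T^1_X)(k)$, while Theorem~\ref{maintheorem}(i) realizes this image as the cokernel of the injection from $H^1(\hX; \Omega^{n-1}_{\hX}(\log E)(-E))$ sitting inside $H^0(X;T^1_X)$ (using $H^0(X;T^1_X)\cong H^1(U;T_U)$). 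The strategy is to compare the two ``log'' sheaves $\Omega^{n-1}_{\hX}(\log E)(-E)\subseteq \Omega^{n-1}_{\hX}(\log E)$ at the level of $\Cee^*$-weights, identify precisely which weight contributes to the quotient $\Gr_F^{n-1}H^n(L) = H^1(\hX;\Omega^{n-1}_{\hX}(\log E)|E)$, and check that this is exactly the weight $-N$ part.

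First I would run the same argument proving Theorem~\ref{wtdhmg} but for the twisted sheaf $\Omega^{n-1}_{\hX}(\log E)(-E)$ in place of $\Omega^{n-1}_{\hX}(\log E)$. The key geometric input is that the $\Cee^*$-action has the effective exceptional divisor $E$ as its ``infinity'' divisor, and twisting by $-E$ shifts the relevant weights by exactly $d$ (since $E$ corresponds to the weighted-projective hyperplane at infinity, with the degree-$d$ normalization built into the $w_i$). Concretely, via Remark~\ref{Cstaridentify} the isomorphism $H^1(U;\Omega^{n-1}_U)\cong H^1(U;T_U)$ shifts weights by $N$, and the section $\omega = dz_1\wedge\cdots\wedge dz_{n+1}/f$ of weight $N$ controls how the extension across $E$ (with or without the $(-E)$ twist) sees the grading. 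I would show that the image of $H^1(\hX; \Omega^{n-1}_{\hX}(\log E)(-E))$ in $H^0(X;T^1_X)$ is $\bigoplus_{k > -N}H^0(X;T^1_X)(k)$, i.e.\ strictly larger weights, so that passing from the $(-E)$-twist to the untwisted log sheaf exactly adjoins the single weight space of weight $-N$.

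Granting both weight computations, the conclusion follows by combining the exact sequences in Theorem~\ref{maintheorem}(ii) and (v). By part (ii), the quotient $H^1(\hX; \Omega^{n-1}_{\hX}(\log E))/H^1(\hX; \Omega^{n-1}_{\hX}(\log E)(-E))$ is isomorphic to $H^1(\hX; \Omega^{n-1}_{\hX}(\log E)|E) = \Gr_F^{n-1}H^n(L)$, and this map is $\Cee^*$-equivariant. On the other hand, the same quotient is, by the two weight computations, exactly the weight $-N$ graded piece $H^0(X;T^1_X)(-N)$: the image of the smaller group is $\bigoplus_{k>-N}(\cdots)(k)$ and the image of the larger is $\bigoplus_{k\geq -N}(\cdots)(k)$, so the difference is precisely the weight $-N$ space. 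Matching these two descriptions of the quotient gives the desired isomorphism $H^0(X;T^1_X)(-N)\cong \Gr_F^{n-1}H^n(L)$.

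The main obstacle I anticipate is pinning down the precise weight shift induced by the $(-E)$ twist and verifying that $H^1(\hX;\Omega^{n-1}_{\hX}(\log E)(-E))$ contributes all weights $k>-N$ but nothing in weight $-N$ — in other words, that the weight $-N$ contribution is genuinely ``new'' and appears only after allowing a log pole along $E$. This requires care with the $\Cee^*$-weight bookkeeping on the resolution $\hX$, using that $\omega$ has weight $N$, that $\ell(\alpha)=2$ characterizes weight $-N$ by Lemma~\ref{Ceestar1}(ii), and that the residue/restriction map $\Omega^{n-1}_{\hX}(\log E)\to \Omega^{n-1}_{\hX}(\log E)|E$ is strict for the Hodge filtration (so that the graded identification with $\Gr_F^{n-1}H^n(L)$ is weight-preserving). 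Once the weight-$-N$ space is isolated as the residue of the single generator $\omega$ along $E$, the identification should follow cleanly.
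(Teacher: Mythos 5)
Your route is genuinely different from the paper's. The paper proves Theorem~\ref{wtdlink} entirely on the singularity-theory side: it identifies $H^0(X;T^1_X)(-N)=Q_f(-N)$ with $\Gr_V^1Q_f$ for the $V$-filtration on the Milnor algebra (weight $-N$ corresponds to $\ell(\alpha)=2$, hence $V$-index $1$), invokes the Scherk--Steenbrink theorem to get $\Gr_V^1Q_f\cong\Gr_F^{n-1}H^n(M)_1$ for the Milnor fiber $M$, identifies $H^n(M)_1\cong H^n(L)$ as mixed Hodge structures in the weighted homogeneous case, and concludes by a dimension count together with the observation that $\Gr_F^{n-1}H^n(L)$ carries the trivial $\Cee^*$-action and therefore lands in weight exactly $-N$ after the shift by $\omega$. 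You instead propose to rerun the stack computation of Theorem~\ref{wtdhmg} for the twisted sheaf $\Omega^{n-1}_{\hX}(\log E)(-E)$, show its image in $H^0(X;T^1_X)$ is $\bigoplus_{k>-N}H^0(X;T^1_X)(k)$, and read off the weight $-N$ piece as the quotient $H^1(\hX;\Omega^{n-1}_{\hX}(\log E))/H^1(\hX;\Omega^{n-1}_{\hX}(\log E)(-E))\cong H^1(\hX;\Omega^{n-1}_{\hX}(\log E)|E)=\Gr_F^{n-1}H^n(L)$ via Theorem~\ref{maintheorem}(ii). Note that this reverses the paper's logical order: the weight description of $H^1(\hX;\Omega^{n-1}_{\hX}(\log E)(-E))$ is there \emph{deduced} in Corollary~\ref{wtdcor} from Theorems~\ref{wtdhmg} and~\ref{wtdlink}. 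Your reversal is legitimate and arguably more self-contained; what it loses is the explicit bridge to the Milnor fiber and the $V$-filtration that the paper exploits in Remark~\ref{rem4.6}.

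Two points must be repaired before your argument closes. First, the claimed weight shift is wrong and is inconsistent with the conclusion you draw from it: on the stack one has $\scrO_{\underline{X}^\#}(-\underline{E}^\#)\cong\rho^*\scrO_{\underline{E}^\#}(1)$, so $\rho_*$ of the twisted sheaf is $\bigoplus_{k\geq 1}\underline{W}\otimes\scrO_{\underline{E}^\#}(k)$ and the cutoff moves by exactly \emph{one} unit of the grading (one unit of integer $\Cee^*$-weight), from $k\geq -N$ to $k\geq -N+1$, i.e.\ $k>-N$. If the shift were really by $d$ as you assert, the quotient would contain all weights in the range $[-N,-N+d)$ and the identification with the single weight space $H^0(X;T^1_X)(-N)$ would fail; the statement you actually use in your final step is the shift-by-one, so you need to correct and verify this. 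Second, Step IV of the proof of Theorem~\ref{wtdhmg} (Lemma~\ref{reswp}), which transports the computation from $X^\#$ to a good resolution $\hX$ dominating it, rests on Steenbrink's birational invariance of $H^q(\Omega^p(\log))$; for the twisted sheaf $\Omega^{n-1}(\log E)(-E)$ you need the analogous resolution-independence of the Du Bois invariants $b^{p,q}$ (or a duality argument reducing to $H^{n-1}_E(\hX;\Omega^1_{\hX}(\log E))$), which is true but is not literally the cited lemma and must be supplied, together with compatibility of the maps to $H^1(U;T_U)$.
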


\begin{corollary}\label{wtdcor} If $(X,x)$ is a rational isolated weighted homogeneous hypersurface singularity with $\dim X \geq 3$, then
\begin{align*}
  \bigoplus_{k> -N}H^0(X;T^1_X)(k)&= H^1(\hX; \Omega^{n-1}_{\hX}(\log E)(-E));\\
    \bigoplus_{k\leq -N}H^0(X;T^1_X)(k)&= K;\\
  \bigoplus_{k< -N}H^0(X;T^1_X)(k)&=  H^2_E(\hX; \Omega^{n-1}_{\hX}(\log E)),
\end{align*} 
and  finally
$$K \cong K' \oplus\Gr^{n-1}_FH^n (L),$$
where $K' =\Ker \{H^2_E(\hX; \Omega^{n-1}_{\hX}) \to H^{n+1}_E(\hX)\}$ is as defined in Theorem~\ref{maintheorem}(vi).
\end{corollary}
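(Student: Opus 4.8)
The plan is to run the entire argument $\Cee^*$-equivariantly. Since $\pi\colon\hX\to X$ may be taken equivariant for the good $\Cee^*$-action, every sheaf, cohomology group and map occurring in Theorem~\ref{maintheorem} carries a $\Cee^*$-action and every exact sequence there is $\Cee^*$-equivariant, hence splits into weight spaces. I will track a single underlying $\Cee^*$-action, passing between its ``geometric'' grading on the $\Omega^{n-1}$-type and local cohomology groups and the grading on $H^0(X;T^1_X)$ via Remark~\ref{Cstaridentify}, under which geometric weight $a$ corresponds to weight $a-N$ on $T^1_X$. Everything is bootstrapped from Theorems~\ref{wtdhmg} and~\ref{wtdlink}.

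First I would establish the three displayed equalities. By Theorem~\ref{wtdhmg} and the injectivity of $H^1(\hX;\Omega^{n-1}_{\hX}(\log E))\to H^1(U;T_U)$ from Theorem~\ref{maintheorem}(i), the group $H^1(\hX;\Omega^{n-1}_{\hX}(\log E))$ is identified equivariantly with $\bigoplus_{k\geq -N}H^0(X;T^1_X)(k)$. The equivariant sequence of Theorem~\ref{maintheorem}(ii) has quotient $H^1(\hX;\Omega^{n-1}_{\hX}(\log E)|E)=\Gr_F^{n-1}H^n(L)$, which by Theorem~\ref{wtdlink} is exactly the weight-$(-N)$ space; since the source has weights $\geq -N$ and the target is pure of weight $-N$ with the same weight-$(-N)$ dimension, the surjection is projection onto that summand, so its kernel is $H^1(\hX;\Omega^{n-1}_{\hX}(\log E)(-E))=\bigoplus_{k>-N}H^0(X;T^1_X)(k)$. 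Feeding this into the first exact sequence of Theorem~\ref{maintheorem}(v) gives $K\cong\bigoplus_{k\leq -N}H^0(X;T^1_X)(k)$, and then the second exact sequence of (v), whose sub $\Gr_F^{n-1}H^n(L)$ is precisely the weight-$(-N)$ part of $K$, gives $H^2_E(\hX;\Omega^{n-1}_{\hX}(\log E))\cong\bigoplus_{k<-N}H^0(X;T^1_X)(k)$.

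For the final isomorphism, note that the second exact sequence of Theorem~\ref{maintheorem}(v) is an equivariant extension whose sub $\Gr_F^{n-1}H^n(L)$ is pure of weight $-N$ and whose quotient has weights $<-N$; as these weights are disjoint, the extension splits canonically along the weight grading, giving $K\cong\Gr_F^{n-1}H^n(L)\oplus H^2_E(\hX;\Omega^{n-1}_{\hX}(\log E))$. It then suffices to identify $K'$ with the second summand, i.e.\ with the weight-$(<-N)$ part of $K$. One inclusion is formal: $K'$ is $\Cee^*$-stable, $K'(k)=0$ for $k>-N$ since $K(k)=0$ there, and $K'\cap\Gr_F^{n-1}H^n(L)=0$ by the argument in Theorem~\ref{maintheorem}(vi); hence $K'\subseteq\bigoplus_{k<-N}H^0(X;T^1_X)(k)$.

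The reverse inclusion is the crux, and reduces to showing that the map $\phi\colon H^2_E(\hX;\Omega^{n-1}_{\hX})\to H^{n+1}_E(\hX)$ of Theorem~\ref{maintheorem}(vi) annihilates the weight-$(<-N)$ part of $K$. By its construction in Theorem~\ref{maintheorem}(vi) as an edge homomorphism of the spectral sequence $E_1^{p,q}=H^q_E(\hX;\Omega^p_{\hX})$, the image of $\phi$ is contained in the bottom nonzero Hodge piece $\Gr_F^{n-1}H^{n+1}_E(\hX)$, a genuine subspace of $H^{n+1}_E(\hX)$ since $\Gr_F^nH^{n+1}_E(\hX)=\Gr_F^{n+1}H^{n+1}_E(\hX)=0$. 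I would then prove that this subspace is pure of geometric weight $0$ (equivalently $T$-weight $-N$): the link sequence gives a $\Cee^*$-equivariant exact sequence $0\to\Gr_F^{n-1}H^n(L)\to\Gr_F^{n-1}H^{n+1}_E(\hX)\to\Gr_F^{n-1}H^{n+1}(E)$, whose sub is of geometric weight $0$ by Theorem~\ref{wtdlink} and whose target $\Gr_F^{n-1}H^{n+1}(E)\cong H^2(\hX;\Omega^{n-1}_{\hX})$ (Corollary~\ref{Hn-1E}(i)) is of geometric weight $0$ because $E$ is compact and the connected group $\Cee^*$ acts trivially on $H^\ast(E;\Cee)$, hence on each Hodge graded piece. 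Since $\phi$ preserves weights, it then kills every weight space of $K$ except that of geometric weight $0$; in particular it annihilates the weight-$(<-N)$ part, which therefore lies in $K'=\Ker(\phi|_K)$. This gives the reverse inclusion and hence $K\cong K'\oplus\Gr_F^{n-1}H^n(L)$. The one genuinely nonformal point, which I expect to be the main obstacle to pin down precisely, is this purity of $\Gr_F^{n-1}H^{n+1}_E(\hX)$ — i.e.\ the topological triviality of the $\Cee^*$-action on the cohomology of the compact exceptional divisor, propagated through the Hodge filtration. This is exactly the input unavailable in the absence of a $\Cee^*$-action, consistent with the remark that the analogous splitting is open for general rational lci singularities.
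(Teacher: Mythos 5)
Your proof is correct and follows essentially the same route as the paper: the three displayed equalities are obtained exactly as in the paper by feeding Theorems~\ref{wtdhmg} and~\ref{wtdlink} equivariantly into the exact sequences of Theorem~\ref{maintheorem}, and the final splitting rests on the same key fact, namely that the $\Cee^*$-action on the topological group $H^{n+1}_E(\hX)$ is trivial (weight $-N$ after the shift of Remark~\ref{Cstaridentify}), so the equivariant map $\phi$ kills all weights $<-N$. The only difference is that the paper invokes this triviality directly on all of $H^{n+1}_E(\hX)$, so your detour through the purity of $\Gr_F^{n-1}H^{n+1}_E(\hX)$ and the link sequence, while valid, is not needed.
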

\begin{proof} By Theorem~\ref{maintheorem}(ii), we have an exact sequence
$$0 \to H^1(\hX; \Omega^{n-1}_{\hX}(\log E)(-E)) \to H^1(\hX; \Omega^{n-1}_{\hX}(\log E)) \to \Gr^{n-1}_FH^n (L)\to 0.$$
By Theorem~\ref{wtdhmg}, $H^1(\hX; \Omega^{n-1}_{\hX}(\log E)) = \bigoplus_{k\geq -N}H^0(X;T^1_X)(k)$. By Theorem~\ref{wtdlink},  $\Gr^{n-1}_FH^n (L) = H^0(X;T^1_X)(-N)$. Hence $H^1(\hX; \Omega^{n-1}_{\hX}(\log E)(-E)) = \bigoplus_{k> -N}H^0(X;T^1_X)(k)$. Using the exact sequence 
$$0 \to H^1(\hX; \Omega^{n-1}_{\hX}(\log E)(-E) )\to H^0(X; T^1_X) \to K \to 0$$
of Theorem~\ref{maintheorem}(v), it follows that $K \cong \bigoplus_{k\leq -N}H^0(X;T^1_X)(k)$. Likewise $H^2_E(\hX; \Omega^{n-1}_{\hX}(\log E)) \cong   \bigoplus_{k< -N}H^0(X;T^1_X)(k)$ as follows   from the exact sequence
$$0 \to \Gr_F^{n-1}H^n(L) \to K \to H^2_E(\hX; \Omega^{n-1}_{\hX}(\log E)) \to 0.$$
Finally, since the $\Cee^*$-action on $H^{n+1}_E(\hX)$ is trivial, and hence becomes of weight $-N$ after the   shift of Remark~\ref{Cstaridentify}, $\bigoplus_{k< -N}H^0(X;T^1_X)(k)  \subseteq K'$. Thus  $K' \oplus\Gr^{n-1}_FH^n (L)$ contains and is contained in $\bigoplus_{k\leq -N}H^0(X;T^1_X)(k) =K$, and hence  $K' \oplus\Gr^{n-1}_FH^n (L)=K$.    
\end{proof}

\begin{corollary}\label{wtscor}  $X$ is $1$-irrational $\iff$  $\sum_iw_i \leq 2$, $X$ is not $1$-Du Bois $\iff$ $X$ is strongly $1$-irrational  $\iff$  $\sum_iw_i < 2$ and $X$ is $1$-liminal $\iff$  $\sum_iw_i = 2$.
\end{corollary}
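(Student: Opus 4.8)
The plan is to obtain all three equivalences by reading them off the weight decompositions of Corollary~\ref{wtdcor}, once the relevant vanishing conditions are rephrased in terms of the $\Cee^*$-weights occurring in $H^0(X;T^1_X)$. Recall from Definition~\ref{defsub} that $X$ is $1$-irrational $\iff K\neq0$, that $X$ is not $1$-Du Bois $\iff H^2_E(\hX;\Omega^{n-1}_{\hX}(\log E))\neq0$, and that $X$ is $1$-liminal $\iff$ it is simultaneously $1$-irrational and $1$-Du Bois. By Corollary~\ref{wtdcor} these groups are $\bigoplus_{k\leq-N}H^0(X;T^1_X)(k)$ and $\bigoplus_{k<-N}H^0(X;T^1_X)(k)$ respectively, so each condition reduces to asking which weights $k$ actually occur in $H^0(X;T^1_X)\cong\Cee\{z\}/J(f)$.

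The key step is to pin down the smallest weight that occurs. By Lemma~\ref{Ceestar1}(i) the weight of a monomial $z^\alpha$ equals $d(\ell(\alpha)-2)-N$, a strictly increasing function of $\ell(\alpha)=\sum_i(\alpha_i+1)w_i$; since every $w_i>0$ this is minimized uniquely at $\alpha=0$, where $\ell(0)=\sum_iw_i$ and the weight is $-d$ (equivalently $d(\sum_iw_i-2)-N=-d$). The one point requiring care is that the class of $1=z^0$ is nonzero: because $X$ has an isolated singularity and is not smooth, each $\partial f/\partial z_i$ lies in $\mathfrak{m}_x$, so $J(f)\subseteq\mathfrak{m}_x$ and $1\notin J(f)$. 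Consequently no weight below $-d$ occurs while the weight-$(-d)$ space is nonzero, so $-d$ is exactly the minimal weight present.

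From here the three statements follow by comparing $-d$ with $-N$. A weight $\leq-N$ occurs $\iff -d\leq-N$, i.e.\ $\iff N\leq d$; since $N=d(\sum_iw_i-1)$ this is $\iff\sum_iw_i\leq2$. Hence $K\neq0$, i.e.\ $X$ is $1$-irrational, exactly when $\sum_iw_i\leq2$. Likewise a weight $<-N$ occurs $\iff -d<-N\iff\sum_iw_i<2$, so $H^2_E(\hX;\Omega^{n-1}_{\hX}(\log E))\neq0$, i.e.\ $X$ fails to be $1$-Du Bois, exactly when $\sum_iw_i<2$. Finally $X$ is $1$-liminal $\iff$ it is both $1$-irrational and $1$-Du Bois, i.e.\ $\sum_iw_i\leq2$ and $\sum_iw_i\geq2$, which is $\sum_iw_i=2$; in this borderline case $-d=-N$ and the weight-$(-N)$ space $\Gr_F^{n-1}H^n(L)=H^0(X;T^1_X)(-N)$ is nonzero, as it must be. The main obstacle is really just the identification of $-d$ as the extreme weight, which rests on the nonvanishing of the unit class together with the monotonicity of $\ell$; all the cohomological content has already been packaged into Corollary~\ref{wtdcor}.
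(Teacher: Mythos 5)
Your proposal is correct and follows essentially the same route as the paper: both arguments read the three conditions off the weight decompositions of Corollary~\ref{wtdcor} and then observe that the minimal $\Cee^*$-weight occurring in $H^0(X;T^1_X)\cong\Cee\{z\}/J(f)$ is $-d$, realized by the class of $1$, so that everything reduces to comparing $-d$ with $-N$. Your write-up merely makes explicit two points the paper leaves implicit (the monotonicity of $\ell(\alpha)$ in $\alpha$ and the nonvanishing of the unit class because $J(f)\subseteq\mathfrak{m}_x$), both of which are correct.
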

\begin{proof} Since the smallest weight of $T^1_{X,x}$ is $-d$, corresponding to $1\in \Cee[z_1, \dots, z_{n+1}]/J(f)$, Corollary~\ref{wtdcor} implies  that $X$ is $1$-irrational $\iff$ $-d \leq -N$ $\iff$ $N/d \leq 1$ $\iff$  $d \geq \frac12\sum_ia_i $ $\iff$ $\sum_iw_i \leq 2$, and similarly $X$ is not $1$-Du Bois $\iff$ $d >\frac12\sum_ia_i $  $\iff$ $\sum_iw_i < 2$ and $X$ is $1$-liminal $\iff$ $d =\frac12\sum_ia_i $  $\iff$ $\sum_iw_i = 2$. Finally, if $X$ is not $1$-Du Bois, then it is   strongly $1$-irrational by Corollary~\ref{wtdcor} and Definition~\ref{defsub}(iv). 
\end{proof}

\begin{remark}\label{rem4.6} (i) Let $\widetilde\alpha_X$ be the minimal exponent as defined by Saito  \cite{Saito-b}. By a result of Saito \cite[(2.5.1)]{SaitoV}, in the weighted homogeneous case $\widetilde\alpha_X = \sum_iw_i$; this follows directly in  case $X$ is an   isolated weighted homogeneous hypersurface singularity, as then $\widetilde\alpha_X = \ell(0) = \sum_iw_i$. Thus we recover  the numerical conditions that  $X$ is $1$-irrational $\iff$ $\widetilde\alpha_X \leq 2$, $X$ is not $1$-Du Bois $\iff$ $\widetilde\alpha_X <2$ \cite[Thm. 1]{JKSY-duBois}, \cite[Thm. 1.1.]{MOPW}, and $X$ is $1$-liminal $\iff$ $\widetilde\alpha_X = 2$. See \cite[\S6]{FL22d} for more details.

\smallskip
\noindent (ii) For the germ of an isolated hypersurface singularity defined by $f=0$ in $(\Cee^{n+1},0)$, we have the Milnor algebra $Q_f =\Omega^{n+1}_{\Cee^{n+1}, 0}/df\wedge \Omega^n_{\Cee^{n+1}, 0}$. It carries a decreasing filtration $V^bQ_f$, indexed by rational numbers $b\in \Q$.
In the weighted homogeneous case, $Q_f = T^1_{X,x}$ and $T^1_{X,x}(k) = Q_f(k) \cong \Gr_V^bQ_f$ by e.g.\  \cite{ScherkSteenbrink}, where 
$$b = \ell(\alpha) - 1 = \frac{k+N}{d} +1.$$ 
Thus Corollary~\ref{wtdcor} identifies $V^1Q_f$ with the image of the map $H^1(\hX; \Omega^{n-1}_{\hX}(\log E)) \to T^1_{X,x}$ and  $V^{>1}Q_f$ with the image of the map $H^1(\hX; \Omega^{n-1}_{\hX}(\log E)(-E)) \to T^1_{X,x}$.   

In the general, not necessarily weighted homogeneous case, $T^1_{X,x} = Q_f/fQ_f$ and $fQ_f \subseteq V^{>1}Q_f$ in case $X$ is rational, since then $Q_f = V^{>0}Q_f$. It is then natural to ask if  the image of the map $H^1(\hX; \Omega^{n-1}_{\hX}(\log E)) \to T^1_{X,x}$ is $V^1T^1_{X,x}$ and if the image of the map $H^1(\hX; \Omega^{n-1}_{\hX}(\log E)(-E)) \to T^1_{X,x}$ is equal to $V^{>1}T^1_{X,x}$ in the general case.    
\end{remark}

\begin{corollary} With $X$ as above, suppose that $\pi\colon \hX \to X$ is a crepant resolution. Then the image of  $H^1(\hX; T_{\hX})$ in $H^0(X; T^1_X)$ is  $\bigoplus_{k> -N}H^0(X;T^1_X)(k)$.
\end{corollary}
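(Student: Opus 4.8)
The plan is to reduce the statement to the two results just established, Theorem~\ref{maintheorem}(iii) and Corollary~\ref{wtdcor}, by exploiting what crepancy buys us at the level of sheaves on $\hX$. First I would recall from Remark~\ref{inclusion} that, on a good resolution $\pi\colon \hX \to X$ of a canonical (hence Gorenstein) singularity, there is an effective divisor $D$ supported on $E$ with $K_{\hX}\cong \scrO_{\hX}(D)$ and a resulting isomorphism $T_{\hX}\cong \Omega^{n-1}_{\hX}(-D)$. The hypothesis that $\pi$ is crepant means precisely that $K_{\hX}\cong \pi^*\omega_X\cong \scrO_{\hX}$, i.e.\ $D=0$; hence one obtains a \emph{global} isomorphism $T_{\hX}\cong \Omega^{n-1}_{\hX}$ of sheaves on $\hX$, induced by a trivialization of $\Omega^n_{\hX}$ which on $U$ restricts to the trivializing section $\omega$ of Remark~\ref{Cstaridentify}.

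With this identification in hand, I would next observe that the map $H^1(\hX; T_{\hX}) \to H^0(X;T^1_X)$ (the tangent map of $\mathbf{Def}_{\hX}\to \mathbf{Def}_X$ described after Lemma~\ref{localCD}) becomes, under the $\omega$-trivialization, the map $H^1(\hX; \Omega^{n-1}_{\hX}) \to H^1(U; \Omega^{n-1}_{\hX}|U)\cong H^1(U;T_U)\cong H^0(X;T^1_X)$. Concretely, the square with top row the isomorphism $H^1(\hX;T_{\hX})\cong H^1(\hX;\Omega^{n-1}_{\hX})$, bottom row the isomorphism $H^1(U;T_U)\cong H^1(U;\Omega^{n-1}_U)$, and vertical arrows the restriction maps, commutes because both horizontal identifications are induced by the \emph{same} trivialization of $\Omega^n$. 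Consequently the image of $H^1(\hX; T_{\hX})$ and the image of $H^1(\hX;\Omega^{n-1}_{\hX})$ in $H^0(X;T^1_X)$ coincide.

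I would then invoke Theorem~\ref{maintheorem}(iii), which asserts that $H^1(\hX; \Omega^{n-1}_{\hX})$ and $H^1(\hX; \Omega^{n-1}_{\hX}(\log E)(-E))$ have the same image in $H^1(U;T_U)$. Finally, since $X$ is a rational isolated weighted homogeneous hypersurface singularity with $\dim X\geq 3$ and $\pi$ is $\Cee^*$-equivariant, Corollary~\ref{wtdcor} identifies this common image with $\bigoplus_{k>-N}H^0(X;T^1_X)(k)$. Chaining the equalities of images from the previous two paragraphs with these two citations yields the claim.

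The argument is essentially formal; the only genuine step is the first one, namely recognizing that crepancy forces $D=0$ and hence the global isomorphism $T_{\hX}\cong \Omega^{n-1}_{\hX}$, which is exactly what allows one to replace the a priori resolution-dependent image of $H^1(\hX; T_{\hX})$ by the canonical, resolution-independent image of $H^1(\hX;\Omega^{n-1}_{\hX})$. The point requiring care is bookkeeping of the $\Cee^*$-weights: because the isomorphism $T_{\hX}\cong \Omega^{n-1}_{\hX}$ and the identification $H^1(U;\Omega^{n-1}_U)\cong H^1(U;T_U)$ of Remark~\ref{Cstaridentify} are induced by one and the same section $\omega$, no spurious weight shift is introduced, so the final answer is correctly expressed in the $T^1_X$-weights of Lemma~\ref{Ceestar1}, exactly matching the grading in Corollary~\ref{wtdcor}.
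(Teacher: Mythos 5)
Your proposal is correct and follows the paper's own argument: the paper likewise notes that crepancy makes the image of $H^1(\hX; T_{\hX})$ coincide with that of $H^1(\hX; \Omega^{n-1}_{\hX})$, then invokes Theorem~\ref{maintheorem}(iii) to pass to $H^1(\hX; \Omega^{n-1}_{\hX}(\log E)(-E))$ and Corollary~\ref{wtdcor} to identify the image with $\bigoplus_{k>-N}H^0(X;T^1_X)(k)$. Your additional care with the $\omega$-trivialization and the $\Cee^*$-weight bookkeeping is a useful elaboration of what the paper leaves implicit, but it is the same proof.
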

\begin{proof} In this case, this image is the same as that of $H^1(\hX; \Omega^{n-1}_{\hX})$ and hence, by Theorem~\ref{maintheorem}(iii),  of $H^1(\hX; \Omega^{n-1}_{\hX}(\log E)(-E))$.
\end{proof}

\begin{proof}[Proof of Theorem~\ref{wtdhmg}]
Note that $X$ is an affine algebraic variety, and we may assume that $\hX$ is a scheme. For the remainder of the proof of Theorem~\ref{wtdhmg},  as opposed to the tacit  conventions in the rest of this paper, all sheaves,  cohomology and higher direct images will be taken with respect to the \textbf{Zariski topology} and all stacks will be algebraic.

We begin by recalling the weighted blowup $X^\#$ of $X$. Let $R = \Cee[z_1, \dots, z_{n+1}]/(f)$ be the homogeneous coordinate ring of $X$, with the induced grading, and hence $X =\Spec R$. Note that $X$ is normal since $X$ has an isolated singularity.  Let $E^\# = \Proj R$. Then $E^\#$ is a hypersurface  in the weighted projective space $WP^n= \Proj \Cee[z_1, \dots, z_{n+1}]$. As usual, there are the sheaves $\scrO_{E^\#}(k)$, $k\in \Zee$ and homomorphisms $\scrO_{E^\#}(k) \otimes \scrO_{E^\#}(\ell) \to \scrO_{E^\#}(k+\ell)$. Thus $\bigoplus_{k\in \Zee}\scrO_{E^\#}(k)$ and $\bigoplus_{k\geq 0} \scrO_{E^\#}(k)$ are sheaves of graded $\scrO_{E^\#}$-algebras. Define 
$$X^\# = \mathbf{Spec}_{E^\#}\bigoplus_{k\geq 0} \scrO_{E^\#}(k),$$
and let $\rho\colon X^\# \to E^\#$ be the natural morphism. It is easy to check that   $X^\#$ is normal and that  all fibers of $\rho$ have dimension one, although $\rho$ is not in general flat.

We then have the following facts  (cf.\ EGA II \cite[8.6.2, 8.2.7]{EGAII}):

\begin{proposition}\begin{enumerate} \item[\rm(i)] $E^\# = \mathbf{Proj}_{E^\#}\bigoplus_{k\geq 0} \scrO_{E^\#}(k)$.
\item[\rm(ii)] $\mathbf{Spec}_{E^\#}\bigoplus_{k\in \Zee} \scrO_{E^\#}(k) = X-\{0\} = X^\# - E^\#=U$. In particular, $X^\# - E^\#$ is smooth. 
\item[\rm(iii)] $\rho_*\scrO_{X^\#} = \bigoplus_{k\geq 0} \scrO_{E^\#}(k)$.
\item[\rm(iv)] If we also denote  by $\rho$ the morphism $U=X^\#-E^\# \to E^\#$, 
 $\rho_*\scrO_{X^\#-E^\#} = \rho_*\scrO_U = \bigoplus_{k\in \Zee} \scrO_{E^\#}(k)$.
\qed 
\end{enumerate}
\end{proposition}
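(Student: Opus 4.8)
The plan is to reduce all four assertions to the standard affine charts $D_+(z_i) = \Spec R_{(z_i)}$ covering $E^\# = \Proj R$, where $R_{(z_i)} = (R_{z_i})_0$ is the degree-zero part of the graded localization, and to compute directly with the $\Zee$-graded ring $R_{z_i}$. Over $D_+(z_i)$ the twisting sheaf $\scrO_{E^\#}(k)$ is the sheaf associated to the $R_{(z_i)}$-module $(R_{z_i})_k$, so the cone algebra $\bigoplus_{k\geq 0}\scrO_{E^\#}(k)$ restricts to $S := \bigoplus_{k\geq 0}(R_{z_i})_k \subseteq R_{z_i}$, while $\bigoplus_{k\in\Zee}\scrO_{E^\#}(k)$ restricts to all of $R_{z_i}$. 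With this dictionary, statement (iii) is immediate from the defining property of the relative Spec: for a quasi-coherent sheaf of $\scrO_{E^\#}$-algebras $\mathcal A$, the morphism $\mathbf{Spec}_{E^\#}\mathcal A \to E^\#$ is affine with pushforward of the structure sheaf equal to $\mathcal A$. Thus (iii) merely restates the construction of $X^\#$, and (iv) will follow once (ii) identifies $U$ with $\mathbf{Spec}_{E^\#}\bigoplus_{k\in\Zee}\scrO_{E^\#}(k)$.

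For (i), I would argue locally that $\mathbf{Proj}_{E^\#}$ of the cone algebra collapses back onto the base. Over $D_+(z_i)$ one has $\mathbf{Proj}_{E^\#}\mathcal A|_{D_+(z_i)} = \Proj S$, and $z_i$ is a homogeneous element of positive degree $a_i$ that becomes invertible after localization: since $S_{z_i} = R_{z_i}$ we get $S_{(z_i)} = (R_{z_i})_0 = R_{(z_i)}$, so the principal open $D_+(z_i)\subseteq \Proj S$ equals $\Spec R_{(z_i)}$. It then suffices to check $V_+(z_i)=\emptyset$ in $\Proj S$, i.e.\ $S_+ \subseteq \sqrt{z_iS}$: for homogeneous $h\in (R_{z_i})_k$ with $k>0$ the element $h^{a_i}/z_i^{\,k}$ lies in $(R_{z_i})_0 = S_0$, whence $h^{a_i}\in z_iS$. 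Therefore $\Proj S = \Spec R_{(z_i)} = D_+(z_i)$, and these isomorphisms are the structure morphism restricted to the chart; they glue over $E^\#$ to give (i). This is exactly the content of the cited EGA statement specialized to the present graded algebra.

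For (ii) the same localization is run with the full $\Zee$-graded algebra: over $D_+(z_i)$ the relative Spec of $\bigoplus_{k\in\Zee}\scrO_{E^\#}(k)$ is $\Spec R_{z_i}$, which is the principal affine open $D(z_i) = \Spec R\setminus V(z_i)$ of $X = \Spec R$. As $i$ varies these cover $X\setminus V(R_+) = X\setminus\{0\} = U$ with the standard gluing, so $\mathbf{Spec}_{E^\#}\bigoplus_{k\in\Zee}\scrO_{E^\#}(k) = U$. To match this with $X^\#\setminus E^\#$, note that over $D_+(z_i)$ the zero section $E^\#$ is the closed subscheme $V(S_+)\subseteq\Spec S$; since $z_i\in S_+$ and $S_+\subseteq\sqrt{z_iS}$, we have $V(S_+) = V(z_i)$, so $\Spec S\setminus E^\# = \Spec S_{z_i} = \Spec R_{z_i} = D(z_i)$, in agreement with the previous description. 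Finally $U = X\setminus\{0\}$ is smooth because $X$ has an isolated singularity at $0$, and (iv) follows from (ii) together with the relative-Spec generality used for (iii), once one observes that the morphism $U\to E^\#$ of (ii) is the restriction of $\rho$ to the open subscheme $X^\#\setminus E^\#$.

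The main obstacle is that, because the weights $a_i$ need not all equal $1$, the sheaves $\scrO_{E^\#}(k)$ are not invertible, the multiplication maps $\scrO_{E^\#}(k)\otimes\scrO_{E^\#}(\ell)\to\scrO_{E^\#}(k+\ell)$ fail to be isomorphisms, and $\rho$ is not flat; hence one cannot appeal to the familiar picture of the total space of the line bundle $\scrO_{E^\#}(-1)$ nor treat $X^\#$ as a projective bundle over $E^\#$. The work is therefore to confirm that the EGA cone/Proj statements still apply when the graded algebra is not generated in degree one and the twisting sheaves are only reflexive. What makes the argument robust is that every computation takes place after inverting a single homogeneous element $z_i$ of positive degree, where the graded localization $R_{z_i}$ is insensitive to the weights: the identities $S_{z_i} = R_{z_i}$ and $S_+\subseteq\sqrt{z_iS}$ hold verbatim regardless of the $a_i$, which is precisely what reduces all four assertions to elementary facts about graded localization.
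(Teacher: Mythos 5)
Your proof is correct. The paper gives no argument here beyond the citation of EGA II (8.6.2, 8.2.7); your chart-by-chart computation with the graded localizations $R_{z_i}$ — using $S_{z_i}=R_{z_i}$, $S_{(z_i)}=R_{(z_i)}$, and $S_+\subseteq\sqrt{z_iS}$ — is exactly the standard verification underlying those references, and it correctly handles the only delicate point, namely that the weights $a_i$ need not be $1$.
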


Next, we outline the steps in the proof of Theorem~\ref{wtdhmg}. To explain the argument below, we first recall the basic setup of Wahl \cite{Wahl}: if $D$ is a smooth scheme or complex manifold and $\mathbb{V} = \mathbb{V}(L)$ is the total space of a line bundle on $D$, with $D$ embedded in $\mathbb{V}$ as the zero section, then $\Omega^1_{\mathbb{V}}(\log D)$ is the pullback of the vector bundle $\Omega^1_{\mathbb{V}}(\log D)|D$ on $D$. There is also a more precise description of the vector bundle $\Omega^1_{\mathbb{V}}(\log D)|D$:  Poincar\'e residue $\Omega^1_{\mathbb{V}}(\log D) \to \scrO_D$ defines an extension
 $$0\to \Omega^1_D \to \Omega^1_{\mathbb{V}}(\log D)|D \to \scrO_D \to 0,$$
 whose extension class is $c_1(L)$. As we shall show,   $X^\#$ is an orbifold and $E^\#$ is an orbifold smooth  divisor in $X^\#$. We would like to replicate the argument above for $X^\#$  and $E^\#$: morally, $X^\# = \mathbb{V}(\scrO_{E^\#}(1))$ with $E^\#$ embedded as the zero section, and hence $\Omega^{n-1}_{X^\#}(\log E^\#) = \rho^*W$ for some vector bundle $W$ on $X^\#$ (although we don't need to have a concrete description of $W$). If this were indeed the case, then, for the affine morphism $\rho\colon X^\# \to E^\#$,  
 $$H^1(X^\#; \Omega^{n-1}_{X^\#}(\log E^\#)) = H^1(E^\#; \rho_*\rho^*W) = \bigoplus_{k\geq 0} H^1(E^\#; W\otimes\scrO_{E^\#}(k)) = \bigoplus_{k\geq 0} H^1(X^\#; \Omega^{n-1}_{X^\#}(\log E^\#))(k),$$
 for the grading on $H^1(X^\#; \Omega^{n-1}_{X^\#}(\log E^\#))$ coming from the $\Cee^*$-action. Similarly, using instead  the affine morphism $\rho\colon U \to E^\#$, we have
\begin{align*}H^1(U; \Omega^{n-1}_{X^\#} |U) &= H^1(U; \Omega^{n-1}_{X^\#}(\log E^\#)|U) = H^1(U; \rho_*\rho^*W) = \bigoplus_{k\in \Zee} H^1(E^\#; W\otimes\scrO_{E^\#}(k)) \\
&= \bigoplus_{k\in \Zee} H^1(U; \Omega^{n-1}_{X^\#} |U)(k).
\end{align*}
Thus, the image of $H^1(X^\#; \Omega^{n-1}_{X^\#}(\log E^\#))$ in $H^1(U; \Omega^{n-1}_{X^\#} |U) = H^1(U; \Omega^{n-1}_{X^\#}(\log E^\#)|U)$ is the nonnegative weight space in $H^1(U; \Omega^{n-1}_{X^\#}(\log E^\#)|U)$.    As in Remark~\ref{Cstaridentify}, under the image of the isomorphisms
$$H^1(U; \Omega^{n-1}_U)  \to H^1(U; T_U)\cdot\omega \to H^1(U; T_U),$$ the image in $H^1(U; T_U)$ of the nonnegative weight space $\bigoplus_{k\geq 0}H^1(U; \Omega^{n-1}_U)$ is the space 
$$\bigoplus_{k\geq -N}H^1(U; T_U) \cong \bigoplus_{k\geq -N}H^0(X;T^1_X)(k),$$
as claimed. Finally, we would like to identify  $H^1(\hX; \Omega^{n-1}_{\hX}(\log E))$ with $H^1(X^\#; \Omega^{n-1}_{X^\#}(\log E^\#))$. In case  $E^\# =E$ is a smooth hypersurface in $\Pee^n$, this argument then  gives a proof of the following special case of \cite[Corollary 3.8]{Wahl}:

\begin{proposition} Suppose that $X$ is the cone over a smooth hypersurface $E$ in $\Pee^n$ of degree $d$. Then the image of  $H^1(\hX; T_{\hX}(-\log E))$ in $H^0(X; T^1_X)$ is  $\bigoplus_{k\geq 0}H^0(X;T^1_X)(k)$.  
\end{proposition}
\begin{proof} In this case, we can take $\hX =X^\#$, the ordinary blowup of $X$ at $0$, with exceptional divisor $E= E^\#$.   Then $T_{\hX}(-\log E) =\Omega^{n-1}_{\hX}(\log E)\otimes K_{\hX}^{-1} \otimes \scrO_{\hX}(-E) = \Omega^{n-1}_{\hX}(\log E)\otimes \rho^*K_E^{-1}$. Since all of the weights $a_i$ are $1$, $N =  n+1 -d$ and     $K_E = \scrO_E(n+1-d) =  \scrO_E(-N)$. Thus, after shifting by $N$,  the image of $H^1(\hX; T_{\hX}(-\log E))$ is the weight $\geq 0$ subspace.
\end{proof}

 Because of the orbifold singularities, it is not possible to carry out this argument directly for a general weighted homogeneous singularity. Instead, we work with stacks. Then the outline of the proof is as follows: 
 
 \smallskip
 \noindent \textbf{Step I:} Replacing $X^\#$ and $E^\#$ by the corresponding quotient stacks $\underline{X}^\#$ and $\underline{E}^\#$, we show that there is a corresponding vector bundle $\Omega^{n-1}_{\underline{X}^\#}(\log \underline{E}^\#)$ on $\underline{X}^\#$ with   $\Omega^{n-1}_{\underline{X}^\#}(\log \underline{E}^\#) = \rho^*\underline{W}$, where $\rho\colon \underline{X}^\#\to \underline{E}^\#$ is the corresponding morphism and $\underline{W}$ is a vector bundle on the stack $\underline{E}^\#$.
 
 \smallskip
 \noindent \textbf{Step II:} We show that there is an open substack of $\underline{X}^\#$ isomorphic to (the representable stack) $U$, identifying  $\Omega^{n-1}_{\underline{X}^\#}(\log \underline{E}^\#)|U$ with $\Omega^{n-1}_U$, and that, as in Wahl's argument but using the cohomology of the corresponding sheaves on the appropriate stacks, 
 $$H^1(\underline{X}^\#; \Omega^{n-1}_{\underline{X}^\#}(\log \underline{E}^\#))= H^1(\underline{E}^\#;\rho_*\rho^*\underline{W})= \bigoplus_{k\geq 0}H^1(\underline{E}^\# ; \underline{W} \otimes \scrO_{\underline{E}^\#}(k))$$
  is the nonnegative weight part of 
  $$H^1(U; \Omega^{n-1}_U) = \bigoplus_{k\in \Zee}H^1(\underline{E}^\#; \underline{W} \otimes\scrO_{\underline{E}^\#}(k)).$$
 
 \smallskip
 \noindent \textbf{Step III:} We identify $H^1(\underline{X}^\#; \Omega^{n-1}_{\underline{X}^\#}(\log \underline{E}^\#))$ with $H^1(X^\#; \Omega^{n-1}_{X^\#}(\log E^\#))$, where $\Omega^{n-1}_{X^\#}(\log E^\#)$ is the sheaf on the scheme $X^\#$ defined by Steenbrink in \cite[Definition 1.17]{Steenvanishing}. 
  
 \smallskip
 \noindent \textbf{Step IV:} We identify $H^1(X^\#; \Omega^{n-1}_{X^\#}(\log E^\#))$ with $H^1(\hX; \Omega^{n-1}_{\hX}(\log E))$.
 
 \smallskip

\noindent \textbf{Steps I and II:} For clarity, first consider the case of the weighted projective space $WP^n$: it  is the coarse moduli space of the quotient stack $[\Cee^{n+1}-0/\Cee^*]$ for the $\Cee^*$-action $\lambda\cdot (z_1, \dots,   z_{n+1}) = (\lambda^{a_1}z_1, \dots , \lambda^{a_{n+1}}z_{n+1})$. Note that the $\Cee^*$-action extends to a morphism $F\colon \Cee\times (\Cee^{n+1}-0) \to \Cee^{n+1}$ defined by $(t, z)\mapsto (t^{a_1}z_1, \dots , t^{a_{n+1}}z_{n+1})$.  Working instead with the scheme $\Cee\times (\Cee^{n+1}-0)$ and the $\Cee^*$-action
 $$\lambda \cdot (t,  z_1, \dots,   z_{n+1})= (\lambda^{-1}t, \lambda^{a_1}z_1, \dots , \lambda^{a_{n+1}}z_{n+1})$$ defines another quotient stack $[\Cee\times (\Cee^{n+1}-0)/\Cee^*]$.  Here, $\Cee\times (\Cee^{n+1}-0)$ is the normalization of the closure in  $\Cee^{n+1} \times (\Cee^{n+1}-0)$ of the ``incidence correspondence" $\mathcal{I} \subseteq (\Cee^{n+1}-0)\times (\Cee^{n+1}-0)$ defined by 
 $$\mathcal{I} = \{(z, z')\in (\Cee^{n+1}-0)\times (\Cee^{n+1}-0): \text{ there exists $\lambda \in \Cee^*$ such that $z = \lambda\cdot z'$} \}.$$  
 The geometric quotient $\Cee\times (\Cee^{n+1}-0)/\Cee^*$ is the weighted blowup $\widetilde{\Cee^{n+1}}$ of $\Cee^{n+1}$ at the origin: the morphism $F\colon \Cee\times (\Cee^{n+1}-0) \to \Cee^{n+1}$ is $\Cee^*$-equivariant, for the given action on $\Cee\times (\Cee^{n+1}-0)$ and the trivial action on $\Cee^{n+1}$, and, for $t\neq 0$, $F(t, z) = F(t', z')$ $\iff$ $(t, z)$ and $(t', z')$ are in the same $\Cee^*$-orbit.   Thus, there is a morphism $\rho\colon \widetilde{\Cee^{n+1}} \to WP^n$, arising from the $\Cee^*$-equivariant morphism $\pi_2\colon \Cee\times (\Cee^{n+1}-0) \to \Cee^{n+1} -0$,  as well as an induced morphism $\widetilde{\Cee^{n+1}} \to \Cee^{n+1}$ which is an isomorphism except over $0$, and whose fiber over $0$ is $WP^n$.   In terms of stacks, identifying a representable stack with the corresponding scheme, we have a morphism  
 $$\rho \colon [\Cee\times (\Cee^{n+1}-0)/\Cee^*] \to [(\Cee^{n+1}-0)/\Cee^*],$$
  as well as a morphism $[\Cee\times (\Cee^{n+1}-0)/\Cee^*] \to   \Cee^{n+1}$, inducing an isomorphism from the open substack $O= [\Cee^*\times (\Cee^{n+1}-0)/\Cee^*]$  to $\Cee^{n+1}-0$.  Likewise, the $\Cee^*$-equivariant inclusion $0\times (\Cee^{n+1}-0) \to \Cee\times (\Cee^{n+1}-0)$ defines a closed embedding 
  $$[(\Cee^{n+1}-0)/\Cee^*] \to [\Cee\times (\Cee^{n+1}-0)/\Cee^*]$$ which is a section of the morphism $[\Cee\times (\Cee^{n+1}-0)/\Cee^*] \to [(\Cee^{n+1}-0)/\Cee^*]$.
 
 The stacks $[\Cee^{n+1}-0/\Cee^*]$ and $[\Cee\times (\Cee^{n+1}-0)/\Cee^*]$ can also be covered by orbifold charts. As usual, we have the morphism $\Cee^{n+1}-0\to  [\Cee^{n+1}-0/\Cee^*]$ defined by the trivial bundle $(\Cee^{n+1}-0)\times \Cee^*$ and the $\Cee^*$-equivariant morphism $(\Cee^{n+1}-0)\times \Cee^* \to \Cee^{n+1}-0$ defined by $(z,s) \mapsto s\cdot z$. Let $A_i = \Spec \Cee[z_1, \dots, \hat{z}_i, \dots, z_{n+1}]\cong \Cee^n$, which we view   as the subset of $\Cee^{n+1}-0$ where  $z_i =1$. Note that, while $A_i$ is not a $\Cee^*$-invariant subset of $\Cee^{n+1}-0$, the group of $a_i^{\text{th}}$ roots of unity $\boldsymbol{\mu}_{a_i}$ acts on $A_i$ via 
 $$\zeta \cdot (z_1, \dots, \hat{z}_i, \dots, z_{n+1}) = (\zeta^{-a_1}z_1, \dots, \hat{z}_i, \dots, \zeta^{-a_{n+1}}z_{n+1}).$$
Moreover, abbreviating $(z_1, \dots, \hat{z}_i, \dots, z_{n+1})$ by $z$, the \'etale morphism $F_i\colon A_i\times \Cee^* \to \Cee^{n+1}-0$ defined by
 $$F_i(z,s)=( s^{a_1}z_1, \dots , s^{a_i}, \dots,  s^{a_{n+1}}z_{n+1})$$ 
 satisfies $F_i(\zeta^{-1}\cdot z,s) = F_i(z, \zeta s)$ and thus  induces a $\Cee^*$-equivariant morphism $A_i\times^{\boldsymbol{\mu}_{a_i}}\Cee^* \to \Cee^{n+1}-0$, where, $A_i\times^{\boldsymbol{\mu}_{a_i}}\Cee^*$ denotes the quotient of $A_i \times \Cee^*$ by the equivalence relation
 $$(\zeta^{-1}z, s) \sim (z , \zeta s).$$
  Thus there is a morphism of quotient stacks $[A_i/\boldsymbol{\mu}_{a_i}] \to [(\Cee^{n+1}-0)/\Cee^*]$, which is an open embedding. Since $F_i(z, \lambda s) = \lambda\cdot F_i(z,s)$, the composite morphism $A_i\to [A_i/\boldsymbol{\mu}_{a_i}] \to [(\Cee^{n+1}-0)/\Cee^*]$ corresponds to the free $\Cee^*$-action on the trivial $\Cee^*$-bundle $A_i\times \Cee^*$ given by  
 $$\lambda \cdot (z_1, \dots, \hat{z}_i, \dots, z_{n+1}, s) = (z_1, \dots, \hat{z}_i, \dots, z_{n+1}, \lambda s),$$
 and $A_i$ is identified with the geometric quotient $(A_i\times \Cee^*)/\Cee^*$. 
 In particular, the image of $A_i/\boldsymbol{\mu}_{a_i}$ in $WP^n$ is the open subset corresponding to $z_i\neq 0$. A similar construction works for blowups: let $B_i = \Cee \times A_i$, with $\boldsymbol{\mu}_{a_i}$-action
 $$\zeta \cdot (t, z_1, \dots, \hat{z}_i, \dots, z_{n+1}) = (\zeta t, \zeta^{-a_1}z_1, \dots, \hat{z}_i, \dots, \zeta^{-a_{n+1}}z_{n+1}),$$
 and define $G_i\colon B_i\times \Cee^* \to \Cee\times (\Cee^{n+1}-0)$ by
 $$G_i(t,z,s) = (s^{-1}t,  s^{a_1}z_1, \dots , s^{a_i}, \dots,  s^{a_{n+1}}z_{n+1}).$$ 
 Note that $G_i(\zeta^{-1}\cdot (t,z), s) = G_i(t, z, \zeta s)$ and $G_i(t, z , \lambda s) = \lambda \cdot G_i(t,z,s)$. 
 Then as before $G_i$ is \'etale and induces a $\Cee^*$-equivariant morphism $B_i\times^{\boldsymbol{\mu}_{a_i}}\Cee^* \to \Cee \times (\Cee^{n+1}-0)$ and hence a morphism of quotient stacks $[B_i/\boldsymbol{\mu}_{a_i}] \to [\Cee \times (\Cee^{n+1}-0)/\Cee^*]$.  
 As before, $[B_i/\boldsymbol{\mu}_{a_i}] \to [\Cee\times(\Cee^{n+1}-0)/\Cee^*]$ is an open embedding, the composite morphism $B_i \to [B_i/\boldsymbol{\mu}_{a_i}] \to [\Cee\times(\Cee^{n+1}-0)/\Cee^*]$ corresponds to the trivial $\Cee^*$-bundle   $B_i\times \Cee^*$,   and $B_i$ is identified with the geometric quotient $(B_i\times \Cee^*)/\Cee^*$. Moreover, the $\boldsymbol{\mu}_{a_i}$-action on $B_i$ is compatible both with the projection $B_i \to A_i$ and the inclusion $0\times A_i \to B_i$. 
More generally, the two constructions are compatible in the sense that there is a commutative diagram
 $$\begin{CD}
 B_i @>>> [B_i/\boldsymbol{\mu}_{a_i}] @>>> [\Cee\times (\Cee^{n+1}-0)/\Cee^*]\\
 @VVV @VVV @VVV\\
 A_i @>>> [A_i/\boldsymbol{\mu}_{a_i}] @>>> [ \Cee^{n+1}-0/\Cee^*]
 \end{CD}$$
 This discussion restricts to the affine subvariety $X\subseteq \Cee^{n+1}$, with $U = X-0 \subseteq \Cee^{n+1}-0$. By assumption, $U$ is smooth.  We have the quotient stacks $[U/\Cee^*] = \underline{E}^\#$ and $[\Cee\times U/\Cee^*] = \underline{X}^\#$. As before, we have morphisms $\underline{X}^\# \to \underline{E}^\#$ and $\underline{X}^\# \to X$. The open substack $U= [\Cee^*\times U/\Cee^*]\subseteq \underline{X}^\#$ is isomorphic to $U$ via $\pi_2$. Likewise, the $\Cee^*$-equivariant inclusion $0\times U \to \Cee\times U$ defines a closed embedding $\underline{E}^\# \to \underline{X}^\#$ which is a section of the morphism $\underline{X}^\# \to \underline{E}^\#$. Finally, we also have orbifold charts:  Let 
$$U_i = \{(z_1, \dots, \hat{z}_i, \dots, z_{n+1})\in A_i: f(z_1, \dots, 1, \dots, z_{n+1}) = 0\}=X\cap A_i,$$
where we identify $A_i$ with the corresponding subset of $\Cee^{n+1}-0$, and let $V_i = \Cee\times U_i$. There are \'etale morphisms $U_i\times \Cee^* \to U$ and $V_i \times \Cee^* \to \Cee\times U$, as well as compatible morphisms
$$\begin{CD}
 V_i @>>> [V_i/\boldsymbol{\mu}_{a_i}] @>>> \underline{X}^\#\\
 @VVV @VVV @VVV\\
 U_i @>>> [U_i/\boldsymbol{\mu}_{a_i}] @>>> \underline{E}^\#
 \end{CD}$$ 
 On the level of coarse moduli spaces, $X^\#$ is covered by open subsets $V_i/\boldsymbol{\mu}_{a_i}$ and $E^\#$ is covered by open subsets $U_i/\boldsymbol{\mu}_{a_i}$.  Since $U_i$ and $V_i$ are smooth and $U_i$ is the smooth divisor $0\times U_i \subseteq \Cee\times U_i$, $X^\#$ is an orbifold and $E^\#$ is an orbifold smooth divisor.
 
 Turning now to sheaves, recall that a coherent sheaf $\mathcal{F}$ on $\Cee^{n+1}-0$ together with an action of $\Cee^*$ on $\mathcal{F}$ lifting the action of $\Cee^*$ on $\Cee^{n+1}-0$ defines a sheaf $\underline{\mathcal{F}}$ on 
 $[ \Cee^{n+1}-0/\Cee^*]$, and similarly for any of the other spaces on which $\Cee^*$ acts. For example, the sheaf $\scrO_{\Cee^{n+1}-0}$ together with the action given by the character $\chi_r$ of $\Cee^*$, where $\chi_r(\lambda) =\lambda^r$, defines the analogue of the sheaf $\scrO_{WP^n}(r)$. Recall that  $U =[\Cee^*\times U/\Cee^*]\subseteq \underline{X}^\# =[\Cee \times U/\Cee^*]$ and that there are  morphisms  (both of which we shall denote by $\rho$) $U \to \underline{E}^\#$ and $\underline{X}^\# \to \underline{E}^\#$. Then  $\Cee^*\times U =\Spec  A(U)[x, x^{-1}]$ and  $\Cee\times U =\Spec  A(U)[x]$, where $\Cee^*$ acts on $x$, the dual coordinate to $t\in \Cee$, by $\lambda \cdot x = \lambda x$. Hence $\rho_*\scrO_U = \bigoplus_{k\in \Zee} \scrO_{\underline{E}^\#}(k)$ and 
 $\rho_*\scrO_{\underline{X}^\#} = \bigoplus_{k\geq 0} \scrO_{\underline{E}^\#}(k)$.
 
 We now define the stacky equivalent of the sheaf $\Omega^p_{X^\#}(\log E^\#)$. It  suffices to consider the case $p=1$. 
 As before, we begin with the case of the orbifold smooth divisor $WP^n \subseteq \widetilde{\Cee^{n+1}}$. On $\Cee\times (\Cee^{n+1}-0)$, with coordinates $t, z_1, \dots, z_{n+1}$, we have the Euler vector field 
  $\Dis \tilde\xi = -t\frac{\partial}{\partial t} + \sum_{i=1}^{n+1}a_iz_i\frac{\partial}{\partial z_i}$, 
 as well as the analogous vector field $\Dis \xi =  \sum_{i=1}^{n+1}a_iz_i\frac{\partial}{\partial z_i}$  on  $\Cee^{n+1}-0$.
   Let $\Omega  = \Omega^1_{\Cee\times (\Cee^{n+1}-0)} \subseteq   \Omega(\log) = \Omega^1_{\Cee\times (\Cee^{n+1}-0)}(\log (0\times (\Cee^{n+1}-0)))$, which is free with basis $\Dis \frac{dt}{t}, dz_1, \dots, dz_{n+1}$. Let $\Omega_0\subseteq \Omega$ be the annihilator of $\tilde\xi$: $\Omega_0 = \{\varphi\in \Omega: \varphi(\tilde\xi) = 0\}$, and define $\Omega(\log)_0$ similarly. Thus, in terms of local sections,

\begin{align*}
hdt + \sum_ig_idz_i\in \Omega_0  &\iff th = \sum_ia_iz_ig_i; \\
h\frac{dt}{t} + \sum_ig_idz_i\in \Omega(\log)_0 &\iff h = \sum_ia_iz_ig_i.
\end{align*}

It is easy to check that $\Omega_0$ is locally free: let $D_i$ be the  open subset of $\Cee^{n+1}$ where $z_i\neq 0$. Then a basis of sections of $\Omega_0$ over $\Cee\times D_i$ is given by
$$dt + t(a_iz_i)^{-1}dz_i \quad \text{ and} \quad  dz_j - \frac{a_jz_j}{a_iz_i}dz_i, j\neq i.$$
   Similarly,  $\Omega(\log)_0$ is free with basis $\Dis a_iz_i\frac{dt}{t}+ dz_i$, $1\leq i\leq n+1$. Over the open set $\Cee^*\times (\Cee^{n+1}-0)$,  $\Omega(\log)_0|\Cee^*\times (\Cee^{n+1}-0)$ is the pullback of $\Omega^1_{\Cee^{n+1}-0}$  via the morphism $(t, z)\mapsto (t^{a_1}z_1, \dots , t^{a_{n+1}}z_{n+1})$, with basis $\Dis t^{a_i}\left(a_iz_i\frac{dt}{t}+ dz_i\right)$, $1\leq i\leq n+1$ and $\Cee^*$ acts trivially on this basis. Another useful basis for $\Omega(\log)_0$ over $\Cee\times D_i$ is given by
\begin{equation}\label{basiseqn}  dz_j - \frac{a_jz_j}{a_iz_i}dz_i, j\neq i   \quad \text{ and} \quad \frac{dt}{t} + (a_iz_i)^{-1}dz_i .
\end{equation}
   Here $\Cee^*$ acts on $\Dis dz_j - \frac{a_jz_j}{a_iz_i}dz_i$ with weight $a_j$ and on   $\Dis\frac{dt}{t} + (a_iz_i)^{-1}dz_i$ with weight $0$. 
   
 \begin{lemma} The vector bundle $\Omega(\log)_0$ is isomorphic to the pullback via $\pi_2^*$ of the vector bundle $\Omega(\log)_0|0\times (\Cee^{n+1}-0)$, viewed as a vector bundle on $\Cee^{n+1}-0$, and the isomorphism is compatible with the $\Cee^*$-actions on both bundles. 
 \end{lemma}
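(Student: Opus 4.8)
The plan is to exploit the product structure of $\Cee\times(\Cee^{n+1}-0)$ to produce a canonical $\Cee^*$-equivariant splitting of $\Omega(\log)$ and then to identify $\Omega(\log)_0$ with the pullback of its horizontal part, exactly as in Wahl's situation $\mathbb{V}=\mathbb{V}(L)$ with $L$ trivial. Write $M=\Cee\times(\Cee^{n+1}-0)$, $B=\Cee^{n+1}-0$, $D=0\times B=\{t=0\}$, let $\pi_2\colon M\to B$ be the projection, and let $\xi'=\sum_ia_iz_i\partial_{z_i}$ be the lift to $M$ of the Euler field on $B$, so that $\tilde\xi=-t\partial_t+\xi'$. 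Since $M$ is a product and $D$ is the zero section of the trivial $\Cee$-bundle $\pi_2$, the relative logarithmic cotangent sequence splits canonically:
$$\Omega(\log)=\pi_2^*\Omega^1_B\oplus\scrO_M\cdot\tfrac{dt}{t}.$$
First I would check that this splitting is $\Cee^*$-equivariant: under $m_\lambda(t,z)=(\lambda^{-1}t,\lambda^{a_1}z_1,\dots,\lambda^{a_{n+1}}z_{n+1})$ one computes $m_\lambda^*\tfrac{dt}{t}=\tfrac{dt}{t}$ and $m_\lambda^*dz_i=\lambda^{a_i}dz_i$, so both summands are $\Cee^*$-invariant subbundles and the projection $q\colon\Omega(\log)\to\pi_2^*\Omega^1_B$ is equivariant.

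Next I would show that the restriction of $q$ to the annihilator subbundle $\Omega(\log)_0=\ker(\iota_{\tilde\xi})$ is an isomorphism onto $\pi_2^*\Omega^1_B$. Writing a local section as $\varphi=h\,\tfrac{dt}{t}+\beta$ with $\beta\in\pi_2^*\Omega^1_B$, the contraction $\iota_{\tilde\xi}\varphi=-h+\iota_{\xi'}\beta$ vanishes exactly when $h=\iota_{\xi'}\beta$; thus $q(\varphi)=\beta$ determines $\varphi\in\Omega(\log)_0$ uniquely, and conversely every $\beta$ arises this way. Hence $q|_{\Omega(\log)_0}\colon\Omega(\log)_0\xrightarrow{\sim}\pi_2^*\Omega^1_B$ is an $\scrO_M$-linear isomorphism, and it is equivariant because $q$ is. As a concrete check, the global frame $\sigma_i=a_iz_i\tfrac{dt}{t}+dz_i$ maps to $\{dz_i\}$, and $m_\lambda^*\sigma_i=\lambda^{a_i}\sigma_i$ matches the weight $a_i$ of $dz_i$; equivalently, the transition functions among the local frames listed over $\{z_i\neq0\}$ involve only the $z_j$, hence are pulled back from $B$.

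Finally I would restrict to the zero section. Because $\pi_2\circ\iota_D=\mathrm{id}_B$, pulling back along $\pi_2$ and then restricting to $D$ returns the original bundle; applying this to $\pi_2^*\Omega^1_B$ and using $q|_{\Omega(\log)_0}$, the restriction $W_0:=\Omega(\log)_0|_{D}$ is identified with $\Omega^1_B$ via $q|_D$. Pulling back gives $\pi_2^*W_0\cong\pi_2^*\Omega^1_B\cong\Omega(\log)_0$, and all three identifications are $\Cee^*$-equivariant since each is built from the equivariant map $q$ together with the $\Cee^*$-invariant divisor $D$.

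The main obstacle — indeed essentially the only point requiring care — is bookkeeping the $\Cee^*$-linearizations: one must confirm that the canonical product splitting is equivariant (which hinges on the invariance $m_\lambda^*\tfrac{dt}{t}=\tfrac{dt}{t}$, reflecting the weight $-1$ of $t$), and that restriction to the invariant divisor $D$ and pullback along the equivariant map $\pi_2$ preserve linearizations, so that the composite is an \emph{equivariant} isomorphism and not merely an abstract one. Everything else is a formal consequence of the fact that $\pi_2\colon M\to B$ is a trivial line bundle with $D$ its zero section.
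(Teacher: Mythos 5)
Your argument is correct, and it reaches the lemma by a slightly different, more global route than the paper. You use the canonical $\Cee^*$-equivariant product splitting $\Omega(\log)=\pi_2^*\Omega^1_{\Cee^{n+1}-0}\oplus \scrO\cdot\frac{dt}{t}$ and the computation $\iota_{\tilde\xi}\bigl(h\frac{dt}{t}+\beta\bigr)=-h+\iota_{\xi}\beta$ (with $\xi=\sum_ia_iz_i\partial_{z_i}$), so that the projection onto the first summand restricts to an equivariant isomorphism $\Omega(\log)_0\xrightarrow{\sim}\pi_2^*\Omega^1_{\Cee^{n+1}-0}$; restricting to the zero section and pulling back then finishes the proof. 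The paper instead restricts the Poincar\'e residue sequence to $0\times(\Cee^{n+1}-0)$ and matches, over each open set $\{z_i\neq 0\}$, the local frame $\frac{dt}{t}+(a_iz_i)^{-1}dz_i$, $dz_j-\frac{a_jz_j}{a_iz_i}dz_i$ ($j\neq i$) of $\Omega(\log)_0$ with the corresponding frame of the restricted bundle, checking that the transition functions are pulled back from the base and that the weights agree ($a_j$ on $dz_j-\frac{a_jz_j}{a_iz_i}dz_i$, weight $0$ on the residue direction). Your version is in fact shorter here: the global frame $\sigma_i=a_iz_i\frac{dt}{t}+dz_i$, of weight $a_i$ and matching $dz_i$, already trivializes $\Omega(\log)_0$ equivariantly (the paper records this frame just before the lemma). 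What the paper's local, residue-based bookkeeping buys is that the identical computation descends without change to the hypersurface case $0\times U\subseteq\Cee\times U$ and to the \'etale charts $B_i\times\Cee^*$ used in Step III, where the global product trivialization is no longer the relevant structure.
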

 \begin{proof}  By Poincar\'e residue, after   identifying $\Cee^{n+1}-0$ with the divisor $0\times (\Cee^{n+1}-0)$,  there is an exact sequence
   $$0 \to \Omega  \to \Omega(\log) \to \scrO_{\Cee^{n+1}-0} \to 0.$$
  Restricting the above  sequence to $0\times (\Cee^{n+1}-0)$ gives an exact sequence
$$0 \to J/J^2 \to \Omega|0\times (\Cee^{n+1}-0)  \to \Omega(\log)|0\times (\Cee^{n+1}-0)  \to \scrO_{\Cee^{n+1}-0} \to 0,$$
where $J/J^2$ is the conormal bundle to the smooth divisor $0\times (\Cee^{n+1}-0) \subseteq \Cee \times (\Cee^{n+1}-0) $. 

The induced map $\Omega(\log)_0 \to \scrO_{\Cee^{n+1}-0}$ is  surjective: over the open subset $\Cee\times D_i$ where as before $D_i\subseteq \Cee^{n+1}$ is the open set defined by $z_i\neq 0$, 
the Poincar\'e residue of $\Dis \frac{dt}{t} + (a_iz_i)^{-1}dz_i$ is $1$. The bundle $\Omega_0|0\times (\Cee^{n+1}-0)$ is locally free with  basis $ dt$ and $\Dis  dz_j - \frac{a_jz_j}{a_iz_i}dz_i, j\neq i$ and the image of $J/J^2$ is spanned by $dt$. Thus 
$$\Omega_0|0\times (\Cee^{n+1}-0)\Big/\im J/J^2 = (\Omega^1_{\Cee^{n+1}-0 })_0,$$
the annihilator of $\xi$ in $\Omega^1_{\Cee^{n+1}-0 }$.  In other words, still under the identification $0\times (\Cee^{n+1}-0)\cong \Cee^{n+1}-0$, there is an exact sequence
$$0 \to  (\Omega^1_{\Cee^{n+1}-0 })_0 \to \Omega(\log)_0|0\times (\Cee^{n+1}-0) \to \scrO_{\Cee^{n+1}-0} \to 0.$$
It follows that,  over   $\Cee\times D_i$, $\Omega(\log)_0|0\times (\Cee^{n+1}-0)$ has basis $\Dis dz_j - \frac{a_jz_j}{a_iz_i}dz_i, j\neq i$ and $\Dis\frac{dt}{t} + (a_iz_i)^{-1}dz_i$ and $\Cee^*$ acts on $\Dis dz_j - \frac{a_jz_j}{a_iz_i}dz_i$ with weight $a_j$ and on  $\Dis\frac{dt}{t} + (a_iz_i)^{-1}dz_i$ with weight $0$. Pulling back via $\pi_2$, this basis  lifts to the basis $\Dis dz_j - \frac{a_jz_j}{a_iz_i}dz_i, j\neq i$ and $\Dis\frac{dt}{t} + (a_iz_i)^{-1}dz_i$ of   (\ref{basiseqn}) and thus defines an isomorphism over $\pi_2^{-1}(D_i)$ from $\pi_2^*\Omega(\log)_0|0\times (\Cee^{n+1}-0)$ to  $\Omega(\log)_0$  which is compatible with the $\Cee^*$-actions. Over $D_i\cap D_j$ resp.\ $\pi_2^{-1}(D_i\cap D_j)$,  the transition functions for the two bases are clearly compatible since the bases are defined by the same formulas. This defines the $\Cee^*$-equivariant isomorphism of the lemma. 
\end{proof}  

A very similar argument handles the case of the divisor $0\times U \subseteq \Cee\times U$. Note that, as $f$ is weighted homogeneous, there are Euler vector fields $\xi$ and   $\tilde\xi$ on $U$ and $\Cee\times U$, respectively:  First, we have the normal bundle sequence
$$0 \to T_U \to T_{\Cee^{n+1}-0}|U \to N_{U/\Cee^{n+1}-0} \to 0,$$
where $N_{U/\Cee^{n+1}-0}$ is the normal bundle to $U$, the dual to the conormal bundle $I_U/I_U^2\cong \scrO_U\cdot [f]$ and  $[f]$ is the class   $f\bmod f^2$. A derivation $\theta$ defines a homomorphism $I_U/I_U^2$ to $\scrO_U$ by: $\theta (h[f]) = h\theta (f)$. Since $\xi(f) = d\cdot f$,  the section $\xi|U$ maps to $0$ in $N_{U/\Cee^{n+1}-0}$ and hence defines a section of $T_U$, also denoted $\xi$. Likewise, since $f$ does not depend on $t$, we can define the vector field $\tilde\xi$ as a global section of $T_{\Cee\times U}$ and hence the annihilator of $\tilde\xi$ in $\Omega^1_{\Cee\times U}(\log (0\times U))$; denote it by $\Omega^1_{\Cee\times U}(\log (0\times U))_0$.  From the conormal sequence, there is an exact sequence
$$0 \to \pi_2^*(I_U/I_U^2) \to \Omega(\log)_0|U \to \Omega^1_{\Cee\times U}(\log (0\times U))_0 \to 0.$$
Since $\Omega(\log)_0|U$ and the map $\pi_2^*(I_U/I_U^2) \to \Omega(\log)_0|U$ are both pulled back from the restriction  to  $0\times U$, it follows that $\Omega^1_{\Cee\times U}(\log (0\times U))_0$ is the pullback of the corresponding vector bundle on $U$, equivariantly with respect to the $\Cee^*$-actions. 

The upshot is that $\Omega^1_{\Cee\times U}(\log (0\times U))_0$ defines a vector bundle  $\Omega^1_{\underline{X}^\#}(\log \underline{E}^\#)$ on the stack $\underline{X}^\#$, and it is the pullback of a vector bundle $\underline{W}_0$ on $\underline{E}^\#$ via the morphism   $\rho \colon  \underline{X}^\# \to \underline{E}^\#$. By taking the $(n-1)^{\text{st}}$ exterior power, we similarly have $\Omega^{n-1}_{\underline{X}^\#}(\log \underline{E}^\#)= \rho^*\left(\bigwedge ^{n-1}\underline{W}_0\right)=\rho^*\underline{W}$, where $\underline{W} = \bigwedge ^{n-1}\underline{W}_0$.  We can then take the   cohomology of $\Omega^{n-1}_{\underline{X}^\#}(\log \underline{E}^\#)$ and have: 
$$H^1(\underline{X}^\#; \Omega^{n-1}_{\underline{X}^\#}(\log \underline{E}^\#))= H^1(\underline{X}^\#; \rho^*\underline{W})= \bigoplus_{k\geq 0} H^1(\underline{E}^\#; \underline{W}\otimes \scrO_{\underline{E}^\#}(k)),$$
with the grading corresponding to the $\Cee^*$-action on $H^1(\underline{X}^\#; \Omega^{n-1}_{\underline{X}^\#}(\log \underline{E}^\#))$. Here, the subspace $H^1(\underline{X}^\#; \Omega^{n-1}_{\underline{X}^\#}(\log \underline{E}^\#))(k)$  where $\Cee^*$ acts with weight $k$, is equal to $H^1(\underline{E}^\#; \underline{W}\otimes \scrO_{\underline{E}^\#}(k))$. Similarly, restricting to the open substack $[\Cee^*\times U/\Cee^*] = U$ and the morphism $\rho\colon U \to \underline{E}^\#$, we have
$$H^1(U; \Omega^{n-1}_{\underline{X}^\#}(\log \underline{E}^\#)|U)= H^1(U; \rho^*\underline{W})= \bigoplus_{k\in \Zee} H^1(\underline{E}^\#; \underline{W}\otimes \scrO_{\underline{E}^\#}(k)),$$
again  with the grading corresponding to the $\Cee^*$-action. 
On the other hand, via the morphism $(t, z)\mapsto (t^{a_1}z_1, \dots , t^{a_{n+1}}z_{n+1})$,   $\Omega(\log)_0|\Cee^*\times (\Cee^{n+1}-0)$ is the pullback of $\Omega^1_{\Cee^{n+1}-0}$.   Similarly the restriction of $\Omega^1_{\underline{X}^\#}(\log \underline{E}^\#)$ to the open substack $U$ is $\Omega^1_U$.  Thus also $H^1(U; \Omega^{n-1}_{\underline{X}^\#}(\log \underline{E}^\#)|U)= H^1(U;\Omega^{n-1}_U)$. This completes the proof of Steps I and II.

\smallskip

\noindent \textbf{Step III:} The next step is to compare the cohomology group  $H^1(\underline{X}^\#; \Omega^{n-1}_{\underline{X}^\#}(\log \underline{E}^\#))$ with  the group $H^1(X^\#; \Omega^{n-1}_{X^\#}(\log E^\#))$, where $\Omega^{n-1}_{X^\#}(\log E^\#)$ is the sheaf defined by Steenbrink \cite{Steenvanishing} on the coarse moduli space $X^\#$:

\begin{lemma} For all $p,q$, $H^q(\underline{X}^\#; \Omega^p_{\underline{X}^\#}(\log \underline{E}^\#))=H^q(X^\#; \Omega^p_{X^\#}(\log E^\#))$.
\end{lemma}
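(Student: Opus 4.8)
The plan is to exploit the coarse moduli morphism $c\colon \underline{X}^\# \to X^\#$ and to split the statement into two independent facts: first, that $c$ is cohomologically trivial for coherent sheaves, so that $H^q(\underline{X}^\#;\mathcal F) \cong H^q(X^\#; c_*\mathcal F)$ for every coherent sheaf $\mathcal F$ on $\underline{X}^\#$; and second, that $c_*\Omega^p_{\underline{X}^\#}(\log \underline{E}^\#) \cong \Omega^p_{X^\#}(\log E^\#)$, the latter being Steenbrink's sheaf on the $V$-manifold $X^\#$. Granting these, the lemma is immediate upon taking $\mathcal F = \Omega^p_{\underline{X}^\#}(\log \underline{E}^\#)$.

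For the first fact, I would argue locally via the orbifold charts of Steps I and II. By the commutative diagrams there, $X^\#$ is covered by the affine quotients $V_i/\boldsymbol{\mu}_{a_i}$ and $\underline{X}^\#$ by the quotient stacks $[V_i/\boldsymbol{\mu}_{a_i}]$, and $c$ restricts over each such chart to the canonical map $[V_i/\boldsymbol{\mu}_{a_i}] \to V_i/\boldsymbol{\mu}_{a_i}$. Writing $q_i\colon V_i \to V_i/\boldsymbol{\mu}_{a_i}$ for the (finite) quotient morphism, a coherent sheaf on $[V_i/\boldsymbol{\mu}_{a_i}]$ is a $\boldsymbol{\mu}_{a_i}$-equivariant coherent sheaf $\mathcal F_i$ on $V_i$, and $c_*\mathcal F_i = (q_{i*}\mathcal F_i)^{\boldsymbol{\mu}_{a_i}}$. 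Since we work over $\Cee$ and $\boldsymbol{\mu}_{a_i}$ is a finite group of invertible order, the functor of $\boldsymbol{\mu}_{a_i}$-invariants is exact (the averaging operator splits it off as a direct summand) and the higher group cohomology $H^{>0}(\boldsymbol{\mu}_{a_i},-)$ vanishes; equivalently, the fibers of $c$ are gerbes $B\boldsymbol{\mu}_{a_i}$ with no higher coherent cohomology. Hence $c_*$ is exact and $R^q c_*\mathcal F = 0$ for $q>0$, so the Leray spectral sequence $H^p(X^\#; R^q c_*\mathcal F) \Rightarrow H^{p+q}(\underline{X}^\#;\mathcal F)$ degenerates to give the desired isomorphism. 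This is the standard tameness statement for Deligne--Mumford stacks in characteristic zero.

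For the second fact, I would compare the two sheaves chart by chart. On $[V_i/\boldsymbol{\mu}_{a_i}]$ the bundle $\Omega^p_{\underline{X}^\#}(\log \underline{E}^\#)$ is given by the $\boldsymbol{\mu}_{a_i}$-equivariant bundle $\bigwedge^p\bigl(\Omega^1_{V_i}(\log(0\times U_i))_0\bigr)$ arising from the annihilator-of-the-Euler-field construction, so by the previous paragraph its pushforward is the sheaf of $\boldsymbol{\mu}_{a_i}$-invariant logarithmic $p$-forms on the uniformizing chart $V_i$. This is exactly the local recipe by which Steenbrink defines $\Omega^p_{X^\#}(\log E^\#)$ on a $V$-manifold with orbifold-smooth boundary divisor in \cite{Steenvanishing}, and the two are compatible on overlaps because both are induced from the same equivariant gluing data on the $V_i$. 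The main obstacle is precisely this matching: one must verify that the intrinsic annihilator construction on $\Cee\times U$ restricts, on each chart, to the invariant log-differentials entering Steenbrink's definition, and that the $\boldsymbol{\mu}_{a_i}$-linearizations of the transition functions agree; once the local models and the divisor $E^\#$ are identified, the global isomorphism $c_*\Omega^p_{\underline{X}^\#}(\log \underline{E}^\#)\cong \Omega^p_{X^\#}(\log E^\#)$ follows, completing the proof.
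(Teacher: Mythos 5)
Your overall architecture is the same as the paper's: reduce to the orbifold charts $[V_i/\boldsymbol{\mu}_{a_i}]$ covering $\underline{X}^\#$, use that in characteristic zero taking $\boldsymbol{\mu}_{a_i}$-invariants is exact (the paper phrases this as computing \v{C}ech cohomology on this cover rather than via $R^qc_*=0$ and Leray for the coarse moduli map $c$, but these are interchangeable), and then identify the invariant sections over each chart with Steenbrink's sheaf. Your first fact is fine as stated.

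The issue is your second fact: you explicitly defer the verification that the annihilator-of-the-Euler-field construction restricts on each chart to ordinary log differentials (``one must verify that\ldots''), and that verification is the entire mathematical content of the lemma --- everything else is formal. The paper closes it by an explicit computation: pulling back along the \'etale $\Cee^*$-equivariant chart $G_i(t,z,s) = (s^{-1}t, s^{a_1}z_1,\dots,s^{a_i},\dots,s^{a_{n+1}}z_{n+1})$, the local basis $\frac{dt}{t}+(a_iz_i)^{-1}dz_i$, $dz_j - \frac{a_jz_j}{a_iz_i}dz_i$ ($j\neq i$) of $\Omega(\log)_0$ pulls back to $\frac{dt}{t}$, $s^{a_j}dz_j$, so that $G_i^*\Omega(\log)_0$ is the pullback of $\Omega^1_{B_i}(\log A_i)$ from the chart $B_i=\Cee\times A_i$ (equivalently, is the annihilator of $s\,\partial/\partial s$), and its $\Cee^*$-invariant sections are exactly $\Omega^1_{B_i}(\log A_i)$; exterior powers and the restriction to $V_i\subseteq B_i$ then give that the pullback of $\Omega^p_{\underline{X}^\#}(\log\underline{E}^\#)$ to $V_i$ is $\Omega^p_{V_i}(\log U_i)$, whose $\boldsymbol{\mu}_{a_i}$-invariants are by definition the sections of Steenbrink's $\Omega^p_{X^\#}(\log E^\#)$ over $V_i/\boldsymbol{\mu}_{a_i}$. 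Without some such explicit check your argument is a correct plan rather than a proof; supplying this basis computation (or an equivalent coordinate-free argument that the \'etale pullback of the annihilator bundle is the log cotangent bundle of the chart) is what you need to add.
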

\begin{proof} We have an open cover of $\underline{X}^\#$ by the stacks $[V_i/\boldsymbol{\mu}_{a_i}]$. We use these to compute \v{C}ech cohomology. Our goal is to show that the group of sections of $\Omega^p_{\underline{X}^\#}(\log \underline{E}^\#)$ over $[V_i/\boldsymbol{\mu}_{a_i}]$ is $(\Omega^p_{V_i}(\log U_i))^{\boldsymbol{\mu}_{a_i}}$, which by definition is $\Omega^p_{X^\#}(\log E^\#))(V_i/\boldsymbol{\mu}_{a_i})$. It suffices to show that the pullback of $\Omega^p_{\underline{X}^\#}(\log \underline{E}^\#)$ to $V_i$ is $\Omega^p_{V_i}(\log U_i)$; here we use the fact that we are in characteristic zero. We shall just work out the argument for the corresponding case of $A_i$ and $B_i =\Cee\times A_i$ and for $p=1$. Recall that we have an \'etale, $\Cee^*$-equivariant  morphism $A_i\times \Cee^* \to \Cee^{n+1}-0$ and hence a corresponding morphism $G_i \colon B_i\times \Cee^* = \Cee \times A_i\times \Cee^* \to \Cee \times (\Cee^{n+1}-0)$, defined by
$$G_i(t, z_1, \dots, \hat{z}_i, \dots, z_{n+1}, s) = (s^{-1}t,  s^{a_1}z_1, \dots , s^{a_i}, \dots, s^{a_{n+1}}z_{n+1}).$$ 
Since $G_i$ is \'etale,  $G_i^*\Omega(\log) = \Omega^1_{B_i\times \Cee^*}(\log (A_i\times \Cee^*))$, viewing $A_i\times \Cee^*$ as the divisor $0\times A_i\times \Cee^*\subseteq \Cee \times A_i\times \Cee^*$. 
Via $G_i^*$, the basis $\Dis \frac{dt}{t} + (a_iz_i)^{-1}dz_i$, $\Dis    dz_j - \frac{a_jz_j}{a_iz_i}dz_i, j\neq i$ of $\Omega(\log)_0$ pulls back to $\Dis \frac{dt}{t} $, $\Dis    s^{a_j}dz_j , j\neq i$, and hence $G_i^*\Omega(\log)_0$ is the pullback of $\Omega^1_{B_i}(\log A_i)$ (or equivalently, is the annihilator of the Euler vector field $\Dis \tilde\xi_i =  s\frac{\partial}{\partial s}$). Then the sheaf of $\Cee^*$-invariant sections of $G_i^*\Omega(\log)_0$ over $B_i\times \Cee^*$ is  $\Omega^1_{B_i}(\log A_i)$, and similarly for the exterior powers. The analogous statement for   $U_i$ and $V_i$ then implies that  the pullback of $\Omega^p_{\underline{X}^\#}(\log \underline{E}^\#)$ to $V_i$ is $\Omega^p_{V_i}(\log U_i)$.
\end{proof}

 \noindent \textbf{Step IV:} Lastly, we compare $H^1(X^\#; \Omega^{n-1}_{X^\#}(\log E^\#))$ with $H^1(\hX; \Omega^{n-1}_{\hX}(\log E))$:

\begin{lemma}\label{reswp} If $\hX$ is a good equivariant resolution of $X$ dominating $X^\#$, then, for all $p,q$,
$$H^q(\hX; \Omega^p_{\hX}(\log E)) \cong H^q(X^\#; \Omega^p_{X^\#}(\log E^\#)).$$
\end{lemma}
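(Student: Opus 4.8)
The plan is to deduce the isomorphism from the invariance of the cohomology of sheaves of logarithmic differentials under log-smooth modifications, after passing to the smooth stack $\underline{X}^\#$. Since $X$ is affine, for any coherent sheaf $\mathcal F$ on $\hX$ (resp. $\mathcal G$ on $X^\#$) one has $H^q(\hX;\mathcal F)\cong H^0(X;R^q\pi_*\mathcal F)$ (resp. $H^q(X^\#;\mathcal G)\cong H^0(X;R^q\rho_*\mathcal G)$), where $\pi\colon\hX\to X$ and $\rho\colon X^\#\to X$ are the structure morphisms. Thus it suffices to compare $R\pi_*\Omega^p_{\hX}(\log E)$ with $R\rho_*\Omega^p_{X^\#}(\log E^\#)$ as complexes on $X$. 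By the previous lemma (Step III), $H^q(X^\#;\Omega^p_{X^\#}(\log E^\#))\cong H^q(\underline{X}^\#;\Omega^p_{\underline{X}^\#}(\log\underline{E}^\#))$, so it is enough to compare $\hX$ with the smooth Deligne--Mumford stack $\underline{X}^\#$, on which $\underline{E}^\#$ is a smooth divisor.

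Both $\hX$ and $\underline{X}^\#$ are log-smooth models of the pair $(X,\{0\})$: each is smooth (as a scheme, resp. as a stack), carries a reduced snc exceptional divisor, and is an isomorphism over $U=X\setminus\{0\}$. The plan is to dominate both by a single smooth log-stack. Concretely, I would take the fiber product $\hX\times_X\underline{X}^\#$, which is proper and birational over both factors and an isomorphism over $U$, and choose (by functorial/equivariant resolution in characteristic zero) a smooth Deligne--Mumford stack $\hW$ with reduced snc boundary $D$ mapping to it, so that the two projections give log-smooth modifications $a\colon\hW\to\hX$ and $b\colon\hW\to\underline{X}^\#$, each an isomorphism over $U$ with $D$ the reduced preimage of the respective boundary. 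The result to invoke is then
\begin{equation*}
Ra_*\,\Omega^p_{\hW}(\log D)\cong\Omega^p_{\hX}(\log E),\qquad Rb_*\,\Omega^p_{\hW}(\log D)\cong\Omega^p_{\underline{X}^\#}(\log\underline{E}^\#),
\end{equation*}
for every $p$. Granting this, Leray for the maps to $X$ gives $H^q(\hX;\Omega^p_{\hX}(\log E))\cong H^q(\hW;\Omega^p_{\hW}(\log D))\cong H^q(\underline{X}^\#;\Omega^p_{\underline{X}^\#}(\log\underline{E}^\#))$, which together with Step III yields the Lemma.

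The remaining input --- invariance of $\Omega^p(\log)$ under a log-smooth modification $h\colon(W,D)\to(M,D_M)$ of snc pairs that is an isomorphism over $M\setminus D_M$ --- is the statement $Rh_*\Omega^p_W(\log D)\cong\Omega^p_M(\log D_M)$. I would prove it by a filtered derived category / strictness argument: the logarithmic de Rham complexes $\Omega^\bullet_W(\log D)$ and $\Omega^\bullet_M(\log D_M)$, with their stupid filtrations, both compute $Rj_*\Cee$ on the complement (where $j$ is the open inclusion), so that $Rh_*\Omega^\bullet_W(\log D)\to\Omega^\bullet_M(\log D_M)$ is a filtered quasi-isomorphism; $E_1$-degeneration of the associated Hodge--de Rham spectral sequences then forces the graded pieces to agree, i.e. $Rh_*\Omega^p_W(\log D)\cong\Omega^p_M(\log D_M)$ for each $p$. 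To transport this to the stacky setting I would argue \'etale-locally on the orbifold charts $V_i/\boldsymbol{\mu}_{a_i}$: there $\Omega^p_{\underline{X}^\#}(\log\underline{E}^\#)$ is the sheaf of $\boldsymbol{\mu}_{a_i}$-invariants of $\Omega^p_{V_i}(\log U_i)$, the base change of $\hW$ is a $\boldsymbol{\mu}_{a_i}$-equivariant log resolution of the smooth pair $(V_i,U_i)$, and since taking $\boldsymbol{\mu}_{a_i}$-invariants is exact in characteristic zero it commutes with $Rb_*$; applying the smooth statement equivariantly and passing to invariants gives the stacky isomorphism.

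The main obstacle I expect is the bookkeeping around the orbifold/stacky geometry rather than the analytic heart of the argument: one must ensure that a common smooth model $\hW$ dominating both log-smooth models genuinely exists with reduced snc boundary equal to the preimage of each boundary divisor (so that the hypotheses of the invariance theorem are met), and that the comparison commutes with passage to $\boldsymbol{\mu}_{a_i}$-invariants on charts. The subtlety that $\hX$ is only assumed to dominate the coarse space $X^\#$ --- not a priori the stack $\underline{X}^\#$ --- is precisely what is circumvented by resolving the fiber product $\hX\times_X\underline{X}^\#$ and comparing both sides to the common model $\hW$, so that one never needs a direct morphism $\hX\to\underline{X}^\#$.
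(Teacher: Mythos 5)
Your proposal reaches the statement by a genuinely different route from the paper. The paper does not construct a common log-smooth model at all: it chooses a projective completion $\overline{X}$ of the affine variety $X$ with snc boundary $F$, notes that the open inclusions $j\colon \hX\to\overline{\hX}$ and $j^\#\colon X^\#\to\overline{X^\#}$ are affine (so Leray identifies the cohomology groups in question with cohomology of $j_*\Omega^p_{\hX}(\log E)$ and $j^\#_*\Omega^p_{X^\#}(\log E^\#)$ on the compactifications), observes that the two quotients $j_*\Omega^p_{\hX}(\log E)/\Omega^p_{\overline{\hX}}(\log(E+F))$ and $j^\#_*\Omega^p_{X^\#}(\log E^\#)/\Omega^p_{\overline{X^\#}}(\log(E^\#+F))$ coincide (they are supported along $F$, away from the exceptional sets), and then invokes Steenbrink's theorem that $H^q(\overline{X^\#};\Omega^p(\log(E^\#+F)))\to H^q(\overline{\hX};\Omega^p(\log(E+F)))$ is an isomorphism for the projective $V$-manifold $\overline{X^\#}$; a five-lemma argument finishes. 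Your route instead stays local, passes through the stack via Step III, and compares $\hX$ with $\underline{X}^\#$ through a common resolution $\widehat{W}$ of the fiber product, using the invariance $Rh_*\Omega^p_W(\log D)\cong\Omega^p_M(\log D_M)$ for a proper log-smooth modification that is an isomorphism off the boundary. That invariance is a true, citable, and local-on-the-base statement (it is essentially Esnault--Viehweg's lemma on logarithmic differentials under birational modifications), and granting it your architecture works; in particular the device of resolving $\hX\times_X\underline{X}^\#$ correctly circumvents the absence of a direct morphism $\hX\to\underline{X}^\#$, and the single reduced divisor $D=\widehat{W}\setminus U$ serves as the boundary for both projections.

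The one step you should repair is your proposed \emph{proof} of that invariance. The argument ``both filtered complexes compute $Rj_*\Cee$, and $E_1$-degeneration forces the graded pieces to agree'' is a global argument: degeneration of the logarithmic Hodge--de Rham spectral sequence and strictness are theorems about proper varieties, and they are not available for the Stein germ $\hX$ or for a general base $M$; moreover, a quasi-isomorphism of filtered complexes whose spectral sequences both degenerate need not be a filtered quasi-isomorphism without strictness of the induced maps on hypercohomology. Since $Rh_*\Omega^p_W(\log D)\cong\Omega^p_M(\log D_M)$ is local on $M$, the clean fix is to quote it as a black box or to prove it by compactifying --- which is, in effect, exactly what the paper's own argument does. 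Your reduction of the stacky case to the schematic one on the charts $V_i$ by taking $\boldsymbol{\mu}_{a_i}$-invariants is fine, since invariants are exact in characteristic zero.
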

\begin{proof} By construction, $X^\#$ and hence $\hX$ dominate the affine variety $X$ and are isomorphic to it away from $0$. Let $\overline{X}$ be some projective completion of $X$ such that $\overline{X}-\{0\}$ is smooth and the complement $F$ of $X$ in $\overline{X}$ is a divisor with normal crossings. Then $\overline{X}$ defines projective completions $\overline{X^\#}$ and $\overline{\hX}$ of $X^\#$ and $\hX$ respectively, and we can identify $\overline{X^\#} -X^\#$ and $\overline{\hX} - \hX$ with $F$ as well. Let $j^\#\colon X^\# \to \overline{X^\#}$ and $j\colon \hX \to \overline{\hX}$ be the inclusions. Then $j^\#$ and $j$ are affine morphisms. Hence $R^ij_*\Omega^p_{\hX}(\log E) =0$ for $i >0$. By the Leray spectral sequence, $H^q(\hX; \Omega^p_{\hX}(\log E)) = H^q(\overline{\hX}; j_*\Omega^p_{\hX}(\log E))$, and likewise $H^q(X^\#; \Omega^p_{X^\#}(\log E^\#)) = H^q(\overline{X^\#}; j^\#_*\Omega^p_{X^\#}(\log E^\#) )$. Finally let
$$Q = j_*\Omega^p_{\hX}(\log E)/\Omega^p_{\overline{\hX}}(\log (E+F)) = j^\#_*\Omega^p_{X^\#}(\log E^\#)/\Omega^p_{\overline{X^\#}}(\log (E^\#+F)).$$
Then the birational morphism $\hX \to X^\#$ induces a commutative diagram
$$\begin{CD}
@>>> H^q(\overline{X^\#}; \Omega^p_{\overline{X^\#}}(\log (E^\#+F))) @>>> H^q(\overline{X^\#};j^\#_*\Omega^p_{X^\#}\log (E^\#)) @>>> H^q(\overline{X^\#};Q) @>>> \\
@. @VVV @VVV @| @.
\\
@>>> H^q(\overline{\hX}; \Omega^p_{\overline{\hX}}(\log (E+F))) @>>> H^q(\overline{\hX};j_*\Omega^p_{\hX}\log (E^\#)) @>>> H^q(\overline{\hX};Q) @>>>
\end{CD}$$
By \cite[1.12 and 1.19]{Steenvanishing} (see especially the comments after the proof of 1.19), the homomorphism
$$H^q(\overline{X^\#}; \Omega^p_{\overline{X^\#}}(\log (E^\#+F))) \to H^q(\overline{\hX}; \Omega^p_{\overline{\hX}}(\log (E+F)))$$
is an isomorphism for all $p, q$.   Hence the homomorphism 
$$H^q(\overline{X^\#};j^\#_*\Omega^p_{X^\#}(\log E^\#)) \to H^q(\overline{\hX};j_*\Omega^p_{\hX}(\log E))$$
is also an isomorphism, so that $H^q(X^\#; \Omega^p_{X^\#}(\log E^\#)) \to H^q(\hX; \Omega^p_{\hX}(\log E))$ is an isomorphism as well.
\end{proof}
\renewcommand{\qedsymbol}{}
\end{proof}

\begin{remark}\label{biratinvremark} A similar argument shows that, for a general isolated singularity $X$ (not necessarily a hypersurface singularity or one with a $\Cee^*$-action), and for $\pi \colon \hX \to E$ a good resolution, the groups $H^q(\hX; \Omega^p_{\hX}(\log E))$ and $H^q(\hX; \Omega^p_{\hX}(\log E)(-E))$ are independent of the choice of $\hX$.
\end{remark}

\begin{proof}[Proof of Theorem~\ref{wtdlink}] As in Remark~\ref{rem4.6}, let $Q_f$ denote the Milnor algebra. Since $X$ is weighted homogeneous, $H^0(X;T^1_X) = Q_f$, and  in particular  $H^0(X;T^1_X)(-N) = Q_f(-N)$. In terms of the $V$-filtration on $Q_f$,  since weight $-N$ corresponds to $\ell(\alpha) = 2$ and thus $\ell(\alpha) -1 =1$, it follows from   Remark~\ref{rem4.6}(i) that $Q_f(-N) = \Gr_V^1Q_f$. By a result of Scherk-Steenbrink \cite{ScherkSteenbrink}, $\Gr_V^1Q_f \cong \Gr_F^{n-1}H^n(M)_1$, where $M$ is the Milnor fiber of $X$ and the subscript denotes the subspace of $H^n(M)$ where the semisimple part of the monodromy $T$ acts trivially.  Since  the monodromy is semisimple in the weighted homogeneous case, $T$ acts trivially on $H^n(M)_1$. Then  $H^n(M)_1 = W_{n+1}H^n(M)_1 \cong H^{n-1}(E^\#)_0(-1) \cong H^n(L)$ as mixed Hodge structures, where $H^{n-1}(E^\#)_0$ denotes the primitive cohomology (cf.\ Steenbrink \cite[bottom of p.\ 216]{SteenCompositio}) and $H^{n-1}(E^\#)_0(-1)$ denotes the Tate twist (not a $\Cee^*$-weight space). Thus $\Gr_F^{n-1}H^n(M)_1 \cong \Gr_F^{n-1}H^n(L)$. In particular, $\dim H^0(X;T^1_X)(-N) = \dim Q_f(-N) = \dim \Gr_F^{n-1}H^n(L)$.

Since $\Gr_F^{n-1}H^n(L)$ is a subquotient of $H^n(L)$, the $\Cee^*$-action on $\Gr_F^{n-1}H^n(L)$ is trivial, i.e.\ of weight $0$. Viewing $\Gr_F^{n-1}H^n(L)$ instead as a subquotient of $H^0(X;T^1_X)$ shifts the weight by $-N$. Thus
$\Gr_F^{n-1}H^n(L) \subseteq H^0(X;T^1_X)(-N)$. As both spaces have the same dimension, they are equal.  
\end{proof}

\section{Generalized Fano varieties}\label{Section4}
 
 For the moment, let $Y$ be an arbitrary compact complex analytic variety of dimension $n$ with isolated Gorenstein singularities,  and let $Z = Y_{\text{sing}}$. Let $\pi\colon \hY \to Y$ be a good equivariant resolution with exceptional set $E=\bigcup_{i=1}^rE_i$, and let $V = \hY -E$. We let $X$ be the union of good representatives for germs of $Y$ at the singular points and denote by $\pi \colon \hX \to X$ the induced resolution of $X$, with $U = X - Z = \hX - E$.  We can write
$$H^0(Y; T^1_Y) =\bigoplus_{x\in Z}T^1_{Y,x}$$ 
and likewise
$$K =\bigoplus_{x\in Z}K_x = \bigoplus_{x\in Z}T^1_{Y,x}/\im H^0(Y;(R^1\pi_*\Omega^{n-1}_{\hY})_x).$$

 There is  a  global version of Lemma~\ref{2.1}  (cf.\ \cite[Proposition 3.6]{F}):

\begin{lemma}\label{2.1global} Under the assumption of isolated lci singularities  of $\dim\ge 3$, $H^1_Z(Y; T^0_Y) =0$, $H^2_Z(Y; T^0_Y) \cong H^0(Y; T^1_Y)$, $\mathbb{T}^1_Y \cong H^1(V; T_V)$, and there is an isomorphism of exact sequences identifying the local cohomology  sequence for $T^0_Y$  and  the Ext spectral sequence relating $\Ext^{p+q}(\Omega^1_Y, \scrO_Y)$ to $H^p(Y; \mathit{Ext}^q (\Omega^1_Y, \scrO_Y))$ as follows:
$$\begin{CD}
0 @>>> H^1(Y; T^0_Y) @>>> \mathbb{T}^1_Y @>>> H^0(Y; T^1_Y) @>>> H^2(Y; T^0_Y) \\
@. @VV{=}V @V{\cong}VV @V{\cong}VV @VV{=}V \\
 0=H^1_Z(Y; T^0_Y) @>>> H^1(Y; T^0_Y) @>>> H^1(V; T^0_Y|V) @>>> H^2_Z(Y; T^0_Y) @>>> H^2(Y; T^0_Y).  
\end{CD}$$
\end{lemma}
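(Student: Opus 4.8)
The plan is to reduce the three isolated-point assertions to the local Lemmas~\ref{1.0} and~\ref{2.1} via the finiteness of $Z$, and then to exhibit both rows of the diagram as the long exact sequences of two compatible distinguished triangles built from the single two-term complex $R\mathcal{H}om(\Omega^1_Y,\scrO_Y)$, so that the asserted isomorphism of sequences becomes an instance of the $3\times 3$ comparison in the derived category.

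First I would dispose of the statements supported on $Z$. Since $Y$ has isolated lci singularities of dimension $\geq 3$, at each $x\in Z$ the local ring is Cohen--Macaulay of depth $\geq 3$, so Lemmas~\ref{1.0} and~\ref{2.1} apply to each germ. As $Z$ is finite, local cohomology with supports in $Z$ splits as a direct sum over the points of $Z$, each summand being computed on a good Stein representative; hence $H^1_Z(Y;T^0_Y)=\bigoplus_x H^1_x(X;T^0_X)=0$ and $H^2_Z(Y;T^0_Y)=\bigoplus_x H^2_x(X;T^0_X)\cong\bigoplus_x H^0(X;T^1_X)=H^0(Y;T^1_Y)$. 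In particular $T^0_Y$ has depth $\geq 2$ along $Z$, so the local cohomology sequence of the sheaf $T^0_Y$ for the open immersion $V=Y-Z\hookrightarrow Y$ (with $T^0_Y|V=T_V$) begins with $0=H^1_Z(Y;T^0_Y)\to H^1(Y;T^0_Y)\to H^1(V;T^0_Y|V)\to H^2_Z(Y;T^0_Y)\to H^2(Y;T^0_Y)$, which is the bottom row.

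Next I introduce $\mathcal{P}^\bullet=R\mathcal{H}om(\Omega^1_Y,\scrO_Y)$, represented locally by the normal complex of Lemma~\ref{2.1}; since $Y$ is lci this is a perfect complex of amplitude $[0,1]$ with $\mathcal{H}^0\mathcal{P}^\bullet=T^0_Y$ and $\mathcal{H}^1\mathcal{P}^\bullet=T^1_Y$, and $\mathbb{H}^k(Y;\mathcal{P}^\bullet)=\Ext^k(\Omega^1_Y,\scrO_Y)$, so $\mathbb{H}^1=\mathbb{T}^1_Y$. Because $\mathcal{P}^\bullet$ has only two nonzero cohomology sheaves, the local-to-global Ext spectral sequence is the long exact sequence of the truncation triangle $T^0_Y\to\mathcal{P}^\bullet\to T^1_Y[-1]\xrightarrow{+1}$, and the piece in degree $1$ is exactly the top row, with $d_2$ the connecting map. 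The essential vanishing input is furnished by the proof of Lemma~\ref{2.1}: the local hyper-Ext groups satisfy $\mathbb{H}^k_x(\mathcal{P}^\bullet)=0$ for $k\leq 2$, hence $\mathbb{H}^k_Z(Y;\mathcal{P}^\bullet)=0$ for $k\leq 2$. Applying the support/restriction triangle to $\mathcal{P}^\bullet$ then yields the restriction isomorphism $\mathbb{T}^1_Y=\mathbb{H}^1(Y;\mathcal{P}^\bullet)\xrightarrow{\sim}\mathbb{H}^1(V;\mathcal{P}^\bullet|V)=H^1(V;T_V)$ (the middle vertical arrow), while applying $R\Gamma_Z$ to the truncation triangle, together with $R\Gamma_Z(T^1_Y[-1])=T^1_Y[-1]$ (as $T^1_Y$ is supported on $Z$), recovers $H^0(Y;T^1_Y)\xrightarrow{\sim}H^2_Z(Y;T^0_Y)$.

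Finally, both rows and all four vertical maps embed into the $\mathbb{H}^\bullet(Y;-)$ of the $3\times 3$ diagram whose columns are the truncation triangle applied to $R\Gamma_Z(-)$, to $(-)$, and to $Rj_*j^*(-)$, and whose rows are the support/restriction triangle applied to $T^0_Y$, to $\mathcal{P}^\bullet$, and to $T^1_Y[-1]$; the two vanishing facts above and $T^1_Y|V=0$ collapse this nine-term diagram onto the asserted one. Commutativity of each square is then functoriality of connecting homomorphisms in this diagram. I expect the one genuinely delicate point to be the rightmost square: one must check that the differential $d_2\colon H^0(Y;T^1_Y)\to H^2(Y;T^0_Y)$ coincides, under the identification $H^0(Y;T^1_Y)\cong H^2_Z(Y;T^0_Y)$, with the local cohomology boundary $H^2_Z(Y;T^0_Y)\to H^2(Y;T^0_Y)$. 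This reduces to the naturality of the connecting map of the truncation triangle under the ``forget supports'' transformation $R\Gamma_Z\to\mathrm{id}$, the crucial observation being that this transformation is the identity on $H^0(Y;T^1_Y)=H^0_Z(Y;T^1_Y)$; only the sign bookkeeping in the $3\times 3$ diagram requires care. (Compare \cite[Proposition 3.6]{F}.)
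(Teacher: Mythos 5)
Your proposal is correct and follows essentially the same route as the paper: the paper works with $\mathcal{J}^\bullet = Hom(\Omega^1_Y,\mathcal{I}^\bullet)$ (an explicit model of $R\mathcal{H}om(\Omega^1_Y,\scrO_Y)$), its local cohomology triangle, and the local-to-global Ext spectral sequences with and without supports, which for a complex with only two nonzero cohomology sheaves are exactly your truncation-triangle long exact sequences. The decisive inputs are the same in both arguments --- excision over the finite set $Z$, the depth statements of Lemmas~\ref{1.0} and~\ref{2.1} giving $\mathbb{H}^k_Z(Y;R\mathcal{H}om(\Omega^1_Y,\scrO_Y))=0$ for $k\le 2$ and the identification of $d_2$ with the isomorphism $H^0(T^1_Y)\cong H^2_Z(T^0_Y)$ --- and your $3\times 3$ comparison is the paper's ``comparison of $E_2$ pages.''
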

    \begin{proof} Let $\mathcal{I}^\bullet$ be an injective resolution of   $\scrO_Y$ and let $\mathcal{J}^\bullet = \mathit{Hom}(\Omega^1_Y, \mathcal{I}^\bullet)$. Then  
$\mathbb{H}^i(Y;\mathcal{J}^\bullet) = \Ext^i(\Omega^1_Y, \scrO_Y) = \mathbb{T}^i_Y$ and  $\mathbb{H}^i(V;\mathcal{J}^\bullet|V)\cong H^i(V; T_V)$. The local cohomology exact sequence gives an exact sequence
$$\begin{CD}
\cdots @>>> \mathbb{H}^i_Z(Y;\mathcal{J}^\bullet) @>>> \mathbb{H}^i(Y;\mathcal{J}^\bullet) @>>> \mathbb{H}^i(V;\mathcal{J}^\bullet|V) @>>> \cdots\\
@. @| @| @| @. \\
\cdots @>>> \mathbb{H}^i_Z(Y;\mathcal{J}^\bullet) @>>> \mathbb{T}^i_Y  @>>> H^i(V; T_V) @>>> \cdots
\end{CD}$$
(cf.\  \cite[Expos\'e VI]{SGA2}, where $\mathbb{H}^i_Z(Y;\mathcal{J}^\bullet)$ is denoted $\Ext^i_Z(\Omega^1_Y, \scrO_Y)$). 
There is a spectral sequence with $E_2^{p,q} = H^p_Z(Y;\mathit{Ext}^q(\Omega^1_Y, \scrO_Y))$ converging to $\mathbb{H}^i_Z(Y;\mathcal{J}^\bullet)$, with $Ext^i(\Omega^1_Y, \scrO_Y) = T^i_Y$. Then $H^i_Z(T^0_Y)  = 0$ for $i=0,1$ by Lemma~\ref{1.0}, $H^i_Z(T^1_Y) = 0$ for $i>0$,  $T^2_Y =0$, and $d_2\colon H^0_Z(T^1_Y) \to H^2_Z(T^0_Y)$ is easily checked by excision to be the isomorphism of Lemma~\ref{2.1}. Hence $\mathbb{H}^1_Z(Y;\mathcal{J}^\bullet) = \mathbb{H}^2_Z(Y;\mathcal{J}^\bullet) =0$. Comparing the induced maps on the $E_2$ pages of the hypercohomology spectral sequences shows the commutativity of the diagram in the statement of Lemma~\ref{2.1global}. 
\end{proof} 

\begin{remark} It is easy to see that we can weaken the hypothesis of Lemma~\ref{2.1} and Lemma~\ref{2.1global} from   isolated lci  singularities to isolated  singularities of  depth $\geq 3$. 
\end{remark}

\begin{lemma}\label{bigCD} We have  the following commutative diagram with exact rows: 
\begin{equation}\label{CD4.3}  
\begin{CD}
{} @. H^1(T^0_Y) @>{=}>>  H^1(T^0_Y) @. @. @. \\
@. @VVV @VVV @. @. \\
H^1_E(\hY; T_{\hY})@>>> H^1(\hY; T_{\hY}) @>>> \mathbb{T}^1_Y @>>>H^2_E(\hY; T_{\hY}) @>>> H^2(\hY; T_{\hY})\\
@| @VVV @VVV @|  @VVV\\
H^1_E(\hY; T_{\hY})@>>> H^0_Z(R^1\pi_*T_{\hY}) @>>> H^0(T^1_Y) @>>>H^2_E(\hY; T_{\hY}) @>>> H^0(R^2\pi_*T_{\hY})\\
@VVV @| @VVV @VVV @|\\
H^1(\hY; T_{\hY})@>>> H^0(R^1\pi_* T_{\hY}) @>>> H^2(T^0_Y) @>>>H^2(\hY; T_{\hY}) @>>> H^0(R^2\pi_*T_{\hY})
\end{CD}
\end{equation}
\end{lemma}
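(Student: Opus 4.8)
The plan is to realize all four rows as the long exact and edge sequences attached to the single morphism $\pi\colon \hY \to Y$ and the sheaf $T_{\hY}$, and then to obtain every square from the functoriality of local cohomology and of the Leray spectral sequence. First I would record the sheaf-theoretic input: by equivariance $R^0\pi_*T_{\hY} = T^0_Y$, while for $q>0$ the sheaf $R^q\pi_*T_{\hY}$ is a torsion sheaf supported on the finite set $Z$, so that $H^p(Y; R^q\pi_*T_{\hY}) = H^p_Z(Y; R^q\pi_*T_{\hY})$ vanishes for $p>0$, $q>0$ and equals $H^0(Y; R^q\pi_*T_{\hY})$ when $p=0$. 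Together with the vanishing $H^1_Z(Y;T^0_Y)=0$ and the isomorphism $H^2_Z(Y;T^0_Y)\cong H^0(Y;T^1_Y)$ supplied by Lemma~\ref{2.1global}, these are the only facts needed to degenerate both spectral sequences below in the range that matters.

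Next I would identify the rows. The row through $\mathbb{T}^1_Y$ is the local cohomology long exact sequence of $T_{\hY}$ on $\hY$ with supports along $E$, after inserting the identification $H^1(V;T_{\hY}|V)=H^1(V;T_V)\cong \mathbb{T}^1_Y$ of Lemma~\ref{2.1global}. The third and fourth rows are, respectively, the exact sequences of low-degree terms of the local-cohomology Leray spectral sequence $H^p_Z(Y;R^q\pi_*T_{\hY})\Rightarrow H^{p+q}_E(\hY;T_{\hY})$ (coming from $R\Gamma_E = R\Gamma_Z\circ R\pi_*$, valid since $\pi^{-1}(Z)=E$ and $\pi$ is proper) and of the ordinary Leray spectral sequence $H^p(Y;R^q\pi_*T_{\hY})\Rightarrow H^{p+q}(\hY;T_{\hY})$. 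Because $E_2^{p,q}=0$ for $p,q>0$, in each case the only relevant differential is $d_2$ out of $E_2^{0,1}=H^0(R^1\pi_*T_{\hY})$, landing in $H^2_Z(T^0_Y)\cong H^0(T^1_Y)$ for the local sequence and in $H^2(Y;T^0_Y)$ for the global one; exactness at the interior terms then follows from the vanishing of the intermediate graded piece $E_\infty^{1,1}$. The top row and the two squares directly beneath it are exactly the content of Lemma~\ref{2.1global}, which already matches the deformation-theoretic $\mathrm{Ext}$ sequence with the local cohomology sequence of $T^0_Y$ on $Y$; in particular the maps $H^1(T^0_Y)\to H^1(\hY;T_{\hY})$ and $H^1(T^0_Y)\to\mathbb{T}^1_Y$ are the respective Leray edge inclusions.

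Finally, commutativity is a matter of naturality. The vertical maps between the third and fourth rows are induced by the forget-supports natural transformation $R\Gamma_Z\to \mathrm{id}$ (equivalently $R\Gamma_E\to \mathrm{id}$ upstairs), which is a morphism of the two Leray spectral sequences and therefore commutes with all edge maps and with $d_2$; this disposes of that entire block of squares. The squares relating the $\mathbb{T}^1_Y$-row to the local-cohomology Leray edge sequence amount to identifying $H^k_E(\hY;T_{\hY})$ with the abutment of $R\Gamma_Z\circ R\pi_*$ compatibly with the edge maps $H^1(\hY;T_{\hY})\to H^0(R^1\pi_*T_{\hY})$ and $H^2(\hY;T_{\hY})\to H^0(R^2\pi_*T_{\hY})$. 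I expect the one genuinely nonformal point to be the verification that the central vertical map $\mathbb{T}^1_Y\to H^0(T^1_Y)$ agrees both with the $d_2$ differential of the local-cohomology Leray spectral sequence and, under $H^2_Z(T^0_Y)\cong H^0(T^1_Y)$, with the connecting map of the deformation sequence that sends a global first-order deformation to its tuple of local first-order deformations; this is precisely the compatibility established by excision in Lemma~\ref{2.1global}, and checking that the restriction-then-connecting map of Row~1 factors as edge-then-$d_2$ through it is the main obstacle. Everything else reduces to the formal functoriality of these derived functors.
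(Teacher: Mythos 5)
Your proposal is correct and follows essentially the same route as the paper, which likewise obtains the middle row from the local cohomology sequence of $T_{\hY}$ along $E$ (via $\mathbb{T}^1_Y\cong H^1(V;T_V)$), the bottom two rows from the Leray spectral sequences in local and ordinary cohomology (degenerating because $R^q\pi_*T_{\hY}$ is punctually supported for $q>0$), and all commutativity from naturality together with the identifications of Lemma~\ref{2.1global}. The paper in fact records only this sketch, so your more detailed account of the edge maps and of the compatibility of the two $d_2$'s is, if anything, a slight elaboration of the intended argument.
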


Here the second row  is the local cohomology exact sequence, using Lemma~\ref{2.1global}.     
The bottom two rows come  from the Leray spectral sequences in local or ordinary cohomology, using $H^0_Z(R^1\pi_*T_{\hY}) = H^0(R^1\pi_*T_{\hY})$, $H^i_Z(R^1\pi_*T_{\hY}) = 0$ for $i>0$, $T^0_Y \cong R^0\pi_*T_{\hY}$ and $H^2_Z(Y; T^0_Y) \cong H^1(U; T^0_Y|U)  \cong  H^0(Y; T^1_Y)$.
Note  that the middle column is exact. The map $H^1(\hY; T_{\hY})  \to \mathbb{T}^1_Y$ is the map on tangent spaces of a corresponding morphism on functors $\mathbf{Def}_{\hY} \to \mathbf{Def}_Y$, which is the global analogue of the morphism of functors $\mathbf{Def}_{\hX} \to \mathbf{Def}_X$ discussed in Section 1.

In case $\pi\colon \hY \to Y$ is not necessarily crepant, $K_{\hY} =\pi^*\omega_Y\otimes \scrO_{\hY}(D)$, where $D$ is an effective divisor supported in $E$. 
Thus 
$$T_{\hY} \cong \Omega^{n-1}_{\hY}\otimes K_{\hY}^{-1} = \Omega^{n-1}_{\hY}\otimes \pi^*\omega_Y^{-1}\otimes\scrO_{\hY}(-D),$$
and hence 
$$\Omega^{n-1}_{\hY}\otimes \pi^*\omega_Y^{-1}  \cong T_{\hY}\otimes \scrO_{\hY}(D).$$
Then, by Lemma~\ref{imageH0},  
$$(R^0\pi_*\Omega^{n-1}_{\hY})\otimes \omega_Y^{-1} = R^0\pi_*(\Omega^{n-1}_{\hY}\otimes \pi^*\omega_Y^{-1}) =   R^0\pi_*(T_{\hY}\otimes \scrO_{\hY}(D)) = R^0\pi_*T_{\hY},$$
and we can replace $T_{\hY}$ by $\Omega^{n-1}_{\hY}\otimes \pi^*\omega_Y^{-1}$ throughout the  diagram (\ref{CD4.3}). Note that, for $i> 0$,
$$R^i\pi_*(\Omega^{n-1}_{\hY}\otimes \pi^*\omega_Y^{-1}) = (R^i\pi_*\Omega^{n-1}_{\hY})\otimes \omega_Y^{-1} \cong R^i\pi_*\Omega^{n-1}_{\hY},$$
after choosing a local trivialization of $\omega_Y$ near each $x\in Z$. 

In this case, we have the following:

\begin{lemma}\label{Fano1} The diagram
$$\begin{CD}
H^0(Y;R^1\pi_*(\Omega^{n-1}_{\hY}\otimes \pi^*\omega_Y^{-1})) \cong H^0(Y;R^1\pi_*\Omega^{n-1}_{\hY}) @>{d_2}>> H^2(Y;R^0\pi_*(\Omega^{n-1}_{\hY}\otimes \pi^*\omega_Y^{-1}))\\
@VVV @VV{\cong}V\\
H^0(Y;T^1_Y) @>>> H^2(Y;T^0_Y) 
\end{CD}$$
is commutative. The image of  the map $H^0(Y;R^1\pi_*(\Omega^{n-1}_{\hY}\otimes \pi^*\omega_Y^{-1})) \to H^0(Y;T^1_Y)$ is the same as the image of the natural map $H^0(Y;R^1\pi_*\Omega^{n-1}_{\hY}) \to H^0(Y;T^1_Y)$  given in Lemma~\ref{localCD}.  
\end{lemma}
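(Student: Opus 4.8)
The plan is to read the square in Lemma~\ref{Fano1} as a sub-square of the diagram~(\ref{CD4.3}) with $T_{\hY}$ replaced throughout by $\mathcal{F} := \Omega^{n-1}_{\hY}\otimes\pi^*\omega_Y^{-1}$, and then to reduce the assertion about images to the purely local computation of Lemma~\ref{localCD} and Theorem~\ref{maintheorem}(iii). First I would record the two facts established just before the lemma: $R^0\pi_*\mathcal{F} = T^0_Y$ canonically, and $R^i\pi_*\mathcal{F}\cong R^i\pi_*\Omega^{n-1}_{\hY}$ for $i>0$. The first of these makes the right-hand vertical map $H^2(Y;R^0\pi_*\mathcal{F})\to H^2(Y;T^0_Y)$ the identity, hence an isomorphism, which is the content of the label $\cong$.

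For the commutativity, the point is that the top horizontal arrow is the differential $d_2$ of the ordinary Leray spectral sequence $H^p(Y;R^q\pi_*\mathcal{F})\Rightarrow H^{p+q}(\hY;\mathcal{F})$, while the left vertical arrow is, via the Schlessinger isomorphism $H^0(Y;T^1_Y)\cong H^2_Z(Y;T^0_Y)$ of Lemma~\ref{2.1global}, the differential $d_2$ of the local-cohomology Leray spectral sequence $H^p_Z(Y;R^q\pi_*\mathcal{F})\Rightarrow H^{p+q}_E(\hY;\mathcal{F})$; here $H^0_Z(Y;R^1\pi_*\mathcal{F})=H^0(Y;R^1\pi_*\mathcal{F})$ because $R^1\pi_*\mathcal{F}$ is supported on $Z$. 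The bottom horizontal arrow is the localization map $H^2_Z(Y;T^0_Y)\to H^2(Y;T^0_Y)$. The natural transformation $\Gamma_Z\Rightarrow\Gamma$ induces a morphism between these two spectral sequences, and a morphism of spectral sequences commutes with all differentials; this is exactly the compatibility already invoked in the construction of~(\ref{CD4.3}) and, in the local setting, in the proof of Lemma~\ref{localCD}. Concretely, the square in question is the sub-square of the $\mathcal{F}$-version of~(\ref{CD4.3}) spanned by the column $H^0_Z(R^1\pi_*\mathcal{F})\to H^0(R^1\pi_*\mathcal{F})$ (an equality) and the column $H^0(T^1_Y)\to H^2(T^0_Y)$, so its commutativity is inherited with no further work.

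For the image statement I would reduce to a single Stein local model. Since $R^1\pi_*\mathcal{F}$ and $R^1\pi_*\Omega^{n-1}_{\hY}$ are supported on the finite set $Z$, both maps split as direct sums over $x\in Z$ of local maps $H^1(\hX;\mathcal{F})\to T^1_{Y,x}$ and $H^1(\hX;\Omega^{n-1}_{\hX})\to T^1_{Y,x}$, where $\pi\colon\hX\to X$ is the induced local resolution; by Lemma~\ref{localCD} the $d_2$-image agrees with the image of the natural restriction-to-$U$ map for any coherent sheaf, so it suffices to compare these natural maps for one $X$. On such an $X$ we have $\omega_X\cong\scrO_X$, so a nowhere-vanishing section $s$ of $\omega_X^{-1}$ gives an honest sheaf isomorphism $\otimes s\colon\Omega^{n-1}_{\hX}\xrightarrow{\ \sim\ }\mathcal{F}$ on $\hX$, compatible over $U=X-\{x\}$ with the canonical identification $\mathcal{F}|U\cong T_U$ (contraction). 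Hence the natural map $H^1(\hX;\Omega^{n-1}_{\hX})\to H^0(X;T^1_X)$ equals the natural map $H^1(\hX;\mathcal{F})\to H^0(X;T^1_X)$ composed with the isomorphism $\otimes s$, and since $\otimes s$ is surjective the two maps have the same image. Summing over $x\in Z$ gives the asserted equality in $H^0(Y;T^1_Y)$.

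The step I expect to demand the most care is the commutativity: one must be sure that, under the identification $H^0(Y;T^1_Y)\cong H^2_Z(Y;T^0_Y)$, the left vertical arrow is genuinely the local $d_2$ and that the localization transformation intertwines the two spectral sequences exactly (not up to a sign or an auxiliary edge map), so that the square closes on the nose. Once this is arranged precisely as in the construction of~(\ref{CD4.3}), everything is formal. By contrast the image statement is comparatively painless: the global nontriviality of $\omega_Y$ never intervenes, because the higher direct images live on $Z$ and $\omega$ is trivial on each Stein local model, making the comparison of images an exact statement rather than merely an up-to-unit one.
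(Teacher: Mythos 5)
Your proposal is correct and follows essentially the same route as the paper: the commutativity is obtained from the morphism of Leray spectral sequences induced by $\Gamma_Z\Rightarrow\Gamma$ together with the identification of Lemma~\ref{2.1global}, and the image statement is reduced via Lemma~\ref{localCD} to the local models, where a local trivialization of $\omega_Y$ near each singular point identifies $\Omega^{n-1}_{\hY}\otimes\pi^*\omega_Y^{-1}$ with $\Omega^{n-1}_{\hY}$. Your write-up is somewhat more explicit than the paper's (which simply cites the local case for the last assertion), but the content is the same.
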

\begin{proof} Comparing the Leray spectral sequences in local and ordinary cohomology, there is a commutative diagram
$$\begin{CD}
H^0_Z(Y; R^1\pi_*(\Omega^{n-1}_{\hY}\otimes \pi^*\omega_Y^{-1})) @>{=}>> H^0(Y; R^1\pi_*(\Omega^{n-1}_{\hY}\otimes \pi^*\omega_Y^{-1}))\\
@V{d_2}VV @VV{d_2}V \\
H^2_Z(Y; R^0\pi_*(\Omega^{n-1}_{\hY}\otimes \pi^*\omega_Y^{-1})) \cong H^2_Z(Y;T^0_Y) @>>> H^2(Y;R^0\pi_*(\Omega^{n-1}_{\hY}\otimes \pi^*\omega_Y^{-1})) \cong  H^2(Y;T^0_Y).
\end{CD}$$
By Lemma~\ref{2.1global}, $H^2_Z(Y;T^0_Y) \cong H^0(Y; T^1_Y)$ and the map $ H^2_Z(Y;T^0_Y) \to H^2(Y;T^0_Y)$ is identified with the map $H^0(Y; T^1_Y) \to H^2(Y;T^0_Y)$. Thus the diagram in the statement of Lemma~\ref{Fano1} is commutative. The   last statement follows from the local case (Lemma~\ref{localCD}).
\end{proof}

We turn now to the case of Fano varieties, and begin with a very general and easy unobstructedness theorem in the special  case of $1$-Du Bois singularities, which however will only be of limited use. Here, in case the singularities of $Y$ are not isolated, we use the definition of $1$-Du Bois described in Remark~\ref{altDBdef}. 
 
 \begin{theorem}\label{Fano1DB}  Suppose that $Y$ is a  projective variety   with   $1$-Du Bois local complete intersection singularities, not necessarily isolated, such that  $\omega^{-1}_Y$ is ample. Then $\mathbb{T}^i_Y =0$ for all $i\ge 2$ and in particular all deformations of $Y$ are unobstructed. Moreover, if the singularities of $Y$ are also isolated, then $H^i(Y; T^0_Y) = 0$ for $i\geq 3$. 
 \end{theorem}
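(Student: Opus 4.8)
The plan is to compute the higher cotangent cohomology $\mathbb{T}^i_Y=\Ext^i(\Omega^1_Y,\scrO_Y)$ by Serre duality and then to extract the statement about $H^i(Y;T^0_Y)$ from the local-to-global $\Ext$ spectral sequence. Since the singularities are lci, $Y$ is Gorenstein and in particular Cohen--Macaulay, with dualizing sheaf the line bundle $\omega_Y$; being projective, $Y$ satisfies Serre duality. Writing $\scrO_Y=\omega_Y\otimes\omega_Y^{-1}$ and absorbing $\omega_Y^{-1}$ into the first argument gives $\Ext^i(\Omega^1_Y,\scrO_Y)\cong\Ext^i(\Omega^1_Y\otimes\omega_Y,\omega_Y)$, so that
$$\mathbb{T}^i_Y\cong H^{n-i}(Y;\Omega^1_Y\otimes\omega_Y)^\vee.$$
Setting $L=\omega_Y^{-1}$, which is ample by hypothesis, the desired vanishing $\mathbb{T}^i_Y=0$ for all $i\ge 2$ is thus equivalent to the Nakano-type statement $H^j(Y;\Omega^1_Y\otimes L^{-1})=0$ for all $j\le n-2$.

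The crux is precisely this vanishing, and it is where the $1$-Du Bois hypothesis enters. By Remark~\ref{altDBdef}, $Y$ being $1$-Du Bois means exactly that the comparison map $\Omega^1_Y\to\underline{\Omega}^1_Y$ into the first graded piece of the Deligne--Du Bois complex is an isomorphism. One then invokes the Kodaira--Akizuki--Nakano vanishing for Du Bois complexes on a projective variety: for $L$ ample, $\mathbb{H}^j(Y;\underline{\Omega}^p_Y\otimes L^{-1})=0$ whenever $p+j<n$. Taking $p=1$ and using $\underline{\Omega}^1_Y\cong\Omega^1_Y$ yields $H^j(Y;\Omega^1_Y\otimes L^{-1})=0$ for $j\le n-2$, as required. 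Hence $\mathbb{T}^i_Y=0$ for $i\ge 2$; in particular the obstruction space $\mathbb{T}^2_Y$ vanishes and all deformations of $Y$ are unobstructed. I expect this vanishing to be the only substantial input; a self-contained proof can be given by pulling back to a good resolution $\pi\colon\hY\to Y$, identifying $\underline{\Omega}^1_Y$ with the appropriate pushforward of logarithmic differentials, and applying the global (projective) form of the logarithmic vanishing theorem behind Theorem~\ref{GNAPS} with the nef and big line bundle $\pi^*L$.

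For the final assertion, assume the singularities are isolated, so $Z=Y_{\text{sing}}$ is finite and the sheaf $T^1_Y$ is supported on $Z$; hence $H^p(Y;T^1_Y)=0$ for $p\ge 1$. In the local-to-global spectral sequence $E_2^{p,q}=H^p(Y;T^q_Y)\Rightarrow\mathbb{T}^{p+q}_Y$ the lci condition forces $T^q_Y=0$ for $q\ge 2$, leaving only the rows $q=0,1$ and the single differential $d_2\colon H^p(Y;T^1_Y)\to H^{p+2}(Y;T^0_Y)$. For $i\ge 3$ the differential entering $E_2^{i,0}$ originates in $H^{i-2}(Y;T^1_Y)=0$, and the only other contribution to the abutment, namely $E_\infty^{i-1,1}\subseteq H^{i-1}(Y;T^1_Y)$, also vanishes; therefore $\mathbb{T}^i_Y\cong H^i(Y;T^0_Y)$. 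Combined with $\mathbb{T}^i_Y=0$ this gives $H^i(Y;T^0_Y)=0$ for all $i\ge 3$. The restriction to $i\ge 3$ is essential: for $i=2$ the same bookkeeping only shows that $d_2\colon H^0(Y;T^1_Y)\to H^2(Y;T^0_Y)$ is surjective, consistent with $H^2(Y;T^0_Y)$ being nonzero in general.

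Finally, a point requiring care in the non-isolated case is that the deformation-theoretic $\mathbb{T}^i_Y$ should be computed from the cotangent complex $L_Y$ rather than from $\Omega^1_Y$: for a non-isolated lci singularity $\mathcal{H}^{-1}(L_Y)$ need not vanish, and correspondingly $\mathcal{E}xt^{\ge 2}(\Omega^1_Y,\scrO_Y)$ may be nonzero, whereas $T^q_Y=\mathcal{E}xt^q(L_Y,\scrO_Y)=0$ for $q\ge 2$ always holds for an lci. The safe way to run the argument is to replace $\Omega^1_Y$ by $L_Y$ throughout the Serre duality step -- $L_Y$ is a perfect complex, so $\mathbb{T}^i_Y\cong\mathbb{H}^{n-i}(Y;L_Y\otimes\omega_Y)^\vee$ -- and to deduce the vanishing of $\mathbb{H}^j(Y;L_Y\otimes L^{-1})$ for $j\le n-2$ from the $1$-Du Bois hypothesis via the comparison map $L_Y\to\underline{\Omega}^1_Y$. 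This reconciliation of $L_Y$ with $\underline{\Omega}^1_Y$, together with the Du Bois Nakano vanishing, is the main obstacle; everything else is formal homological algebra.
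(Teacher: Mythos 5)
Your argument is correct and is essentially the paper's proof: Serre duality reduces $\mathbb{T}^i_Y=\Ext^i(\Omega^1_Y,\scrO_Y)$ to $H^{n-i}(Y;\Omega^1_Y\otimes\omega_Y)$, the $1$-Du Bois hypothesis identifies $\Omega^1_Y$ with $\underline{\Omega}^1_Y$, the Guill\'en--Navarro Aznar--Pascual-Gainza--Puerta Akizuki--Nakano vanishing kills these groups for $i\ge 2$, and the local-to-global Ext spectral sequence with $T^q_Y=0$ for $q\ge 2$ and $T^1_Y$ supported on the finite singular locus gives $H^i(Y;T^0_Y)\cong\mathbb{T}^i_Y=0$ for $i\ge 3$. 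Your closing caution about $L_Y$ versus $\Omega^1_Y$ is moot here: a $1$-Du Bois variety is reduced, and for a reduced lci the conormal sheaf $I/I^2$ is locally free and injects into $\Omega^1_{\Cee^N}|_Y$ (injectivity holds on the dense smooth locus), so the cotangent complex is quasi-isomorphic to $\Omega^1_Y$ and $\mathcal{E}xt^{q}(\Omega^1_Y,\scrO_Y)=0$ for $q\ge 2$, exactly as the paper uses.
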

 \begin{proof} By e.g.\ \cite[Proposition 2.4.8]{Sernesi}, since $Y$ is reduced with  only local intersection singularities,  we have $\mathbb{T}^i_Y =\Ext^i(\Omega^1_Y, \scrO_Y)$, which is Serre dual to $H^{n-i}(Y; \Omega^1_Y\otimes \omega_Y)$. By the $1$-Du Bois assumption,
 $$H^{n-i}(Y; \Omega^1_Y\otimes \omega_Y) =\mathbb{H}^{n-i}(Y; \underline\Omega^1_Y\otimes \omega_Y),$$ 
 where $\underline\Omega^1_Y$ is first graded piece of the filtered  de Rham complex (Remark~\ref{altDBdef}). 
If $i\ge 2$ and hence $n-i+1< n$, then $\mathbb{H}^{n-i}(Y; \underline\Omega^1_Y\otimes \omega_Y) =0$ by the generalization of the Akizuki-Nakano vanishing theorem due to Guill\'en, Navarro Aznar, Pascual-Gainza and Puerta \cite[V Theorem (7.10)]{GNPP} as $\omega_Y$ is the dual of an ample line bundle and $Y$ has lci singularities. Thus $\mathbb{T}^i_Y = 0$ for all $i\ge 2$. 
 
 The final statement then follows from the Ext spectral sequence, since $T^k_Y =Ext^k(\Omega^1_Y, \scrO_Y)=0$ for all $k \ge 2$ and $H^k(Y; T^1_Y) =0$  for $k > 0$ since the singularities of $Y$ are isolated. Thus, for $i \ge 3$, $H^i(Y; T^0_Y) = \mathbb{T}^i_Y =0$. 
 \end{proof}

To get actual smoothing results, even to first order, we begin with a somewhat \emph{ad hoc} definition.
 
 \begin{definition} A \textsl{generalized Fano variety $Y$} is a projective variety $Y$ with isolated rational Gorenstein singularities such that  $\omega^{-1}_Y$ is ample.  
  \end{definition}

 \begin{lemma}\label{Fano0}   Let $Y$ be a generalized Fano variety of dimension $n$. 
 \begin{enumerate}
 \item[\rm(i)] For $i> 0$, $H^i(Y;\scrO_Y) = H^i(\hY; \scrO_{\hY}) = 0$.  More generally, $H^i(Y;\omega_Y^{-k}) =0$ for $i> 0$ and $k\ge 0$. 
  \item[\rm(ii)]    $H^0(\hY; \Omega^1_{\hY}) =H^0(\hY; \Omega^1_{\hY}(\log E)) =0$.
   \item[\rm(iii)] If $H$ is an effective Cartier divisor on $Y$ such that $\omega_Y = \scrO_Y(-H)$, then $H^i(H;\scrO_H(-kH)) =0$ for all $i< n-1$ and all $k> 0$, as well as for $0< i< n-1$ and $k=0$.  
   \end{enumerate}
 \end{lemma}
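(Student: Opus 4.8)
The plan is to deduce everything from the rationality of the singularities together with standard vanishing theorems, treating the three parts in the order (i), (iii), (ii): part (iii) is purely formal once (i) is available, and the only genuinely new input occurs in (ii).

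For (i) I would first use that $Y$ has rational (hence canonical Gorenstein) singularities, so that $\pi_*\scrO_{\hY}=\scrO_Y$ and $R^j\pi_*\scrO_{\hY}=0$ for $j>0$; the Leray spectral sequence then gives $H^i(\hY;\scrO_{\hY})\cong H^i(Y;\scrO_Y)$ for all $i$. For the vanishing, write $\omega_Y^{-k}=\omega_Y\otimes\omega_Y^{-(k+1)}$, where $\omega_Y^{-(k+1)}$ is ample for $k\ge 0$. Grauert--Riemenschneider vanishing gives $\pi_*\omega_{\hY}=\omega_Y$ and $R^j\pi_*\omega_{\hY}=0$ for $j>0$, so by the projection formula $H^i(Y;\omega_Y^{-k})\cong H^i(\hY;\omega_{\hY}\otimes\pi^*\omega_Y^{-(k+1)})$; since $\pi^*\omega_Y^{-(k+1)}$ is nef and big, Kawamata--Viehweg vanishing on the smooth projective $\hY$ kills this for $i>0$. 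Taking $k=0$ yields $H^i(Y;\scrO_Y)=0=H^i(\hY;\scrO_{\hY})$ for $i>0$, completing (i).

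For (iii), part (i) gives, via Serre duality on the Cohen--Macaulay variety $Y$, that $H^j(Y;\omega_Y^m)=0$ for all $j<n$ and all $m\ge 1$. Since $\scrO_Y(-H)\cong\omega_Y$, the restriction sequence $0\to\scrO_Y(-(k+1)H)\to\scrO_Y(-kH)\to\scrO_H(-kH)\to 0$ becomes $0\to\omega_Y^{k+1}\to\omega_Y^{k}\to\scrO_H(-kH)\to 0$. For $k>0$ both $\omega_Y^{k}$ and $\omega_Y^{k+1}$ have vanishing cohomology in every degree $<n$, so for $i<n-1$ the long exact sequence sandwiches $H^i(H;\scrO_H(-kH))$ between $H^i(Y;\omega_Y^k)=0$ and $H^{i+1}(Y;\omega_Y^{k+1})=0$ and hence forces it to vanish. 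For $k=0$ the sequence $0\to\omega_Y\to\scrO_Y\to\scrO_H\to 0$ gives the same sandwich for $0<i<n-1$, using $H^i(Y;\scrO_Y)=0$ (from (i)) and $H^{i+1}(Y;\omega_Y)=0$ (the $m=1$ case above).

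The essential point is (ii). The vanishing $H^0(\hY;\Omega^1_{\hY})=0$ is immediate from Hodge symmetry on the smooth projective $\hY$, since $h^{1,0}(\hY)=h^{0,1}(\hY)=\dim H^1(\hY;\scrO_{\hY})=0$ by (i). For the logarithmic forms I would use the residue sequence $0\to\Omega^1_{\hY}\to\Omega^1_{\hY}(\log E)\to\bigoplus_i\scrO_{E_i}\to 0$; taking global sections and using $H^0(\hY;\Omega^1_{\hY})=0$ identifies $H^0(\hY;\Omega^1_{\hY}(\log E))$ with the kernel of the connecting map $\bigoplus_i H^0(E_i;\scrO_{E_i})=\Cee^{\,r}\to H^1(\hY;\Omega^1_{\hY})$, which sends the generator over $E_i$ to its class $[E_i]$. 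Thus the claim reduces to the linear independence of $[E_1],\dots,[E_r]$ in $H^2(\hY;\Cee)$, and this is the step I expect to be the main obstacle. I would settle it with the negativity lemma \cite{KollarMori}: if $\sum_i c_i[E_i]=0$, then splitting into real and imaginary parts, the real $\pi$-exceptional divisors $\sum_i(\operatorname{Re}c_i)E_i$ and $\sum_i(\operatorname{Im}c_i)E_i$ pair to zero with every $\pi$-contracted curve, hence are $\pi$-numerically trivial; being $\pi$-exceptional, they must vanish, so all $c_i=0$. Apart from the input vanishing theorems in (i) and this appeal to negativity, the remaining arguments are routine long-exact-sequence bookkeeping.
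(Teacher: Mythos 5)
Your proof is correct and follows the paper's overall strategy: Kawamata--Viehweg/Grauert--Riemenschneider vanishing plus rationality for (i), the restriction sequence $0\to\omega_Y^{k+1}\to\omega_Y^{k}\to\scrO_H(-kH)\to 0$ for (iii), and Hodge symmetry plus the residue sequence for (ii), reducing the last part to the independence of the classes $[E_i]$ in $H^1(\hY;\Omega^1_{\hY})$. The one place you genuinely diverge is that final independence step: the paper argues by induction on $\dim Y$, cutting with general very ample divisors to reduce to the negative definiteness of the exceptional intersection form on a surface, whereas you apply the negativity lemma directly in dimension $n$ (a real exceptional divisor that is numerically trivial over $Y$ must vanish, applying the lemma to both $D$ and $-D$). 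Both are standard and correct; your version avoids the hyperplane-section induction and is arguably cleaner, at the cost of invoking a slightly heavier general lemma. Two minor remarks: the paper is careful to invoke Hodge symmetry for $\hY$ only as a Moishezon manifold (a resolution need not be assumed projective a priori, though one can always choose it so), and your derivation of $H^j(Y;\omega_Y^m)=0$ for $j<n$, $m\ge 1$ via Serre duality on the Gorenstein $Y$ from part (i) is an equivalent reformulation of the paper's direct appeal to Kawamata--Viehweg on $\hY$.
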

 \begin{proof} (i)  By the Kawamata-Viehweg vanishing theorem, $H^i(\hY;   \pi^*\omega_Y) = 0$ for $i< n$ since   $\pi^*\omega_Y^{-1}$   is a nef  and big line bundle. Thus $H^i(Y; \omega_Y) =0$ for $i< n$ by the assumption of rational singularities. By duality, $H^i(Y;\scrO_Y) = 0$ for $i> 0$, and hence $H^i(\hY; \scrO_{\hY}) = 0$, again because the singularities of $Y$ are rational. The vanishing of $H^i(Y;\omega_Y^{-k})$ is similar. 
 
 \smallskip
 \noindent (ii) By the Hodge symmetries on the Moishezon manifold $\hY$, $h^0(\hY; \Omega^1_{\hY}) = h^1(\hY; \scrO_{\hY}) = 0$. Taking Poincar\'e residue, we have an exact sequence
 $$0 = H^0(\hY; \Omega^1_{\hY}) \to H^0(\hY; \Omega^1_{\hY}(\log E))\to \bigoplus_iH^0(E_i; \scrO_{E_i}) \to H^1(\hY; \Omega^1_{\hY}).$$
 Next we claim that the classes $[E_i] \in H^2(\hY)$ are linearly independent. This follows from:
 
 \begin{lemma} Let $\hY$ be a smooth projective variety of dimension $n \ge 2$, and let $\pi\colon \hY \to Y$ be a morphism from $\hY$ to an analytic space $Y$ such that
 \begin{enumerate} 
  \item[\rm(i)] There exists a divisor $E =\bigcup _iE_i$, in $\hY$ , where $E_i$ are the irreducible components of $E$,  with $\pi(E) = Z$ a finite number of points; 
 \item[\rm(ii)]  $\pi|\hY -E \to Y-Z$ is an isomorphism. 
 \end{enumerate}  
 Then the classes $[E_i] \in H^2(\hY)$ are linearly independent. 
 \end{lemma}
 \begin{proof} The lemma is well-known if $n=2$ since the intersection matrix $(E_i\cdot E_j)$ is negative. Assume that the lemma has been proved for $\hY$ of dimension $n-1$, and let $\widehat{A} \subseteq \hY$ be a  general very ample divisor. Setting $A = \pi(\widehat{A})$ and $E_i' = E_i\cap \widehat{A}$, the morphism $\pi|\widehat{A}\colon \widehat{A} \to A$ and the divisor $E' = \bigcup _iE_i'$ satisfy the assumptions of the lemma. Hence the classes $[E_i'] \in H^2(\widehat{A})$ are linearly independent. Since $[E_i'] $ is the image of  $[E_i] \in H^2(\hY)$, the classes  $[E_i]$  must be  linearly independent in $H^2(\hY)$ as well. 
 \end{proof}

 Returning to the proof of (ii), the lemma implies that  $\bigoplus_iH^0(E_i; \scrO_{E_i}) \to H^1(\hY; \Omega^1_{\hY})$ is injective. Thus $H^0(\hY; \Omega^1_{\hY}(\log E)) =0$. 
 
  \smallskip
 \noindent (iii) We have the exact sequence
  $$0 \to \scrO_Y(-(k+1)H) \to \scrO_Y(-kH) \to  \scrO_H(-kH) \to 0.$$
  By  the Kawamata-Viehweg vanishing theorem as in (i) and the fact that $Y$ has rational singularities, we have  $H^i(Y; \scrO_Y(-kH)) = H^i(\hY; \pi^*\omega_Y^{\otimes k}) = 0$ for $i< n$ and $k> 0$. Thus $H^i(H;\scrO_H(-kH)) =0$ for   $i< n-1$ and   $k> 0$.  The case  $0< i< n-1$ and $k=0$ is similar.
 \end{proof}

  We can now state the main theorem of this section as follows.

\begin{theorem}\label{Fanofirst} Let $Y$ be a generalized Fano variety of dimension $n\geq 3$ such that   the singularities of $Y$ are local complete intersections.   
\begin{enumerate}
\item[\rm(i)] Suppose that the map $H^2(\hY; \Omega^{n-1}_{\hY}\otimes \pi^*\omega_Y^{-1})\to
 H^0(Y;R^2\pi_*(\Omega^{n-1}_{\hY}\otimes \pi^*\omega_Y^{-1}))$ is injective. Then  $\mathbb{T}^2_Y =0$, and  the induced homomorphism  
$$\mathbb{T}^1_Y \to H^0(Y; T^1_Y)/\im H^0(\hY; R^1\pi_*\Omega^{n-1}_{\hY}\otimes \pi^*\omega_Y^{-1})=K=\bigoplus_{x\in Z}K_x$$ is surjective.
\item[\rm(ii)] If $H^3(Y; T^0_Y) = 0$ and there exists a Cartier divisor  $H$ on $Y$ with $\omega_Y^{-1}=\scrO_Y(H)$ such that 
\begin{enumerate}
\item $H\cap Y_{\text{\rm{sing}}} = \emptyset$;
\item $H^{n-3}(H; \Omega^1_H) =0$,
\end{enumerate}    
then the map 
$H^2(\hY; \Omega^{n-1}_{\hY}\otimes \pi^*\omega_Y^{-1})\to
 H^0(R^2\pi_*(\Omega^{n-1}_{\hY}\otimes \pi^*\omega_Y^{-1}))$ is injective and hence the conclusions of {\rm(i)}  hold  for $Y$.  
 \item[\rm(iii)] If $H$ is as in {\rm{(ii)}}  and is   $1$-Du Bois in the sense of Remark~\ref{altDBdef}, then $H^{n-3}(H; \Omega^1_H) =0$ $\iff$ $H^{n-3}(\hY; \Omega^1_{\hY}(\log E)) =0$ 
 $\iff$ the map $\bigoplus _i H^{n-3}(E_i; \scrO_{E_i}) \to H^{n-2}(\hY; \Omega^1_{\hY})$ is injective and $\bigoplus _i H^{n-4}(E_i; \scrO_{E_i}) \to H^{n-3}(\hY; \Omega^1_{\hY})$ is surjective. 
\end{enumerate} 
\end{theorem}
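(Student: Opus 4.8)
The unifying tool is the long exact sequence obtained by applying $\pi_*$ to $\mathcal{T}:=\Omega^{n-1}_{\hY}\otimes\pi^*\omega_Y^{-1}$: since $R^q\pi_*\mathcal{T}$ is supported on the finite set $Z$ for $q>0$ and $R^0\pi_*\mathcal{T}\cong T^0_Y$, the Leray spectral sequence degenerates to an exact sequence $\cdots\to H^k(Y;T^0_Y)\xrightarrow{e_k} H^k(\hY;\mathcal{T})\xrightarrow{\Phi_k} H^0(Y;R^k\pi_*\mathcal{T})\to H^{k+1}(Y;T^0_Y)\to\cdots$, in which the map preceding $e_2$ is the differential $d_2\colon H^0(Y;R^1\pi_*\mathcal{T})\to H^2(Y;T^0_Y)$. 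For (i) I would first record, from the local-to-global Ext sequence of Lemma~\ref{2.1global} together with $T^2_Y=0$ (lci) and $H^{>0}(Y;T^1_Y)=0$ (isolated), that $\mathbb{T}^2_Y\cong\operatorname{coker}\{\delta\colon H^0(Y;T^1_Y)\to H^2(Y;T^0_Y)\}$ and that $\operatorname{Im}(\mathbb{T}^1_Y\to H^0(Y;T^1_Y))=\ker\delta$. The injectivity hypothesis on $\Phi=\Phi_2$ says exactly that $e_2=0$, equivalently that $d_2$ is surjective. By Lemma~\ref{Fano1}, $\delta$ restricted to $\operatorname{Im} H^0(R^1\pi_*\mathcal{T})$ equals $d_2$, so $\delta$ is surjective (hence $\mathbb{T}^2_Y=0$) and every class of $H^2(Y;T^0_Y)$ is $\delta$ of something in $\operatorname{Im} H^0(R^1\pi_*\mathcal{T})$. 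A one-line chase then gives $\ker\delta+\operatorname{Im} H^0(R^1\pi_*\mathcal{T})=H^0(Y;T^1_Y)$, so $\ker\delta$ surjects onto $K$, which is the claimed surjectivity of $\mathbb{T}^1_Y\to K$.

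For (ii) the task is to produce the injectivity of $\Phi$, i.e. $e_2=0$. Dualizing via Serre duality on $\hY$ (so that $H^2(\hY;\mathcal{T})^\vee\cong H^{n-2}(\hY;\Omega^1_{\hY}(-\hat H))$, where $\hat H=\pi^*H\cong H$ is disjoint from $E$ because $H\cap Z=\emptyset$) and local duality ($H^0(R^2\pi_*\mathcal{T})^\vee\cong H^{n-2}_E(\hY;\Omega^1_{\hY})$), the injectivity of $\Phi$ becomes the surjectivity of the natural map $H^{n-2}_E(\hY;\Omega^1_{\hY}(-\hat H))\to H^{n-2}(\hY;\Omega^1_{\hY}(-\hat H))$, equivalently the vanishing of the restriction $H^{n-2}(\hY;\Omega^1_{\hY}(-\hat H))\to H^{n-2}(\hY\setminus E;\Omega^1(-\hat H))$. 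Using the residue sequence $0\to\Omega^1_{\hY}(-\hat H)\to\Omega^1_{\hY}(\log E)(-\hat H)\to\bigoplus_i\scrO_{E_i}\to0$ together with the fact that the classes coming from the $\scrO_{E_i}$ restrict to zero off $E$, this reduces to the single vanishing $H^{n-2}(\hY;\Omega^1_{\hY}(\log E)(-\hat H))=0$. I would establish it from the restriction sequence along $\hat H$ and the conormal sequence $0\to\scrO_H(-H)\to\Omega^1_Y|_H\to\Omega^1_H\to0$, combined with the Fano vanishing $H^i(H;\scrO_H(-H))=0$ for $i<n-1$ of Lemma~\ref{Fano0}(iii) and the two hypotheses $H^{n-3}(H;\Omega^1_H)=0$ and $H^3(Y;T^0_Y)=0$; the latter, read off the degenerate Leray sequence above, also yields the complementary surjectivity of $\Phi$ and controls $H^{n-2}(\hY;\Omega^1_{\hY}(\log E))$, which is Serre dual to $K$.

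For (iii) the equivalence with the two maps out of $\bigoplus_i\scrO_{E_i}$ is immediate from the Poincar\'e-residue sequence $0\to\Omega^1_{\hY}\to\Omega^1_{\hY}(\log E)\to\bigoplus_i\scrO_{E_i}\to0$: its long exact sequence shows $H^{n-3}(\hY;\Omega^1_{\hY}(\log E))=0$ iff the connecting map $\bigoplus_i H^{n-4}(E_i;\scrO_{E_i})\to H^{n-3}(\hY;\Omega^1_{\hY})$ is surjective and $\bigoplus_i H^{n-3}(E_i;\scrO_{E_i})\to H^{n-2}(\hY;\Omega^1_{\hY})$ is injective. For the equivalence with $H^{n-3}(H;\Omega^1_H)=0$, I would produce a chain of isomorphisms $H^{n-3}(\hY;\Omega^1_{\hY}(\log E))\cong H^{n-3}(H;\Omega^1_Y|_H)\cong H^{n-3}(H;\Omega^1_H)$: the first from the restriction sequence $0\to\Omega^1_{\hY}(\log E)(-\hat H)\to\Omega^1_{\hY}(\log E)\to\Omega^1_{\hY}|_{\hat H}\to0$ once the outer groups $H^{n-3},H^{n-2}(\hY;\Omega^1_{\hY}(\log E)(-\hat H))$ are known to vanish, and the second from the conormal sequence and Lemma~\ref{Fano0}(iii) exactly as in (ii). Here the hypothesis that $H$ is $1$-Du Bois is what identifies $\Omega^1_H$ with $\underline\Omega^1_H$ (Remark~\ref{altDBdef}) and underwrites the vanishing of $H^{\bullet}(\hY;\Omega^1_{\hY}(\log E)(-\hat H))$ through a logarithmic/Du Bois vanishing theorem of GNPP type, as in the proof of Theorem~\ref{Fano1DB}.

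The main obstacle is exactly the vanishing $H^{n-2}(\hY;\Omega^1_{\hY}(\log E)(-\hat H))=0$ (and its degree-$(n-3)$ companion). When $Y$ is only rational, not higher Du Bois, one cannot simply identify $R\pi_*\Omega^1_{\hY}(\log E)$ with $\underline\Omega^1_Y$ and invoke GNPP to kill these groups for free; this is precisely why the hypotheses $H^{n-3}(H;\Omega^1_H)=0$ and $H^3(Y;T^0_Y)=0$ must be spent, and in (iii) why $H$ is required to be $1$-Du Bois. Carrying out this vanishing while keeping the Serre- and local-duality identifications compatible with the local-to-global maps, and bookkeeping the several residue, restriction, and conormal sequences, is where the real work lies.
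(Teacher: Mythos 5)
Your treatment of part (i) is correct and is essentially the paper's argument: the Leray sequence for $\mathcal{T}=\Omega^{n-1}_{\hY}\otimes\pi^*\omega_Y^{-1}$, the identification of $\mathbb{T}^2_Y$ with $\operatorname{Coker}\delta$ and of $\im(\mathbb{T}^1_Y\to H^0(T^1_Y))$ with $\Ker\delta$ via Lemma~\ref{2.1global}, Lemma~\ref{Fano1}, and the modification chase. The problem is in (ii) (and it propagates to (iii)). You correctly dualize the injectivity of $\Phi$ to the statement that every class of $H^{n-2}(\hY;\Omega^1_{\hY}(-\hat H))$ lies in the image of $H^{n-2}_E$, but you then reduce this to the vanishing $H^{n-2}(\hY;\Omega^1_{\hY}(\log E)(-\hat H))=0$, and this is where the argument breaks down. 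First, that vanishing is strictly stronger than what is needed: from the residue sequence it is equivalent to the surjectivity of $\partial\colon\bigoplus_iH^{n-3}(E_i;\scrO_{E_i})\to H^{n-2}(\hY;\Omega^1_{\hY}(-\hat H))$ (which is what you actually want) \emph{together with} the injectivity of $\bigoplus_iH^{n-2}(E_i;\scrO_{E_i})\to H^{n-1}(\hY;\Omega^1_{\hY}(-\hat H))$, and nothing in the hypotheses controls the latter. Second, the sketch you give for proving the vanishing does not work: the restriction sequence along $\hat H$ together with $H^{n-3}(H;\Omega^1_{\hY}|H)=0$ only yields an injection $H^{n-2}(\hY;\Omega^1_{\hY}(\log E)(-\hat H))\hookrightarrow H^{n-2}(\hY;\Omega^1_{\hY}(\log E))$, and the target is a group you cannot kill: its local-to-global spectral sequence receives $H^0(Y;R^{n-2}\pi_*\Omega^1_{\hY}(\log E))=\bigoplus_x H^{n-2}(\hX_x;\Omega^1_{\hX_x}(\log E_x))$, which is dual to $\bigoplus_xK_x$ and is \emph{nonzero} precisely in the $1$-irrational case that the theorem is designed for. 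No GNPP/Du Bois vanishing applies here: the $1$-Du Bois hypothesis in (iii) is on $H$, not on $Y$, and it is used in the paper only to identify $\Gr^1_FH^{n-2}(H)$ with $H^{n-3}(H;\Omega^1_H)$.

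What is actually missing is the paper's key step, which is a dimension count rather than a vanishing. One first uses $H^3(Y;T^0_Y)=0$ to get \emph{surjectivity} of $\Phi$, computes $\dim H^0(R^2\pi_*\mathcal{T})=\sum_ih^{0,n-3}(E_i)$ from Corollary~\ref{Hn-1E}, and then proves the inequality $\dim H^{n-2}(\hY;\Omega^1_{\hY}(-\hat H))\le\sum_ih^{0,n-3}(E_i)$ by a global topological/Hodge-theoretic argument: $Y-H$ is affine, so $H^k(Y-H)=0$ for $k\ge n+1$ by Goresky--MacPherson--Lefschetz; Mayer--Vietoris gives $H^k(\hY-H)\cong H^k(E)$ for $k\ge n+1$; duality for the pair gives $H^{n-1}(\hY,H)\cong H^{n+1}(E)\spcheck(-n)$, whose $\Gr^1_F$ is $\bigoplus_iH^{n-3}(E_i;\scrO_{E_i})$; and strictness of morphisms of mixed Hodge structures bounds the kernel of $H^{n-2}(\hY;\Omega^1_{\hY})\to\mathbb{H}^{n-2}(H;\underline\Omega^1_H)$, hence the kernel of restriction to $\Omega^1_{\hY}|H$. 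Surjectivity plus the dimension inequality forces $\Phi$ to be an isomorphism. This mixed-Hodge-theoretic input is the heart of (ii) and of the second equivalence in (iii) (where the same pair sequence in degrees $n-2$ and $n-1$ is compared with the Poincar\'e residue sequence), and it is absent from your proposal.
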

\begin{proof} Proof of (i):  By Lemma~\ref{imageH0}, 
 $$R^0\pi_*(\Omega^{n-1}_{\hY}\otimes \pi^*\omega_Y^{-1}) = R^0\pi_*(T_{\hY}\otimes \scrO_{\hY}(D)) =T^0_Y,$$
 since $\pi$ is equivariant.  The Leray spectral sequence with $E_2$ page
 $$E_2^{p,q} = H^p(Y; R^q\pi_*(\Omega^{n-1}_{\hY}\otimes \pi^*\omega_Y^{-1}))  \implies H^{p+q}(\hY; \Omega^{n-1}_{\hY}\otimes \pi^*\omega_Y^{-1})$$
 satisfies $E_2^{p,q} \neq 0$ only for $p=0$ or $q=0$. Thus there is a  long exact sequence:
\begin{gather*}
H^0(R^1\pi_*(\Omega^{n-1}_{\hY}\otimes \pi^*\omega_Y^{-1})) \to H^2(R^0\pi_*(\Omega^{n-1}_{\hY}\otimes \pi^*\omega_Y^{-1}))\to H^2(\hY; \Omega^{n-1}_{\hY}\otimes \pi^*\omega_Y^{-1})\to \\
\to H^0(R^2\pi_*(\Omega^{n-1}_{\hY}\otimes \pi^*\omega_Y^{-1})) \to H^3(R^0\pi_*(\Omega^{n-1}_{\hY}\otimes \pi^*\omega_Y^{-1})).
\end{gather*}
  By assumption, the map $H^2(\hY; \Omega^{n-1}_{\hY}\otimes \pi^*\omega_Y^{-1})\to
 H^0(Y;R^2\pi_*(\Omega^{n-1}_{\hY}\otimes \pi^*\omega_Y^{-1}))$ is injective.
Thus the map 
$$H^0(Y; R^1\pi_*(\Omega^{n-1}_{\hY}\otimes \pi^*\omega_Y^{-1})) \to H^2(Y;R^0\pi_*(\Omega^{n-1}_{\hY}\otimes \pi^*\omega_Y^{-1}))= H^2(Y;T^0_Y)$$ is surjective. By Lemma~\ref{Fano1}, the map $H^0(Y;T^1_Y) \to H^2(Y;T^0_Y)$ is also surjective. Hence $\mathbb{T}^2_Y =0$. Moreover, given an element $\xi$ of $H^0(Y;T^1_Y)$, we can modify $\xi$ by an element $\theta$ in the image of the map 
$$H^0(Y;R^1\pi_*(\Omega^{n-1}_{\hY}\otimes \pi^*\omega_Y^{-1}))\to H^0(Y;T^1_Y)$$ so that $\xi+ \theta$ maps to $0$ in $H^2(Y;T^0_Y)$, and hence is in the image of the map $\mathbb{T}^1_Y \to H^0(Y;T^1_Y)$. It follows that  $\mathbb{T}^1_Y \to H^0(Y;T^1_Y)/\im H^0(Y;R^1\pi_*(\Omega^{n-1}_{\hY}\otimes \pi^*\omega_Y^{-1}))$ is surjective. 

\smallskip
 \noindent Proof of (ii):   By  assumption,  $H^3(Y; T^0_Y) =0$, and so  $H^3(Y;R^0\pi_*(\Omega^{n-1}_{\hY}\otimes \pi^*\omega_Y^{-1}))= 0$ as well.  The  long exact sequence  in the proof of Part (i) shows that 
 the map $$H^2(\hY; \Omega^{n-1}_{\hY}\otimes \pi^*\omega_Y^{-1})\to
 H^0(Y;R^2\pi_*(\Omega^{n-1}_{\hY}\otimes \pi^*\omega_Y^{-1}))$$ 
 is surjective. To see that it is injective, it suffices to prove that 
 $$\dim H^2(\hY; \Omega^{n-1}_{\hY}\otimes \pi^*\omega_Y^{-1})\le 
\dim  H^0(Y;R^2\pi_*(\Omega^{n-1}_{\hY}\otimes \pi^*\omega_Y^{-1})).$$

We have  $R^2\pi_*(\Omega^{n-1}_{\hY}\otimes \pi^*\omega_Y^{-1}) \cong R^2\pi_* \Omega^{n-1}_{\hY}$. By Corollary~\ref{Hn-1E}, 
$$H^0(Y;R^2\pi_*(\Omega^{n-1}_{\hY}\otimes \pi^*\omega_Y^{-1}))  \cong \bigoplus_i H^2(E_i; \Omega^{n-1}_{E_i}).$$
Thus 
$$\dim H^0(Y;R^2\pi_*(\Omega^{n-1}_{\hY}\otimes \pi^*\omega_Y^{-1})) = \sum_ih^{n-1, 2}(E_i) = \sum_ih^{0,n-3}(E_i).$$
Hence, it will suffice to prove that $\dim H^2(\hY; \Omega^{n-1}_{\hY}\otimes \pi^*\omega_Y^{-1})\leq  \sum_ih^{0,n-3}(E_i)$.
   By duality, we must show that 
   $$\dim H^{n-2}(\hY; \Omega^1_{\hY}\otimes \pi^*\omega_Y)\leq \sum_ih^{0,n-3}(E_i).$$
   
  The assumption (a) that $H\cap Y_{\text{\rm{sing}}} = \emptyset$ means that we can identify $H$ with its preimage in $\hY$.   Using the conormal sequence
$$0 \to \scrO_H(-H) \to \Omega^1_{\hY} |H \to \Omega^1_H \to 0,$$
and the assumption (b) that $H^{n-3}(H; \Omega^1_H) = 0$,   $H^{n-3}(\hY; \Omega^1_{\hY} |H) =0$ and there is an exact sequence
$$0\to H^{n-2}(\hY; \Omega^1_{\hY} \otimes \scrO_{\hY}(-H)) \to H^{n-2}(\hY; \Omega^1_{\hY}) \to H^{n-2}(H; \Omega^1_{\hY} |H).$$
Thus $H^{n-2}(\hY; \Omega^1_{\hY}\otimes \pi^*\omega_Y) = H^{n-2}(\hY; \Omega^1_{\hY} \otimes \scrO_{\hY}(-H)) \cong \Ker\{H^{n-2}(\hY; \Omega^1_{\hY}) \to H^{n-2}(\hY; \Omega^1_{\hY} |H)\} $ and we must show that 
 $\dim \Ker\{H^{n-2}(\hY; \Omega^1_{\hY}) \to H^{n-2}(\hY; \Omega^1_{\hY} |H)\} \leq \sum_ih^{0,n-3}(E_i)$. 
 
 Note that $Y-H$ is affine as $H$ is an ample divisor. By the Goresky-MacPherson-Lefschetz theorem \cite{GoreskyMacPherson88}, $H^k(Y-H) =0$ for $k\ge n+1$. The exact sequence of the pair $(\hY-H, E)$ leads to an exact sequence of mixed Hodge structures (cf.\ \cite[Proposition 5.46]{PS})
 $$\cdots \to H^k(\hY-H, E) \to H^k(\hY - H) \to H^k(E) \to \cdots .$$
 Moreover $ H^k(\hY-H, E) \cong  H^k(Y-H, Z)$, where as usual $Z=Y_{\text{\rm{sing}}}$.  Since $Z$ is a finite set of points, $H^k(Y-H, Z) \cong H^k(Y-H)$ for $k > 1$. Thus, in this range, there is an exact sequence
$$\cdots \to H^k(Y-H) \to H^k(\hY - H) \to H^k(E) \to \cdots .$$
(One can also obtain this long exact sequence via the   Mayer-Vietoris sequence.) 
Hence, for $k \ge n+1$, there is an isomorphism of mixed Hodge structures $H^k(\hY - H) \cong H^k(E)$.  Likewise,  the exact sequence of the pair $(\hY, H)$ gives an exact sequence of mixed Hodge structures
$$\cdots \to H^{n-1}(\hY, H) \to H^{n-1}(\hY) \to H^{n-1}(H)\to \cdots $$
By strictness there is an exact sequence
$$\Gr^1_FH^{n-1}(\hY, H) \to \Gr^1_FH^{n-1}(\hY) \to \Gr^1_FH^{n-1}(H).$$
As $\Gr^1_FH^{n-1}(\hY) = H^{n-2}(\hY; \Omega^1_{\hY})$ and $\Gr^1_FH^{n-1}(H)= \mathbb{H}^{n-2}(H; \underline{\Omega}^1_H)$, where $\underline{\Omega}^1_H$ is the first graded piece of the filtered de Rham complex for $H$, this identifies the kernel of $H^{n-2}(\hY; \Omega^1_{\hY}) \to \mathbb{H}^{n-2}(H; \underline{\Omega}^1_H)$ with the image of $\Gr^1_FH^{n-1}(\hY, H)$. 
Since $\hY$ is smooth and compact, by \cite[Lemma B.21, Theorem B.24, \S5.5, \S6.3]{PS}, there are isomorphisms of mixed Hodge structures
$$H^{n-1}(\hY, H) \cong  H^{n-1}_c(\hY- H)\cong H_{n+1}(\hY-H)(-n) = H^{n+1}(\hY-H)\spcheck(-n) = H^{n+1}(E)\spcheck(-n).$$
Moreover $\Gr^1_FH^{n+1}(E)\spcheck(-n) = \Gr^{1-n}_FH^{n+1}(E)\spcheck = (\Gr^{n-1}_FH^{n+1}(E))\spcheck$. 
For the normal crossing divisor $E$, which has dimension $n-1$,
$$\Gr^{n-1}_FH^{n+1}(E) = H^2(E; \Omega^{n-1}_E/\tau^{n-1}_E) = \bigoplus_iH^2(E_i; \Omega^{n-1}_{E_i}).$$
Hence, by Serre duality on the $E_i$, 
$$(\Gr^{n-1}_FH^{n+1}(E))\spcheck\cong \left(\bigoplus_iH^2(E_i; \Omega^{n-1}_{E_i})\right)\spcheck \cong \bigoplus_iH^{n-3}(E_i; \scrO_{E_i}).$$
In particular, 
$$\dim \Ker\{H^{n-2}(\hY; \Omega^1_{\hY}) \to \mathbb{H}^{n-2}(H; \underline{\Omega}^1_H)\} \leq \sum_ih^{0,n-3}(E_i).$$
On the other hand, the homomorphism $H^{n-2}(\hY; \Omega^1_{\hY}) \to \mathbb{H}^{n-2}(H; \underline{\Omega}^1_H)$ factors as
$$H^{n-2}(\hY; \Omega^1_{\hY}) \to H^{n-2}(\hY; \Omega^1_{\hY} |H) \to H^{n-2}(H; \Omega^1_H) \to H^{n-2}(H; \Omega^1_{H_{\text{\rm{red}}}}) \to \mathbb{H}^{n-2}(H; \underline{\Omega}^1_H).$$
So finally
\begin{align*}
\dim \Ker\{H^{n-2}(\hY; \Omega^1_{\hY}) \to H^{n-2}(\hY; \Omega^1_{\hY} |H)\} &\leq \dim \Ker\{H^{n-2}(\hY; \Omega^1_{\hY}) \to \mathbb{H}^{n-2}(H; \underline{\Omega}^1_H)\}  \\
&\leq \sum_ih^{0,n-3}(E_i),
\end{align*}
 as claimed.
 
 Proof of (iii): Arguing as in (ii), we have an exact sequence of mixed Hodge structures
 $$\begin{CD}
 H^{n-2}(\hY, H) @>>>  H^{n-2}(\hY) @>>>  H^{n-2}(H)  @>>> H^{n-1}(\hY, H) @>>> H^{n-1}(\hY) \\
 @VV{\cong}V @. @.   @VV{\cong}V @.\\
 H^{n+2}(E)\spcheck(-n) @. @. @. H^{n+1}(E)\spcheck(-n)
 \end{CD}$$
 Thus there is a corresponding exact sequence
 $$\Gr^1_FH^{n+2}(E)\spcheck(-n) \to  \Gr^1_FH^{n-2}(\hY)  \to \Gr^1_FH^{n-2}(H) \to  \Gr^1_FH^{n+1}(E)\spcheck(-n)\to \Gr^1_FH^{n-1}(\hY).$$ 
 As before, $\Gr^1_FH^k(\hY)\cong H^{k-1}(\hY; \Omega^1_{\hY})$ and 
 $$\Gr^1_FH^k(E)\spcheck(-n) \cong \bigoplus_i H^{2n-k -2}(E_i; \scrO_{E_i}).$$ 
By the $1$-Du Bois assumption on $H$,  $\Gr^1_FH^{n-2}(H)= \mathbb{H}^{n-3}(H; \underline{\Omega}_H^1) = H^{n-3}(H; \Omega^1_H)$. Thus there is an exact sequence
$$\bigoplus_i H^{n-4}(E_i; \scrO_{E_i}) \to H^{n-3}(\hY; \Omega^1_{\hY})\to H^{n-3}(H; \Omega^1_H) \to \bigoplus_i H^{n-3}(E_i; \scrO_{E_i}) \to H^{n-2}(\hY; \Omega^1_{\hY}).$$
It is clear by construction that the maps $\bigoplus_i H^{k-1}(E_i; \scrO_{E_i}) \to H^k(\hY; \Omega^1_{\hY})$ are induced by the Gysin maps $H^{k-1}(E_i) \to H^{k+1}(\hY)$.  In particular, comparing the above exact sequence with the long exact Poincar\'e residue sequence  
$$
\bigoplus_i H^{n-4}(E_i; \scrO_{E_i}) \to  H^{n-3}(\hY; \Omega^1_{\hY})\to  H^{n-3}(\hY; \Omega^1_{\hY}(\log E)) \to \bigoplus_i H^{n-3}(E_i; \scrO_{E_i}) \to H^{n-2}(\hY; \Omega^1_{\hY}),
$$
we see that  $H^{n-3}(H; \Omega^1_H) =0$ $\iff$ the map $\bigoplus_i H^{n-4}(E_i; \scrO_{E_i}) \to  H^{n-3}(\hY; \Omega^1_{\hY})$ is surjective and the map $\bigoplus_i H^{n-3}(E_i; \scrO_{E_i}) \to H^{n-2}(\hY; \Omega^1_{\hY})$ is injective $\iff$ $H^{n-3}(\hY; \Omega^1_{\hY}(\log E)) =0$. 
\end{proof}

In dimension three, we have the following more precise result:

\begin{theorem}\label{Fanothree} Suppose that $Y$ is a generalized Fano threefold with local complete intersection singularities. Then $H^3(Y; T^0_Y) = 0$. Moreover, either there exists a smooth element $H\in |\omega_Y^{-1}|$, with $H^0(H; \Omega^1_H) =0$, or $Y$ is a complete intersection in a weighted projective space $\Pee(1,1,1,1,2,3)$ of degree $(2,6)$.
\end{theorem}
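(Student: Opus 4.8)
The plan is to dispatch the two assertions by quite different means: the cohomology vanishing is formal (Serre duality together with a Bogomolov-Sommese argument), whereas the dichotomy is a statement about anticanonical linear systems that ultimately rests on the geometry of Gorenstein canonical Fano threefolds.

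\emph{Proof that $H^3(Y;T^0_Y)=0$.} Since $Y$ is projective, Gorenstein and Cohen-Macaulay of dimension $3$, top-degree Serre duality gives
$$H^3(Y;T^0_Y)\spcheck\cong \Hom_{\scrO_Y}(T^0_Y,\omega_Y)=H^0\bigl(Y;(T^0_Y)\spcheck\otimes\omega_Y\bigr),$$
the last equality because $\omega_Y$ is invertible. Now $(T^0_Y)\spcheck=(\Omega^1_Y)^{\vee\vee}=\Omega^{[1]}_Y$ is the sheaf of reflexive $1$-forms, so a nonzero class would be a nonzero map $\omega_Y^{-1}\to \Omega^{[1]}_Y$; as $\Omega^{[1]}_Y$ is torsion free this map is injective, exhibiting $\omega_Y^{-1}$ as a rank-one reflexive subsheaf of $\Omega^{[1]}_Y$. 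Since $Y$ has canonical, hence klt, singularities and $\omega_Y^{-1}$ is ample, this contradicts the Bogomolov-Sommese vanishing theorem for reflexive forms on klt spaces (Greb-Kebekus-Kov\'acs-Peternell), which bounds the Iitaka dimension of such a subsheaf by $1<3=\kappa(\omega_Y^{-1})$. Hence $H^3(Y;T^0_Y)=0$.

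\emph{Reduction of the dichotomy.} Suppose $H\in|\omega_Y^{-1}|$ is smooth. As the singular locus $Z$ of $Y$ is finite, $H$ must lie in the smooth locus $Y_{\mathrm{reg}}$, so it is a smooth surface; adjunction gives $\omega_H=(\omega_Y\otimes\scrO_Y(H))|H=\scrO_H$, while the ideal sequence $0\to\omega_Y\to\scrO_Y\to\scrO_H\to 0$, together with $H^1(Y;\scrO_Y)=0$ (Lemma~\ref{Fano0}(i)) and its Serre dual $H^2(Y;\omega_Y)=0$, yields $H^1(H;\scrO_H)=0$. Thus $H$ is a $K3$ surface and in particular $H^0(H;\Omega^1_H)=0$, which is exactly the assertion wanted. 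It therefore suffices to prove that every generalized Fano threefold possesses a \emph{smooth} anticanonical member, with the stated single exception.

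\emph{Existence of a smooth anticanonical member.} If $|\omega_Y^{-1}|$ is base point free then, by Bertini in characteristic zero, its general member is smooth on $Y_{\mathrm{reg}}$, and being movable it avoids the finite set $Z$, hence is a smooth $K3$ by the previous paragraph. So the whole question reduces to whether $|\omega_Y^{-1}|$ is base point free. By the theory of anticanonical systems of canonical Gorenstein Fano threefolds --- Shokurov's smoothness theorem for the general anticanonical divisor, together with the classification of anticanonical base loci (Reid, Jahnke-Radloff) --- the system $|\omega_Y^{-1}|$ is base point free except for a single family, of minimal anticanonical degree $(-K_Y)^3=2$, namely the complete intersection of degrees $2$ and $6$ in $\Pee(1,1,1,1,2,3)$. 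In that family $-K_Y=\scrO_Y(1)$ is spanned by the four weight-one coordinates $x_0,\dots,x_3$, whose common zero locus meets $Y$ in one point of the singular stratum $\Pee(2,3)\subseteq\Pee(1,1,1,1,2,3)$, at which the degree-two equation degenerates and $Y$ acquires a canonical singularity; every anticanonical divisor passes through this singular point and is therefore singular, so no member is smooth. Hence $Y$ either has a smooth (necessarily $K3$) anticanonical member or is this complete intersection, as claimed.

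The decisive step, and the main obstacle, is the last one: producing a smooth---hence $K3$---anticanonical divisor amounts to controlling the base locus of $|\omega_Y^{-1}|$ for Gorenstein canonical Fano threefolds and verifying that the degree-$(2,6)$ complete intersection in $\Pee(1,1,1,1,2,3)$ is the only family in which all anticanonical divisors are forced to be singular. The preceding two steps are essentially formal once this classification is granted.
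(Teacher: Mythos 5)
Your argument is correct in substance, but the two halves relate to the paper's proof differently. For the vanishing $H^3(Y;T^0_Y)=0$ you take a genuinely different and much heavier route: after the same Serre-duality reduction to $\Hom(T^0_Y,\omega_Y)=H^0(Y;\Omega^{[1]}_Y\otimes\omega_Y)$, you interpret a nonzero section as an injection $\omega_Y^{-1}\hookrightarrow\Omega^{[1]}_Y$ and invoke the Bogomolov--Sommese vanishing for reflexive forms on klt spaces (Greb--Kebekus--Kov\'acs--Peternell). The paper instead first produces an effective $H$ with $\omega_Y=\scrO_Y(-H)$ by Riemann--Roch, and then kills $H^0(Y;\Omega^1_Y\otimes\scrO_Y(-H))$ by the elementary inclusion $\Omega^1_Y\otimes\scrO_Y(-H)\subseteq\Omega^1_Y$ together with $H^0(Y;\Omega^1_Y)=H^0(\hY;\Omega^1_{\hY})=0$ from Lemma~\ref{Fano0}. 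Your version buys independence from the effectivity of $-K_Y$ and from the reflexivity of $\Omega^1_Y$ itself (you only ever use the double dual), at the cost of citing a deep extension theorem; the paper's version is self-contained modulo Riemann--Roch and Lemma~\ref{reflexive}.

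For the dichotomy you follow essentially the same route as the paper (Shokurov--Reid smoothness of the general anticanonical member plus the Reid/Shin/Mella/Jahnke--Radloff analysis of the base locus), but two intermediate claims are misstated. First, the reduction ``the whole question reduces to whether $|\omega_Y^{-1}|$ is base point free'' is too strong, and the assertion that $|\omega_Y^{-1}|$ is base point free except for the single $(2,6)$ family is false: the cited classification has a third branch in which the base locus is a curve isomorphic to $\Pee^1$ and yet the general member is smooth. Your final conclusion survives only because that branch also produces a smooth member, so you should state the trichotomy (base point free; base locus $\cong\Pee^1$ with smooth general member; base locus a point with $(-K_Y)^3=2$ and $Y$ a $(2,6)$ complete intersection in $\Pee(1,1,1,1,2,3)$) rather than a bpf dichotomy. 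Second, your description of the exceptional family conflates the base point of $|\omega_Y^{-1}|$ with a singular point of $Y$ and asserts that no anticanonical member is smooth there; neither claim is needed (the theorem is an inclusive ``or'') and neither is justified as written --- what the classification gives is that the general member has a compound $A_1$ point at the base point, not that $Y$ is singular there. Neither issue affects the validity of the conclusion, but both should be corrected.
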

\begin{proof} 
By a well-known  Riemann-Roch calculation (cf.\ \cite[\S4.4]{Reid}),  
 $$h^0(Y;  \omega_Y^{-1}) =   -\frac{c_1(\omega_Y)^3}{2} + 3 > 0.$$ 
Hence $\omega_Y= \scrO_Y(-H)$ for some effective Cartier divisor $H$.

To show that that $H^3(Y; T_Y^0)  = 0$,  it suffixes by duality to show that $H^3(Y; T_Y^0)\spcheck = \Hom (T^0_Y, \omega_Y) = 0$. By Lemma~\ref{reflexive}, $\Omega^1_Y$ is reflexive and $\Omega^1_Y = \pi_*\Omega^1_{\hY}$.  Thus,  
 $$\Hom (T^0_Y, \omega_Y) = H^0(Y; (\Omega^1_Y)\spcheck{}\spcheck \otimes \omega_Y) = H^0(Y; \Omega^1_Y\otimes \scrO_Y(-H))$$ 
 for some effective Cartier divisor $H$.
 By  Lemma~\ref{Fano0},   
  $H^0(Y; \Omega^1_Y) =  H^0(Y; \pi_*\Omega^1_{\hY}) = H^0(\hY;  \Omega^1_{\hY}) =0$.  Using the inclusion $\Omega^1_Y\otimes \scrO_Y(-H) \subseteq \Omega^1_Y$ and the fact that $H^0(Y; \Omega^1_Y) =0$,   it follows that $H^0(Y; \Omega^1_Y\otimes \scrO_Y(-H)) =0$.  Hence   $\Hom (T^0_Y, \omega_Y)=0$ and therefore $H^3(Y; T_Y^0)\spcheck=0$  as well. 
  
If there exists a smooth    $H\in |\omega_Y^{-1}|$, then $H$  is a (smooth)  $K3$ surface  and hence $H^0(H; \Omega^1_H) =0$. In the general case, by a theorem of Shokurov and Reid \cite[Theorem 0.5]{Reid}, a general element of $|\omega_Y^{-1}|$ is a $K3$ surface, possibly with rational double points. Moreover, essentially by a remark due to Reid \cite[\S0.5]{Reid}, \cite{Shin}, \cite{Mella},  standard results on nef and big linear systems on $K3$ surfaces imply that one of  following holds:  (i)  The linear system $|\omega_Y^{-1}|$ is base point free; (ii) The base locus of $|\omega_Y^{-1}|$ is isomorphic to $\Pee^1$ and the general element of $|\omega_Y^{-1}|$ is smooth; (iii) The base locus of $|\omega_Y^{-1}|$ is a single point $y$, the general element $H$ of $|\omega_Y^{-1}|$ has an $A_1$ singularity at $y$, so that $Y$ has a compound $A_1$ singularity at $y$, and $H^3 =2$. In this case,   $Y$ is a complete intersection in a weighted projective space $\Pee(1,1,1,1,2,3)$ of degree $(2,6)$ \cite[Theorem 2.9]{Mella}, 
   \cite[Proposition 4.1]{JahnkeRadloff}.  Thus, if there does not exist a smooth    $H\in |\omega_Y^{-1}|$, then the second case of the last sentence in Theorem~\ref{Fanothree} holds. 
\end{proof} 

\begin{corollary}\label{Fanocorollary} Suppose that $Y$ is a generalized Fano variety with only isolated hypersurface singularities,  such that one of the following holds: 
\begin{enumerate}
\item[\rm(i)] $\dim Y = 3$; 
\item[\rm(ii)] The singularities of $Y$ are $1$-irrational, there exists an element $H\in |\omega_Y^{-1}|$ with $H\cap Y_{\text{\rm{sing}}} = \emptyset$, and   $H^3(Y; T^0_Y) =H^{n-3}(H; \Omega^1_H) =0$. 
\item[\rm(iii)] $\dim Y \geq 4$, the singularities of $Y$ are $1$-liminal, there exists an element $H\in |\omega_Y^{-1}|$ with $H\cap Y_{\text{\rm{sing}}} = \emptyset$, and   $H^{n-3}(H; \Omega^1_H) =0$.
\end{enumerate}
Then $Y$ is smoothable. Moreover, every small smoothing of $Y$ is a Fano variety.
\end{corollary}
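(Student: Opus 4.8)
The plan is to reduce all three cases to a single mechanism: first establish that the deformations of $Y$ are unobstructed and that the global-to-local map $\mathbb{T}^1_Y\to H^0(Y;T^1_Y)=\bigoplus_{x\in Y_{\text{sing}}}T^1_{Y,x}$ has image large enough to carry a simultaneous smoothing direction, and then integrate. Concretely, in each case I would verify the hypotheses of Theorem~\ref{Fanofirst}(i), so that $\mathbb{T}^2_Y=0$ (hence all deformations are unobstructed) and the composite $\mathbb{T}^1_Y\to K=\bigoplus_{x}K_x$ is surjective. In case (ii) these are literally the hypotheses of Theorem~\ref{Fanofirst}(ii). In case (iii), $1$-liminal singularities are in particular $1$-Du Bois, so Theorem~\ref{Fano1DB} already yields $\mathbb{T}^2_Y=0$ and $H^3(Y;T^0_Y)=0$; since $1$-liminal implies $1$-irrational and the divisor $H$ is provided, the remaining hypotheses are exactly those of (ii). In case (i), with $n=\dim Y=3$, Theorem~\ref{Fanothree} supplies $H^3(Y;T^0_Y)=0$ together with a smooth anticanonical member $H$ (a $K3$ surface, so that $H^{n-3}(H;\Omega^1_H)=H^0(H;\Omega^1_H)=0$), while every isolated rational hypersurface threefold singularity is $1$-irrational; after choosing $H$ disjoint from the finite set $Y_{\text{sing}}$, and treating the exceptional $(2,6)$ complete intersection in $\Pee(1,1,1,1,2,3)$ separately, we are again in the setting of Theorem~\ref{Fanofirst}(ii).

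\textbf{First order smoothing.} Next I would extract a simultaneous first order smoothing. Each $T^1_{Y,x}$ is a cyclic $\scrO_{Y,x}$-module, so $T^1_{Y,x}/\mathfrak{m}_xT^1_{Y,x}\cong\Cee$ is one-dimensional, the singular points being genuinely singular. Writing $K_x=T^1_{Y,x}/W_x$, where $W_x$ is the image of $H^0(Y;(R^1\pi_*\Omega^{n-1}_{\hY})_x)$, the $1$-irrationality assumption together with Theorem~\ref{maintheorem}(iii) and Lemma~\ref{com2}(i) shows $W_x\subseteq\mathfrak{m}_xT^1_{Y,x}$; hence the projection $K_x\twoheadrightarrow T^1_{Y,x}/\mathfrak{m}_xT^1_{Y,x}$ is a nonzero surjection. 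Using the surjectivity of $\mathbb{T}^1_Y\to\bigoplus_xK_x$ from Theorem~\ref{Fanofirst}(i), I would choose $\xi\in\mathbb{T}^1_Y$ whose image in each $T^1_{Y,x}/\mathfrak{m}_xT^1_{Y,x}$ is nonzero, i.e.\ a generator of the cyclic module $T^1_{Y,x}$. By Lemma~\ref{defsmooth}, $\xi$ restricts at each $x$ to a first order smoothing of the local singularity.

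\textbf{Integration and the Fano property.} Since $\mathbb{T}^2_Y=0$, the deformation space of $Y$ is smooth, so $\xi$ integrates to a one-parameter deformation $\mathcal{Y}\to\Delta$ with $\mathcal{Y}_0=Y$ and Kodaira-Spencer class $\xi$. Applying Lemma~\ref{defsmooth} to the induced local one-parameter deformation at each $x$, whose Kodaira-Spencer class is a generator of $T^1_{Y,x}$, shows that $\mathcal{Y}$ is smooth in a neighborhood of each singular point; away from the finite set $Y_{\text{sing}}$ the total space is already smooth and remains so on nearby fibers, so $\mathcal{Y}_t$ is smooth for $t\neq0$ small, proving that $Y$ is smoothable. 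For the final assertion, $\mathcal{Y}\to\Delta$ is a flat family of Gorenstein lci varieties, so $\omega_{\mathcal{Y}/\Delta}^{-1}$ is a line bundle restricting to the ample bundle $\omega_Y^{-1}$ on the central fiber; by openness of ampleness in proper flat families it is relatively ample near $0$, so $\omega_{Y_t}^{-1}$ is ample, and $Y_t$, being smooth, is a Fano variety.

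\textbf{Main obstacle.} The conceptual core, namely extracting a simultaneous smoothing from $1$-irrationality by exploiting the cyclicity of $T^1_{Y,x}$, is short once Theorem~\ref{maintheorem} and Theorem~\ref{Fanofirst} are in hand. I expect the genuine work to lie in the uniform verification of the cohomological hypotheses across the three cases, and especially in dimension three: arranging an anticanonical $K3$ section $H$ disjoint from $Y_{\text{sing}}$ when $|\omega_Y^{-1}|$ may carry base points, and disposing of the exceptional weighted complete intersection, for which a direct smoothing argument is required.
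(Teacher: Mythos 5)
Your proposal is correct and follows essentially the same route as the paper: reduce each case to the hypotheses of Theorem~\ref{Fanofirst} (using Theorem~\ref{Fanothree} in dimension $3$, Theorem~\ref{Fano1DB} for the $1$-liminal case, and a separate direct argument for the $(2,6)$ complete intersection in $\Pee(1,1,1,1,2,3)$), then use $1$-irrationality via Theorem~\ref{maintheorem}(iii)/Lemma~\ref{com2}(i) to see that the local image mod which $K_x$ is defined lies in $\mathfrak{m}_xT^1_{Y,x}$, so surjectivity of $\mathbb{T}^1_Y\to\bigoplus_xK_x$ produces a class that is a unit in every $T^1_{Y,x}$, which integrates by unobstructedness and smooths by Lemma~\ref{defsmooth}. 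The only cosmetic difference is that the paper phrases the first-order step as lifting the elements $1_x\in K_x$ rather than projecting to $T^1_{Y,x}/\mathfrak{m}_xT^1_{Y,x}$, which is the same argument.
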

\begin{proof} 
If $\dim Y =3$ or the singularities of $Y$ are $1$-liminal, then $H^3(Y; T^0_Y) = 0$ by Theorem~\ref{Fanothree} and Theorem~\ref{Fano1DB}.  In case $\dim Y =3$ and $Y$ is a complete intersection in $\Pee(1,1,1,1,2,3)$ of degree $(2,6)$, then $Y$ is clearly smoothable (and a general smoothing of $Y$ is a double cover of $\Pee^3$ branched along a smooth sextic). By Theorem~\ref{Fanofirst} and Theorem~\ref{Fano1DB}, in all remaining cases, $\mathbb{T}^2_Y =0$, i.e.\ the deformations of $Y$ are unobstructed and the map 
$$\mathbb{T}^1_Y \to H^0(Y; T^1_Y)/\im H^0(\hY; R^1\pi_*\Omega^{n-1}_{\hY}\otimes \pi^*\omega_Y^{-1})=K=\bigoplus_{x\in Z}K_x$$ is surjective. Moreover, in case $\dim Y =3$, the singularities of $Y$ are necessarily  $1$-irrational. 

 We now argue along the lines of Corollary~\ref{meaning}. For  every $x\in Z=Y_{\text{sing}}$, the image of 
$$H^0(Y; R^1\pi_*\Omega^{n-1}_{\hY}\otimes \pi^*\omega_Y^{-1})\to T^1_{Y,x}$$  is contained in $\im  H^0(Y;(R^1\pi_*\Omega^{n-1}_{\hY})_x)$.  Since the singularities of $Y$ are   $1$-irrational in all cases,  $\im  H^0(Y;(R^1\pi_*\Omega^{n-1}_{\hY})_x)\subseteq \mathfrak{m}_xT^1_{Y,x}$. 
For   $x\in Z$, we have the element $1_x\in K_x$ which is the image of $1\in T^1_{Y,x}$ (for some identification of $T^1_{Y,x}$ with a cyclic module $\scrO_{Y,x}/I$).
Given an element $\xi \in \mathbb{T}^1_Y$, let $\overline\xi\in H^0(Y; T^1_Y)$ be the image of $\xi$ and, for $x\in Z$,  let $\overline\xi_x$ be the corresponding component of  $\overline\xi$ in $T^1_{Y,x}$. Since $\mathbb{T}^1_Y \to \bigoplus_{x\in Z}K_x$ is surjective,  there exists a $\theta \in \mathbb{T}^1_Y$ whose image in $K_x$ is $1_x$ for every $x\in Z$. For every $x\in Z $, $\overline\theta_x \equiv 1 \bmod  \im H^0(Y;(R^1\pi_*\Omega^{n-1}_{\hY})_x)$, and hence $\overline\theta_x \equiv 1 \bmod \mathfrak{m}_xT^1_{Y,x}$. Thus $\theta$ is a first order smoothing of $Y$. Since the deformations of $Y$ are unobstructed, there exists a $1$-parameter deformation $\mathcal{Y}\to \Delta$ of $Y$ whose Kodaira-Spencer class is $\theta$.  By Lemma~\ref{defsmooth}, $\mathcal{Y}$ is smooth, and in particular is a smoothing of $Y$. The final statement is clear since $\omega_Y^{-1}$ remains ample for every small smoothing. 
\end{proof}

\begin{remark}\label{Fanoremark} (i) If $\dim Y =3$ and $Y$ has only ordinary double point singularities, then, by \cite[Theorem 4.2]{F} and \cite[Proposition 4]{NamikawaF}, $H^2(Y; T^0_Y) =0$ and hence the map $\mathbb{T}^1_Y \to H^0(Y; T^1_Y)$ is surjective. In particular, the deformations of $Y$ are versal for the deformations of the singular points.  On the other hand, Namikawa has constructed an example of a generalized Fano threefold $Y$ such that the map $\mathbb{T}^1_Y \to H^0(Y; T^1_Y)$ is not surjective \cite[Example 5]{NamikawaF}. Hence, our general strategy of working with the quotient $K$ of $H^0(Y; T^1_Y)$ seems to be required for this case.

\smallskip
\noindent (ii)  If $\dim Y =3$ and $Y$ is a $(2,6)$ complete intersection in $\Pee(1,1,1,1,2,3)$,   a direct argument shows that $\mathbb{T}^2_Y =0$:   By looking at the hyperplane sections, it is easy to check by hand that  $Y$ is contained in the smooth locus of $P=\Pee(1,1,1,1,2,3)$. Then $\mathbb{T}^2_Y =\mathbb{H}^2(\mathcal{C}^\bullet)$, where $\mathcal{C}^\bullet$ is the complex (in degrees $0$ and $1$)
$$T_P|Y \to N_{Y/P},$$
where $N_{Y/P}$ is the normal sheaf $(I_Y/I_Y^2)\spcheck = \scrO_Y(2) \oplus \scrO_Y(6)$. Thus there is an exact sequence 
$$H^1(Y; N_{Y/P}) \to \mathbb{T}^2_Y \to H^2(Y; T_P|Y).$$
The vanishing of $H^1(Y; N_{Y/P})$ and $ H^2(Y; T_P|Y)$ are standard, using the exact sequences 
\begin{align*}
0 \to T_P \otimes I_Y \to &T_P \to T_P|Y \to 0;
\\
0 \to \scrO_P(-8) \to  \scrO_P(-2) &\oplus \scrO_P(-6)\to I_Y \to 0.
\end{align*}
Hence $\mathbb{T}^2_Y =0$.

\smallskip
\noindent (iii) It is likely that the arguments of Theorem~\ref{Fanofirst} can be pushed to handle more general cases, without making the somewhat unnatural assumption that $H\cap Y_{\text{\rm{sing}}} = \emptyset$. Without making this assumption, let $H$ be a general element of  $|\omega_Y^{-1}|$ and let $\widehat{H} = \pi^*H$ be the total transform of $H$. In particular, $\widehat{H} \cong H$ $\iff$ $H\cap Y_{\text{\rm{sing}}} = \emptyset$. The general strategy would then be to compare the dimensions of $H^{n-3}(\widehat{H}; \Omega^1_{\widehat{H}})$ and $H^{n-3}(H; \Omega^1_H)$ and show that they differ by at most $\sum_i'h^{0, n-3}(E_i)$, where the ``$\sum'$" indicates they we only sum over the components $E_i$ of $E$ which map onto a point of $H\cap Y_{\text{\rm{sing}}}$.

\smallskip
\noindent (iv) Suppose that $\dim Y \ge 4$. Then Chmutov and Givental \cite{Varchenkobook}, \cite[Appendix to no.\ 75]{Hirzebruch} have constructed examples of projective hypersurfaces of degree $d$ in $\Pee^{n+1}$ with many ordinary double points. These lower bounds for the maximum number of nodes on a hypersurface  of degree $d$ in $\Pee^{n+1}$ are on the order of $Cd^{n+1}/\sqrt{n}$ (for $d$ fixed and $n\to \infty$).  It is easy to see that this procedure gives examples of generalized Fano varieties with only ordinary double point singularities for which the map $\mathbb{T}^1_Y \to H^0(Y; T^1_Y)$ is not surjective. For example, using sharp bounds for the maximum number of nodes on cubic hypersurfaces and strong lower bounds for the maximum number of nodes on quartic hypersurfaces due to Goryunov \cite{Goryunov},  such examples exist  for $\deg Y = 3$  and $\dim Y \ge 7$ (cubic hypersurfaces), as well as $\deg Y = 4$ and $\dim Y \ge 4$ (quartic hypersurfaces).

\smallskip
\noindent (v) Consider the hypersurface of degree $n+1$ in $\Pee^{n+1}$ defined by $z_0(\sum_{i=1}^{n+1}z_i^n) - (\sum_{i=1}^{n+1}z_i^{n+1})$. This hypersurface is a generalized Fano variety whose only singular point is at $x=(1, 0, \dots, 0)$ analytically isomorphic to the cone over the Fermat hypersurface of degree $n$ in $\Pee^n$. In particular, it is a rational but strongly $1$-irrational singularity. It is easy to check that, for $n \ge 4$,  the map $\mathbb{T}^1_Y \to H^0(Y; T^1_Y)$ is not surjective by looking at the positive weight space of the weighted homogeneous singularity at $x$. Cf.\ \cite[Example 5.5]{FL24} for related examples.
\end{remark}

\section{The Calabi-Yau case}\label{Section5}

\begin{definition}\label{defnCY} A \textsl{canonical Calabi-Yau variety $Y$} is a   compact analytic variety $Y$  with at worst canonical Gorenstein singularities, such that $\omega_Y\cong \scrO_Y$, and such that either $Y$ is a scheme or $Y$ has only isolated singularities and the $\partial\bar\partial$-lemma holds for some resolution of $Y$.
\end{definition}

\begin{remark}\label{ddbarremark} Assume that the singularities of $Y$ are isolated. Since we only consider resolutions of $Y$ which are an isomorphism away from the isolated singular points, and thus for which the exceptional divisors are Moishezon, it is easy to see that the $\partial\bar\partial$-lemma holds for one resolution of $Y$ $\iff$ it holds for every such. The reason that we need to assume the $\partial\bar\partial$-lemma in some form comes from Theorem~\ref{unob1} below as well as the need to know that various spectral sequences degenerate at $E_1$ (cf.\ the proof of Lemma~\ref{7.1}).  
\end{remark}

By \cite[Corollary 1.5]{FL22c}, we have:
 
\begin{theorem}\label{unob1} Let $Y$ be a canonical Calabi-Yau $n$-fold such that all singularities of $Y$ are $1$-Du Bois lci singularities, not necessarily isolated.  Then the deformations of $Y$ are unobstructed. \qed
\end{theorem}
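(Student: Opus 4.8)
The plan is to prove that the deformation functor $\mathbf{Def}_Y$ is smooth directly, since -- in contrast to the Fano case of Theorem~\ref{Fano1DB} -- the obstruction space $\mathbb{T}^2_Y$ need not vanish for a Calabi--Yau variety, so unobstructedness is a genuine statement about the functor rather than a consequence of an $\Ext$-vanishing. The natural tool is the $T^1$-lifting criterion of Ran and Kawamata. Writing $A_n = \Cee[t]/(t^{n+1})$, it suffices to show that for every deformation $Y_n$ of $Y$ over $A_n$ the relative first-order deformation module $T^1_{Y_n/A_n}$ is free over $A_n$: since $Y$ has lci singularities the relative cotangent complex $L_{Y_n/A_n}$ is perfect of amplitude $[-1,0]$, these $\Ext$-groups are well behaved, and freeness is equivalent to the numerical identity $\dim_\Cee T^1_{Y_n/A_n} = (n+1)\dim_\Cee \mathbb{T}^1_Y$. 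Freeness for all $n$ forces surjectivity of the lifting maps $T^1_{Y_{n+1}/A_{n+1}} \to T^1_{Y_n/A_n}$, which is exactly the hypothesis of the criterion, so the whole problem is converted into a dimension count in the family.

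First I would reduce the computation of $T^1$ to Hodge theory. By Grothendieck--Serre duality on the Gorenstein $n$-fold $Y$ with $\omega_Y \cong \scrO_Y$, together with the perfectness of $L_Y$, one has $\mathbb{T}^1_Y \cong \mathbb{H}^{n-1}(Y; L_Y)\spcheck$. Because $Y$ is reduced lci, the map $I/I^2 \to \Omega^1_{\mathrm{amb}}|_Y$ is injective, so $L_Y$ is quasi-isomorphic to $\Omega^1_Y$; the $1$-Du Bois hypothesis then enters through Remark~\ref{altDBdef}, which gives that the natural map $\Omega^1_Y \to \underline\Omega^1_Y$ is an isomorphism. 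Hence
$$\mathbb{H}^{n-1}(Y; L_Y) = H^{n-1}(Y; \Omega^1_Y) = \mathbb{H}^{n-1}(Y; \underline\Omega^1_Y) = \Gr^1_F H^n(Y),$$
so $\dim_\Cee \mathbb{T}^1_Y = \dim_\Cee \Gr^1_F H^n(Y)$. Carrying out the same duality relatively over $A_n$ expresses $T^1_{Y_n/A_n}$ as the $A_n$-dual of the relative Du Bois cohomology $\mathbb{H}^{n-1}(Y_n; \underline\Omega^1_{Y_n/A_n})$, so that $A_n$-freeness of $T^1_{Y_n/A_n}$ becomes equivalent to local freeness, compatible with base change to the closed point, of that relative Hodge module.

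The freeness over $A_n$ is where both halves of the hypothesis do the real work, and it is the main obstacle. I would deduce it from $E_1$-degeneration of the relative Hodge-to-de Rham spectral sequence of the filtered Du Bois complex $(\underline\Omega^\bullet_{Y_n/A_n}, F)$, which makes the Hodge filtration strict, forces the Hodge numbers of the fibres to be constant, and yields local freeness of $R^{q}f_{n*}\,\underline\Omega^p_{Y_n/A_n}$ over $A_n$ with base change (here $f_n\colon Y_n \to \Spec A_n$); feeding this into the duality of the previous paragraph gives the desired $\dim_\Cee T^1_{Y_n/A_n} = (n+1)\dim_\Cee \mathbb{T}^1_Y$. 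When $Y$ is a scheme the required degeneration is the algebraic Hodge--Du Bois degeneration for projective Du Bois spaces, bootstrapped to the Artinian family by flatness; when $Y$ has isolated singularities and lives only in the analytic category the exceptional divisors of a resolution are Moishezon rather than Kähler, and the $\partial\bar\partial$-lemma assumption of Definition~\ref{defnCY} is precisely what forces the spectral sequence to degenerate (compare Remark~\ref{ddbarremark}). The genuinely delicate points are constructing $\underline\Omega^\bullet_{Y_n/A_n}$ over the Artinian base, proving its relative $E_1$-degeneration, and handling the relative dualizing sheaf $\omega_{Y_n/A_n}$ (a line bundle restricting to $\scrO_Y$, but not a priori globally trivial, so an untwisting is needed before Hodge numbers can be compared); everything else is formal once this relative Du Bois input is in place. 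As an alternative I would keep in reserve the DGLA route: contracting the Kodaira--Spencer DGLA of $Y$ with a global section trivializing $\omega_Y$ transfers the bracket to the Du Bois forms, and the same $\partial\bar\partial$-degeneration shows this DGLA is homotopy abelian, giving unobstructedness à la Tian--Todorov.
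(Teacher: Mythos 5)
The paper does not actually prove this statement here: it is imported verbatim from \cite[Corollary 1.5]{FL22c}, and the strategy carried out in that reference is the one you describe --- the Ran--Kawamata $T^1$-lifting criterion, Serre duality on the Gorenstein lci $Y$ with $\omega_Y\cong\scrO_Y$ identifying $\mathbb{T}^1_Y$ with $H^{n-1}(Y;\Omega^1_Y)\spcheck$, the $1$-Du Bois isomorphism $\Omega^1_Y\cong\underline\Omega^1_Y$ converting this into $(\Gr_F^1H^n(Y))\spcheck$, and degeneration of the (relative) Du Bois--Hodge spectral sequence to force $A_n$-freeness of $T^1_{Y_n/A_n}$. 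So your proposal is essentially the same approach as the actual source of the theorem; just be aware that the step you explicitly defer (constructing $\underline\Omega^\bullet_{Y_n/A_n}$ over the Artinian base, its relative $E_1$-degeneration, and the base-change/untwisting of $\omega_{Y_n/A_n}$) is not a formality but is precisely the technical core of the cited proof.
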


From now on, we assume that $Y$ is a canonical Calabi-Yau variety of dimension $n\ge 3$ with isolated  singularities, such that $H^1(Y;\scrO_Y) =0$,  and let $Z$ be its singular locus.   Let $\pi\colon \hY \to Y$ be a good equivariant resolution, so that  $E =\pi^{-1}(Z)=\bigcup_{x\in Z}E_x$ is a (not necessarily connected) divisor with normal crossings.  Let $V = \hY -E= Y-Z$. For each $x\in Z$, let $X_x$ be a good Stein neighborhood of $x$. Define $\hX_x= \pi^{-1}(X_x)$, a   good equivariant resolution of $X_x$  with exceptional divisor $E_x$,
 and let $U_x  = X_x-\{x\}=\hX_x - E_x$.

 \begin{lemma}\label{Hodgezero} If $Y$ is a canonical Calabi-Yau variety of dimension $n$   and $H^1(Y;\scrO_Y) =0$, then    $H^1(\hY;\scrO_{\hY}) = H^{n-1}(\hY;\scrO_{\hY}) =0$, and hence $H^1(\hY;\Omega^n_{\hY}) = H^{n-1}(\hY;\Omega^n_{\hY}) =0$.
 \end{lemma}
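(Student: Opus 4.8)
The plan is to reduce all four vanishings to the two statements $H^1(\hY;\scrO_{\hY}) = H^{n-1}(\hY;\scrO_{\hY}) = 0$, transfer cohomology between $\hY$ and $Y$ using the rationality of the singularities, and then deduce the two $\Omega^n$-statements from the two $\scrO$-statements by Serre duality on $\hY$ (this is exactly the structure suggested by the word ``hence'' in the statement).

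First I would record that, since $Y$ has canonical Gorenstein and hence rational singularities, $R^i\pi_*\scrO_{\hY} = 0$ for $i > 0$, while $R^0\pi_*\scrO_{\hY} = \scrO_Y$ by normality. The Leray spectral sequence for $\pi$ then degenerates and yields canonical isomorphisms $H^i(\hY;\scrO_{\hY}) \cong H^i(Y;\scrO_Y)$ for every $i$. In particular $H^1(\hY;\scrO_{\hY}) \cong H^1(Y;\scrO_Y)$, which vanishes by hypothesis. For the middle-degree group I would invoke Serre duality on $Y$ itself: being Gorenstein, $Y$ is Cohen--Macaulay with dualizing sheaf $\omega_Y$, so the dualizing complex is $\omega_Y[n]$ and duality gives $H^{n-1}(Y;\scrO_Y)^{\vee} \cong \Ext^{1}(\scrO_Y,\omega_Y) = H^1(Y;\omega_Y)$. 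Since $\omega_Y \cong \scrO_Y$, this is again $H^1(Y;\scrO_Y) = 0$, whence $H^{n-1}(\hY;\scrO_{\hY}) \cong H^{n-1}(Y;\scrO_Y) = 0$.

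The two $\Omega^n$ statements then follow by Serre duality on the compact complex manifold $\hY$ (legitimate since $\hY$ is a proper smooth resolution of the compact $Y$, hence compact). Writing $\Omega^n_{\hY} = \omega_{\hY}$, duality gives $H^q(\hY;\Omega^n_{\hY})^{\vee} \cong H^{n-q}(\hY;\scrO_{\hY})$, so taking $q = 1$ and $q = n-1$ converts the two vanishings just established into $H^1(\hY;\Omega^n_{\hY}) = H^{n-1}(\hY;\Omega^n_{\hY}) = 0$.

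The one point requiring care -- and the step I would single out as the crux -- is the application of Serre duality to the \emph{singular} space $Y$ to force $H^{n-1}(Y;\scrO_Y) = 0$; this is where the Gorenstein (hence Cohen--Macaulay) hypothesis and the triviality of $\omega_Y$ are both essential, and in the analytic category it rests on the Ramis--Ruget--Verdier form of duality for compact complex spaces. An alternative that stays entirely on $\hY$, which I would fall back on if one prefers to avoid duality on a singular space, is to combine Grauert--Riemenschneider vanishing $R^i\pi_*\omega_{\hY} = 0$ for $i > 0$ with the equality $\pi_*\omega_{\hY} = \omega_Y \cong \scrO_Y$ characterizing rational Gorenstein singularities: then $H^1(\hY;\Omega^n_{\hY}) = H^1(\hY;\omega_{\hY}) \cong H^1(Y;\scrO_Y) = 0$ directly, and Serre duality on $\hY$ recovers $H^{n-1}(\hY;\scrO_{\hY}) = 0$ from $H^1(\hY;\omega_{\hY}) = 0$.
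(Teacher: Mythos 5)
Your proposal is correct and follows essentially the same route as the paper: Serre duality on the Gorenstein space $Y$ (using $\omega_Y\cong\scrO_Y$) to get $H^{n-1}(Y;\scrO_Y)=0$, the rationality of the singularities to identify $H^i(\hY;\scrO_{\hY})$ with $H^i(Y;\scrO_Y)$, and Serre duality on $\hY$ for the $\Omega^n$ statements. The Grauert--Riemenschneider alternative you sketch is a fine variant but is not needed.
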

\begin{proof} By Serre duality on $Y$, $H^{n-1}(Y;\scrO_Y) = H^{n-1}(Y;\omega_Y) =0$. Since $Y$ has rational singularities,  $H^1(\hY;\scrO_{\hY}) = H^{n-1}(\hY;\scrO_{\hY}) =0$. Then by Serre duality on $\hY$, $H^1(\hY;\Omega^n_{\hY}) = H^{n-1}(\hY;\Omega^n_{\hY}) =0$. 
\end{proof}

For the rest of this section, we assume that the singularities of $Y$ are isolated lci singularities. Recall from Lemma~\ref{bigCD} that we have a commutative diagram  with exact rows
\begin{equation}\label{bigCDeqn}
\begin{CD}
H^1_E(\hY; \Omega^{n-1}_{\hY}) @>>> H^0(R^1\pi_*\Omega^{n-1}_{\hY}) @>>> H^0(Y; T^1_Y) @>>> K @>>> 0 \\
@VVV @| @VVV @VVV @.\\
H^1(\hY; \Omega^{n-1}_{\hY}) @>>>  H^0(R^1\pi_* \Omega^{n-1}_{\hY}) @>>> H^2(Y;T^0_Y) @>>>H^2(\hY; \Omega^{n-1}_{\hY}) @.  {}
\end{CD} 
\end{equation}
Here $K = \Ker\{H^2_E(\hY; \Omega^{n-1}_{\hY}) \to H^0(R^2\pi_* \Omega^{n-1}_{\hY})\} = \bigoplus_{x\in Z}K_x$, where 
$$K_x =\Ker\{H^2_{E_x}(\hX_x; \Omega^{n-1}_{\hX_x}) \to H^2(\hX_x, \Omega^{n-1}_{\hX_x})\}$$
as defined in Theorem~\ref{maintheorem}(v). For each $x\in Z$, by  Theorem~\ref{maintheorem}(vi), there is an inclusion 
$$K'_x\oplus \Gr_F^{n-1}H^n(L_x) \hookrightarrow K_x,$$
 where $K'_x = \Ker\{H^2_{E_x}(\hX_x; \Omega^{n-1}_{\hX_x}) \to H^{n+1}_{E_x}(X_x)\}$. Let $K' =\bigoplus_{x\in Z}K'_x$. 

\begin{lemma}\label{7.1} For every $x\in Z$, the induced map $K'_x \to H^2(\hY; \Omega^{n-1}_{\hY})$ is zero.
\end{lemma}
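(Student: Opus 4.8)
The plan is to compare the map in question with the corresponding map into cohomology in degree $n+1$, where the local contribution from $K'_x$ vanishes essentially by definition. Concretely, I would first set up the commutative square
$$
\begin{CD}
H^2_E(\hY; \Omega^{n-1}_{\hY}) @>{c}>> H^2(\hY; \Omega^{n-1}_{\hY}) \\
@VVV @VVV \\
H^{n+1}_E(\hY;\Cee) @>>> H^{n+1}(\hY;\Cee),
\end{CD}
$$
in which the horizontal arrows are the natural ``forget supports'' maps and the vertical arrows are induced by the two de Rham spectral sequences $E_1^{p,q}=H^q_E(\hY;\Omega^p_{\hY})\implies H^{p+q}_E(\hY;\Cee)$ and $E_1^{p,q}=H^q(\hY;\Omega^p_{\hY})\implies H^{p+q}(\hY;\Cee)$, exactly as in the proof of Theorem~\ref{maintheorem}(vi) (whose square is the local counterpart of this one). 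The square commutes because the forget-supports morphism $R\Gamma_E\to R\Gamma$ is a morphism of filtered complexes for the Hodge filtration on $\Omega^\bullet_{\hY}$, hence is compatible with the associated spectral sequences. Under the excision decompositions $H^2_E(\hY;\Omega^{n-1}_{\hY})=\bigoplus_{x}H^2_{E_x}(\hX_x;\Omega^{n-1}_{\hX_x})$ and $H^{n+1}_E(\hY;\Cee)=\bigoplus_x H^{n+1}_{E_x}(\hX_x;\Cee)$, the left vertical map is the direct sum of the local maps of Theorem~\ref{maintheorem}(vi); and the map of the lemma is the restriction of $c$ to $K'_x\subseteq K_x\subseteq K\subseteq H^2_E(\hY;\Omega^{n-1}_{\hY})$, since the arrow $K\to H^2(\hY;\Omega^{n-1}_{\hY})$ in the right-hand column of (\ref{bigCDeqn}) is precisely this restriction.

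Granting the square, the conclusion is immediate. By definition $K'_x=\Ker\{H^2_{E_x}(\hX_x;\Omega^{n-1}_{\hX_x})\to H^{n+1}_{E_x}(\hX_x)\}$, so any $\alpha\in K'_x$ maps to $0$ in $H^{n+1}_{E_x}(\hX_x)$, hence to $0$ in $H^{n+1}_E(\hY;\Cee)$. By commutativity, $c(\alpha)$ then maps to $0$ under $H^2(\hY;\Omega^{n-1}_{\hY})\to H^{n+1}(\hY;\Cee)$. Thus everything reduces to showing that this last map is injective, after which $c(\alpha)=0$.

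For the injectivity I would invoke the $\partial\bar\partial$-lemma built into Definition~\ref{defnCY} (valid on $\hY$ by Remark~\ref{ddbarremark}): it forces the Frölicher spectral sequence of $\hY$ to degenerate at $E_1$, so that $H^2(\hY;\Omega^{n-1}_{\hY})\cong\Gr_F^{n-1}H^{n+1}(\hY;\Cee)$. Now $F^{n+1}H^{n+1}(\hY;\Cee)=H^0(\hY;\Omega^{n+1}_{\hY})=0$ and $\Gr_F^{n}H^{n+1}(\hY;\Cee)=H^1(\hY;\Omega^n_{\hY})=0$ by Lemma~\ref{Hodgezero} (this is where the hypothesis $H^1(Y;\scrO_Y)=0$ enters), so $F^{n}H^{n+1}(\hY;\Cee)=0$, and therefore $\Gr_F^{n-1}H^{n+1}(\hY;\Cee)=F^{n-1}H^{n+1}(\hY;\Cee)$ is an honest \emph{subspace} of $H^{n+1}(\hY;\Cee)$. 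Consequently $H^2(\hY;\Omega^{n-1}_{\hY})\to H^{n+1}(\hY;\Cee)$ is injective. This is the global analogue of the injectivity of $H^2(\hX;\Omega^{n-1}_{\hX})\to H^{n+1}(\hX)$ proved locally in Theorem~\ref{maintheorem}(vi), with the role of the local vanishing $H^{n-1}(E;\scrO_E)=0$ taken over by Lemma~\ref{Hodgezero}.

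The only delicate point, and the main obstacle, is the bookkeeping of the first paragraph: one must verify that the global map $H^2(\hY;\Omega^{n-1}_{\hY})\to H^{n+1}(\hY;\Cee)$ is genuinely defined — i.e.\ that $F^{n}H^{n+1}=0$, which is exactly what turns $\Gr_F^{n-1}$ into a subspace rather than a mere subquotient — and that this map is compatible, through forget-supports and excision, with the purely local maps of Theorem~\ref{maintheorem}(vi) defining $K'_x$. Once the square is seen to commute and the right vertical arrow is injective, the vanishing of $K'_x\to H^2(\hY;\Omega^{n-1}_{\hY})$ is a formal consequence.
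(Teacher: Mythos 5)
Your proposal is correct and follows essentially the same route as the paper: both reduce the claim to the commutative square relating the forget-supports maps on $H^2(\cdot;\Omega^{n-1})$ and on $H^{n+1}$, use that $K'_x$ dies in $H^{n+1}_{E_x}(\hX_x)$ by definition, and deduce the vanishing from the injectivity of $H^2(\hY;\Omega^{n-1}_{\hY})\to H^{n+1}(\hY)$, which rests on $H^1(\hY;\Omega^n_{\hY})=0$ (Lemma~\ref{Hodgezero}) together with the $E_1$-degeneration guaranteed by the $\partial\bar\partial$-lemma. Your write-up merely makes explicit the degeneration and filtration bookkeeping that the paper leaves implicit (and acknowledges in Remark~\ref{ddbarremark}).
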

\begin{proof} By Lemma~\ref{Hodgezero} and the running assumption that $H^1(Y;\scrO_Y) =0$, $H^1(\hY; \Omega^n_{\hY})   =0$. Hence there is an injection $H^2(\hY; \Omega^{n-1}_{\hY}) \to H^{n+1}(\hY)$. If 
$$H^2_{E_x}(\hX_x; \Omega^{n-1}_{\hX_x}) \cong H^2_{E_x}(\hY; \Omega^{n-1}_{\hY}) \to H^2(\hY; \Omega^{n-1}_{\hY}) \to H^{n+1}(\hY)$$
is the composition, then we have  a commutative diagram
$$\begin{CD}
H^2_{E_x}(\hX_x; \Omega^{n-1}_{\hX_x}) @>>> H^{n+1}_{E_x}(\hX_x) =  H^{n+1}_{E_x}(\hY)\\
@VVV @VVV \\
H^2(\hY; \Omega^{n-1}_{\hY}) @>>> H^{n+1}(\hY).
\end{CD}$$
Then $K_x'$, which is the kernel of the upper horizontal arrow, must map to $0$ in $H^2(\hY; \Omega^{n-1}_{\hY})$ since $H^2(\hY; \Omega^{n-1}_{\hY}) \to H^{n+1}(\hY)$ is injective.
\end{proof}  

For each $x\in Z$, we also have the homomorphism $\Gr_F^{n-1}H^n(L_x) \to H^2(\hY; \Omega^{n-1}_{\hY})$ arising from  the composition   $H^n(L_x) \to H^{n+1}_E(\hY) \to H^{n+1}(\hY)$.  By Equations (\ref{1.11}) and (\ref{1.2}), this homomorphism is given by the composition
$$H^1(\hY; \Omega^{n-1}_{\hY}(\log E)|E) \to H^1(\hY; \Omega^{n-1}_{\hY}(\log E)/\Omega^{n-1}_{\hY}) \xrightarrow{\partial}  H^2(\hY; \Omega^{n-1}_{\hY}).$$
Let $T$ be the kernel of the homomorphism
$$\bigoplus_{x\in Z}\Gr_F^{n-1}H^n(L_x) \to H^2(\hY; \Omega^{n-1}_{\hY}).$$
Thus $K'\oplus T \subseteq K$ and the induced homomorphism $K'\oplus T \to H^2(\hY; \Omega^{n-1}_{\hY})$ is zero.

 \begin{lemma}\label{liftable} Let $K_0$ be the kernel of the map $K  \to H^2(\hY; \Omega^{n-1}_{\hY})$.
 \begin{enumerate}
 \item[\rm(i)] $K_0$ contains $K'\oplus T$.
 \item[\rm(ii)] $K_0 = \Ker\{\bigoplus_{x\in Z}H^2_{E_x}(\hX_x; \Omega^{n-1}_{\hX_x}) \to H^2(\hY; \Omega^{n-1}_{\hY})\}$.
  \item[\rm(iii)] The image of the kernel of $H^0(Y;T^1_Y) \to H^2(Y; T^0_Y)$ in $K$ contains $K_0$. Hence, the image of the composition $\mathbb{T}^1_Y \to H^0(Y;T^1_Y) \to K$ contains $K_0$.
 \end{enumerate}
\end{lemma}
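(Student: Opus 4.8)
The plan is to dispatch the three parts in order; parts (i) and (iii) are diagram chases in (\ref{bigCDeqn}) together with the material assembled immediately before the lemma, while part (ii) is the one genuine local-to-global computation. Part (i) is largely a matter of collecting what is already in hand. By Lemma~\ref{7.1} each $K'_x$, and hence $K'=\bigoplus_{x\in Z}K'_x$, maps to zero in $H^2(\hY;\Omega^{n-1}_{\hY})$; and $T$ is by construction the kernel of $\bigoplus_{x\in Z}\Gr_F^{n-1}H^n(L_x)\to H^2(\hY;\Omega^{n-1}_{\hY})$, so $T$ lies in $K_0$ as well. That $K'\oplus T$ is an internal direct sum inside $K$ is guaranteed summand by summand by the injection $K'_x\oplus \Gr_F^{n-1}H^n(L_x)\hookrightarrow K_x$ of Theorem~\ref{maintheorem}(vi). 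Hence $K'\oplus T\subseteq K_0$.

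For part (ii), the key point is to identify the ambient map correctly. Because the singularities are isolated, the exceptional divisors $E_x$ are pairwise disjoint, so local cohomology with supports splits as a direct sum, and by excision $H^2_{E_x}(\hX_x;\Omega^{n-1}_{\hX_x})\cong H^2_{E_x}(\hY;\Omega^{n-1}_{\hY})$; therefore $\bigoplus_{x}H^2_{E_x}(\hX_x;\Omega^{n-1}_{\hX_x})\cong H^2_E(\hY;\Omega^{n-1}_{\hY})$, and under this identification the map to $H^2(\hY;\Omega^{n-1}_{\hY})$ is the natural edge map of the local cohomology sequence of $\Omega^{n-1}_{\hY}$ on $\hY$ with supports along $E$. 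Its kernel is thus the image of $H^1(V;\Omega^{n-1}_{\hY}|V)$. I would then take a class in this kernel, lift it to $H^1(V;\Omega^{n-1}_{\hY}|V)$, and restrict it to each punctured neighborhood $U_x$; the compatibility, under restriction, of the global local-cohomology sequence with the local ones shows that the $x$-component lands in the image of $H^1(U_x;\Omega^{n-1}_{\hX_x}|U_x)\to H^2_{E_x}(\hX_x;\Omega^{n-1}_{\hX_x})$, which is exactly $K_x$. Hence the whole kernel lies in $K=\bigoplus_x K_x$, and since $K_0$ is by definition the intersection of $K$ with this kernel, the two coincide.

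For part (iii) I would run a chase in (\ref{bigCDeqn}). Given $\kappa\in K_0$, surjectivity of $H^0(Y;T^1_Y)\to K$ (exactness of the top row) gives a lift $\xi$; commutativity of the right-hand square together with $\kappa\in K_0$ forces the image $\bar\xi$ of $\xi$ in $H^2(Y;T^0_Y)$ to die in $H^2(\hY;\Omega^{n-1}_{\hY})$, so by exactness of the bottom row $\bar\xi$ is the image of some $\eta\in H^0(R^1\pi_*\Omega^{n-1}_{\hY})$. Let $\tilde\eta$ be the image of $\eta$ under the top horizontal map into $H^0(Y;T^1_Y)$, and set $\xi'=\xi-\tilde\eta$. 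Then $\xi'$ still maps to $\kappa$ in $K$ (as $\tilde\eta$ dies in $K$ by exactness of the top row) and now maps to $0$ in $H^2(Y;T^0_Y)$ (by commutativity of the left-middle square, which equates the two images of $\eta$). Thus $\kappa$ is the image of $\xi'\in\Ker\{H^0(Y;T^1_Y)\to H^2(Y;T^0_Y)\}$. The concluding assertion is then immediate: by Lemma~\ref{2.1global} the image of $\mathbb{T}^1_Y$ in $H^0(Y;T^1_Y)$ is precisely this kernel, so the image of $\mathbb{T}^1_Y\to H^0(Y;T^1_Y)\to K$ contains $K_0$.

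The step I expect to demand the most care is the restriction argument in part (ii): one must verify that restricting a global class in $H^1(V;\Omega^{n-1}_{\hY}|V)$ to the punctured Stein neighborhoods $U_x$ is compatible with the connecting maps into local cohomology, so that its $x$-component genuinely lies in $K_x$ and not merely in $H^2_{E_x}(\hX_x;\Omega^{n-1}_{\hX_x})$. Everything else is formal homological algebra once the maps in (\ref{bigCDeqn}) and the excision identification $H^2_E(\hY;\Omega^{n-1}_{\hY})\cong\bigoplus_x H^2_{E_x}(\hX_x;\Omega^{n-1}_{\hX_x})$ are in place.
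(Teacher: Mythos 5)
Your proof is correct, and parts (i) and (iii) follow the paper's own route: (i) is exactly the observation that $K'$ and $T$ die in $H^2(\hY;\Omega^{n-1}_{\hY})$ by Lemma~\ref{7.1} and the definition of $T$, and (iii) is precisely the diagram chase in (\ref{bigCDeqn}) that the paper cites as one of its two options (the paper also offers an alternative chase through the diagram relating $\mathbb{T}^1_Y=H^1(V;\Omega^{n-1}_V)$ to $\bigoplus_x H^2_{E_x}(\hX_x;\Omega^{n-1}_{\hX_x})$). The one place you diverge is part (ii), where you prove the inclusion $\Ker\{\bigoplus_x H^2_{E_x}(\hX_x;\Omega^{n-1}_{\hX_x})\to H^2(\hY;\Omega^{n-1}_{\hY})\}\subseteq K$ by lifting a kernel class to $H^1(V;\Omega^{n-1}_{\hY}|V)$ and restricting to each $U_x$; this works, and the compatibility of connecting maps under excision that you flag as the delicate step is indeed the point that needs checking. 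The paper gets the same inclusion in one line by a cleaner observation: the map $H^2_{E_x}(\hX_x;\Omega^{n-1}_{\hX_x})\to H^2(\hX_x;\Omega^{n-1}_{\hX_x})$ factors as $H^2_{E_x}(\hY;\Omega^{n-1}_{\hY})\to H^2(\hY;\Omega^{n-1}_{\hY})\to H^2(\hX_x;\Omega^{n-1}_{\hX_x})$, so anything killed by the middle group is automatically killed in $\bigoplus_x H^2(\hX_x;\Omega^{n-1}_{\hX_x})$ and hence lies in $K$. That factorization avoids any lifting and makes the restriction-compatibility issue moot; your version buys nothing extra but is not wrong. (A small stylistic point in (i): $T$ is not a direct sum of subspaces $T_x\subseteq\Gr_F^{n-1}H^n(L_x)$, so ``summand by summand'' is not quite the right justification for the directness of $K'\oplus T$; what you actually need is $K'\cap\bigoplus_x\Gr_F^{n-1}H^n(L_x)=0$ inside $K=\bigoplus_x K_x$, which does follow componentwise from Theorem~\ref{maintheorem}(vi).)
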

\begin{proof} (i) is clear from Lemma~\ref{7.1} and the above remarks. For (ii), by excision, $H^2_{E_x}(\hX_x; \Omega^{n-1}_{\hX_x}) =  H^2_{E_x}(\hY; \Omega^{n-1}_{\hY})$, and  the map $H^2_{E_x}(\hX_x; \Omega^{n-1}_{\hX_x}) \to H^2(\hX_x; \Omega^{n-1}_{\hX_x})$ factors through the natural map
 $$H^2_{E_x}(\hX_x; \Omega^{n-1}_{\hX_x}) = H^2_{E_x}(\hY; \Omega^{n-1}_{\hY}) \to   H^2(\hY; \Omega^{n-1}_{\hY})\to H^2(\hX_x; \Omega^{n-1}_{\hX_x}).$$
Hence the kernel of $\bigoplus_{x\in Z}H^2_{E_x}(\hX_x; \Omega^{n-1}_{\hX_x}) \to H^2(\hY; \Omega^{n-1}_{\hY})$ is contained in $K$ and   by definition is equal to $K_0$.

 Then  (iii) follows from a diagram chase involving  (\ref{bigCDeqn}). Alternatively, to see the last statement of (iii), we have  the commutative diagram
$$\begin{CD}
\mathbb{T}^1_Y = H^1(V, \Omega^{n-1}_V) @>>> H^2_E(\hY; \Omega^{n-1}_{\hY}) @>>> H^2(\hY; \Omega^{n-1}_{\hY})\\
@VVV @| @VVV \\
H^0(T^1_Y) @>>> \bigoplus_{x\in Z}H^2_{E_x}(\hX_x; \Omega^{n-1}_{\hX_x}) @>>> \bigoplus_{x\in Z}H^2(\hX_x; \Omega^{n-1}_{\hX_x})
\end{CD}$$
using excision to get the middle equality. Then by definition $K_0 = \Ker \Big\{K  \to H^2(\hY; \Omega^{n-1}_{\hY})\Big\}$ is contained in $\Ker \Big\{H^2_E(\hY; \Omega^{n-1}_{\hY}) \to H^2(\hY; \Omega^{n-1}_{\hY})\Big\}$ and hence is in the image of $\mathbb{T}^1_Y$. 
\end{proof} 
 
 \begin{definition} Suppose that all singularities of $Y$ are isolated lci singularities. We say that $Y$ has a \textsl{good configuration of singularities} if every singularity of $Y$ is either strongly $1$-irrational or $1$-liminal. If $x\in Z$ is a $1$-liminal hypersurface singularity, then by Proposition~\ref{limdimone},  $\dim \Gr_F^{n-1}H^n(L_x) = 1$, say $\Gr_F^{n-1}H^n(L_x) =\Cee \cdot \varepsilon_x$. Let $Z = Z' \cup Z''$, where $Z'$ is the subset of strongly $1$-irrational singularities and $Z''$ is the subset of $1$-liminal  singularities. Finally, let   $L' =\bigcup_{y\in Z'}L_y$ and $E' = \bigcup_{y\in Z'}E_y$, and similarly for $L''$ and $E''$. Note that $L'$ and $L''$ are   subspaces of $Y-Z$. 
 
If $\dim Y =3$, the condition that $Y$ has a good configuration of singularities is always satisfied by Corollary~\ref{gooddim3cor}.  In case all of the singularities of $Y$ are weighted homogeneous hypersurface singularities, $Y$ has a good configuration of singularities $\iff$ all singularities are    $1$-irrational.
 \end{definition}
 
 \begin{theorem}\label{smoothCY} Suppose that $H^1(Y;\scrO_Y) =0$,  that  all singularities of $Y$ are isolated hypersurface  singularities and that $Y$ has a good configuration of singularities in the notation of the preceding definition.  Let $Y' \to Y$ be a good equivariant resolution over the points of $Z''$, so that the singular locus of $Y'$ is $Z'$. Let $\varphi \colon   H^n(L'') \to H_{n-1}(Y')$ be the composition
 $$ H^n(L'') \cong  H_{n-1}(L'') \to H_{n-1}(Y'),$$ 
 where the second map comes from the inclusion $L'' \subseteq Y-Z'' \cong Y'- E''\subseteq Y'$. 
  Finally, suppose  that, for every $x\in Z''$, there exists a nonzero $a_x\in \Cee$   such that 
\begin{equation}\label{localobs}
\sum_{x\in Z''}a_x\varphi(\varepsilon _x) =0\in H_{n-1}(Y').
\end{equation}

 Then there exists a first order smoothing $\theta \in \mathbb{T}^1_Y$.  If the deformations of $Y$ are unobstructed, then there is a smoothing $\mathcal{Y}\to \Delta$ with smooth total space $\mathcal{Y}$. Moreover all small smoothings of $Y$ have trivial canonical bundle. 
 \end{theorem}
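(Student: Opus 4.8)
The plan is to construct $\theta$ by prescribing its image in $K=\bigoplus_{z\in Z}K_z$ inside the liftable subspace of Lemma~\ref{liftable}, and then to check that a suitable choice restricts to a generator of $T^1_{Y,z}$ at every singular point. Since every singularity is $1$-irrational, Lemma~\ref{com2}(i) gives $\im H^0((R^1\pi_*\Omega^{n-1}_{\hY})_z)\subseteq \mathfrak{m}_z T^1_{Y,z}$, so the quotient $T^1_{Y,z}\to K_z$ factors a surjection $K_z\twoheadrightarrow T^1_{Y,z}/\mathfrak{m}_z T^1_{Y,z}$; by Lemma~\ref{defsmooth} an element $\theta\in\mathbb{T}^1_Y$ restricts to a smoothing direction at $z$ exactly when its image in $K_z$ is not killed by this surjection. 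By Lemma~\ref{liftable}(iii) the image of $\mathbb{T}^1_Y\to K$ contains $K_0\supseteq K'\oplus T$, so it suffices to produce $\kappa\in K'\oplus T$ whose component in each $K_z$ survives in $T^1_{Y,z}/\mathfrak{m}_z T^1_{Y,z}$, and then to lift it.

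At a $1$-liminal point $z\in Z''$ we have $K_z=\Cee\cdot\varepsilon_z$ one-dimensional (Proposition~\ref{limdimone}) and $K'_z=0$, so $K_z\twoheadrightarrow T^1_{Y,z}/\mathfrak{m}_z T^1_{Y,z}$ is an isomorphism and any nonzero multiple $a_z\varepsilon_z$ is a generator. At a strongly $1$-irrational point $z\in Z'$ we have $K_z=K'_z\oplus\Gr^{n-1}_FH^n(L_z)$ with $K'_z\subseteq K_0$, so the $K'_z$-summand may be prescribed freely and at no topological cost; here I would argue that $K'_z$ already surjects onto $T^1_{Y,z}/\mathfrak{m}_z T^1_{Y,z}$, i.e.\ that the generator $1_z$ lies in the $K'_z$-summand. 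For a weighted homogeneous singularity this is immediate from Corollary~\ref{wtdcor}: the generator sits in the lowest weight $-d$, which is $<-N$ precisely because $z$ is strongly $1$-irrational, hence lies in $K'_z=\bigoplus_{k<-N}H^0(X;T^1_X)(k)$, while $\Gr^{n-1}_FH^n(L_z)$ is concentrated in weight $-N$ and maps to $0$ in the weight $-d$ quotient; in general this should follow from matching these images with the $V$-filtration of Remark~\ref{rem4.6}. Granting this, I take $\kappa=\sum_{z\in Z'}\xi_z+\sum_{z\in Z''}a_z\varepsilon_z$ with $\xi_z\in K'_z$ a generator and $a_z\neq 0$; since $K'=\bigoplus_{z\in Z'}K'_z$, this lies in $K'\oplus T$ provided $(a_z\varepsilon_z)_{z\in Z''}\in T$.

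The remaining point, and the crux of the proof, is to show that $\sum_{z\in Z''}a_z\varepsilon_z$ lies in $T$ — i.e.\ dies in $H^2(\hY;\Omega^{n-1}_{\hY})$ — if and only if $\sum_{z\in Z''}a_z\varphi(\varepsilon_z)=0$ in $H_{n-1}(Y')$. I would use that $H^1(\hY;\Omega^n_{\hY})=0$ (Lemma~\ref{Hodgezero}) yields an injection $H^2(\hY;\Omega^{n-1}_{\hY})\hookrightarrow H^{n+1}(\hY)$, so membership in $T$ is equivalent to vanishing of the image in $H^{n+1}(\hY)$; the composite $\Gr^{n-1}_FH^n(L_z)\hookrightarrow H^n(L_z)\to H^{n+1}_{E_z}(\hY)\to H^{n+1}(\hY)$ is the relevant Gysin map, with the first arrow injective by semipurity (Theorem~\ref{semipure}). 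Via Poincar\'e duality $H^{n+1}(\hY)\cong H_{n-1}(\hY)$ this becomes the homology class $\varphi_{\hY}(\varepsilon_z)$, and comparing $\hY$ with $Y'$ — which modifies $Y$ only over $Z'$, hence is an isomorphism near the cycles coming from $Z''$ — matches $\varphi_{\hY}(\varepsilon_z)$ with $\varphi(\varepsilon_z)$ and identifies the two vanishing conditions. Executing this comparison, and in particular confirming that the $Z'$-contributions genuinely decouple so that the constraint is measured on $Y'$ rather than on $\hY$, is the hardest step. Given the hypothesis the required $(a_z)_{z\in Z''}$ exist, so $\kappa\in K_0$ lifts to $\theta\in\mathbb{T}^1_Y$ with generator image at every $z$, proving first-order smoothability.

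Finally, if the deformations of $Y$ are unobstructed, I would integrate $\theta$ to a one-parameter deformation $\mathcal{Y}\to\Delta$ with Kodaira–Spencer class $\theta$; by Lemma~\ref{defsmooth} the total space is smooth at each $z\in Z$, and it is automatically smooth along $Y\setminus Z$, so after shrinking $\Delta$ the total space $\mathcal{Y}$ is smooth and the general fibre is smooth, exactly as in Corollary~\ref{Fanocorollary}. For the last assertion, let $\mathcal{Y}\to\Delta$ be any small smoothing; the relative dualizing sheaf $\omega_{\mathcal{Y}/\Delta}$ is a line bundle (deformations of Gorenstein singularities are Gorenstein) restricting to $\omega_Y\cong\scrO_Y$ on the central fibre. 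A nowhere-vanishing section of $\omega_Y$ extends to all infinitesimal orders because the successive obstructions lie in $H^1(Y;\omega_Y)\cong H^1(Y;\scrO_Y)=0$; by properness over $\Delta$ the formal section algebraizes to a section of $\omega_{\mathcal{Y}/\Delta}$ nonvanishing on $Y$, hence, its zero locus being proper and disjoint from the central fibre, nonvanishing on all nearby fibres. Therefore $\omega_{Y_t}\cong\scrO_{Y_t}$ for every small smoothing.
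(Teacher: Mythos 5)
Your overall strategy (prescribe a class in $K$ inside the liftable subspace of Lemma~\ref{liftable}, lift it to $\mathbb{T}^1_Y$, and detect units via Lemmas~\ref{defsmooth} and \ref{com2}) is the paper's strategy, but the step you yourself call the crux is wrong as formulated, and it cannot be repaired in the form you propose. You require $\sum_{x\in Z''}a_x\varepsilon_x$ to lie in $T$, i.e.\ to map to zero in $H^2(\hY;\Omega^{n-1}_{\hY})$, equivalently (via the injection $H^2(\hY;\Omega^{n-1}_{\hY})\hookrightarrow H^{n+1}(\hY)$ from Lemma~\ref{Hodgezero} and Poincar\'e duality) to zero in $H_{n-1}(\hY)$. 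But hypothesis (\ref{localobs}) asserts vanishing in $H_{n-1}(Y')$, and Mayer--Vietoris identifies $H_{n-1}(Y')\cong H_{n-1}(\hY)/\im H_{n-1}(E')$; so (\ref{localobs}) only says that the image of $\sum_x a_x\varepsilon_x$ in $H_{n-1}(\hY)$ is \emph{supported on $E'$}, not that it vanishes. These conditions are genuinely different, and no comparison of $\hY$ with $Y'$ can identify them: allowing the class to survive in $H_{n-1}(\hY)$ provided it comes from $E'$ is exactly why the theorem is stated on $Y'$ rather than on $\hY$ (compare the threefold case, where the relation $\sum_x a_x[C_x]=0$ is imposed in $H_2$ of the partial resolution, not of the full resolution). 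The paper's proof instead uses (\ref{localobs}), strictness, and Theorem~\ref{maintheorem}(vi) (the image of $H^2_{E'}(\hY;\Omega^{n-1}_{\hY})\to H^{n+1}_{E'}(\hY)$ contains $\Gr_F^{n-1}H^{n+1}_{E'}(\hY)$) to produce a correction class $\beta\in H^2_{E'}(\hY;\Omega^{n-1}_{\hY})$ with the same image in $H^{n+1}(\hY)$ as $\sum_x a_x\varepsilon_x$; the liftable element is then $\bigl(-\beta,\sum_x a_x\varepsilon_x\bigr)\in K_0$, which in general does \emph{not} lie in your subspace $K'\oplus T$ (its $E'$-component has nonzero image in $H^{n+1}_{E'}(\hY)$, so it is not in $K'$). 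Since the components of $\beta$ at the points of $Z'$ are uncontrolled, one must then restore units at $Z'$, e.g.\ as in the paper by taking a general linear combination $t_1\theta_1+t_2\theta_2$ of this lift with a lift $\theta_1$ of an element of $K'$.

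There is a second, smaller gap at the strongly $1$-irrational points: your claim that $K'_y$ surjects onto $T^1_{Y,y}/\mathfrak{m}_yT^1_{Y,y}$ is correct, but you prove it only for weighted homogeneous singularities and for the general case appeal to Remark~\ref{rem4.6}, which the paper poses as an open question, not a result. The claim follows from what is already proved: strong $1$-irrationality means $K'_y\oplus\Gr_F^{n-1}H^n(L_y)\to K_y$ is an isomorphism, which by Theorem~\ref{maintheorem}(vi) identifies $K'_y$ with $H^2_{E_y}(\hX_y;\Omega^{n-1}_{\hX_y}(\log E_y))$; and since such a point is not $1$-Du Bois, Lemma~\ref{com2}(ii) shows that the kernel of $T^1_{Y,y}\to H^2_{E_y}(\hX_y;\Omega^{n-1}_{\hX_y}(\log E_y))$, namely the image of $H^1(\hX_y;\Omega^{n-1}_{\hX_y}(\log E_y))$, lies in $\mathfrak{m}_yT^1_{Y,y}$. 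Hence the element of $K'_y$ corresponding to the image of $1\in T^1_{Y,y}$ maps to a generator of $T^1_{Y,y}/\mathfrak{m}_yT^1_{Y,y}$, with no $V$-filtration input needed. With these two repairs your outline becomes the paper's proof; your treatment of the $1$-liminal points via Proposition~\ref{limdimone}, and your final paragraph on smoothness of the total space and triviality of the canonical bundle of nearby fibers, are fine.
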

 \begin{proof} It is tempting  to try to apply Lemma~\ref{liftable}(iii) directly. However, the hypotheses of Theorem~\ref{smoothCY} are somewhat weaker: we assume that there is a  relation $\sum_{x\in Z''}a_x\varphi(\varepsilon _x) =0\in H_{n-1}(Y')$, not that $\sum_{x\in Z''}a_x \varepsilon _x \in T$, i.e.\ that the image of  $\sum_{x\in Z''}a_x \varepsilon _x$ is $0$ in $H^2(\hY; \Omega^{n-1}_{\hY})$. Thus we proceed slightly differently.
 
 Let $y\in Z'$ be a strongly $1$-irrational singular point of $Y$. Then by Theorem~\ref{maintheorem}(vi), $K_y \to H^2_{E_y}( \hX_y; \Omega^{n-1}_{\hX_y}(\log E_y))$ induces an isomorphism $K'_y\cong H^2_{E_y}( \hX_y; \Omega^{n-1}_{\hX_y}(\log E_y))$. Moreover, the kernel of $T^1_{Y,y} \to H^2_{E_y}( \hX_y; \Omega^{n-1}_{\hX_y}(\log E_y))$ is contained in $\mathfrak{m}_yT^1_{Y,y}$.  For each $y\in Z'$, we can then choose $1'_y\in K'_y$ mapping to the image   $1_y$ of $1\in T^1_{Y,y}$  in $H^2_{E_y}( \hX_y; \Omega^{n-1}_{\hX_y}(\log E_y))$ (for some identification of $T^1_{Y,x}$ with a cyclic module $\scrO_{Y,x}/I$). 
  By Lemma~\ref{liftable}(iii), there exists a $\theta_1\in \mathbb{T}^1_Y$ whose image in $T^1_{Y,y}$ maps to $1'_y\in K'_y$. Then, for every $y\in Z'$,  the image $(\bar\theta_1)_y$ of $\theta_1$ in $T^1_{Y,y}$  satisfies: $(\bar\theta_1)_y \equiv 1 \bmod \mathfrak{m}_yT^1_{Y,y}$. 
 
 If $x\in Z''$ is a $1$-liminal singular point of $Y$, then $K_x =\Gr_F^{n-1}H^n(L)$ has dimension $1$ and $\varepsilon_x$ is a generator. Since the kernel of $T^1_{Y,x} \to K_x$ is then $\mathfrak{m}_xT^1_{Y,x}$, every lift of $\varepsilon_x$ to $T^1_{Y,x}$ is a generator of the cyclic module  $T^1_{Y,z}$. We claim that there exists a $\theta_2\in \mathbb{T}^1_Y$ whose image in $T^1_{Y,x}$ is $a_x\varepsilon_x$ for every $x\in Z''$. Assuming this, a general linear combination $t_1\theta_1 + t_2\theta_2$ is an element of $\mathbb{T}^1_Y$ whose projection to $\bigoplus_{z\in Z}T^1_{Y,z}$ is a generator of $T^1_{Y,z}$ for every $z\in Z$ as claimed.
 
 To see the claim, as in   Lemma~\ref{liftable} we have an exact sequence 
 $$\mathbb{T}^1_Y = H^1(V; \Omega^{n-1}_V)\to H^2_{E'}(\hY; \Omega^{n-1}_{\hY}) \oplus H^2_{E''}(\hY; \Omega^{n-1}_{\hY})\to H^2(\hY; \Omega^{n-1}_{\hY}).$$
 It suffices  to find an element in $\bigoplus_{x\in Z}H^2_{E_x}(\hX_x; \Omega^{n-1}_{\hX_x}) = H^2_{E'}(\hY; \Omega^{n-1}_{\hY}) \oplus H^2_{E''}(\hY; \Omega^{n-1}_{\hY})$ whose second component is $\sum_{x\in Z''}a_x\varphi(\varepsilon _x)$ and which maps to $0$ in $H^2(\hY; \Omega^{n-1}_{\hY})$.   We have  $\sum_{x\in Z''}a_x\varepsilon _x \in \Gr_F^{n-1}H^n(L'')$ and its image in $H^{n+1}(\hY)$ is contained in $H^2(\hY; \Omega^{n-1}_{\hY}) = \Gr_F^{n-1}H^{n+1}(\hY)$.  By Theorem~\ref{maintheorem}(vi),  the image of $H^2_{E'}(\hY; \Omega^{n-1}_{\hY}) =  H^2_{E'}(\hX; \Omega^{n-1}_{\hX})$ in $H^{n+1}_{E'}(\hY)$ contains $\Gr_F^{n-1}H^{n+1}_{E'}(\hY)$. Thus, by strictness, if we can show that, assuming the hypothesis of Theorem~\ref{smoothCY}, the image of $\sum_{x\in Z''}a_x\varepsilon _x$ in $H^{n+1}(\hY)$ is contained in the image of $H^{n+1}_{E'}(\hY)$, we will be done. 
 
 Via Poincar\'e duality, the map $H^n(L'') \to H^{n+1}(\hY)$ is identified with the map $H_{n-1}(L'') \to H_{n-1}(\hY)$, and the map $H^{n+1}_{E'}(\hY) \to  H^{n+1}(\hY)$ is identified with the map $H_{n-1}(E') \to H_{n-1}(\hY)$. So we must show that the image of $\sum_{x\in Z''}a_x\varepsilon _x$ in $H_{n-1}(\hY)$ is contained in the image of $H_{n-1}(E')$.  Taking the exact sequence of the pair $(\hY, E')$ gives an exact sequence
 $$H_{n-1}(E') \to H_{n-1}(\hY) \to H_{n-1}(\hY, E').$$
 Also, $H_{n-1}(\hY, E') \cong H_{n-1}(Y', Z')$, and $H_{n-1}(Y', Z') \cong H_{n-1}(Y')$ since $n \ge 3$.  In particular, there is a natural injection  $H_{n-1}(\hY)/\im H_{n-1}(E') \to H_{n-1}(Y')$, and $\varphi$ is the composition 
 $$ H^n(L'') \cong  H_{n-1}(L'') \to H_{n-1}(\hY) \to H_{n-1}(\hY)/\im H_{n-1}(E') \to H_{n-1}(Y').$$ 
 Thus, the hypothesis $\sum_{x\in Z''}a_x\varphi(\varepsilon _x) =0\in H_{n-1}(Y')$ implies that the image of $\sum_{x\in Z''}a_x\varepsilon _x$   is contained in the image of $H_{n-1}(E')$, as claimed. 
 
 The fact that a small deformation $Y_t$ of $Y$ has trivial canonical bundle then follows easily from the assumption that $ h^1(Y; \scrO_Y) = 0$.  
 \end{proof}
 
 \begin{remark} It is easy to check by unwinding the above argument that (\ref{localobs}) is a necessary and sufficient condition for the existence of first order smoothings.
 \end{remark}

 \begin{corollary}\label{3CYsmoothing} Let $Y$ be a a canonical Calabi-Yau variety of dimension $3$ with isolated hypersurface singularities and such that $H^1(Y;\scrO_Y) =0$. 
 \begin{enumerate} \item[\rm(i)]   $Y$ can be deformed to a canonical Calabi-Yau threefold whose only singularities are ordinary double points. More precisely, there exist small deformations of $Y$ which are locally trivial at all of the ordinary double points of $Y$ and smooth the remaining singularities, and which have trivial dualizing sheaves. 
 \item[\rm(ii)] Let $\overline{Y}' \to Y$ be a small resolution of  $Y$ at the ordinary double points and an isomorphism elsewhere, and, for each $x\in Z''$, let $C_x$ be the corresponding exceptional curve in $\overline{Y}'$. If there exist nonzero $a_x\in \Cee$ such that $\sum_{x\in Z''}a_x[C_x] = 0$ in $H_2(\overline{Y}')$, then $Y$ is smoothable, and all small smoothings of $Y$ have trivial canonical bundle. 
 \end{enumerate}
 \end{corollary}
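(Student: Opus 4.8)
The plan is to reduce both parts to the first-order smoothing analysis of Theorem~\ref{smoothCY}, supplemented by an unobstructedness input that, in dimension three, is available for \emph{all} canonical lci singularities. First I would record the dimension-three dictionary. By the Namikawa--Steenbrink classification (\cite{NS}), every isolated rational hypersurface singularity of dimension three is $1$-irrational, and it is $1$-liminal exactly when it is an ordinary double point; moreover, the splitting $K \cong \Gr_F^{2}H^3(L)\oplus K'$ of Theorem~\ref{maintheorem}(vi), which always holds when $\dim X = 3$, shows that every non-$1$-Du Bois such singularity is automatically strongly $1$-irrational. Hence, writing $Z = Z'\cup Z''$ as in Section~\ref{Section5}, the set $Z''$ is precisely the set of ordinary double points and $Z'$ the set of remaining, strongly $1$-irrational singularities, so $Y$ has a good configuration of singularities for free. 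Next I would supply unobstructedness: since $Y$ is a threefold with isolated canonical lci singularities, $\omega_Y\cong\scrO_Y$, $h^1(\scrO_Y)=0$, and a resolution satisfying the $\partial\bar\partial$-lemma, the deformations of $Y$ are unobstructed by Namikawa's Theorem~\ref{thmN} (with the $\partial\bar\partial$-lemma replacing the K\"ahler hypothesis, as in the remark following Theorem~\ref{thmK}). This is the point special to dimension three: the $Z'$ singularities need not be $1$-Du Bois, so Theorem~\ref{unob1} does not apply, and we genuinely rely on Namikawa's stronger input.

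For part (ii) I would then argue as follows. The hypothesis $\sum_{x\in Z''}a_x[C_x]=0$ in $H_2(\overline Y')$ is, with $n=3$, exactly the topological condition (\ref{localobs}) of Theorem~\ref{smoothCY}, once we identify the generator $\varepsilon_x$ of the one-dimensional space $\Gr_F^{2}H^3(L_x)$ (Proposition~\ref{limdimone}) with the class of the exceptional curve. Concretely, for a three-dimensional ordinary double point the link $L_x$ is diffeomorphic to $S^2\times S^3$, and under $H^3(L_x)\cong H_2(L_x)\to H_2(\overline Y')$ the image of $\varepsilon_x$ is a nonzero multiple of $[C_x]$. I would also reconcile the two resolutions: Theorem~\ref{smoothCY} is stated with a good (normal crossings) equivariant resolution over $Z''$, whereas the corollary uses the small resolution $\overline Y'$; both compute the intrinsic quotient $H_2(\hY)/\im H_2(E')$ that actually governs the argument in the proof of Theorem~\ref{smoothCY}, so I would check that the two linear-relation conditions correspond under the blow-down of the $(-1,-1)$-curves. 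Granting this, Theorem~\ref{smoothCY} produces a class $\theta\in\mathbb{T}^1_Y$ that is a unit at every point of $Z$, unobstructedness lifts it to a one-parameter deformation $\mathcal Y\to\Delta$, and Lemma~\ref{defsmooth} shows the generic fiber is smooth; triviality of the canonical bundle on small fibers follows from $h^1(\scrO_Y)=0$ as in the proof of Theorem~\ref{smoothCY}.

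For part (i) the goal is instead to smooth only the singularities in $Z'$ while preserving the ordinary double points, and the clean way is to work on the small resolution $\overline Y'\to Y$. Its only singularities are the strongly $1$-irrational points $Z'$, and it is again a canonical Calabi--Yau variety: $\omega\cong\scrO$ by crepancy of the small resolution, $h^1(\scrO)=0$ by rationality, and it inherits a resolution satisfying the $\partial\bar\partial$-lemma from $Y$. Applying Theorem~\ref{smoothCY} to $\overline Y'$, where now $Z''=\emptyset$ so the topological obstruction is vacuous, yields a first-order smoothing of $Z'$, which Namikawa's unobstructedness lifts to a deformation $\overline{\mathcal Y}'\to\Delta$. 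Each exceptional curve $C_x\cong\Pee^1$ has normal bundle $\scrO(-1)\oplus\scrO(-1)$, so $H^0(N_{C_x})=H^1(N_{C_x})=0$; it therefore deforms uniquely and persists in every fiber, and contracting the resulting family of $(-1,-1)$-curves relatively over $\Delta$ produces $\mathcal Y\to\Delta$ with $\mathcal Y_0=Y$ and $\mathcal Y_t$ a Calabi--Yau threefold whose only singularities are the ordinary double points obtained by contraction, the $Z'$ singularities having been smoothed. Local triviality at the nodes and triviality of the dualizing sheaf on small fibers again follow formally from $h^1(\scrO_Y)=0$.

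The main obstacle I anticipate is geometric bookkeeping rather than new cohomological input. On the one hand, in part (ii) the precise identification $\varphi(\varepsilon_x)=c\,[C_x]$ and the comparison between the good-resolution formulation of (\ref{localobs}) and the small-resolution relation $\sum_{x}a_x[C_x]=0$ must be carried out with care, tracking $\varepsilon_x$ through the Mayer--Vietoris sequences of both resolutions. On the other hand, in part (i) one must justify that the $(-1,-1)$-curves can be contracted in the family $\overline{\mathcal Y}'\to\Delta$ so that the nodes genuinely persist to all orders: first-order vanishing of the Kodaira--Spencer class at a node (whose $T^1$ is one-dimensional) is by itself not enough to prevent the node from being smoothed at higher order, which is precisely why passing to the small resolution and contracting, rather than deforming $Y$ directly, is the right route. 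Both steps are standard in the threefold setting but are where the argument has to be made precise.
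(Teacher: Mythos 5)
Your proposal is correct and follows essentially the same route as the paper: unobstructedness via Namikawa's theorem, part (i) by smoothing the small resolution $\overline{Y}'$ (whose singularities are all strongly $1$-irrational, so Theorem~\ref{smoothCY} applies with no topological condition) and then contracting the rigid $(-1,-1)$-curves, and part (ii) by identifying the relation $\sum_x a_x[C_x]=0$ in $H_2(\overline{Y}')$ with condition (\ref{localobs}) and invoking Theorem~\ref{smoothCY} directly. The only difference is that you spell out details the paper leaves as ``standard'' (the dimension-three dictionary for good configurations, the identification of $\varphi(\varepsilon_x)$ with a multiple of $[C_x]$, and the persistence of the nodes under contraction in the family), which is consistent with, not divergent from, the paper's argument.
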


  \begin{remark} (i) is a somewhat more precise version of \cite[Theorem 2.4]{NS}. (ii) is a strengthening of \cite[Theorem 2.5]{namstrata}, which only proves the result under the assumption that $Y$ has terminal singularities. (However, in \cite{namstrata} there is an if and only if statement.)
 \end{remark}

\begin{proof}[Proof of Corollary~\ref{3CYsmoothing}] By \cite{namtop}, the deformations of $Y$ are unobstructed.

\smallskip
\noindent (i) As in the statement of (ii), let $\overline{Y}' \to Y$ be a small resolution of  $Y$ at the ordinary double points. Then $\overline{Y}'$ is also a canonical Calabi-Yau variety of dimension $3$ with isolated singularities, and all singularities of $\overline{Y}'$ are strongly $1$-irrational.  Hence, by Theorem~\ref{smoothCY}, $\overline{Y}'$ is smoothable. Let $\overline{\mathcal{Y}}'\to \Delta$ be a smoothing, with general fiber $\overline{Y}'_t$. In the  smoothing $\overline{\mathcal{Y}}'$, the exceptional curves $C_x$ are rigid for small $t$. Thus we may contract them to obtain a deformation of $Y$ which smooths all of the singularities which are not ordinary double points. Since $\overline{Y}'_t$ has trivial canonical bundle, the dualizing sheaf of the contraction is trivial as well. 

\smallskip
\noindent (ii) Let $\overline{Y}'$ be as above, and let $Y' \to \overline{Y}'$ be  the blowup of $Y'$ along the curves $C_x$.  A standard argument identifies the condition that $\sum_{x\in Z''}a_x\varphi(\varepsilon _x) =0\in H_{n-1}(Y')$ with the condition that $\sum_{x\in Z''}a_x[C_x] = 0$ in $H_2(\overline{Y}')$. Hence $Y$ is smoothable by Theorem~\ref{smoothCY}.  
\end{proof}

A similar argument, using Theorem~\ref{unob1}, shows the following:

 \begin{corollary}\label{nCYsmoothing} Let $Y$ be a a canonical Calabi-Yau variety of dimension $n$ with isolated hypersurface singularities and such that $H^1(Y;\scrO_Y) =0$, and suppose that all singularities of $Y$ are $1$-liminal. Let $\hY \to Y$ be a good resolution of  $Y$ at the points of $Z = Z''$. With $\iota \colon   H^n(L) \to H^{n+1}(\hY)$ the natural map, suppose  that, for every $x\in Z$, there exists a nonzero $a_x\in \Cee$   such that 
 $$\sum_{x\in Z}a_x\iota(\varepsilon _x) =0\in H^{n+1}(\hY).$$ Then $Y$ is smoothable, and all small smoothings of $Y$ have trivial canonical bundle. \qed
 \end{corollary}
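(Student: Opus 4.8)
The plan is to run the argument of Theorem~\ref{smoothCY} in the degenerate case $Z' = \emptyset$, $Z = Z''$, where the resolution $Y'$ of that theorem is just $\hY$ and the Mayer--Vietoris bookkeeping collapses. Since every singularity is $1$-liminal, it is in particular $1$-Du Bois, so $Y$ satisfies the hypotheses of Theorem~\ref{unob1} and its deformations are unobstructed; thus it suffices to produce a class $\theta \in \mathbb{T}^1_Y$ whose image in each $T^1_{Y,x}$ is a unit, for then Lemma~\ref{defsmooth} applied at each $x$ shows that the deformation with Kodaira--Spencer class $\theta$ is a smoothing.

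First I would set up the construction. Because $Z' = \emptyset$ we have $E' = \emptyset$, and the exact sequence of Lemma~\ref{liftable} reduces to
\[
\mathbb{T}^1_Y = H^1(V; \Omega^{n-1}_V) \longrightarrow H^2_E(\hY; \Omega^{n-1}_{\hY}) = \bigoplus_{x\in Z} H^2_{E_x}(\hX_x; \Omega^{n-1}_{\hX_x}) \longrightarrow H^2(\hY; \Omega^{n-1}_{\hY}).
\]
By Proposition~\ref{limdimone} each $K_x = \Gr_F^{n-1}H^n(L_x)$ is one-dimensional, spanned by $\varepsilon_x$, and sits inside $H^2_{E_x}(\hX_x; \Omega^{n-1}_{\hX_x})$. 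Set $\xi = \sum_{x\in Z} a_x\varepsilon_x$. By exactness, $\xi$ lifts to some $\theta\in\mathbb{T}^1_Y$ precisely when $\xi$ dies in $H^2(\hY; \Omega^{n-1}_{\hY})$; equivalently, by Lemma~\ref{liftable}(ii)--(iii), when $\xi \in K_0$, in which case $\theta$ may moreover be chosen with image $\xi$ in $K$.

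The crux is to translate ``$\xi \mapsto 0$ in $H^2(\hY; \Omega^{n-1}_{\hY})$'' into the stated topological hypothesis. Here I would use that $H^1(Y; \scrO_Y) = 0$ gives $H^1(\hY; \Omega^n_{\hY}) = 0$ by Lemma~\ref{Hodgezero}; together with $H^0(\hY; \Omega^{n+1}_{\hY}) = 0$ and the $E_1$-degeneration guaranteed by the $\partial\bar\partial$-lemma, this identifies $H^2(\hY; \Omega^{n-1}_{\hY})$ with $F^{n-1}H^{n+1}(\hY)$, so the natural map $H^2(\hY; \Omega^{n-1}_{\hY}) \to H^{n+1}(\hY)$ is injective, exactly as in Lemma~\ref{7.1}. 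One then checks, as in the proof of Theorem~\ref{maintheorem}(vi) and Theorem~\ref{smoothCY}, that the composite $\bigoplus_x K_x = \Gr_F^{n-1}H^n(L) \to H^2(\hY; \Omega^{n-1}_{\hY}) \hookrightarrow H^{n+1}(\hY)$ is the restriction of the natural map $\iota\colon H^n(L) \to H^{n+1}(\hY)$ (factoring through $H^{n+1}_E(\hY)$ as in the link exact sequence). Consequently $\xi$ maps to zero in $H^2(\hY; \Omega^{n-1}_{\hY})$ if and only if $\sum_x a_x\iota(\varepsilon_x) = 0$ in $H^{n+1}(\hY)$, which is the hypothesis; hence $\xi \in K_0$ and the desired $\theta$ exists.

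Finally I would conclude. The image $\bar\theta_x$ of $\theta$ in $T^1_{Y,x}$ reduces to $a_x\varepsilon_x \neq 0$ in $K_x$; since each singularity is $1$-irrational, Lemma~\ref{com2}(i) shows the kernel of $T^1_{Y,x} \to K_x$ lies in $\mathfrak{m}_x\cdot T^1_{Y,x}$, so $\bar\theta_x \notin \mathfrak{m}_x\cdot T^1_{Y,x}$ and thus generates the cyclic module $T^1_{Y,x}$. Realizing $\theta$ as the Kodaira--Spencer class of a one-parameter deformation $\mathcal{Y}\to\Delta$ (possible by unobstructedness) and applying Lemma~\ref{defsmooth} at each $x$ yields a smoothing; triviality of the canonical bundle of small smoothings follows from $h^1(Y; \scrO_Y) = 0$ as in Theorem~\ref{smoothCY}. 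The main obstacle is the compatibility claim of the third paragraph---that the intrinsic coboundary $\Gr_F^{n-1}H^n(L_x) \to H^{n+1}(\hY)$ coincides with $\iota(\varepsilon_x)$, and that the target injects via the Hodge filtration---since once this identification is in place everything else is formal bookkeeping inherited from Theorem~\ref{smoothCY} with $Z'$ empty.
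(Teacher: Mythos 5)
Your proposal is correct and follows exactly the route the paper intends: the paper gives no separate proof of this corollary beyond the remark that ``a similar argument, using Theorem~\ref{unob1},'' works, i.e.\ one runs the proof of Theorem~\ref{smoothCY} with $Z'=\emptyset$ (so $E'=\emptyset$ and the lifting criterion collapses to the vanishing of $\sum_x a_x\iota(\varepsilon_x)$ in $H^{n+1}(\hY)$, via the injection $H^2(\hY;\Omega^{n-1}_{\hY})\hookrightarrow H^{n+1}(\hY)$ of Lemma~\ref{7.1}) and replaces the unobstructedness input by Theorem~\ref{unob1}, which applies because $1$-liminal singularities are $1$-Du Bois. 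Your write-up supplies precisely these details, including the key point that $1$-irrationality forces any lift of $a_x\varepsilon_x$ to $T^1_{Y,x}$ to be a unit, so nothing is missing.
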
 

 \begin{remark} In case $\dim Y = 3$ and all singularities are ordinary double points, then by \cite{Fddbar}, a nonempty open subset of smoothings of $Y$ will satisfy the $\partial\bar\partial$-lemma. It is natural to expect this to be true in greater generality.
 \end{remark}

\bibliography{cyref}
\end{document}